\documentclass[english,american,dvipsnames]{article}
\usepackage[T1]{fontenc}
\usepackage{textcomp}
\usepackage[utf8]{inputenc}
\usepackage{color}
\usepackage{babel}
\usepackage{units}
\usepackage{url}
\usepackage{amsmath}
\usepackage{amsthm}
\usepackage{amssymb}
\usepackage{graphicx}
\usepackage{xargs}[2008/03/08]
\usepackage[all]{xy}
\usepackage[pdfusetitle,
 bookmarks=true,bookmarksnumbered=false,bookmarksopen=false,
 breaklinks=true,pdfborder={0 0 0},pdfborderstyle={},backref=false,colorlinks=true]
 {hyperref}
\hypersetup{
 linkcolor=BlueViolet, urlcolor=MidnightBlue, citecolor=RoyalPurple, unicode=true}

\makeatletter

\newcommand{\noun}[1]{\textsc{#1}}

\newcommand\thmsname{\protect\theoremname}
\newcommand\nm@thmtype{theorem}
\theoremstyle{plain}

\newenvironment{namedthm}[1][Undefined Theorem Name]{
  \ifx{#1}{Undefined Theorem Name}\renewcommand\nm@thmtype{theorem*}
  \else\renewcommand\thmsname{#1}\renewcommand\nm@thmtype{namedtheorem}
  \fi
  \begin{\nm@thmtype}}
  {\end{\nm@thmtype}}
\theoremstyle{plain}
\newtheorem{thm}{\protect\theoremname}[section]
\theoremstyle{remark}
\newtheorem{rem}[thm]{\protect\remarkname}
\newenvironment{lyxlist}[1]
	{\begin{list}{}
		{\settowidth{\labelwidth}{#1}
		 \setlength{\leftmargin}{\labelwidth}
		 \addtolength{\leftmargin}{\labelsep}
		 }}
	{\end{list}}
\theoremstyle{plain}
\newtheorem{prop}[thm]{\protect\propositionname}
\theoremstyle{plain}
\newtheorem{lem}[thm]{\protect\lemmaname}
\theoremstyle{definition}
\newtheorem{defn}[thm]{\protect\definitionname}
\theoremstyle{plain}
\newtheorem{cor}[thm]{\protect\corollaryname}
\theoremstyle{definition}
\newtheorem{example}[thm]{\protect\examplename}

\usepackage{tikz}
\usepackage{tikz-cd}

\makeatother

\addto\captionsamerican{\renewcommand{\corollaryname}{Corollary}}
\addto\captionsamerican{\renewcommand{\definitionname}{Definition}}
\addto\captionsamerican{\renewcommand{\examplename}{Example}}
\addto\captionsamerican{\renewcommand{\lemmaname}{Lemma}}
\addto\captionsamerican{\renewcommand{\propositionname}{Proposition}}
\addto\captionsamerican{\renewcommand{\remarkname}{Remark}}
\addto\captionsamerican{\renewcommand{\theoremname}{Theorem}}
\addto\captionsenglish{\renewcommand{\corollaryname}{Corollary}}
\addto\captionsenglish{\renewcommand{\definitionname}{Definition}}
\addto\captionsenglish{\renewcommand{\examplename}{Example}}
\addto\captionsenglish{\renewcommand{\lemmaname}{Lemma}}
\addto\captionsenglish{\renewcommand{\propositionname}{Proposition}}
\addto\captionsenglish{\renewcommand{\remarkname}{Remark}}
\addto\captionsenglish{\renewcommand{\theoremname}{Theorem}}
\providecommand{\corollaryname}{Corollary}
\providecommand{\definitionname}{Definition}
\providecommand{\examplename}{Example}
\providecommand{\lemmaname}{Lemma}
\providecommand{\propositionname}{Proposition}
\providecommand{\remarkname}{Remark}
\providecommand{\theoremname}{Theorem}

\begin{document}
\selectlanguage{english}%
\global\long\def\setminus{\smallsetminus}%

\global\long\def\tx#1{\mathrm{#1}}%
\global\long\def\dd#1{\tx d#1}%
\global\long\def\nf#1#2{\nicefrac{#1}{#2}}%

\global\long\def\ww#1{\mathbb{#1}}%
\global\long\def\group#1{{#1}}%
\global\long\def\bigslant#1#2{{\raisebox{.3em}{$#1$}/\raisebox{-.3em}{$#2$}}}%
 
\global\long\def\quot#1#2{\bigslant{#1}{#2}}%
\global\long\def\rr{\mathbb{R}}%
\global\long\def\cc{\mathbb{C}}%
\global\long\def\disc{\mathbb{D}}%
\global\long\def\zz{\mathbb{Z}}%
\global\long\def\zp{\mathbb{Z}_{\geq0}}%
\global\long\def\qq{\mathbb{Q}}%
\global\long\def\pol#1{\cc\left[#1\right] }%
\global\long\def\id{\tx{Id}}%
\global\long\def\GL#1#2{\tx{GL}_{#1}\left(#2\right)}%
\global\long\def\fol#1{\mathcal{F}_{#1}}%
\global\long\def\pp#1{\frac{\partial}{\partial#1}}%
\global\long\def\ii{\tx i}%
\global\long\def\ee{\tx e}%
\global\long\def\re#1{\text{Re}\left(#1\right)}%
\global\long\def\im#1{\text{Im}\left(#1\right)}%
\global\long\def\floor#1{\left\lfloor #1\right\rfloor }%
\global\long\def\ceiling#1{\left\lceil #1\right\rceil }%
\newcommandx\neigh[2][usedefault, addprefix=\global, 1=, 2=0]{\left(\cc^{#1},\group{#2}\right)}%
\global\long\def\longto{\longrightarrow}%
\global\long\def\longmaps{\longmapsto}%
\global\long\def\surj{\twoheadrightarrow}%
\global\long\def\cst{\tx{cst}}%
\global\long\def\oo#1{\tx o\left(#1\right)}%
\global\long\def\OO#1{\tx O\left(#1\right)}%
\newcommandx\diff[1][usedefault, addprefix=\global, 1={\cc,0}]{\text{Diff}\left(#1\right)}%

\global\long\def\areg{\mathcal{AR}}%
\global\long\def\dulac{\mathcal{D}}%
\global\long\def\unramif{\mathcal{U}}%
 
\global\long\def\RingQSD{\mathcal{Q}}%
 
\global\long\def\rDulac{\areg}%
 
\global\long\def\rDulacf{\widehat{\rDulac}}%
 
\global\long\def\Dulac{\dulac}%
 
\global\long\def\Dulacf{\widehat{\Dulac}}%
 
\global\long\def\DerDulacf{\mathrm{N}\left(\Dulacf\right)}%

\global\long\def\modul#1{\mathfrak{m}\left(#1\right)}%
\newcommandx\model[3][usedefault, addprefix=\global, 1=k, 2=\mu]{#3_{#1,#2}}%
\global\long\def\var#1{\tx{var}\left(#1\right)}%
\global\long\def\Var#1{\Pi\left(\var{#1}\right)}%
\global\long\def\floc{\fol{}}%
\global\long\def\cloc{f_{\star}}%

\global\long\def\tmop#1{\ensuremath{\operatorname{#1}}}%

\global\long\def\cR{\ensuremath{\mathbb{R}}}%
 
\global\long\def\cC{\ensuremath{\mathbb{C}}}%
 
\global\long\def\cZ{\ensuremath{\mathbb{Z}}}%
 
\global\long\def\cQ{\ensuremath{\mathbb{Q}}}%

\global\long\def\bL{\ensuremath{\mathbb{L}}}%

\global\long\def\vphi{\varphi}%

\global\long\def\eps{\varepsilon}%

\global\long\def\supp{\mathrm{supp}}%

\global\long\def\Exp#1{\exp\,#1}%
 
\global\long\def\Log#1{\mathrm{log}\,{#1}}%
 
\global\long\def\pP{\mathbf{P}}%

\global\long\def\hol{\mathrm{hol}}%

\global\long\def\idD{\id}%
 
\global\long\def\Diff{\mathrm{Diff}}%
 
\global\long\def\Difff{\widehat{\Diff}}%
 
\global\long\def\DerDifff{\mathrm{N}\left(\Difff(\cC,0)\right)}%
 
\global\long\def\Differ{\Delta}%

\global\long\def\varD{\mathrm{var}}%
 
\global\long\def\lvar{\mathrm{lvar}}%
 
\global\long\def\DVar#1{\mathrm{Dvar}({#1})}%
 
\global\long\def\Center{\mathrm{Center}}%
 
\global\long\def\Cent{\mathfrak{C}}%

\global\long\def\transl{\mathbf{t}}%
 
\global\long\def\scale{\mathbf{s}}%
 
\global\long\def\pow{\mathbf{p}}%
 
\global\long\def\Fatou{\mathfrak{F}}%

\global\long\def\U{\unramif}%
 
\global\long\def\MR{\mathcal{M}}%
 
\global\long\def\Uf{\widehat{\U}}%
 
\global\long\def\MRf{\widehat{{\MR}}}%
 
\global\long\def\varf{\widehat{\mathrm{var}}}%

\global\long\def\Orbit{\mathrm{Orbit}}%
 
\global\long\def\QSD{\mathrm{QSD}}%

\global\long\def\Tay{\mathrm{T}}%

\global\long\def\vF{\fol{}}%

\global\long\def\vG{\mathcal{G}}%

\global\long\def\bD{\bar{\Delta}}%

\global\long\def\Fib{\mathrm{Fib}}%

\global\long\def\vR{\mathcal{R}}%

\global\long\def\cut{\mathrm{cut}}%
 
\global\long\def\hhol{{\holt{}}}%

\global\long\def\nobracket{\{\}}%
 
\global\long\def\textdots{...}%
 
\global\long\def\tmem#1{{\em#1\/}}%
 
\global\long\def\tmstrong#1{\textbf{#1}}%
 
\global\long\def\tmtextit#1{\text{{\itshape{#1}}}}%

\global\long\def\Corner{\textrm{\ensuremath{\tmop{Corner}}}}%
 
\global\long\def\Poinc{\tmop{Poinc}}%
 
\global\long\def\GlueM{G}%

\global\long\def\holt#1{\mathrm{hol}_{#1}}%
\newcommandx\holo[1][usedefault, addprefix=\global, 1=\gamma]{\mathfrak{h}_{#1}}%
\selectlanguage{american}%

\title{Rigidity of saddle loops}
\author{Daniel \noun{Panazzolo}\thanks{The first author is partially supported by the ANR/Foliage project
and the bilateral Hubert-Curien Cogito grant 2021-22-23-24 and CSF
IP-2022-10-9820 grant and the project NonSperANR-23-CE40-0028. \protect \\
\protect\url{daniel.panazzolo@uha.fr}\protect \\
 University of Haute-Alsace}, Maja \noun{Resman}\thanks{The second author is partially supported by the Croatian Science Foundation
(HRZZ) grants UIP-2017-05-1020 and PZS-2019-02-30, and the bilateral
Hubert-Curien Cogito grant 2021-22-23-24 and CSF IP-2022-10-9820 grant
and Horizon grant 101183111-DSYREKI-HORIZON-MSCA-2023-SE-01.\protect \\
\protect\url{maja.resman@math.hr}\protect \\
 University of Zagreb}, Loïc \noun{Teyssier}\thanks{The third author is partially supported by the bilateral Hubert-Curien
Cogito grant 2021-22-23-24 and CSF IP-2022-10-9820 grant.\protect \\
\protect\url{teyssier@math.unistra.fr}\protect \\
 University of Strasbourg}}
\date{Spring 2025}
\maketitle
\begin{abstract}
A saddle loop is a germ of a holomorphic foliation near a homoclinic
saddle connection. We prove that they are classified by their Poincaré
first-return map. We also prove that they are formally rigid when
the Poincaré map is multivalued. Finally, we provide a list of all
analytic classes of Liouville-integrable saddle loops.
\end{abstract}
\begin{namedthm}[2010~MSC Classification]
\emph{32S65,34C07,37F75}
\end{namedthm}
\tableofcontents{}

\section{Introduction}

Let $X$ be a real analytic vector field defined on an open subset
$U$ of $\mathbb{R}^{2}$ and let $\Gamma\subset U$ be an invariant
subset formed by a saddle point and a solution curve connecting two
local branches of the stable and unstable separatrices. We will say
that the triple $(U,X,\Gamma)$ is a {\tmem{real planar saddle loop}}.

The qualitative behavior of the solution curves in the vicinity of
$\Gamma$ is encoded by fixing an analytic transverse section $\Sigma$
through a regular point $\sigma\in\Gamma$ and considering the Poincaré
return map, 
\[
P:(\Sigma_{\geqslant0},\sigma)\rightarrow(\Sigma_{\geqslant0},\sigma).
\]
We denote by $\Sigma_{\geqslant0}$ the positive part of $\Sigma$,
with respect to a conveniently chosen parameterization (see figure
below) and the notation $(V,p)$ indicates some open neighborhood
of a point $p$ in a topological vector space $V$.

\begin{figure}[htb]
\centering{}\includegraphics[width=7.05905cm,height=2.958022cm]{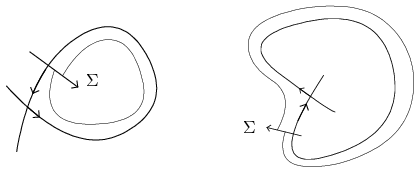}
\end{figure}

The two main goals of this article are to study the analytic classification
of such maps and relate it to the classification of the germ of $X$
along $\Gamma$. More precisely, consider two analytic planar saddle
loops $(U,X,\Gamma)$, $(\tilde{U},\tilde{X},\tilde{\Gamma})$, and
let $P,\tilde{P}$ denote the underlying Poincaré maps. We introduce
the following equivalence relations.
\begin{enumerate}
\item $(U,X,\Gamma)$ and $(\tilde{U},\tilde{X},\tilde{\Gamma})$ are {\tmem{analytically
equivalent}} if, up to changing $U,\tilde{U}$ for smaller neighborhoods
of $\Gamma,\tilde{\Gamma}$, there exists a real analytic diffeomorphism
\[
\Phi:\tilde{U}\rightarrow U
\]
with $\Phi(\tilde{\Gamma})=\Gamma$, mapping the solution curves of
$\tilde{X}$ into the solution curves of $X$ and preserving the orientation
(but not necessarily the natural parameterization by the time).
\item $P,\tilde{P}$ are {\tmem{analytically conjugate}} if there exists
a germ of a real analytic diffeomorphism 
\[
\varphi:(\tilde{\Sigma},\tilde{\sigma})\rightarrow(\Sigma,\sigma)
\]
preserving the orientation (\emph{i.e.}\ mapping $\tilde{\Sigma}_{\geqslant0}$
to $\Sigma_{\geqslant0}$) such that $\tilde{P}=\varphi^{-1}P\varphi$.
\end{enumerate}
The two main problems that we want to address are the following.
\begin{namedthm}[Problem~1]
\begin{flushleft}
\emph{Describe the analytic conjugacy classes of Poincaré maps for
saddle loops.}
\par\end{flushleft}

\end{namedthm}
\begin{namedthm}[Problem~2]
\emph{Show that $(U,X,\Gamma)\thicksim(\tilde{U},\tilde{X},\tilde{\Gamma})$
if and only if $P\thicksim\tilde{P}$.}
\end{namedthm}
Concerning Problem~1, we observe that related problems have been
studied by P.~Mardešić, D.~Peran \emph{et~al.}~\cite{MardeResReal,MardeResClassif,PerResRolSerForm,PerResRolSerAn,Per}
but with respect to the action of a much larger group of germs admitting
logarithmic asymptotic expansions (generalizations of the so-called
{\tmem{Dulac germs}}). As we shall see, the results are strikingly
different with respect to the present classification.

To put Problem~2 in some context, notice that, if we assume $\Gamma,\tilde{\Gamma}$
to be {\tmem{periodic orbits}}, the above statement is a classical
result, as both $(U,X,\Gamma)$ and $(\tilde{U},\tilde{X},\tilde{\Gamma})$
are analytically equivalent to suspensions of $P$ and $\tilde{P}$
(see \emph{e.g.} \cite{Smale1963}).

In our case, due to the presence of singular points, there is no clear
analogue of this construction by suspension. We take an indirect approach
by complexifying the problem and working with the associated complex
holomorphic foliations.

In Sections~\ref{sub:first} and~\ref{sub:second}, we discuss separately
Problem~2 and Problem~1, respectively.

\subsection{\protect\label{sub:first}Real and complex saddle loops}

Since our approach involves considering complex holomorphic foliations,
one is naturally led to consider the following more general set-up.
Let $(S,\mathcal{G})$ be a smooth holomorphic surface equipped with
a singular foliation. A {\tmem{saddle loop}} for $(S,\mathcal{G})$
is defined by a saddle singularity $s\in S$ and an oriented $C^{1}$-path
$\Gamma:[-1,1]\rightarrow S$ such that:
\begin{enumerate}
\item $\Gamma$ is everywhere tangent to $\mathcal{G}$ and $\lim_{t\rightarrow\pm1}\Gamma(t)=\{s\}$;
\item there exists some $\varepsilon>0$ such that $\Gamma|_{]-1,-1+\varepsilon[}$
and $\Gamma|_{]1-\varepsilon,1[}$ lie on distinct local separatrices.
\end{enumerate}
In other words, we assume that the two local separatrices at $s$
lie on a common global leaf $\mathcal{L}_{s}$ of $\mathcal{G}$ and
fix a path $\Gamma\subset\mathcal{L}_{s}$ which accumulates on $s$
along distinct separatrices as $t\rightarrow\pm1$. As above, we fix
a local holomorphic transverse section $(\Sigma,\sigma)$ at a point
$\sigma\in\Gamma$. However, contrary to the real setting, there is
no natural choice of a Poincaré first return map $P$, since the leaves
are not 1-dimensional and there is no natural order on the multiple
crossing points between leaves and transversals.

Since difficulties concentrate in the vicinity of the saddle point,
\ it is convenient to place the transverse section $(\Sigma,\sigma)$
near $s$, with base-point $\sigma$ on one of the local separatrices,
and introduce an auxiliary transverse section ($\Omega,\omega$),
with base-point $\omega$ lying on the other separatrix.

\begin{figure}[htb]
\centering{}\raisebox{-0.00321198471797896\height}{\includegraphics[width=2.745884cm,height=2.61385cm]{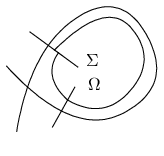}}
\end{figure}

In this setting, we write the factorization $P=RD$, where the {\tmem{regular
transition}} 
\[
R:(\Omega,\omega)\rightarrow(\Sigma,\sigma)
\]
is a well-defined, injective holomorphic map obtained by lifting the
sub-path of $\Gamma$ linking $\Omega$ to $\Sigma$ to the nearby
leaves, and the so-called {\tmem{corner transition}}\footnote{In many references, $D$ is called a Dulac map, but we prefer to keep
this name for a much larger class of maps which will appear very frequently
in the present paper.} $D$ is a multivalued map establishing a point-wise correspondence
between $\Sigma$ and $\Omega$. The motivation for this construction
is that we can study $D$ by using the classical Poincaré-Dulac local
normal form theory.

For instance, to fix a determination of such corner transition we
proceed as follows. Choose an oriented path lying on a sufficiently
close regular leaf $\mathcal{L}$ connecting a point $p\in\Sigma\cap\mathcal{L}$
to a point $q\in\Omega\cap\mathcal{L}$ (we call it a {\tmem{guiding
path}}). By holonomy transport, this path uniquely determines a
map 
\[
D:(\Sigma,p)\rightarrow(\Omega,q)
\]
which (by a very elegant construction of Ilyashenko~\cite{IlYaRus})
has an holomorphic extension to a map between domains lying in the
universal coverings $\widetilde{\Sigma\setminus\{\sigma\}}$ and $\widetilde{\Omega\setminus\{\omega\}}$.

It is important to emphasize that, except for some rather special
cases, the Poincaré map \ cannot be extended as a holomorphic map
on $\Sigma$ itself, and the passage to the universal covering is
necessary due to the intrinsic multivaluedness of $P$.

Of course, different choices of guiding path can lead to different
determinations of $D$ (and hence of $P$), and one needs to take
this choice into account in the definition of equivalence between
complex saddle loops. We refer to Section~\ref{sub:three} for the
details. \medskip{}

The above construction suggests to work with a more abstract model
for a saddle loop, where one considers a pair of a germ of a complex
saddle foliation $\vF$ in $(\mathbb{C}^{2},0)$, namely a foliation
defined by a differential 1-form 
\begin{equation}
\omega=x\dd y+y(\lambda-K(x,y))\dd x,\quad\lambda\in\cR_{>0},~K\in\cC\{x,y\},\label{eq:local_saddle_and_gluing}
\end{equation}
equipped with two transversals $\Omega,\Sigma$ through the separatrices
$\{y=0\}$ and $\{x=0\}$, and a germ of a holomorphic map $R:(\Omega,\omega)\rightarrow(\Sigma,\sigma)$,
seen intuitively as a recipe for {\tmem{gluing}} these transversals.
The scalar $-\lambda$ is called the \emph{eigenratio} of $\fol{}$.

One of the important points is that, while we can always fix the position
of one of these transversals, assuming for instance that $\Omega=\{x=1\}$,
the position of the other transversal $\Sigma$ should be allowed
to vary, while still staying inside the same equivalence class. More
precisely, we identify two such gluing maps 
\[
R:(\Omega,1)\rightarrow(\Sigma,\sigma),\quad\text{and}\quad\widetilde{R}:(\Omega,1)\rightarrow(\widetilde{\Sigma},\widetilde{\sigma})
\]
if there exists a germ of a $\vF$-holonomy map, $h:(\Sigma,\sigma)\rightarrow(\widetilde{\Sigma},\widetilde{\sigma})$
such that $\widetilde{R}=h\,R$.

\begin{figure}[htb]
\centering{}\raisebox{-0.00321198471797896\height}{\includegraphics[height=4.2cm]{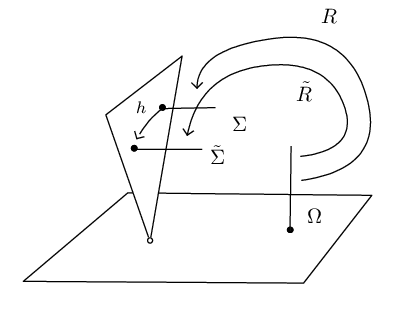}}
\end{figure}

In other words, we define a {\tmem{germ of a saddle loop}} (or,
shortly, a \emph{loop germ}) as a pair $\mathbb{L}=(\mathcal{F},\vR)$,
where:
\begin{enumerate}
\item $\vF$ is a prepared germ of a complex saddle foliation (see Section
\ref{subsect:preparedsaddle});
\item $\vR$ is an {\em equivalence class} of germs of a diffeomorphism
between the fixed vertical transversal $\Omega=\{x=1\}$ and a varying
horizontal transversal $\Sigma$, with respect to the equivalence
relation defined above.
\end{enumerate}
We will say that each germ $R\in\vR$ is a {\em determination}
of the regular transition. We refer to Section~\ref{sec:gcc} for
the detailed definitions and to Section~\ref{sec:realization} for
some examples. \medskip{}

To a loop germ $\mathbb{L}$, we associate the {\em class of Poincaré
first return maps} as the set, denoted by $\Poinc({\vF},{\vR})$,
of all possible compositions 
\[
P=R\,D,
\]
where $R:(\Omega,1)\rightarrow(\Sigma,\sigma)$ and $D:(\Sigma,\sigma)\rightarrow(\Omega,1)$
are arbitrary determinations of the regular transition and corner
transition, respectively.

In this new setting, the following generalization of the Problem~2
seems quite natural.
\begin{namedthm}[Problem~2']
\emph{Suppose that two germs of a saddle loop $\mathbb{L},\tilde{\mathbb{L}}$
have (some determinations of) their Poincaré maps which are analytically
conjugate. Does this imply an analytic equivalence between $\mathbb{L}$
and $\tilde{\mathbb{L}}$?}
\end{namedthm}
More precisely, we consider the action of the group $\tmop{Diff}_{\tmop{fib}}(\mathbb{C}^{2},\bar{\Delta})$
of germs of biholomorphisms defined in a neighborhood of the closed
unit disk $\bar{\Delta}\times\{0\}$ in $\mathbb{C}^{2}$, preserving
the fibration $\{x=\cst\}$, and we say that two loop germs $\bL=(\mathcal{F},\vR)$
and $\widetilde{\bL}=(\widetilde{\mathcal{F}},\widetilde{\vR})$ are
{\em $\tmop{Diff}_{\tmop{fib}}(\mathbb{C}^{2},\bar{\Delta})$-equivalent}
if there exists a germ of a $x$-fibered biholomorphism $\Phi\in\tmop{Diff}_{\tmop{fib}}(\mathbb{C}^{2},\bar{\Delta})$
such that:
\begin{enumerate}
\item $\Phi(\vF)=\widetilde{\mathcal{F}}$;
\item $\Phi$ maps the class $\vR$ to the class $\widetilde{\vR}$.
\end{enumerate}
The last requirement means that there exist suitable representatives
$R\in\vR$ and $\tilde{R}\in\widetilde{\vR}$ which are conjugate
under the restriction of $\Phi$ to the appropriate transversals.
\medskip{}

We are now ready to enunciate the two main equivalence results for
loop germs.
\begin{namedthm}[Theorem A]
Consider two $\tmop{Diff}_{\tmop{fib}}(\mathbb{C}^{2},\bar{\Delta})$-equivalent
loop germs $\bL=\left(\vF,\vR\right)$ and $\widetilde{\bL}=(\widetilde{\vF},\widetilde{\vR})$.
Then, there exist representatives of such germs in suitable neighborhoods
of the origin in $\cC^{2}$ such that each Poincaré map $P\in\Poinc(\vF,\vR)$
is analytically conjugate to a Poincaré map $\tilde{P}\in\Poinc(\widetilde{\vF},\widetilde{\vR})$.
\end{namedthm}
In fact, a more refined version of Theorem~A, stated in Section \ref{sec:gcc},
explicitly describes the correspondence between $P$ and its ``associate''
$\tilde{P}$. \smallskip{}

We also establish a converse to this statement.
\begin{namedthm}[Theorem~B]
Let $(\widetilde{\vF},\widetilde{\vR})$ and $\left(\vF,\vR\right)$
be two loop germs, and suppose that there exist two Poincaré maps,
\[
P\in\Poinc(\vF,\vR)~~\text{and}~~\tilde{P}\in\Poinc(\widetilde{\vF},\widetilde{\vR}),
\]
which are analytically conjugate. Then, $(\widetilde{\vF},\widetilde{\vR})$
and $\left(\vF,\vR\right)$ are $\tmop{Diff}_{\tmop{fib}}(\mathbb{C}^{2},\bar{\Delta})$-equivalent.
\end{namedthm}
The basic idea to prove the latter result is quite simple. As we shall
see, the conjugacy of the Poincaré maps implies the conjugacy of the
local holonomies at the saddle points. Hence, one can use the Mattei-Moussu
theorem~\cite{MaMou} to construct a fibered local equivalence between
the foliations near the saddle points by path lifting.

The subtle point is that such equivalence does not necessarily preserve
both transversals, and this is one of the reasons for having introduced
the more general concept of a loop germ, thus allowing one of the
transversals to {\em move}. \smallskip{}

Fortunately, in the case of {\em real} saddle loops, the structure
is more rigid. In particular, the local equivalence obtained by the
Mattei-Moussu theorem respects the real structure, so one can better
control how the other transversal moves under such an equivalence
map. As a consequence, we obtain the following positive answer to
Problem~2. Here, by the Poincaré map of a real saddle loop we mean
the canonically chosen determination which preserves the real line.
\begin{namedthm}[Theorem~C]
Two real saddle loops $(U,X,\Gamma)$, $(\tilde{U},\tilde{X},\tilde{\Gamma})$
are analytically equivalent if and only if the associated Poincaré
maps $P,\tilde{P}$ are analytically conjugate.
\end{namedthm}

\subsection{\protect\label{sub:second}Rigidity of Poincaré maps}

We now turn to the problem of classifying Poincaré first return maps.
For a loop germ $\mathbb{L}$ as above, consider an associated Poincaré
map 
\[
P=RD.
\]
Following~\cite{IlyaDu}, we observe that the lift of $P$ to the
logarithmic chart $x=\ee^{-z}$ defines a {\tmem{Dulac germ}}:\ a
holomorphic function $p$ on a quadratic standard domain and having
a formal asymptotic expansion in the pol-exp scale, 
\[
p(z)\thicksim az+b+\sum_{k\geqslant1}P_{k}(z)\ee^{-\lambda_{k}z},
\]
where $a>0$, $b\in\mathbb{C}$, $P_{k}\in\mathbb{C}[z]$ and $\{\lambda_{k}\}_{k}$
is an increasing sequence of positive real numbers. We denote by $\Dulac$
the group of Dulac germs and by $\widehat{\Dulac}$ is its {\tmem{formal}}
counterpart, consisting in all formal pol-exp series as above. One
fundamental fact, also due to Y.~Ilyashenko, is that the {\tmem{Taylor
map}} $\Tay:\Dulac\rightarrow\widehat{\Dulac}$, which associates
to each Dulac germ its asymptotic expansion, is injective. We refer
to Section \ref{subsec:ramified-var} for detailed statements.
\begin{rem}
We note that M.~Yeung~\cite{MYeung} recently identified a gap in\linebreak{}
Ilyashenko's proof of Dulac's problem. However, this does not affect
the results used in the present paper, since we only deal with purely
hyperbolic polycycles.
\end{rem}

In order to characterize the Dulac germs originating from Poincaré
maps of saddle loops, we consider the so-called {\tmem{functional
variation operator}} $\tmop{var}:\Dulac\rightarrow\Dulac$, 
\[
\tmop{var}(d)=[\tau,d],
\]
where $[a,b]=a^{-1}b^{-1}ab$ is the commutator operator and $\tau$
is the translation map $z\mapsto z+2\pi\ii$ (also seen as an element
of $\Dulac$). We say that a Dulac germ $d\in\Dulac$ is {\tmem{unramified}}
(noted $d\in\U$) if 
\begin{equation}
\tmop{var}(d)=\id,\label{eq:var}
\end{equation}
and we say that $d$ is {\tmem{mildly ramified}} (noted $d\in\MR$)
if 
\begin{equation}
\tmop{var}(\tmop{var}(d))=\id.\label{eq:varvar}
\end{equation}
The following result establishes a correspondence between these notions
and the maps considered above.
\begin{namedthm}[Proposition. (Dictionary)]
~
\begin{enumerate}
\item A Dulac germ is the lift of an analytic germ from $\mathrm{Diff}(\mathbb{C},0)$
if and only if it is unramified.
\item A Dulac germ is the lift of a Poincaré map of a germ of a complex
saddle loop if and only if it is mildly ramified.
\end{enumerate}
\end{namedthm}
Item~1. is an immediate consequence of the definition of unramified
germs (Proposition~\ref{prop:unramified}). Item~2. uses the nontrivial
realization theorems proved in~\cite{MaRa-Res} and~\cite{PY}:
every mildly ramified germ $f$ can be realized as a corner map of
a saddle foliation, because its variation $\var f$ can be realized
as the holonomy of a saddle foliation $\fol{}$ computed on a horizontal
transversal. By using properties of the variation operator on the
spaces $\Dulac$ and $\MR$, stated in Remark~\ref{rem:added}, one
can establish that the holonomy on the vertical transversal belongs
to the same conjugacy class as $\var{f^{-1}}$, providing a gluing
germ $R$ such that the loop germ $\left(\fol{},\mathcal{R}\right)$
admits $f$ as Poincaré map.

\medskip{}

We observe that equations~\eqref{eq:var} and~\eqref{eq:varvar}
also make perfect sense at the formal level, and define two subsets
$\widehat{\U}$ and $\widehat{\MR}$ of $\widehat{\Dulac}$, so-called
{\tmem{unramified}} and {\tmem{mildly ramified}} formal Dulac
series. Moreover, since the Taylor map induces a group morphism between
$\Dulac$ and $\Dulacf$, one has
\[
\Tay(\U)\subset\widehat{\U}\quad\tmop{and}\quad\Tay(\MR)\subset\widehat{\MR}.
\]
Based on the above dictionary, we can formulate our contribution to
the Problem~1 stated above purely in terms of properties of $\U$,
$\MR$ and their formal counterparts.
\begin{namedthm}[Theorem~D]
 Let $d_{1},d_{2}\in\MR$ be two mildly ramified Dulac germs which
are $\widehat{\U}$-conjugate. Then, one of the following alternatives
holds:
\begin{enumerate}
\item $d_{1}$ and $d_{2}$ belong to $\U$;
\item $d_{1},d_{2}$ belong to $\MR\setminus\U$ and they are $\U$-conjugate.
\end{enumerate}
\end{namedthm}
More precisely, we prove the following \emph{strong rigidity} property.
Suppose that a formal series $\varphi\in\widehat{\U}$ satisfies the
relation 
\[
d_{1}=\varphi^{-1}\,d_{2}\,\varphi
\]
for some $d_{1},d_{2}\in\MR\setminus\U$. Then $\varphi\in\U$.

We immediately obtain the following consequence.
\begin{namedthm}[Corollary]
Consider two Poincaré maps of saddle loop germs $P_{1},P_{2}$ which
are ramified, and suppose that there exists a formal diffeomorphism
$\phi\in\widehat{\tmop{Diff}(\mathbb{C},0)}$ conjugating $P_{1}$
to $P_{2}$. Then $\phi$ converges.
\end{namedthm}
We observe that any germ of a diffeomorphism $P\in\tmop{Diff}(\mathbb{C},0)$
can be realized as an {\tmem{unramified}} Poincaré first return
map of a germ of a saddle loop. This is for instance the case of a
loop germ obtained by gluing a linear $1:1$ saddle (whose canonical
corner transition map is the identity) by $P$. Therefore, according
to the well-known theories of Birkhoff--Écalle--Voronin (resonant
diffeomorphisms) and Yoccoz (quasi-resonant diffeomorphisms), one
cannot expect to have a similar rigidity result if we do not assume
$P_{1},P_{2}$ to be ramified.
\begin{rem}
As kindly requested by a referee, Subsection~\ref{subsec:novo} addresses
the problem of topological rigidity for complex Dulac germs. Our results
suggest that mildly ramified Poincaré maps are generically strongly
topologically rigid. Actually, it might even be true that formal and
topological rigidity coincide for saddle loop Poincaré maps, in the
sense that a Dulac map $d_{1}\in\mathcal{M}\backslash\mathcal{U}$
topologically conjugate to some other $d_{2}$, is analytically conjugate
to the latter (albeit by a different mapping).
\end{rem}

\subsection{Complex saddle loops in birational models and symplectic foliations}

Complex saddle loops appear quite frequently in the problem of birational
classification of holomorphic foliations. For instance, suppose that
a foliated surface $(S,\mathcal{G})$ contains a singular invariant
set which is a nodal rational curve and that the nodal point is a
saddle singularity for $\mathcal{G}$. Then $(S,\mathcal{G})$ contains
a saddle loop.

A saddle loop also appears in the ``very special foliation'' described
by Brunella in~\cite[Section 4.2]{Brunella2015} as one of the class
of foliations without a minimal model. We remark in passing that the
existence of an invariant set of the above type imposes strong restrictions
if we assume $(S,\mathcal{G})$ to be {\tmem{algebraic}}. We refer
to~\cite[Example 7.1]{Brunella2015} for more details.

Another motivation to consider complex saddle loops comes from the
related problem of classifying singular Liouville foliations of focus-focus
type in $4$-dimensional symplectic manifolds $(M,\omega)$ (see \emph{e.g.}~\cite{VNgoc2003},~\cite{Bolsinov2019},~\cite{Duistermaat1980},
or~\cite{Smirnov-focus}). More precisely, one considers germs of
a completely integrable singular Liouville foliation $\fol{}$, defined
in the vicinity of a singular compact leaf $\mathcal{L}\subset M$
which is homeomorphic to a {\tmem{pinched torus}}.
\begin{center}
{\includegraphics[width=5.19328cm,height=5.42959cm]{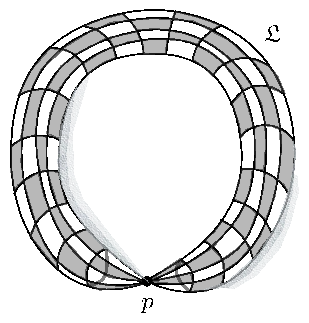}}
\par\end{center}

Under generic hypothesis, it follows from a theorem of Eliasson~\cite{Elia}
that one can find canonical local complex coordinates $(z,w)\in\mathbb{C}^{2}$
in a neighborhood of the pinch point $p$ such that the leaves of
$\fol{}$ are given as the level sets $\left\{ zw=\cst\right\} $.
In other words, we locally obtain a saddle type singularity for a
holomorphic foliation.

In this context, we can define a {\tmem{symplectic}} saddle loop
by considering any path $\gamma\subset\mathcal{L}$ which connects
$p$ to itself and is not homotopic to a the trivial path. \ Notice
that the global foliation $\fol{}$ is not necessarily complex holomorphic
(as $(M,\omega)$ is not necessarily equipped with a compatible complex
structure) but we expect that our results will give new invariants
for the above classification, notably if one considers non-integrable
perturbations of the completely integrable case.

\subsection{Integrability of loop germs in the Liouvillian class}

The discussion about symplectic saddle loops naturally prompts to
consider integrable settings. The usual framework to speak about integrable
holomorphic foliations on complex surfaces is that of Liouvillian
first integrals. A \emph{Liouvillian function} lies in a finite tower
of extensions of differential fields, starting from meromorphic germs,
of the following three types: algebraic, integral or exponential.
A classical reference is~\cite{SingLiou}.

A loop germ $\left(\fol{},\mathcal{R}\right)$ is said to be \emph{integrable
}whenever there exists a (non-constant) Liouvillian first integral
$H$ of $\fol{}$, meaning that the leaves of $\fol{}$ are included
in level sets $\left\{ H=\cst\right\} $, that is compatible with
a gluing map $R\in\mathcal{R}$. When we say that $H$ and $R$ are
\emph{compatible,} we require that $R$ preserve the ``nice'' transverse
structure provided by $H$. We refer to Section~\ref{sec:integrability}
for a precise definition.

The theory developed by M.~Berthier and F.~Touzet in~\cite{BerTouze}
classifies the saddle foliations $\fol{}$ admitting Liouvillian first
integrals $H$. We carry out here the final step that allows us to
list all compatible gluing maps $R\in\diff$ leading to integrable
loop germs.
\begin{namedthm}[Theorem~E]
Any integrable loop germ $\ww L=\left(\fol{},\mathcal{R}\right)$
is $\tmop{Diff}_{\tmop{fib}}(\mathbb{C}^{2},\bar{\Delta})$-equivalent
to one appearing in the following list.
\begin{enumerate}
\item $\mathcal{F}$ is a linear $1:\lambda$ saddle foliation, $\lambda>0$,
and $R\in\mathcal{R}$ is a linear map.
\item $\fol{}$ is a linear $1:1$ saddle foliation and $R\in\mathcal{R}$
is a Bernoulli diffeomorphism.
\item $\mathcal{F}$ is a non-linear $1:1$ saddle foliation in Poincaré-Dulac
normal form and $R\in\mathcal{R}$ embeds in a holomorphic flow.
\end{enumerate}
\end{namedthm}
Section~\ref{sec:integrability} contains the detailed list of the
model loop germs and the precise definitions of the terms involved
above. For the sake of readability, let us simply say that a Bernoulli
diffeomorphism is a resonant diffeomorphism whose Écalle--Voronin
cocycles~\cite{VoroParaboEng,EcalParab} are ramified homographies
(with same ramification order), whereas $R$ embeds in a holomorphic
flow if its cocycles are all trivial.

Because integrable foliations have solvable holonomy groups, and since
solvable finitely-generated subgroups of $\diff$ are very rare, one
is not surprised to encounter only so few integrable foliations. What
may seem more surprising is that we only encounter abelian holonomy
groups in the list. Let us try to give an intuitive explanation of
the fact. It is clear that a linear foliation can be glued by a linear
map $R$ without destroying the transverse structure (a linear map
commutes with the other linear data), and in that case the holonomy
group is itself linear and commutative. In contrast, the nonlinear
terms of $R$ have to conciliate the different behaviors near distinct
local branches of the separatrices passing through the saddle point.

On the one hand, when $\lambda\neq1$, the roles played by the branches
of the local separatrices are asymmetric, which hints at why only
linear gluing maps are compatible with the existence of a Liouvillian
first integral. Nonlinearities in the foliation pose the same problems,
and indeed a nonlinear $p:q$ saddle in Poincaré-Dulac normal form,
given as the solutions to the differential equation
\begin{align*}
\frac{\dd y}{y} & =\frac{1+\mu\left(x^{q}y^{p}\right)^{k}}{\left(x^{q}y^{p}\right)^{k+1}}\dd{\left(x^{q}y^{p}\right)},~~k\in\zz_{>0},~\mu\in\cc,
\end{align*}
even when glued with $R=\id$, is \emph{not} an integrable loop germ.

On the other hand, when $\lambda=1$ the additional symmetry gives
rise to richer outcomes. Since the holonomy group is generated by
tangent to the identity mappings, it is well known that solvability
is equivalent to commutativity. It is worth noticing, though, that
not every abelian holonomy group that arise in loop germs leads to
an integrable situation. Proposition~\ref{prop:GH} gives the complete
list of loop germs with abelian holonomy around a saddle $1:1$ foliation.
In addition to the cases described by Theorem~E, one finds the exceptional
case of foliations that are formally conjugate to the Poincaré-Dulac
foliation with $\mu=\frac{1}{2}$, and having both generators of the
holonomy group being equal. The holonomy group is thus commutative
for trivial reasons having nothing to do with the potential integrability
of the underlying foliations (the realization theorem of~\cite{MaRa-Res}
can be used again to prove that every candidate diffeomorphism is
actually realizable in a loop germ), and it turns out that none of
them correspond to an actual Liouville-integrable loop germ, except
those that embed in a holomorphic flow.

\subsection{Saddle quantities, separatrix values and real classification}

Coefficients of the Poincaré-Dulac normal form for $1:\lambda$ saddle
singularities, called saddle quantities, and Taylor coefficients of
regular transitions of the homoclinic loop, called separatrix values,
have long been conjectured by E.~Leontovi\v{c} to be important for
the cyclicity of homoclinic loops. The trace of the linear part of
the vector field at the saddle point is known as the first saddle
quantity. As shown in~\cite{AnLeon} (see also~\cite{ChoWang}),
if $\lambda\neq1$ the sign of the first saddle quantity determines
the stability of saddle loop (\emph{i.e.} stability of the limit cycle
that unfolds from it). Furthermore, in such a case, the cyclicity
of a generic saddle loop bifurcation is shown by E.~Leontovi\v{c}~\cite{AnLeon,Leon}
to be exactly 1. If $\lambda=1$, further saddle quantities and separatrix
values are needed to estimate the cyclicity, as conjectured in~\cite{Leon}
and proved later in~\cite{Roussarie1998}. The question of effectively
computing saddle quantities and separatrix values becomes of importance.
The approach was continued by many authors who developed effective
algorithms to compute those (or equivalent quantities) and to relate
them to cyclicity in unfoldings of saddle loops and of more complicated
graphics, see \emph{e.g.} C.~Rousseau, P.~Joyal, M.~Han, H.~Zhu~\cite{Jojo},~\cite{HanZ}
and~\cite{RouZhu}.
\begin{rem}
As shown in~\cite{RouLimCyc} and~\cite{Ilya-Kov}, the saddle quantities
and separatrix values fully determine the asymptotic expansion of
the Poincaré first return map near a saddle loop. However, these quantities
are intricately intertwined in the expansion, and only the initial
terms can be computed effectively. In this paper we have proven that
the unramified formal class is indeed the unramified analytic class,
but we do not give the complete description of formal invariants in
terms of the coefficients of the Dulac maps (\emph{i.e.} in terms
of saddle quantities and separatrix values), or whatsoever. Characterization
of formal invariants for mildly ramified Dulac germs remains a valuable
question for future research.
\end{rem}

To the best of our knowledge, there are few results concerning finitely
smooth classification of Poincaré first return maps of smooth real
saddle loops. In~\cite{DumRou}, it is shown that, if $\lambda\neq1$,
the Poincaré map is always $\text{C}^{1}\left(\rr_{\geq0},0\right)$-conjugate
to $x\mapsto x^{\lambda}$. The finitely smooth normal form results
from~\cite{Ilya-Kov} can probably be applied to obtain a $\text{C}^{r}\left(\rr_{\geq0},0\right)$-classification.

However, we observe that, as a consequence of our results, a $\text{C}^{\infty}\left(\rr_{\geq0},0\right)$-conjugacy
between two ramified Poincaré maps of real-analytic saddle loops implies
their analytic conjugacy. Indeed, if two such germs are conjugate
\emph{via} a smooth germ $\varphi$, then the Taylor series of $\varphi$
yields an unramified formal conjugacy between their corresponding
Dulac series: we then use Theorem~D to infer the analytic conjugacy
of the Poincaré maps.

\subsection{Plan of the article}

The main results of the article, stated in the introduction, are proved
in the sections that are listed below.
\begin{lyxlist}{00.00.0000}
\item [{Theorem~A}] is proved in Section~\ref{sec:gcc} as Theorem~\ref{thm:A}.
\item [{Theorem~B}] is proved in Section~\ref{sec:gcc} as Theorem~\ref{thm:B}.
\item [{Theorem~C}] is proved in Section~\ref{subsec:real_loops} as
Theorem~\ref{thm:realequiv-poincare}.
\item [{Theorem~D}] is proved in Section~\ref{sec:Rigidity} as Theorem~\ref{thm:rigidity1}
(attracting/repelling case) and Theorem~\ref{thm:indifferent} (indifferent
case).
\item [{Theorem~E}] is proved in Section~\ref{sec:integrability} as
Theorem~\ref{thm:regluing_integrability}.
\end{lyxlist}

\section{\protect\label{sec:Rigidity}Rigidity in the Dulac group}

\subsection{\protect\label{subsec:ramified-var}The Dulac group $\protect\Dulac$}

Let $\Delta_{c}\in\mathbb{C},\ c\ge0,$ denote the open disk of radius
$c$ centered at the origin. Following~\cite{IlyaDu}, we say that
a subset $\Omega\subset\cC$ is a {\em quadratic standard domain}
if there exist 2 constants $c\ge0$ and $d>0$ such that $\Omega=\varphi_{d}\left(\cC\setminus\Delta_{c}\right)$,
where 
\[
\varphi_{d}(z)=z+d(z+1)^{1/2}.
\]

Let $A(\Omega)$ denote the ring of holomorphic functions on $\Omega$.
The collection $(\QSD,\infty)$ of all quadratic standard domains
form a (direct) partially ordered set for the inclusion relation.
The direct limit $\RingQSD=\varinjlim A(\Omega)$ will be called the
{\em ring of QSD-germs}.

We define an asymptotic partial order relation in this ring by writing
$g\succ f$ (or $f=\oo g$) for two elements $f,g\in\RingQSD$ if,
for each $\eps>0$, there exist representatives (also denoted by $f,g$)
defined in a common domain $\Omega$ such that $|f(z)|\le\eps|g(z)|$
for all $z\in\Omega$.

We will be mostly interested in QSD-germs having a particular type
of asymptotic expansion. The {\em pol-exp asymptotic scale} is
the collection of QSD-germs defined by 
\[
f_{k,\lambda}=z^{k}\ee^{-\lambda z}
\]
with $(k,\lambda)\in\cZ_{\ge0}\times\cR_{\ge0}$. Notice that this
collection is totally ordered with respect to the asymptotic relation
defined above, namely $f_{k,\lambda}\succ f_{l,\mu}$ if and only
if $(\lambda,-k)<(\mu,-l)$ for the usual lexicographical order.

The {\em Dulac formal ring} $\rDulacf$ (with the letters ${\cal AR}$
standing for \emph{almost regular}) is the set of formal sums in the
pol-exp scale 
\[
f=\sum_{(k,\lambda)}{a_{k,\lambda}f_{k,\lambda}},\qquad a_{k,\lambda}\in\cC,
\]
whose {\em support} $\supp(f)=\{(k,\lambda):a_{k,\lambda}\ne0\}$
satisfy the following conditions:
\begin{itemize}
\item The {\em exponential coefficient set} 
\[
L=\{\lambda\,|\,\exists k:(k,\lambda)\in\supp(f)\}
\]
forms a discrete subset of $\cR_{\ge0}$.
\item For each $\lambda\in L$, the set $\{k\,|\,(k,\lambda)\in\supp(f)\}$
is finite.
\end{itemize}
In other words, the elements of $\rDulacf$ are formal sums 
\begin{equation}
\sum_{\lambda\in L}P_{\lambda}(z)\ee^{-\lambda z},\label{asympt-Dulac}
\end{equation}
with $L\subset\cR_{\ge0}$ a discrete subset and $\{P_{\lambda}\}_{\lambda\in L}$
a collection of complex polynomials.

The {\em Dulac ring} is the sub-ring $\rDulac\subset\RingQSD$
of QSD-germs having an asymptotic expansion in $\rDulacf$. Consider
the Taylor map 
\[
\Tay:\rDulac\longrightarrow\rDulacf,
\]
which is the ring morphism mapping each Dulac germ to its asymptotic
expansion. The following result of quasi-analyticity is crucial to
the theory.
\begin{prop}[{see \cite[§0.3,Theorem 1]{IlyaDu}}]
\label{prop:ilyashenko} The morphism $\Tay$ is injective.
\end{prop}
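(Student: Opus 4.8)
This is a quasi-analyticity statement, so the natural plan is to show that $\ker\Tay$ is trivial. Since $\Tay$ is a ring morphism (in particular additive), it is enough to prove that $f\in\rDulac$ with $\Tay(f)=0$ forces $f=0$. By the definition of the asymptotic order $\succ$, the hypothesis $\Tay(f)=0$ means $f=\oo{f_{k,\lambda}}$ for every $(k,\lambda)\in\cZ_{\ge0}\times\cR_{\ge0}$; taking $k=0$ this says that for each $\lambda>0$ and each $\eps>0$, on a suitable quadratic standard domain one has $|f(z)|\le\eps\,\ee^{-\lambda\,\re z}$. In particular $f$ has a holomorphic representative on some QSD $\Omega$, bounded by a constant $M$, and it is \emph{super-exponentially flat} there: for every $\lambda>0$ there is a QSD $\Omega_\lambda\subset\Omega$ with $|f|\le\ee^{-\lambda\,\re z}$ on $\Omega_\lambda$.

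The core of the argument is a Phragmén--Lindelöf estimate exploiting the precise shape of $\Omega$; indeed the quadratic standard domains are tailored exactly so that a bounded, super-exponentially flat holomorphic function on them must vanish. The two geometric facts to extract from the definition $\Omega=\varphi_d(\cC\setminus\Delta_c)$ are: (i) $\Omega$ is, asymptotically, comparable to a half-plane (an angular sector of opening $\pi$); and (ii) its boundary curves escape to infinity slowly, with $\re z$ of order $|z|^{1/2}$ along $\partial\Omega$, so that the flatness of $f$ transported to the boundary reads $|f(z)|\le C_\lambda\,\ee^{-\lambda'|z|^{1/2}}$ for every $\lambda'>0$. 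Fixing the branch of $z\mapsto\sqrt z$ that is real positive on $\Omega$, one then compares $f$ with the auxiliary function $\ee^{\mu\sqrt z}$: the product $F_\mu(z)=f(z)\,\ee^{\mu\sqrt z}$ is bounded on $\partial\Omega$ (choosing $\lambda'\ge\mu$ makes the exponentials cancel) and grows only sub-exponentially, like $\ee^{\oo{|z|}}$, in the interior, so the Phragmén--Lindelöf principle for a half-plane propagates the boundary bound inward. This yields $|f(z)|\le C_\lambda\,\ee^{-c\mu|z|^{1/2}}$ on all of $\Omega$, for a fixed $c>0$ and every $\mu$.

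The delicate point --- and what I expect to be the genuine obstacle --- is that the constant $C_\lambda$ produced this way may blow up as $\mu\to\infty$, so the estimate does not yet force $f(z_0)=0$ at a fixed $z_0\in\Omega$; controlling this dependence is precisely the nontrivial content of Ilyashenko's theorem. It is handled by sharpening the bookkeeping: split $\partial\Omega$ into a fixed compact arc and its complementary tail, bound $\sup|F_\mu|$ on the tail by a \emph{$\mu$-independent} constant using the excess decay $|f|\le C_{2\mu}\,\ee^{-2\mu|z|^{1/2}}$ available there, and trade the (possibly growing) compact contribution against the super-exponentially small pointwise bound before letting $\mu\to\infty$. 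An equivalent route is to map $\Omega$ conformally onto a standard half-plane --- the conformal map behaving like $z\mapsto\sqrt z$ at infinity --- transport $f$ to a bounded holomorphic function $g$ that decays super-exponentially along the real axis, and invoke the classical uniqueness theorem for bounded holomorphic functions on a half-plane (divergence of the integral of $\log^-|g|$ against the Poisson kernel). Either way, once the geometry of quadratic standard domains is in hand the remaining estimates are routine, and I would refer to~\cite{IlyaDu} for the details.
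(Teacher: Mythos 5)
The paper does not prove this proposition itself; it is cited verbatim from Ilyashenko (see the reference attached to the statement), so there is no internal proof to compare against. Assessing your sketch on its own merits: you have correctly identified the flavor of the argument --- a Phragm\'en--Lindel\"of principle tailored to the geometry of quadratic standard domains, together with the super-exponential flatness coming from $\Tay(f)=0$. However, your second route (conformal map to a half-plane followed by the Nevanlinna uniqueness theorem) contains a concrete error. The Riemann map of $\Omega$ onto a half-plane is \emph{not} asymptotic to $z\mapsto\sqrt z$: since $\partial\Omega$ satisfies $\re z\sim|z|^{1/2}$, the domain $\Omega$ asymptotically fills any proper subsector of the half-plane, and the uniformizer is $\varphi_d^{-1}(z)=z-d\sqrt z+\OO 1$, which is asymptotically linear. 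Transporting the boundary flatness through this map, the transported function $g$ satisfies only $\log^{-}|g(\ii t)|\gtrsim\lambda\sqrt{|t|}$ on the boundary for each \emph{fixed} $\lambda$, and $\int_{0}^{\infty}\frac{\sqrt{t}}{1+t^{2}}\,\dd t<\infty$; so the Poisson integral of $\log^{-}|g|$ \emph{converges} rather than diverges, and the classical uniqueness theorem yields nothing. To make this route close, you would have to upgrade the boundary decay to $\ee^{-c|t|}$, and this upgrade is exactly the uniform control over the domains $\Omega_{\lambda}$ that you identify as the ``delicate point.''

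Your first route (multiplying by $\ee^{\mu\sqrt z}$ and invoking Phragm\'en--Lindel\"of) is closer to the right argument, and you correctly name the obstacle: the constant $C_{\lambda}$ may blow up as $\mu\to\infty$ because the domain on which $|f|\le\ee^{-\lambda\re z}$ holds also shrinks with $\lambda$, at a rate not controlled by the mere statement $\Tay(f)=0$. But the proposed repair --- splitting $\partial\Omega$ into a compact arc and a tail, bounding the tail $\mu$-independently via ``excess decay'' $\ee^{-2\mu|z|^{1/2}}$ --- does not obviously go through, because that excess decay is itself available only on a further-shrunk domain that may exclude a growing compact piece of the tail; the $\mu$-dependence does not visibly cancel. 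In short, both routes reduce to the same nontrivial uniformity estimate, which is precisely the content of Ilyashenko's Phragm\'en--Lindel\"of lemma for quadratic standard domains. Deferring to the original reference is reasonable, but the sketch points at the right tools without supplying the key estimate that makes them work.
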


The {\em formal Dulac group} is the subset $\Dulacf\subset\rDulacf$
of elements $f\in\rDulacf$ whose initial part has the form 
\[
f=az+b+\oo 1
\]
for some {\em real} coefficient $a>0$ (the so-called {\em multiplier}
of $f$) and some $b\in\cC$. Notice that $\Dulacf$ forms a group
with respect to the composition. To see this, it suffices to remark
that the substitution $z\rightarrow az+b+z^{l}\ee^{-\mu z}$ into
$\ee^{-\lambda z}$ can be re-expanded as: 
\[
\ee^{-\lambda(az+b+z^{l}\ee^{-\mu z})}=\ee^{-b\lambda}\ee^{-a\lambda z}\,\sum_{k\ge0}{\frac{1}{k!}z^{kl}\ee^{-k\mu z}},
\]
and leads to an expansion in $\rDulacf$.

The subset $\Dulac\subset\rDulac$ of germs $f$ such that $T(f)$
belongs to $\Dulacf$ forms a group under composition (see {\cite{IlyaDu}},
section 0.3B), which we will call the {\em Dulac group}. In what
follows, we will refer to the elements of $\Dulac$ (\emph{resp.}
$\Dulacf$) simply as Dulac germs (\emph{resp.} formal Dulac series).

Notice that the Taylor map $\Tay$ restricts to an injective group
morphism $\Tay:\Dulac\rightarrow\Dulacf$.

\subsection{\protect\label{subsec:ramifmildramif}Two subgroups of $\protect\Dulac$}

Consider the $2\pi\ii$-translation map $\tau(z)=z+2\pi\ii$, seen
as an element of $\Dulac$. The {\em functional variation} of a
Dulac germ $f\in\Dulac$ is defined by the commutator: 
\[
\varD(f)=[\tau,f]=\tau^{-1}f^{-1}\tau f.
\]
Appendix \ref{sec:commut} contains some general identities involving
the commutators which will be useful in the sequel. \smallskip{}

A germ $f\in\Dulac$ is said to be \emph{unramified} (which we write
$f\in\U$) if $\varD(f)=\id$ (\emph{i.e.} if it commutes with $\tau$).
It is said to be \emph{mildly ramified} (which we write $f\in\MR$)
if $\varD^{2}(f)=\id$ (\emph{i.e.} if $\varD(f)$ is unramified).
Appendix \ref{sec:commut} contains some general identities involving
the commutators which will be useful in the sequel. For instance,
based on the definitions given in (\ref{eq:definitions-Ck}), we can
equivalently define:
\begin{itemize}
\item $\U=\Cent_{1}(\tau)$ is the subgroup of {\em unramified germs},
\item $\MR=\Cent_{2}(\tau)$ is the set of {\em mildly ramified germs}.
\end{itemize}
The nomenclature is justified by the following immediate result.
\begin{prop}
\label{prop:unramified} A Dulac germ $f\in\Dulac$ is unramified
if and only if there exists a germ $F\in\Diff(\cC,0)$ such that the
following diagram commutes 
\[
\xymatrix{\left(\QSD,\infty\right)\ar[d]^{\Pi}\ar[r]_{f} & \left(\QSD,\infty\right)\ar[d]_{\Pi}\\
\neigh\ar[r]^{F} & \neigh
}
\]
where $\Pi(z)=\ee^{-z}$. In other words, $f$ is a lift of a holomorphic
germ under the universal covering $\Pi:\cC_{\ge0}\rightarrow\mathbb{D}^{\star}$
where $\cC_{\ge0}=\left\{ z\in\cC:\mathrm{Re}(z)\ge0\right\} $ and
$\mathbb{D}^{\star}$ is the pointed unit disk.
\end{prop}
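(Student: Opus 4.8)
The plan is to exploit the covering map $\Pi(z)=\ee^{-z}$ directly. The deck group of $\Pi$ is exactly the group generated by the translation $\tau:z\mapsto z+2\pi\ii$, since $\Pi(z)=\Pi(w)$ if and only if $z-w\in 2\pi\ii\,\zz$. A holomorphic self-map $f$ of a quadratic standard domain $\Omega$ descends through $\Pi$ to a well-defined holomorphic germ $F$ on a punctured neighborhood $\mathbb{D}^\star$ of $0$ precisely when $f$ is equivariant with respect to this deck action, i.e.\ $\Pi\circ f = \Pi\circ f\circ\tau$ on the overlap of domains where both sides make sense; equivalently $f(z+2\pi\ii)=f(z)+2\pi\ii$, which is exactly $\tau^{-1}f^{-1}\tau f=\id$, i.e.\ $\varD(f)=\id$. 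So the heart of the argument is purely this equivariance/descent bookkeeping; the two directions are then:

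\emph{($\Leftarrow$, $f$ unramified $\Rightarrow$ it is a lift.)} Assume $f$ commutes with $\tau$. First fix the domains: choose a quadratic standard domain $\Omega$ on which a representative of $f$ is defined and $\tau$-invariant (shrinking/translating $\Omega$ so that $\Omega+2\pi\ii\zz = \Omega$; this is possible because the defining curve $\varphi_d$ of a quadratic standard domain is, up to a bounded correction, vertical, so a cofinal family of such domains is $\tau$-invariant — this is a small but necessary technical point). Then $\Pi$ restricts to a covering $\Omega\to \Pi(\Omega)=:V$, where $V$ is a punctured neighborhood of $0$ in $\mathbb{D}^\star$ (here one uses $\re z\to+\infty$ on $\Omega$, forcing $|\Pi(z)|\to 0$). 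The commutation $f\tau=\tau f$ says $f$ is deck-equivariant, so $F:=\Pi\circ f\circ(\Pi|_\Omega)^{-1}$ is a well-defined single-valued holomorphic map $V\to \mathbb{D}^\star$. To see that $F$ extends holomorphically across $0$, use that $f\in\Dulac$ has asymptotic expansion $f(z)=az+b+\oo1$ with $a>0$, whence $F(w)=\ee^{-b}w^a(1+\oo1)$ as $w\to0$; since $F$ is bounded near $0$, Riemann's removable singularity theorem gives a holomorphic germ $F\in\Diff(\cC,0)$ (the multiplier $\ee^{-b}$ when $a=1$, and more generally $F$ is a genuine germ at $0$ — in the relevant cases $a=1$ so $F$ is a biholomorphic germ). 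By construction the diagram commutes.

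\emph{($\Rightarrow$, a lift is unramified.)} Conversely, if $\Pi\circ f = F\circ\Pi$ for some germ $F$ at $0$, then $\Pi\circ f\circ\tau = F\circ\Pi\circ\tau = F\circ\Pi = \Pi\circ f$, so $f$ and $f\circ\tau$ are two lifts of the same map $F\circ\Pi|_\Omega$ through $\Pi$; since $\Omega$ is connected, they differ by a constant element of the deck group, $f\circ\tau = \tau^{n}\circ f$ for some fixed $n\in\zz$. Comparing with the asymptotics $f(z)=az+b+\oo1$ forces $a\cdot 2\pi\ii = 2\pi\ii n$, and as $a>0$ is real this gives $a=1$, $n=1$, i.e.\ $f\circ\tau=\tau\circ f$, so $\varD(f)=\id$ and $f\in\U$.

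The main obstacle is not conceptual but the geometric/domain-theoretic care needed to make "$\Pi$ restricts to a covering $\Omega\to V$'' and "$\Omega$ can be taken $\tau$-invariant'' precise: one must check that quadratic standard domains form a cofinal family of $\tau$-periodic regions under $\Pi$, and that the direct-limit ($\RingQSD$-germ) formulation is compatible with these shrinkings, so that the equivalence holds at the level of germs rather than fixed representatives. Everything else — equivariance $\Leftrightarrow$ commutation with $\tau$, and removability of the singularity at $0$ via the $az+b+\oo1$ normalization with $a>0$ — is routine once the domains are set up correctly.
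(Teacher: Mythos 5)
Your overall strategy is the right one — and, since the paper offers no proof (it calls the proposition ``immediate''), the standard covering-descent argument you give is essentially the intended one: $f$ commutes with $\tau$ exactly when $f$ is equivariant for the deck group of $\Pi$, which is exactly when $\Pi\circ f$ factors through $\Pi$. Your attention to the domain bookkeeping (cofinality of $\tau$-invariant regions inside quadratic standard domains, the direct-limit formulation) is sensible, although it is slightly heavier than needed: it suffices to observe that every QSD eventually contains a half-strip $\{\re z>R,\ |\im z|\leqslant\pi\}$, which $\Pi$ maps bijectively onto a punctured disk, and the well-definedness of $F$ on that disk is then exactly the equivariance identity.

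There is, however, one genuinely wrong step in the $(\Rightarrow)$ direction. You correctly derive $f\circ\tau=\tau^n\circ f$ for some fixed $n\in\zz$ by uniqueness of lifts, and then $a=n$ by comparing leading terms; but the subsequent deduction ``as $a>0$ is real this gives $a=1$, $n=1$'' does not follow. From $a=n$ with $a>0$ and $n\in\zz$ you only get that $a$ is a \emph{positive integer}, not that it equals $1$. What actually forces $a=1$ is the hypothesis $F\in\Diff(\cC,0)$, i.e.\ $F'(0)\neq0$: from $F(\ee^{-z})=\ee^{-f(z)}=\ee^{-b}\ee^{-az}(1+\oo1)$ one reads off $F(w)\sim\ee^{-b}w^{a}$ near $w=0$, and invertibility of $F$ requires $a=1$ (if $a\geqslant2$ then $F'(0)=0$). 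Once $a=1$ is known, $n=1$ follows and hence $f\tau=\tau f$. A symmetric remark applies in the $(\Leftarrow)$ direction: you write ``in the relevant cases $a=1$'' without deriving it, but there the commutation $f\tau=\tau f$ itself forces $a=1$ by the same leading-term comparison; that is exactly what makes the descended $F$ a biholomorphic germ rather than a ramified one. With these two small repairs your proof is complete.
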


In fact, by the uniqueness of the lift \emph{modulo} post composition
by the deck transformation $\tau$, it follows that there exists a
group morphism from $\U$ to $\Diff(\cC,0)$ whose kernel is precisely
the cyclic group $\langle\tau\rangle$ generated by $\tau$. By an
abuse of notation, we will also denote this morphism 
\begin{equation}
\Pi_{*}:\U\longrightarrow\Diff(\cC,0).\label{isomUDiff}
\end{equation}
 On the formal side, we define, in exactly the same way, the sets
\[
\Uf\subset\MRf\subset\Dulacf
\]
of unramified and mildly ramified formal Dulac series. The following
characterization of $\Uf$ will be quite useful.
\begin{prop}
\label{prop:unramified2} A formal Dulac series $f$ is in $\Uf$
if and only if it has a multiplier $a=1$, its exponential coefficient
set $L$ is contained in $\cZ_{\ge0}$ and, for all $\lambda\ge1$,
the polynomial $P_{\lambda}$ is of degree zero.
\end{prop}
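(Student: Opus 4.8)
The plan is to work directly with the defining condition $\varD(f) = [\tau, f] = \id$, which by definition of $\tau$ means precisely that $f$ commutes with the translation $\tau: z \mapsto z + 2\pi\ii$, i.e. $f(z+2\pi\ii) = f(z) + 2\pi\ii$ as formal Dulac series. Writing $f = az + b + \sum_{\lambda \in L} P_\lambda(z)\ee^{-\lambda z}$ with $L \subset \cR_{\geq 0}$ discrete and the $P_\lambda$ polynomials (the $\lambda = 0$ term being absorbed into $az+b$, so $P_0 = 0$ and the sum runs over $\lambda > 0$), I would substitute $z \mapsto z + 2\pi\ii$ and compare with $f(z) + 2\pi\ii$ term by term in the pol-exp scale. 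The first key observation is that the substitution acts on each basis block $P_\lambda(z)\ee^{-\lambda z}$ by
\[
P_\lambda(z+2\pi\ii)\,\ee^{-\lambda(z+2\pi\ii)} = \ee^{-2\pi\ii\lambda}\,P_\lambda(z+2\pi\ii)\,\ee^{-\lambda z},
\]
so it sends the $\lambda$-block to another $\lambda$-block and does not mix different values of $\lambda$. Hence the equation $f(z+2\pi\ii) = f(z) + 2\pi\ii$ decouples: from the polynomial part $az + b$ one reads off $a(z+2\pi\ii) + b = az + b + 2\pi\ii$, which is automatic for any $a$ — so this step alone does \emph{not} force $a = 1$, and I will need to be careful about where that constraint actually comes from.

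The constraint $a = 1$ in fact comes not from commutation with $\tau$ but from membership in $\Dulacf$ together with the requirement that $\Uf$ consist of lifts; more honestly, I would argue as follows. If $f$ is unramified, Proposition~\ref{prop:unramified} (its formal analogue, obtained the same way) says $f$ is the lift under $\Pi(z) = \ee^{-z}$ of a formal germ $F \in \widehat{\Diff(\cC,0)}$, meaning $\ee^{-f(z)} = F(\ee^{-z})$. Writing $w = \ee^{-z}$ and $F(w) = \alpha w + \OO{w^2}$ with $\alpha \neq 0$, we get $\ee^{-f(z)} = \alpha\ee^{-z}(1 + \OO{\ee^{-z}})$, hence $f(z) = z - \Log{\alpha} - \Log{(1 + \OO{\ee^{-z}})}$. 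Expanding the logarithm of $1 + (\text{series in }\ee^{-z}\text{ with polynomial-in-}z\text{ coefficients})$ produces again a series in $\ee^{-z}$ whose coefficients are polynomials in $z$; so at this stage we already learn $a = 1$ and $b = -\Log\alpha$. To pin down the \emph{shape} of the higher blocks — that $L \subset \cZ_{\geq 0}$ and $\deg P_\lambda = 0$ for $\lambda \geq 1$ — I would run the converse substitution: start from $F(w) = \alpha w + \sum_{n \geq 2} c_n w^n \in \cC[[w]]$, an arbitrary formal diffeomorphism, substitute $w = \ee^{-z}$, and observe that $F(\ee^{-z}) = \sum_{n\geq 1} c_n \ee^{-nz}$ (with $c_1 = \alpha$) has exponents in $\cZ_{\geq 1}$ and \emph{constant} coefficients; then $f(z) = -\Log{F(\ee^{-z})} = z - \Log\alpha + \sum_{\lambda \geq 1} Q_\lambda \ee^{-\lambda z}$ where each $Q_\lambda \in \cC$ is a polynomial expression in the $c_n$, because $-\Log$ of a power series in a single variable $t = \ee^{-z}$ with constant coefficients is again a power series in $t$ with constant coefficients. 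This shows the lift of any $F$ has exactly the asserted form, and conversely every series of that form is such a lift (reverse the computation, solving for the $c_n$ recursively from the $Q_\lambda$, which is possible since $\alpha = \ee^{-b} \neq 0$). Combined with the formal version of Proposition~\ref{prop:unramified}, this gives the equivalence.

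For the direct (non-lift) argument — which is probably the cleaner write-up — I would instead push the substitution calculation to its conclusion: the $\lambda$-block equation is $\ee^{-2\pi\ii\lambda} P_\lambda(z + 2\pi\ii) = P_\lambda(z)$ for every $\lambda \in L$. Comparing leading coefficients of the polynomial $P_\lambda$ forces $\ee^{-2\pi\ii\lambda} = 1$, i.e. $\lambda \in \cZ$; since $L \subset \cR_{\geq 0}$ and $\lambda = 0$ is excluded, we get $L \subset \cZ_{\geq 1}$. Then for $\lambda \in \cZ_{\geq 1}$ the equation reduces to $P_\lambda(z + 2\pi\ii) = P_\lambda(z)$, and a nonconstant polynomial cannot be $2\pi\ii$-periodic, so $\deg P_\lambda = 0$. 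This handles the exponential part entirely from the commutation relation; the multiplier claim $a = 1$ I would then derive separately, either via the lift picture above or by noting that the group morphism $\Tay \circ \Pi^{-1}$ sends $\tau$ to $\id$ and tracking multipliers. \textbf{The main obstacle} I anticipate is precisely disentangling which half of the conclusion follows from $\varD(f) = \id$ alone and which requires invoking the lift/Proposition~\ref{prop:unramified}: the exponent and degree constraints are immediate from periodicity, but establishing $a = 1$ cleanly — and making sure the argument is an honest equivalence in both directions, including that every series of the stated form genuinely lies in $\Uf$ — is where the care is needed; the re-expansion bookkeeping for $-\Log{F(\ee^{-z})}$ is routine once set up but must be done to confirm sufficiency.
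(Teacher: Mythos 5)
Your computational approach --- expanding the commutation identity $\tau f = f\tau$, i.e.\ $f(z+2\pi\ii) = f(z) + 2\pi\ii$, block-by-block in the pol-exp scale --- is the same as the paper's (the paper's entire proof is the one-sentence instruction to do exactly this). However, you make an algebraic slip when examining the $\lambda = 0$ block and then build an entire unnecessary detour on top of it. You assert that the equation $a(z+2\pi\ii) + b = az + b + 2\pi\ii$ is ``automatic for any $a$''. It is not: the left-hand side expands to $az + 2\pi\ii a + b$, and equating with $az + b + 2\pi\ii$ gives $2\pi\ii a = 2\pi\ii$, hence $a = 1$. So the commutation relation alone already pins down the multiplier. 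The whole second half of your argument --- invoking a formal analogue of Proposition~\ref{prop:unramified}, writing $\ee^{-f(z)} = F(\ee^{-z})$, expanding $-\Log F(\ee^{-z})$, and the loose suggestion about ``$\Tay\circ\Pi^{-1}$'' --- is therefore superfluous, and also shakier, since (as you correctly flag) Proposition~\ref{prop:unramified} is stated for convergent Dulac germs and its formal counterpart would have to be established first; the direct expansion argument is itself the cleanest proof of that formal counterpart.

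Your treatment of the $\lambda > 0$ blocks is correct and complete: the $\lambda$-block equation $\ee^{-2\pi\ii\lambda}P_\lambda(z+2\pi\ii) = P_\lambda(z)$ forces $\ee^{-2\pi\ii\lambda} = 1$ by comparing leading coefficients, hence $\lambda \in \cZ_{\ge 1}$, after which a nonconstant polynomial cannot be $2\pi\ii$-periodic, so $\deg P_\lambda = 0$. The converse you worry about is immediate: a series with $a=1$, $L \subset \cZ_{\ge 0}$ and constant $P_\lambda$ for $\lambda \ge 1$ visibly satisfies $f(z+2\pi\ii) = f(z)+2\pi\ii$ term by term, since $\ee^{-2\pi\ii\lambda}=1$ for integer $\lambda$. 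Once the $\lambda = 0$ step is repaired, your direct argument coincides with the paper's intended proof with no need for the lift picture at all.
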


\begin{proof}
It suffices to write the identity $\tau f=f\tau$ and compare the
asymptotic expansions.
\end{proof}
It follows that an unramified formal series $f$ can always be written
in the form 
\[
f=\Pi^{-1}\circ\Big(\lambda z\Big(1+\sum_{k\ge1}a_{k}z^{k}\Big)\Big)\circ\Pi=z-\ln(\lambda)-\ln\Big(1+\sum_{k_{\ge}1}a_{k}\ee^{-kz}\Big)
\]
where $\lambda z\big(1+\sum a_{k}z^{k}\big)$ is an arbitrary invertible
formal series (\emph{i.e.}\ an element of $\Difff(\cC,0)$) and $\ln(\lambda)$
is chosen \emph{modulo} $2\pi\ii\cZ$.

Recall that the Taylor expansion map $\Tay$ defines an embedding
of $\Dulac$ into $\Dulacf$. Notice that the $\varD$ operator commutes
with the Taylor map $\Tay$, and, therefore 
\[
\Tay(\U)\subset\Uf,\qquad\Tay(\MR)\subset\MRf.
\]
By an abuse of notation, we will keep on writing $\U$, $\MR$, $\Dulac$
to refer to their images in $\Dulacf$ under the Taylor map.
\begin{rem}
\label{rem:added}It follows from Proposition~\ref{prop:cosetinj}
in the Appendix that the operator $\varD$ establishes one-to-one
correspondences 
\[
\varD:\U\backslash\MR\longrightarrow\U,\quad\text{and}\quad\varD:\Uf\backslash\MRf\longrightarrow\Uf
\]
where $H\backslash G=\{Hg:g\in G\}$ denotes the set of right cosets
of a subgroup $H$ in a group $G$. Using a formal iterative procedure
(see Proposition~\ref{prop:inverselvar}), it is not difficult to
prove that the rightmost map is indeed a bijection. A much deeper
fact, which is an immediate consequence of the results in~\cite{MaRa-Res}
and~\cite{PY}, is that the leftmost map is also a bijection.
\end{rem}

\subsection{Ramified classification of attracting/repelling Dulac germs}

Let us briefly discuss the ramified classification (\emph{i.e.} up
to $\Dulac$-conjugation) of attracting/repelling Dulac germs established
in \cite{PerResRolSerForm}, \cite{PerResRolSerAn} and \cite{Per}.
Consider a Dulac germ $f\in\Dulac$ of the form 
\[
f=az+b+\oo 1.
\]
We will say that $f$ is
\begin{itemize}
\item {\em super-attracting} if $a>1$;
\item {\em super-repelling} if $a<1$;
\item {\em hyperbolically attracting} if $a=1$ and $\re b>0$;
\item {\em hyperbolically repelling} if $a=1$ and $\re b<0$.
\end{itemize}
For shortness, we will simply say that $f$ is attracting/repelling
if one of the above four conditions holds, and in the complementary
case 
\[
a=1\quad\text{ and }\quad\re b=0
\]
we will say that $f$ is {\em indifferent}.
\begin{prop}[Ramified classification~\cite{Per}]
\label{prop:conjugatingramif} Suppose that $f\in\Dulac$ is attracting/repelling.
Then, $f$ is respectively $\Dulac$-conjugate to 
\[
g=az,\quad\text{ or }\quad g=z+b
\]
in the super attracting/repelling and hyperbolically attracting/repelling
cases.
\end{prop}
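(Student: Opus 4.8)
The plan is to transplant the Koenigs and B\"ottcher linearization theorems to quadratic standard domains, producing the conjugator as a limit of rescaled iterates of $f$. First I would reduce to two cases: replacing $f$ by $f^{-1}$ turns a super-repelling germ ($a<1$) into the super-attracting germ $f^{-1}$ of multiplier $1/a>1$, and a hyperbolically repelling germ ($a=1$, $\re b<0$) into the hyperbolically attracting germ $f^{-1}(z)=z-b+\oo1$, whose constant term has $\re{-b}>0$. Since a conjugator carrying $f^{-1}$ to the inverse model carries $f$ to the model, it suffices to handle the super-attracting case $a>1$, with target $g(z)=az$, and the hyperbolically attracting case $a=1$, $\re b>0$, with target $g(z)=z+b$.

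In the super-attracting case, write $f(z)=az+b+\psi(z)$ with $a>1$ and $\psi\in\areg$, $\psi=\oo1$. A routine check with the geometry of quadratic standard domains shows that, after shrinking, $f$ maps such a domain $\Omega$ into itself and $\re{f^{\circ n}(z)}\to+\infty$ at rate $a^{n}$, so $\psi(f^{\circ n}(z))$ stays bounded (indeed decays doubly exponentially) along each orbit. Hence the telescoped series $a^{-n}f^{\circ n}(z)=z+\sum_{k=0}^{n-1}a^{-(k+1)}\bigl(b+\psi(f^{\circ k}(z))\bigr)$ converges uniformly on $\Omega$ to a holomorphic map $\varphi$ with $\varphi(z)=z+\tfrac{b}{a-1}+\oo1$ and $\varphi\circ f=\lim_{n}a^{-n}f^{\circ(n+1)}=a\,\varphi$. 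The hyperbolically attracting case is identical with $a^{-n}f^{\circ n}$ replaced by $f^{\circ n}-nb$: the orbit now drifts rightward at linear speed $\re b>0$, so the increments $f^{\circ(k+1)}(z)-f^{\circ k}(z)-b=\psi(f^{\circ k}(z))$ decay exponentially in $k$, the series $f^{\circ n}(z)-nb=z+\sum_{k<n}\psi(f^{\circ k}(z))$ converges uniformly on $\Omega$, and its limit $\varphi(z)=z+\oo1$ satisfies $\varphi\circ f=\varphi+b$.

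It remains to see that $\varphi$, a priori only holomorphic on a quadratic standard domain, actually lies in $\Dulac$: once it carries an asymptotic expansion in the pol-exp scale its multiplier is $1$, hence $\varphi,\varphi^{-1}\in\Dulac$ and the conjugacy is established. I would prove $\varphi\in\Dulac$ in two steps. First, solve the conjugation equation formally: for $\hat\varphi\in\Dulacf$ of multiplier $1$, the equation $\hat\varphi\circ f=a\hat\varphi$ (resp. $\hat\varphi\circ f=\hat\varphi+b$) is triangular in the pol-exp filtration, its graded component at $z^{k}\ee^{-\mu z}$ being a linear equation whose diagonal is invertible by the Koenigs/B\"ottcher non-resonance condition $a\ne1$ (resp. $\re b\ne0$, which forces $\ee^{-\mu b}\ne1$ for every exponent $\mu>0$ occurring); this pins down a formal solution $\hat\varphi$. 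Second, show the limit realizes $\hat\varphi$: subtracting a finite truncation $\hat\varphi_{(N)}$ and using the already-proved identity $\varphi\circ f=a\varphi$ (resp. $\varphi\circ f=\varphi+b$), one sees that $\varphi-\hat\varphi_{(N)}$ solves a conjugation equation with a high-order defect; summing that defect along the contracting orbit (where $f^{\circ k}(z)$ has real part tending to $+\infty$) bounds $\varphi-\hat\varphi_{(N)}$ by a pol-exp term of order strictly above the truncation, uniformly in $n$. As $N$ is arbitrary, $\varphi\in\areg$ with $\Tay(\varphi)=\hat\varphi\in\Dulacf$, and quasi-analyticity (Proposition~\ref{prop:ilyashenko}) makes the identification canonical. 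The main obstacle, I expect, is this second step: propagating pol-exp asymptotic estimates uniformly through the infinitely many compositions $f^{\circ n}$ within the geometry of quadratic standard domains; everything else is the classical Koenigs/B\"ottcher argument.
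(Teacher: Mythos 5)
This proposition is stated in the paper without proof and is attributed to the external reference~\cite{Per}, so there is no internal argument to compare against; I can only assess your proposal on its own merits. Your Koenigs/B\"ottcher iteration is the natural route, and the outline is sound. The reduction to the attracting cases via $f^{-1}$ is correct, and the telescoped partial sums $a^{-n}f^{\circ n}(z)=z+\sum_{k<n}a^{-(k+1)}\bigl(b+\psi(f^{\circ k}(z))\bigr)$ (resp.\ $f^{\circ n}(z)-nb=z+\sum_{k<n}\psi(f^{\circ k}(z))$) do converge uniformly on a shrunken QSD: a nonzero $\psi\in\rDulac$ with $\psi=\oo 1$ necessarily decays exponentially in $\re z$, orbits drift rightward at rate $a^{n}$ (resp.\ linearly), and $f$ maps a QSD into a QSD because the multiplier $a>1$ (resp.\ the drift $\re b>0$) weakens the parabolic boundary constraint $\re z\gtrsim\sqrt{|\im z|}$. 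The resulting $\varphi$ then satisfies the functional equation, has multiplier $1$, and the two-step argument that $\varphi\in\Dulac$ (solve the equation formally in $\Dulacf$; then bound $\varphi-\hat\varphi_{(N)}$ by summing the truncation defect along the contracting orbit, where the $k=0$ term dominates) is the right structure, with the correct nonresonance conditions: $a\neq 1$ in the B\"ottcher case, and $\ee^{-\mu b}\neq 1$ for all $\mu>0$ in the Koenigs case, which holds because $\re(\mu b)=\mu\,\re b\neq 0$. One point worth adding explicitly is the verification that the formal solution $\hat\varphi$ actually lies in $\Dulacf$: the exponent set generated under the recursion must remain discrete in $\cR_{>0}$. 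This is true --- the exponents of $\psi\circ f^{\circ k}$ live in (scaled copies of) the additive semigroup generated by $L_\psi$, which is discrete because $L_\psi$ is discrete and bounded below --- but it should be stated, since membership in $\Dulacf$ is exactly what licenses the conclusion that $\varphi\in\Dulac$ and hence that $\varphi^{-1}\in\Dulac$ as well. You have correctly flagged the asymptotic propagation as the technical heart; filling it in with the estimate you sketch should go through.
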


We remark that, in contrast to Proposition~\ref{prop:conjugatingramif},
the ramified classification of indifferent germs is much more involved.
Indeed, up to a conjugation by a scaling map $s(z)=cz$ for some $c\in\cR_{>0}$,
we can assume that 
\[
b\in2\pi\ii\cZ,
\]
and the latter case is studied in \cite{MardeResClassif} and \cite{MardeResReal}.
More precisely, the authors embed $\Dulac$ in a larger group of germs
defined on some QSD which admit {\em transserial} asymptotic expansions
in power-logarithm scale. It is shown that there exist functional
moduli similar to \emph{Birkhoff--Écalle--Voronin cocycles} for
the classification up to conjugacy.

\subsection{\protect\label{subsec:unramif-rigid}Unramified rigidity of attracting/repelling
Dulac germs}

We say that a Dulac germ $f\in\Dulac$ is {\em strongly formally
$\U$-rigid} if, whenever a formal unramified series $\psi\in\Uf$
satisfies $\psi^{-1}f\psi\in\Dulac$, then in fact 
\[
\psi\in\U.
\]
In particular, denoting by $\Orbit(x,G)$ the orbit of an element
$x$ in a group $G$ under the $G$-action by conjugacy, we have 
\[
\Orbit(f,\Uf)\cap\Dulac=\Orbit(f,\U).
\]
Our first rigidity result is the following:
\begin{thm}
\label{thm:rigidity1} Suppose that $f\in\Dulac$ is attracting/repelling.
Then $f$ is strongly formally $\U$-rigid.
\end{thm}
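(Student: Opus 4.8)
The plan is to exploit the fact that, after a $\Dulac$-conjugation, an attracting/repelling germ $f$ is $\Dulac$-conjugate to one of the normal forms $g(z)=az$ (super cases) or $g(z)=z+b$ with $\re b\neq 0$ (hyperbolic cases), by Proposition~\ref{prop:conjugatingramif}. So suppose $\psi\in\Uf$ satisfies $\psi^{-1}f\psi\in\Dulac$. Writing $f=h^{-1}gh$ with $h\in\Dulac$, the series $\chi:=h\psi$ conjugates $g$ to an element of $\Dulac$; since $h\in\U$ would follow automatically (a convergent Dulac germ is unramified iff\ldots actually $h$ need not be unramified), I must be slightly careful: the cleaner route is to first reduce to the case $f=g$ itself by replacing $\psi$ with $h\psi$ and noting that $\psi\in\U$ iff\ $h\psi\in\U$ \emph{provided} $h\in\U$. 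Since $h$ is a genuine Dulac germ, not necessarily unramified, instead I argue directly with $f$ and use the normal form only to compute the centralizer. Concretely: if $\tilde f:=\psi^{-1}f\psi\in\Dulac$, then $\varD(\tilde f)=\psi^{-1}\varD(f)\psi\cdot(\text{correction terms})$ — better, use that $\varD$ is a functional operator and that for $\psi\in\Uf$ one has $\psi\tau=\tau\psi$, hence $\varD(\psi^{-1}f\psi)=\psi^{-1}\,\varD(f)\,\psi$ exactly, as a formal identity. Thus conjugation by an unramified series commutes with $\varD$.

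The key step is then a \emph{growth/quasi-analyticity argument}. Both $f$ and $\tilde f=\psi^{-1}f\psi$ are genuine Dulac germs, so $\varD(f)$ and $\varD(\tilde f)=\psi^{-1}\varD(f)\psi$ are too. Now I want to conclude $\varD(\psi)=\id$, i.e. $\psi\in\U$. The idea: write $\psi=\id+r$ where $r=\oo 1$ lies in $\rDulacf$; the commutator relation $\varD(\psi)=[\tau,\psi]$ being trivial is exactly the statement of Proposition~\ref{prop:unramified2} — that the exponential coefficients of $\psi$ are nonnegative integers and the polynomials $P_\lambda$ ($\lambda\ge 1$) are constants, plus multiplier $1$. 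The multiplier of $\psi$ being $1$ is immediate by comparing multipliers in $\psi^{-1}f\psi\in\Dulac$ (multipliers multiply, and $f,\tilde f$ have the same multiplier $a$, forcing $\psi$ to have multiplier $1$). For the rest, I expand $\psi^{-1}f\psi$ and track the lowest-order "forbidden" term of $\psi$ — the first monomial $z^{\ell}\ee^{-\mu z}$ in $\psi$ with either $\mu\notin\cZ_{\ge 0}$ or ($\mu\ge 1$ and $\ell\ge 1$). The point is that such a term produces, in the composition $f\circ\psi$ or $\psi^{-1}\circ f\circ\psi$, a term that \emph{cannot be cancelled} by anything coming from $f$ or from the rest of $\psi$, precisely because $f$ is attracting/repelling: in the super case ($a\neq 1$) the substitution $\ee^{-\lambda z}\mapsto\ee^{-\lambda a z}(\cdots)$ rescales exponents by $a$, so a forbidden exponent stays forbidden and nothing with the right exponent is available to kill it; in the hyperbolic case ($a=1$, $\re b\neq 0$) the substitution introduces a factor $\ee^{-\lambda b}$ but does not change which exponents appear, and a short computation with the lowest forbidden term shows a nonzero obstruction. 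So $\psi^{-1}f\psi$ would fail to lie in $\Dulacf$ — contradiction. Hence no forbidden term exists and $\psi\in\Uf$; being a formal unramified series whose image under $\Tay^{-1}$-considerations is forced, and since $\psi^{-1}f\psi$ converges, $\psi\in\U$ by Proposition~\ref{prop:ilyashenko} applied to $\psi=f^{-1}\circ(\psi^{-1}f\psi)^{-1}\cdot(\ldots)$ — more simply, $\psi$ is determined as a formal conjugacy between two convergent germs and one invokes the injectivity of $\Tay$ together with the fact that $\Uf$ maps bijectively onto $\widehat{\Diff}(\cC,0)$ via $\Pi$, then uses convergence of the corresponding diffeomorphism conjugacy.

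I expect the \textbf{main obstacle} to be the bookkeeping in the "forbidden term cannot cancel" step, i.e.\ making rigorous the claim that the lowest-order violating monomial of $\psi$ survives in $\psi^{-1}f\psi$. The subtlety is that composition in $\Dulacf$ re-expands exponentials (as in the identity $\ee^{-\lambda(az+b+z^{l}\ee^{-\mu z})}=\ee^{-b\lambda}\ee^{-a\lambda z}\sum_k \frac{1}{k!}z^{kl}\ee^{-k\mu z}$ quoted in the text), so one has to order all monomials of $f\circ\psi$ and of $\psi\circ\tilde f$ by the lexicographic asymptotic order on $(\lambda,-k)$ and verify that the minimal forbidden monomial occupies a "slot" $(\lambda_0,k_0)$ not reached by any competing contribution. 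The attracting/repelling hypothesis is exactly what guarantees this: it pins down the multiplier/translation part so that the self-interaction terms of $\psi$ sit strictly deeper in the asymptotic order, or land on integer exponents only. The indifferent case is genuinely excluded here (consistent with the remark following Proposition~\ref{prop:conjugatingramif}), so the proof should visibly use $a>1$, $a<1$, or $\re b\neq 0$ — I would organize it as two cases (super; hyperbolic) with the hyperbolic case being the more delicate one, handled by comparing the coefficients of the lowest forbidden exponent $\mu$ on both sides of $f\psi=\psi\tilde f$ and extracting a nonvanishing linear relation in the violating coefficient from the $\re b\neq0$ condition.
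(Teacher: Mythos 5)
You correctly locate the relevant tools — Proposition~\ref{prop:conjugatingramif} for the normal forms, the identity $\varD(\psi^{-1}f\psi)=\psi^{-1}\varD(f)\psi$ for $\psi\in\Uf$, and you even write ``use the normal form only to compute the centralizer,'' which \emph{is} the paper's strategy — but you then abandon that idea and pursue a ``forbidden monomial'' argument that does not address the actual content of the statement. The hypothesis is $\psi\in\Uf$, so $\psi$ already satisfies $\varD(\psi)=\id$ formally and has no forbidden monomials; what must be proved is \emph{convergence}, i.e.\ $\psi\in\Dulac$. Your claim ``so $\psi^{-1}f\psi$ would fail to lie in $\Dulacf$'' is false outright: $\Dulacf$ is a group, so $\psi^{-1}f\psi\in\Dulacf$ automatically, and your argument would at best re-derive the hypothesis. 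The final appeal to ``convergence of the corresponding diffeomorphism conjugacy'' is circular: that is precisely the rigidity that fails in general (Yoccoz, Écalle--Voronin), which is why the paper must restrict to attracting/repelling germs. Moreover $\Pi$ is only defined on $\U$, and an attracting/repelling $f$ need not lie in $\U$ (indeed has multiplier $a\neq1$ in the super cases), so the reduction to $\widehat{\Diff}(\cC,0)$ is not even available.

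The argument the paper uses — and which you nearly stated — is this: by Proposition~\ref{prop:conjugatingramif} there is a genuine Dulac germ $\varphi\in\Dulac$ conjugating $f$ to $g:=\psi^{-1}f\psi$. Comparing the two conjugacies shows that $\psi\varphi^{-1}$ lies in $\Center(f,\Dulacf)$. Lemma~\ref{lem:center} then computes this centralizer explicitly from the normal form (scalings $\mu z$ in the super case, translations $z+d$ in the hyperbolic case, where $\re b\neq 0$ is exactly what kills all other monomials), showing $\Center(f,\Dulacf)\subset\Dulac$. Hence $\psi=(\psi\varphi^{-1})\varphi\in\Dulac$, so $\psi\in\Dulac\cap\Uf$, and Lemma~\ref{lem:intersect} (which is where Ilyashenko's quasi-analyticity actually enters) gives $\psi\in\U$. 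The missing ingredient in your proposal is precisely this centralizer computation: you never exhibit a \emph{convergent} conjugacy $\varphi$ to compare against, so you have nothing to anchor the convergence of $\psi$.
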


To prove Theorem~\ref{thm:rigidity1}, we start by considering the
following diagram of strict group inclusions (see Section~\ref{subsec:ramifmildramif}):\[\begin{tikzcd}              & \Dulacf                      &               \\ \Dulac \arrow[ru, hook] &                         & \Uf \arrow[lu, hook'] \\   & \U \arrow[ru, hook] \arrow[lu, hook'] &               \end{tikzcd}\]
\begin{lem}
\label{lem:intersect} $\Dulac\cap\Uf=\U$.
\end{lem}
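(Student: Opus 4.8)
The inclusion $\U\subset\Dulac\cap\Uf$ is immediate: by definition $\U\subset\Dulac$, and $\Tay(\U)\subset\Uf$, so any unramified Dulac germ lands in $\Uf$ when viewed through its Taylor expansion (recall the abuse of notation identifying $\U$ with $\Tay(\U)$). The content of the lemma is the reverse inclusion: a Dulac germ whose asymptotic expansion happens to be an unramified \emph{formal} series is in fact an unramified Dulac germ.

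So let $f\in\Dulac$ with $\Tay(f)\in\Uf$. The plan is to show $\varD(f)=\id$ by exploiting the injectivity of the Taylor map (Proposition~\ref{prop:ilyashenko}). First I would note that $\varD$ is defined purely in terms of composition and inversion of QSD-germs, and that $\Tay$ is a ring (and group) morphism compatible with composition, so $\varD$ commutes with $\Tay$: that is, $\Tay(\varD(f))=\varD(\Tay(f))$. Now $\varD(f)\in\Dulac$ (the Dulac group is closed under the variation operator, since $\tau\in\Dulac$ and $\Dulac$ is a group), so $\varD(f)$ is a genuine Dulac germ. On the other hand $\Tay(f)\in\Uf$ means precisely that $\varD(\Tay(f))=\id$ at the formal level, hence $\Tay(\varD(f))=\id=\Tay(\id)$. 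Since $\Tay$ is injective on $\Dulac$, we conclude $\varD(f)=\id$, i.e.\ $f\in\U$.

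The only point requiring a word of care is that all the manipulations take place inside a common quadratic standard domain: $f$, $\tau$, and their inverses are all defined on some QSD, the commutator $\varD(f)=\tau^{-1}f^{-1}\tau f$ is therefore a well-defined QSD-germ, and since each factor has an asymptotic expansion in $\rDulacf$ so does the composition (as recalled in the discussion of closure of $\Dulacf$ under composition). Thus $\varD(f)\in\Dulac$ is legitimate, and the rest is the quasi-analyticity argument above. I do not anticipate a genuine obstacle here; the lemma is essentially a formal consequence of the injectivity of $\Tay$ together with the fact that $\varD$ is expressed by group operations. If anything, the one thing to state explicitly is the compatibility $\Tay\circ\varD=\varD\circ\Tay$, which follows from $\Tay$ being a composition-compatible morphism.
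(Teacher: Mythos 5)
Your proof is correct and is essentially the same argument the paper uses. The paper's one-liner cites Propositions~\ref{prop:unramified} and~\ref{prop:unramified2}, but the step that actually closes the argument is the quasi-analyticity (injectivity) of $\Tay$ from Proposition~\ref{prop:ilyashenko}, which you make explicit together with $\Tay\circ\varD=\varD\circ\Tay$ and the observation that $\varD(f)\in\Dulac$.
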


\begin{proof}
It follows from Propositions~\ref{prop:unramified} and~\ref{prop:unramified2}
that, if a Dulac germ $f\in\Dulac$ is formally unramified (\emph{i.e.}\ lies
in $\Uf$), then it is unramified.
\end{proof}
Let $\Center(x,G)$ denote the centralizer of an element $x$ in a
group $G$. An immediate consequence of Proposition~\ref{prop:conjugatingramif}
is the following lemma.
\begin{lem}
\label{lem:center}Let $f\in\Dulac$ be attracting/repelling. Then
\[
\Center(f,\Dulacf)\subset\Dulac.
\]
\end{lem}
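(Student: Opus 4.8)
The plan is to reduce to the normal forms of Proposition~\ref{prop:conjugatingramif} and then to compute the centralizer of each normal form explicitly inside $\Dulacf$. Since $\Tay$ identifies $\Dulac$ with a subgroup of $\Dulacf$, conjugation by any $h\in\Dulac$ is an automorphism of $\Dulacf$ preserving the subgroup $\Dulac$. So, picking $h\in\Dulac$ as in Proposition~\ref{prop:conjugatingramif} with $g=h^{-1}fh$ the appropriate normal form, a formal series $\psi\in\Dulacf$ commutes with $f$ if and only if $h^{-1}\psi h$ commutes with $g$; hence $\Center(f,\Dulacf)=h\,\Center(g,\Dulacf)\,h^{-1}$, and the inclusion $\Center(g,\Dulacf)\subset\Dulac$ would give $\Center(f,\Dulacf)\subset h\,\Dulac\,h^{-1}=\Dulac$. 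It therefore suffices to treat the two normal forms $g(z)=az$ with $a\in\rr_{>0}$, $a\ne1$, and $g(z)=z+b$ with $\re b\ne0$.

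For this, I would write a general commuting series as $\psi=cz+d+\sum_{\lambda\in L}P_{\lambda}(z)\,\ee^{-\lambda z}\in\Dulacf$, with $c\in\rr_{>0}$, $d\in\cc$, $L\subset\rr_{>0}$ a discrete set and $P_{\lambda}\in\cc[z]$, substitute into $\psi\circ g=g\circ\psi$, re-expand in the pol-exp scale, and compare coefficients using their uniqueness (Proposition~\ref{prop:ilyashenko} is what makes this comparison meaningful). In the case $g(z)=az$, the $\lambda=0$ part gives $d=ad$, hence $d=0$, while the exponential part forces the exponent set $L$ to be invariant under $\lambda\mapsto a\lambda$; this is impossible for a nonempty discrete set bounded below when $a\ne1$ (bijectivity of $\lambda\mapsto a\lambda$ on $L$ would produce an element of $L$ strictly below $\min L$), so $L=\emptyset$ and $\psi(z)=cz$. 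In the case $g(z)=z+b$, the $\lambda=0$ part gives $cb=b$, hence $c=1$, and the coefficient of $\ee^{-\lambda z}$ gives $\ee^{-\lambda b}P_{\lambda}(z+b)=P_{\lambda}(z)$; comparing leading coefficients forces $\ee^{-\lambda b}=1$, i.e.\ $\lambda b\in2\pi\ii\zz$, as soon as $P_{\lambda}\ne0$, which contradicts $\re b\ne0$, so every $P_{\lambda}$ vanishes and $\psi(z)=z+d$. In both cases $\Center(g,\Dulacf)$ consists of manifestly convergent Dulac germs, hence is contained in $\Dulac$, and the lemma follows.

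The one step that is not purely bookkeeping is this coefficient comparison, and it is precisely where the attracting/repelling hypothesis enters: a multiplier $a\ne1$ cannot preserve a discrete, bounded-below exponent set, and a translation by $b$ with $\re b\ne0$ multiplies $\ee^{-\lambda z}$ by the factor $\ee^{-\lambda b}$ of modulus $\ne1$, which no polynomial prefactor can absorb. In the indifferent case (where $a=1$ and $\re b=0$) neither obstruction survives, which is exactly why the statement fails there and a separate analysis is needed. Finally, Lemma~\ref{lem:center} together with Lemma~\ref{lem:intersect} provides the two ingredients for the proof of Theorem~\ref{thm:rigidity1}.
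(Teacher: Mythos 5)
Your proposal is correct and follows essentially the same route as the paper: reduce to the two normal forms $az$ and $z+b$ via Proposition~\ref{prop:conjugatingramif}, observing that this conjugation preserves both $\Dulac$ and the centralizer, and then compute $\Center(g,\Dulacf)$ explicitly by substituting into $\psi g=g\psi$ and comparing pol-exp coefficients. The only cosmetic difference is in the super-attracting/repelling case, where you argue via the impossibility of $aL=L$ for a non-empty discrete $L\subset\rr_{>0}$ when $a\neq1$, whereas the paper examines which individual monomials $\mu z^{k}\ee^{-\lambda z}$ can commute with $az$; both yield $\Center(az,\Dulacf)=\{\mu z\}$.
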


\begin{proof}
Suppose firstly that $f$ is super attracting/repelling. By Proposition~\ref{prop:conjugatingramif},
up to conjugation inside $\Dulac$, we can assume that $f(z)=az$.
Since the conjugation with a germ $az$, $a>0$, preserves the order
of the monomials in the Dulac expansion, it suffices to remark that
the monomials of the form $\mu z^{k}\ee^{-\lambda z}$ commuting with
$az$ must satisfy: 
\[
\mu a^{k}z^{k}\ee^{-\lambda az}=\mu az^{k}\ee^{-\lambda z},
\]
which gives $\mu a^{k}=\mu a$ and $\lambda az\equiv\lambda z$ \emph{modulo}
$2\pi\ii\cZ$. We conclude that either $\mu=0$ or $(k,\lambda)=(1,0)$.
Therefore, 
\[
\Center(az,\Dulacf)=\{\mu z:\mu\in\cR_{>0}\}\subset\Dulac.
\]
The case where $f$ is hyperbolically attracting/repelling is similar.
Using again Proposition~\ref{prop:conjugatingramif} we assume, up
to a $\Dulac$-conjugation, that $f(z)=z+b$, with $\re b$ non-zero.
Now, if a formal Dulac series 
\[
g=cz+d+\sum_{L\subset\cR_{>0}}P_{\lambda}(z)\ee^{-\lambda z}
\]
commutes with $f$, it necessarily follows that $c=1$, and that each
term $P_{\lambda}(z)\ee^{-\lambda z}$ satisfies 
\[
\ee^{-\lambda b}P_{\lambda}(z+b)\ee^{-\lambda z}=P_{\lambda}(z)\ee^{-\lambda z}.
\]
If $P_{\lambda}\neq0$, the leading terms on both sides are equal.
We get a contradiction since $\re b$ is non-zero. Therefore, all
$P_{\lambda}$ vanish and we get: 
\[
\Center(z,\Dulacf)=\{z+d:d\in\cC\}\subset\Dulac.
\]
\end{proof}
\begin{proof}[Proof of Theorem~\ref{thm:rigidity1}.]
Let $\psi\in\Uf$ be a formal unramified series conjugating $f$
to another Dulac germ $g\in\Dulac$, and let $\vphi\in\Dulac$ be
a Dulac germ conjugating $f$ and $g$, whose existence is proved
in Proposition~\ref{prop:conjugatingramif}.

Then, the element $\psi^{-1}\vphi$ lies in $\Center(f,\Dulacf)$.
By Lemma~\ref{lem:center}, this implies that 
\[
\psi^{-1}\vphi\in\Dulac.
\]
Therefore, $\psi$ lies in the intersection $\Dulac\cap\Uf$. We conclude
using Lemma~\ref{lem:intersect}.
\end{proof}

\subsection{Unramified rigidity of indifferent Dulac germs}

We now consider indifferent Dulac germs, namely, germs $f\in\Dulac$
of the form 
\begin{equation}
f(z)=z+b+\oo 1\label{eq:f-unitymult}
\end{equation}
with $\re b=0$.

Here, the rigidity issue is more subtle. In particular, assuming that
$f$ is {\em itself} an unramified germ, it follows from Proposition \ref{prop:unramified}
that the classification up to $\U$-conjugation is precisely the same
as the holomorphic classification of the projected diffeomorphism
$F=\Pi_{*}f\in\Diff(\cC,0)$ (see (\ref{isomUDiff})), given by 
\[
F(Z)=\ee^{b}Z(1+\oo 1).
\]
In this case, the formal and the holomorphic classification can differ
spectacularly when $\re b=0$. We refer the reader to the vast literature
on the subject (see \emph{e.g.} \cite{MaRa-SN}, \cite{VoroParaboEng},
\cite{Yoccoz1992}).

For the remaining of this section, we prove that indifferent Dulac
germs \emph{that are not itself unramified} are strongly rigid.
\begin{thm}
\label{thm:indifferent} Let $f\in\Dulac$, $f\left(z\right)=z+2\pi\ii\beta+\oo 1$,
$\beta\in\mathbb{R}$, be an indifferent Dulac germ which is mildly
ramified but not unramified. Then, $f$ is strongly formally $\U$-rigid.
\end{thm}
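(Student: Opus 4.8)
The plan is to mimic the structure of the proof of Theorem~\ref{thm:rigidity1}, replacing the role of Proposition~\ref{prop:conjugatingramif} (which fails for indifferent germs) with an argument based on the variation operator $\varD$. Suppose $\psi\in\Uf$ satisfies $d_{1}:=\psi^{-1}f\psi\in\Dulac$, with $f\in\MR\setminus\U$. The key observation is that $\varD$ is natural with respect to conjugation by unramified germs: since $\psi$ commutes with $\tau$ (as $\psi\in\Uf$), one has the conjugation identity $\varD(\psi^{-1}f\psi)=\psi^{-1}\varD(f)\psi$, i.e.\ $\varD(d_{1})=\psi^{-1}\varD(f)\psi$. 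Now $\varD(f)\in\U$ because $f\in\MR$, and $\varD(f)\ne\id$ because $f\notin\U$; also $\varD(d_{1})\in\Dulac$. So $\psi$ conjugates the \emph{unramified} Dulac germ $g:=\varD(f)\in\U\setminus\{\id\}$ to another Dulac germ $\varD(d_{1})\in\Dulac$, formally via $\psi\in\Uf$.

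At this point I would like to conclude $\psi\in\U$ by invoking strong $\U$-rigidity of $g=\varD(f)$. The cleanest route is to argue that $g$ is attracting/repelling: via the isomorphism $\Pi\colon\U\to\Diff(\cC,0)$ of~\eqref{isomUDiff}, $g$ corresponds to a germ $G=\Pi(g)\in\Diff(\cC,0)$, $G\ne\id$. If $G$ has multiplier $\neq 1$, or is tangent to the identity but not the identity, then the corresponding Dulac germ $g$ is super-attracting/repelling or hyperbolically attracting/repelling, and Theorem~\ref{thm:rigidity1} applies directly to give $\psi\in\U$. So the substantive point to rule out is the case where $G$ is formally conjugate to an irrational rotation (equivalently $g$ is indifferent and unramified): I must show this cannot happen when $f$ is an indifferent mildly ramified germ. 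Here I would use Proposition~\ref{prop:ilyashenko} (quasi-analyticity) together with a direct computation on Dulac expansions: writing $f(z)=z+2\pi\ii\beta+\oo 1$ with $\beta\in\rr$ and $\varD(f)$ having its lowest-order nonconstant term, one tracks how the multiplier and linear coefficient of $\varD(f)$ are forced by that of $f$. The expectation, matching the ``strong rigidity'' claim preceding the theorem, is that $\varD(f)$ of an indifferent $f\notin\U$ is never an unramified indifferent germ — its projection $G$ has multiplier an actual root of unity or is tangent to identity, hence attracting/repelling as a Dulac germ — so the irrational-rotation obstruction is vacuous.

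An alternative, perhaps more robust, formulation avoids case analysis on $G$: replace the appeal to Theorem~\ref{thm:rigidity1} by a self-contained centralizer argument. Having reduced to ``$\psi$ formally conjugates $g\in\U\setminus\{\id\}$ to a Dulac germ,'' pick $\varphi\in\Dulac$ realizing the $\Dulac$-conjugacy $\varphi^{-1}g\varphi=\varD(d_{1})$ (its existence should follow from the dictionary/realization machinery or from the convergent classification of the projected germ $G$ under $\Diff(\cC,0)$). Then $\psi\varphi^{-1}\in\Center(g,\Dulacf)$, and it suffices to prove $\Center(g,\Dulacf)\subset\Dulac$ for $g\in\U\setminus\{\id\}$; combined with $\Dulac\cap\Uf=\U$ (Lemma~\ref{lem:intersect}) this yields $\psi\in\U$. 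The centralizer statement for unramified $g$ reduces, under $\Pi$, to: the centralizer of $G\in\Diff(\cC,0)\setminus\{\id\}$ inside the \emph{formal} group $\Difff(\cC,0)$ consists of convergent germs — which is a known consequence of Écalle--Voronin / Yoccoz theory (the formal centralizer of a nontrivial germ is a one-parameter analytic subgroup, or finite, in all cases) — lifted back through $\Pi$ and then intersected with $\Dulac$ using quasi-analyticity.

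The main obstacle I anticipate is precisely the indifferent-unramified sub-case flagged above: one must certify that the variation $\varD(f)$ of a mildly-but-not-fully ramified indifferent Dulac germ can never itself be an indifferent unramified germ (which would break rigidity via Yoccoz-type examples). I expect this to follow from a careful bookkeeping of the leading exponential term of $\varD(f)$ — the ramified term of $f$ that obstructs $f\in\U$ (a monomial $z^{\ell}\ee^{-\mu z}$ with $\ell\ge 1$ or $\mu\notin\cZ_{\ge0}$, cf.\ Proposition~\ref{prop:unramified2}) produces in $\varD(f)$ a term forcing the projected germ $G=\Pi(\varD(f))$ to have a nonzero ``parabolic'' or resonant part, hence to be attracting/repelling as a Dulac germ; then Theorem~\ref{thm:rigidity1} closes the argument. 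If that computation is delicate, the fallback is the direct formal-centralizer description of nontrivial germs in $\Difff(\cC,0)$, which sidesteps the trichotomy entirely.
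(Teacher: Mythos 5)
Your reduction step is correct: the identity $\varD(\psi^{-1}f\psi)=\psi^{-1}\varD(f)\psi$ for $\psi\in\Uf$ indeed holds, and $g:=\varD(f)\in\U\setminus\{\id\}$ because $f\in\MR\setminus\U$. But both of your routes from there fail, and they fail for the same reason, which is exactly the place where the paper's proof introduces a genuinely different idea.

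The fatal miscalculation is the claim that if $G=\Pi(g)$ is tangent to the identity (but $\ne\id$), then $g$ is hyperbolically attracting/repelling. Compute directly: if $f(z)=z+2\pi\ii\beta+\oo 1$ then $\varD(f)(z)=z+\oo 1$, so $g$ has multiplier $a=1$ and $b=0$ — it is \emph{indifferent}. Equivalently, under $\Pi$ the dictionary is: hyperbolically attracting/repelling unramified Dulac germs correspond to $G$ with $|G'(0)|\ne 1$, while indifferent unramified germs with $b\in 2\pi\ii\cZ$ correspond precisely to parabolic (tangent to identity) $G$. Thus $g=\varD(f)$ is always an indifferent unramified germ, the one case Theorem~\ref{thm:rigidity1} explicitly excludes, and the case the Corollary following Theorem~D singles out as non-rigid. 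Your hope that ``$\varD(f)$ of an indifferent $f\notin\U$ is never an unramified indifferent germ'' is therefore false — it always is one.

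The fallback also fails. For a parabolic $G$ with nontrivial Écalle--Voronin invariants, the formal centralizer of $G$ inside $\Difff(\cC,0)$ contains the formal one-parameter flow $\exp(tX)$ of the (generically divergent) formal infinitesimal generator $X$, and these are not convergent germs. So $\Center(g,\Dulacf)\subset\Dulac$ is simply false for a single unramified indifferent $g$, and the argument $\psi\varphi^{-1}\in\Center(g,\Dulacf)\Rightarrow\psi\varphi^{-1}\in\Dulac$ does not go through.

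What you are missing, and what the paper's proof supplies, is that the unramified conjugation by $\psi$ also transports $\varD(f^{-1})$ (Lemma~\ref{lem:variationgroup} transports the \emph{variation group} $\DVar f$, not just $\varD(f)$). One must therefore work with the pair $G=\Pi(\varD(f))$, $H=\Pi(\varD(f^{-1}))$ and the group $\langle G,H\rangle\subset\Diff_1(\cC,0)$. Proposition~\ref{prop:GH} shows $G,H$ are $k$-tangent to identity at the same order and that $\langle G,H\rangle$ is non-cyclic except in one degenerate sub-case (which is handled separately via $f^2\in\U$). For a non-cyclic subgroup of tangent-to-identity germs, a formal conjugacy is automatically convergent (\cite[Proposition 1]{CerMou}), which is precisely the rigidity statement your single-germ centralizer argument cannot deliver. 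In short: the rigidity lives at the level of the two-generated group, not at the level of a single germ, and that is the step your proposal cannot bypass.
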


The main ingredient of the proof is the interplay between the unramified
conjugation and the variation operator. We first state and prove Lemma~\ref{lem:variationgroup}
and Proposition~\ref{prop:GH} that will be used in the proof.

We define the {\em $\Dulac$-variation group} of a Dulac germ $f\in\Dulac$
as the group 
\[
\DVar f=\langle\varD(f),\varD(f^{-1})\rangle\subset\Dulac.
\]
The following result shows that $\DVar f$ behaves ``nicely'' under
unramified conjugation.
\begin{lem}
\label{lem:variationgroup} Let $g$ be an unramified germ. Then,
for $f\in\Dulac$, we have
\[
\varD(g^{-1}fg)=g^{-1}\varD(f)g\quad\text{ and }\quad\varD(g^{-1}f^{-1}g)=g^{-1}\varD(f^{-1})g.\ 
\]
In particular, $g$ conjugates $\DVar{g^{-1}fg}$ to $\DVar f$. The
same relations hold if $g$ is an unramified formal series.
\end{lem}

\begin{proof}
These relations are a direct consequence of formulas (\ref{eq:varxz})
in Appendix~\ref{sec:commut}. More precisely, we apply the formulas
with $x=f$, $z=g$ and reason in the group $G=\Dulac$ (resp. $G=\Dulacf$)
whenever $g$ is an unramified Dulac germ (\emph{resp}. an unramified
formal series).
\end{proof}
In this subsection, we assume that $f$ is an indifferent germ lying
in $\MR\setminus\U$, and write 
\begin{equation}
f=z+2\pi\ii\beta+\oo 1\label{eq:ind}
\end{equation}
with $\beta\in\cR$. We consider the germs 
\[
G=\Pi_{*}\varD(f),\quad H=\Pi_{*}\varD(f^{-1})
\]
of diffeomorphisms in $\Diff(\cC,0)$ obtained by the projection morphism
$\Pi$ given in (\ref{isomUDiff}).
\begin{prop}
\label{prop:GH} Let $f\in\MR\setminus\U$ be indifferent, as in \eqref{eq:ind},
and let $G,\,H\in\Diff(\cC,0)$ be defined as above. Then, the germs
$G,H$ are $k$-tangent to the identity\footnote{A tangent to identity germ $G\in\Diff(\cC,0)$ is said to be \emph{$k$-tangent
to identity} if $G(x)=x+ax^{k+1}+\oo{x^{k+1}}$, $a\in\mathbb{C}^{\times}$,
as $x\to0$, $k\in\mathbb{Z}_{\geqslant1}$.} at the same order $k\in\cZ_{\ge1}$. Moreover, $G$ and $H$ commute
if and only if one of the following two situations occurs.
\begin{itemize}
\item \textbf{Embedded into a flow case}: $\beta\not\in\frac{1}{2k}\cZ$
and 
\[
G=\Exp{\partial}\quad\text{and}\quad H=\Exp{\left(-\frac{1}{\nu}\,\partial\right)},
\]
where $\nu=\ee^{-2\pi\ii k\beta}$ and, up to a conjugation in $\Diff(\cC,0)$,
\begin{equation}
\partial=2\pi\ii\frac{x^{k}}{1+\frac{1}{1-\nu}x^{k}}\ x\frac{\partial}{\partial x}.\label{partial-expr}
\end{equation}
\item \textbf{Identical variations case}: $\beta\in\frac{1}{2k}+\frac{1}{k}\cZ$,
$G=H$ and, up to a {\em formal conjugation} in $\Difff(\cC,0)$,
$G=H=\Exp{\partial}$, where 
\[
\partial=2\pi\ii\frac{x^{k}}{1+\frac{1}{2}x^{k}}\ x\frac{\partial}{\partial x}.
\]
\end{itemize}
\end{prop}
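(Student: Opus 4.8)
The plan is to transfer everything to $\Diff(\cC,0)$ through the projection $\Pi$ of~\eqref{isomUDiff}, to use the commutator calculus of Appendix~\ref{sec:commut} to tie $\varD(f^{-1})$ to $\varD(f)$, and then to invoke the classical local theory of parabolic germs and their centralisers. First I would deal with the unconditional claims. Since $f\in\MR$, both $\varD(f)$ and $\varD(f^{-1})$ lie in $\U$, hence are $\Pi$-lifts of genuine germs $G,H\in\Diff(\cC,0)$. Writing $f=z+2\pi\ii\beta+\phi$ with $\phi=\oo1$, an expansion of $\varD(f)=\tau^{-1}f^{-1}\tau f$ shows that $\varD(f)(z)-z$ carries no constant term, so $G$ is tangent to the identity, and the same argument gives tangency of $H$. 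As $f\notin\U$ we have $\varD(f)\neq\id$, so $G$ is a non-trivial parabolic analytic germ whose order of tangency $k\in\cZ_{\ge1}$ is well defined. To compare $G$ with $H$, I would record the identity
\[
\varD(f^{-1})=f\,\varD(f)^{-1}\,f^{-1},
\]
a direct consequence of the commutator identities of Appendix~\ref{sec:commut}, and then expand it. Writing $\varD(f)(z)=z-a\ee^{-kz}+\cdots$ with $a\neq0$ (so $G(x)=x+ax^{k+1}+\cdots$), substituting $f^{-1}(z)=z-2\pi\ii\beta+S(z)$ and using the relation $\phi\circ f^{-1}=-S$ (which follows from $f\circ f^{-1}=\id$) to cancel the tail contributions, one obtains $\varD(f^{-1})(z)=z+a\,\ee^{2\pi\ii k\beta}\ee^{-kz}+\cdots$. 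Hence $H$ is $k$-tangent to the identity as well, with $H(x)=x-a\nu^{-1}x^{k+1}+\cdots$ where $\nu=\ee^{-2\pi\ii k\beta}$. This proves the first assertion.

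Now assume $GH=HG$. Two commuting parabolic germs have proportional formal infinitesimal generators, so up to a formal conjugacy in $\Difff(\cC,0)$ and a rescaling normalising the leading coefficient we may write $G=\Exp{\partial}$ and $H=\Exp{t\,\partial}$ for a single formal vector field $\partial=2\pi\ii\frac{x^{k}}{1+\mu x^{k}}\,x\frac{\partial}{\partial x}$; comparing leading coefficients with the previous paragraph forces $t=-\nu^{-1}$. Three cases occur according to $\nu$ (which has modulus $1$). If $\nu=1$, i.e.\ $\beta\in\frac1k\cZ$, then $t=-1$ so $H=G^{-1}$, and the identity $\varD(f^{-1})=f\varD(f)^{-1}f^{-1}$ then forces the ramified germ $\rho:=\Pi f\Pi^{-1}$, $\rho(x)=\nu_{1}x+\cdots$ with $\nu_{1}=\ee^{-2\pi\ii\beta}$ and $\nu_{1}^{k}=\nu=1$, to commute with $G$; a short analysis of such a conjugation (using $\varD(f)\in\U$) shows this forces $\varD(f)=\id$, contradicting $f\notin\U$, so the case $\nu=1$ does not arise. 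If $\nu\neq\pm1$, i.e.\ $\beta\notin\frac1{2k}\cZ$, we are in the \textbf{embedded into a flow case}. If $\nu=-1$, i.e.\ $\beta\in\frac1{2k}+\frac1k\cZ$, then $t=1$ and one shows $G=H$, the \textbf{identical variations case}. The converse implications are immediate, since $\Exp{\partial}$ and $\Exp{t\partial}$ commute by construction and $G=H$ commutes with itself.

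It remains to identify the normal forms. In the identical variations case $\nu=-1$ gives $\tfrac1{1-\nu}=\tfrac12$, and extracting the formal residue of $\partial$ (directly from the expansion of $\varD(f)$, or from the equation below with $\nu=-1$) yields $\mu=\tfrac12$, hence the stated $\partial=2\pi\ii\frac{x^{k}}{1+\frac12 x^{k}}\,x\frac{\partial}{\partial x}$ up to formal conjugacy, which is all that is claimed there. In the flow case, one first upgrades the formal generator to a convergent one: $G=\Pi\varD(f)$ is an \emph{analytic} parabolic germ, and by the centraliser dichotomy for parabolic germs either $G$ embeds in an analytic flow, or every analytic germ commuting with $G$ is an integer iterate $G^{[n]}$; the latter would force the leading-coefficient ratio $-\nu^{-1}$ to be an integer, impossible when $\nu\neq\pm1$ since $|\nu|=1$. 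Hence $G$, and with it $H$, embeds in an analytic flow, and by the one-dimensional analytic rectification of holomorphic vector fields $\partial$ is analytically conjugate to $2\pi\ii\frac{x^{k}}{1+\mu x^{k}}\,x\frac{\partial}{\partial x}$. To pin down $\mu$, push the identity $\varD(f^{-1})=f\varD(f)^{-1}f^{-1}$ through $\Pi$: it becomes $H=\rho\,G^{-1}\,\rho^{-1}$, hence $\rho_{*}\partial=\nu^{-1}\partial$; solving this for $\partial$, with $\rho$ of linear part $\nu_{1}$ (a $k$-th root of $\nu$) and a ramified tail whose shape is dictated by $\varD(f)\in\U$, produces $\mu=\tfrac1{1-\nu}$, completing the flow case — and with it the statement $G=\Exp{\partial}$, $H=\Exp{-\frac1\nu\partial}$.

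The main obstacle I anticipate is this final determination of $\mu=\tfrac1{1-\nu}$ from the \emph{twisted, ramified} conjugation $\rho_{*}\partial=\nu^{-1}\partial$. One cannot treat $\rho$ as an ordinary change of coordinates: a naive contour-integral argument for the residue would wrongly return $\mu=0$, precisely because $\rho$ is multivalued, so the specific admissible form of the ramified tail of $f$ coming from $f\in\MR\setminus\U$ has to be used in an essential way. Equivalently, this is the step where one genuinely uses that $G$ and $H$ are the two separatrix holonomies of one and the same $1:1$ saddle and must carefully track how the corner map interpolates between them; by contrast, all the remaining steps reduce to formal manipulations and standard parabolic-germ theory.
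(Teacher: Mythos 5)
There is a genuine gap. The overall architecture of your argument (split on $\nu\in\{1,-1\}\cup(\text{non-real})$, use that commuting parabolic germs have formally proportional infinitesimal generators, upgrade to analytic convergence via the centraliser dichotomy / Écalle's iterative-root theory when $\nu\notin\cR$) is sound and runs roughly parallel to the paper's; but the step that actually determines the residue $\mu=\frac{1}{1-\nu}$ is not carried out, and you yourself flag it as the main obstacle. Your route — pushing $\varD(f^{-1})=f\,\varD(f)^{-1}f^{-1}$ through $\Pi$ to get $\rho_*\partial=\nu^{-1}\partial$ and solving with a ramified $\rho$ — does not produce the answer without essentially redoing the paper's Appendix~B computations: because $\rho$ is multivalued, the residue of $\partial$ is \emph{not} a conjugation invariant under $\rho$, and the shape of the ramified tail enters in an essential way. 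The paper avoids the whole difficulty by working upstairs in the $z$-coordinate. After reducing to $\beta=0$ (write $f=\transl_\beta^{-1}\Exp X$ and observe $\varD(f)=\varD(f_0)$, $\varD(f^{-1})=\varD(f_0^{-1})^{\transl_\beta}$), it puts the single derivation $X$ in the normal form of Theorem~\ref{thm:solvevar} and then computes \emph{both} $\lvar(X)$ and $\lvar(-X)$ explicitly by BCH (Corollary~\ref{cor:finalformvars}); this yields at once that the two downstairs generators have residues $\mu$ and $\mu-1$ respectively, so that the scaling relation $\eta^B=t\,\partial$ forces $t=-B^k$ and $B^k(\mu-1)=\mu$, hence $\mu=\frac{B^k}{B^k-1}=\frac{1}{1-\nu}$ and, in the $B^k=-1$ case, $\mu=\frac12$. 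That one input — the two variations carry residues $\mu$ and $\mu-1$ \emph{for the same} $\mu$ — is exactly what your proposal lacks, and it is also what your identical-variations residue computation silently relies on.

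A secondary, smaller gap: your exclusion of $\nu=1$ rests on the unproved assertion that "a short analysis \dots\ shows this forces $\varD(f)=\id$". The statement is in fact true, but the paper does not argue this way; it again reduces to $\beta=0$, observes that conjugation by $\scale_B$ with $B^k=1$ fixes the leading part of $\eta$, and then reads off from Corollary~\ref{cor:finalformvars} that $[W,Z]$ has nonvanishing leading term of order $3k$, so $G$ and $H$ cannot commute. You should either supply a genuine argument for your assertion about $\varD(f)$ or replace it with a commutator-leading-term computation of this kind. The parts of your proposal that are fine — the $k$-tangency via direct expansion, the leading-coefficient ratio $-1/\nu$, the invocation of formal proportionality of commuting parabolic generators, and the convergence upgrade when $\nu$ is non-real — all match the paper in substance, though the paper obtains the tangency and the ratio as a by-product of the same Appendix~B normal-form computations rather than by an ad hoc expansion.
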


\begin{proof}
Suppose first that $\beta=0$. Then $f\in\Dulacf_{>0}$ (see the notation
in Section~\ref{subsec:appB} of Appendix~B). By Lemma \ref{lem:exponentiallog}
in the Appendix, $f=\Exp X$ for some (possibly formal) nilpotent
derivation $X\in\DerDulacf$. By Corollary~\ref{cor:finalformvars},
up to a simultaneous conjugation of both $G$ and $H$ inside $\Diff(\cC,0)$,
there exist constants $k\in\cZ_{\ge1}$ and $\mu\in\cC$ such that
\[
G=\Exp{\partial}\quad\text{and}\quad H=\Exp{\eta},
\]
where the formal derivations $\partial,\eta\in\DerDifff$ are given
by 
\[
\partial=\Big(2\pi\ii\frac{x^{k}}{1+\mu x^{k}}+\oo{x^{2k}}\Big)\ x\frac{\partial}{\partial x},
\]
and 
\[
\eta=\Big(-2\pi\ii\frac{x^{k}}{1+\Big(\mu-1\Big)\ x^{k}}+\oo{x^{2k}}\Big)\ x\frac{\partial}{\partial x}.
\]
Note that $\partial,\eta$ are respectively the images of the derivations
$Z,W$ given in (\ref{first-vf}) and (\ref{second-vf}) by the map
$x=\ee^{-z}$. Furthermore, it follows from that same Corollary~\ref{cor:finalformvars}
that the commutator $[H,G]$ has the form 
\[
[H,G]=\Exp{\Big(4\pi^{2}k\ x^{3k}+\oo{x^{3k}}\Big)x\frac{\partial}{\partial x}},
\]
and hence $G$ and $H$ do not commute. \smallskip{}

Suppose now that $\beta\neq0$. We write $f(z)=\transl_{\beta}^{-1}\,f_{0}(z)$,
where $f_{0}=z+\oo 1$ is an element of $\Dulacf_{>0}$, and 
\[
\transl_{\beta}(z)=z-2\pi\ii\beta.
\]
It follows from identities (\ref{eq:varxz}) that 
\[
\varD(f)=\varD(f_{0}),\quad\text{and}\quad\varD(f^{-1})=\varD(f_{0}^{-1})^{\transl_{\beta}}.
\]
Therefore, if we denote by $G,H$ and $G_{0},H_{0}$ the respective
images in $\Diff(\cC,0)$ of the $\varD(f),\varD(f^{-1})$ and $\varD(f_{0}),\varD(f_{0}^{-1})$
by the projection morphism $\Pi$, we obtain 
\[
G=G_{0},\qquad H=\scale_{B}^{-1}\,H_{0}\,\scale_{B},
\]
where $\scale_{B}(x)=Bx$ is the scaling map with ratio $B=\ee^{2\pi\ii\beta}$.
By the previous paragraph, up to a conjugacy inside $\Diff(\cC,0)$,
we can assume $G=\Exp{\partial}$ and $H=\Exp{\eta^{B}}$, where 
\[
\eta^{B}=\Big(-2\pi\ii\frac{B^{k}x^{k}}{1+\Big(\mu-1\Big)\ B^{k}x^{k}}+\oo{x^{2k}}\Big)\ x\frac{\partial}{\partial x}
\]
is the conjugate of $\eta$ under $\scale_{B}$. \smallskip{}

Let us now suppose that $G$ and $H$ commute, or, equivalently, that
\[
[\partial,\eta^{B}]=0.
\]
It follows immediately that $B^{k}\ne1$, (otherwise, $\eta^{B}$
coincides with $\eta$ and does not commute with $\partial$ according
to the previous paragraph). Moreover, it follows from the explicit
characterization of the centralizer of $\partial$ (see \emph{e.g.}
\cite{CerMou}, 1.3) that there exists some constant $t\in\cC$ such
that 
\[
\eta^{B}=t\,\partial.
\]
By comparing coefficients on both sides, we get 
\[
t=-B^{k},\quad\text{and}\quad B^{k}\left(\mu-1\right)=\mu.
\]
Since $|B|=1$ and $B^{k}\neq1$, there are two cases to consider:
\begin{enumerate}
\item $B^{k}$ is non-real.
\item $B^{k}=-1$ (\emph{i.e.} $\beta\in\frac{1}{2k}+\frac{1}{k}\cZ$).
\end{enumerate}
We now use Écalle's theory on the existence of iterative roots in
$\Diff(\cC,0)$ (see \emph{e.g.} \cite{EcalParab}).

In case 1., since $G$ has a non-real iterative root $H$, it follows
from Écalle's theory that $\partial$ is a {\em convergent} derivation
and that both $G$ and $H$ can be embedded in a common holomorphic
flow. Moreover, up to conjugation inside $\Diff(\cC,0)$, we can assume
that $G=\Exp{\partial}$ and $H=\Exp ({-B^{k}\partial})$, where 
\[
\partial=2\pi\ii\frac{x^{k}}{1+\mu x^{k}}\ x\frac{\partial}{\partial x},
\]
for $\mu=\frac{B^{k}}{B^{k}-1}$.

In case 2., $B$ is a $k^{th}$-root of $-1$, and we necessarily
have $\eta^{B}=\partial$ and 
\[
G=H.
\]
Moreover, in this case the residue of the (possibly formal) derivation
$\partial$ is $\mu=\frac{1}{2}$.

Finally, we see that in both cases (1. and 2.) $G$ and $H$ indeed
commute, which then proves also the converse direction.
\end{proof}
\begin{proof}[Proof of Theorem~\ref{thm:indifferent}]
Let $\psi\in\Uf$ be an unramified formal series conjugating $f$
to another indifferent Dulac germ $f_{0}$. If follows from Lemma~\ref{lem:variationgroup}
that 
\[
\psi\,\varD(f)\,\psi^{-1}=\varD(f_{0})\quad\text{and}\quad\psi\,\varD(f^{-1})\,\psi^{-1}=\varD(f_{0}{}^{-1}).
\]
Consider the respective images of these variations in $\Diff(\cC,0)$
under the projection morphism (\ref{isomUDiff}), 
\[
G=\Pi_{*}\varD(f),\;H=\Pi_{*}\varD(f^{-1})\text{ and }G_{0}=\Pi_{*}\varD(f_{0}),\;H_{0}=\Pi_{*}\varD(f_{0}^{-1}).
\]
It follows that $\Psi=\Pi_{*}\psi\in\Difff(\cC,0)$ defines a conjugation
between the subgroups 
\[
\langle G,H\rangle\quad\text{and}\quad\langle G_{0},H_{0}\rangle,
\]
which are subgroups of the group $\Diff_{1}(\cC,0)$ of tangent to
the identity diffeomorphisms.

We now consider different cases. If either $G,\,H$ do not commute
or $G,\,H$ commute and case $(1)$ from Proposition \ref{prop:GH}
occurs, the group $\langle G,H\rangle$ is not cyclic. Hence, it follows
from \cite[Proposition 1]{CerMou}, that $\Psi$ converges. Therefore,
$\psi$ lies in $\U$ and we conclude that $f$ is strongly formally
$\U$-rigid.

\noindent Finally, if $G,\,H$ commute and case $(2)$ from Proposition~\ref{prop:GH}
occurs, then $G=H$, that is, $\varD(f)=\varD(f^{-1})$. By Item~3.
of Lemma~\ref{lem:properties-var} (with $x=y=f$), it follows that
$f^{2}\in\U$ is unramified. Let us denote by $F^{2}=\Pi_{*}f^{2}$
the corresponding germ of a diffeomorphism in $\Diff(\cC,0)$, and
similarly $F_{0}{}^{2}=\Pi_{*}f_{0}{}^{2}$.

It follows that the formal germ $\Psi$ conjugates the group $\langle G,F^{2}\rangle$
to $\langle G_{0},F_{0}{}^{2}\rangle$. According to \cite[Proposition 1]{CerMou},
to prove that $\Psi$ converges, it suffices to show that the subgroup
$\langle G,F^{2}\rangle\cap\Diff_{1}(\mathbb{C},0)$ of the group
of tangent to identity germs is not cyclic. Suppose the contrary.
Then, there exists a tangent to identity germ $C\in\Diff_{1}(\mathbb{C},0)$
such that 
\[
G=C^{m}\quad\text{and}\quad F^{2k}=C^{n},
\]
for some $m,n\in\mathbb{Z}$. Note that $F^{2k}$ is tangent to identity
(while $F^{2}$ is not necessarily), since $F(x)=\ee^{2\pi\ii\beta}x+\oo x$
and $\beta\in\frac{1}{2k}+\frac{1}{k}\mathbb{Z}$. Therefore, in the
covering coordinate $x=\ee^{-z}$, there exists a unramified germ
$c\in\U$ such that 
\[
\varD(f)=c^{m}\quad\text{and}\quad f^{2k}=c^{n}.
\]
Using Lemma \ref{lem:x2} in the Appendix and the fact that $\varD(f)=\varD(f^{-1})$,
we get $f\,\varD(f)=\varD(f)^{-1}f$. Therefore, $f\,\varD(f)^{n}=\varD(f)^{-n}f$,
which finally gives: 
\[
f\,c^{mn}=c^{-mn}\,f.
\]
In other words, $ff^{2km}=f^{-2km}f$, \emph{i.e.}, $f^{4km}$ is
the identity. This implies that $c=\idD$, and, consequently, that
$\varD(f)$ is the identity. This is a contradiction with the assumption
in the enunciate that $f$ is not unramified.
\end{proof}
\begin{rem}
In the case 1. of the Proposition \ref{prop:GH}, up to unramified
conjugation, $f$ is the solution of the following implicit equation
\[
\ee^{kf}+\frac{k}{1-\nu}f=\frac{1}{\nu}\left(\ee^{kz}+\frac{k}{1-\nu}z\right)
\]
where $\nu=\ee^{-2\pi\ii k\beta}$ and $k\in\cZ_{\ge1}$. Consider
the {\em Fatou coordinate} 
\[
w=\Fatou(z)=-\frac{1}{2\pi\ii}\Big(\ee^{kz}+\frac{1}{1-\nu}z\Big)
\]
which conjugates the vector field $\partial$ to $\frac{\partial}{\partial w}$.
Then, the elliptic function $\wp(w)$ with period lattice 
\[
\Lambda=\{m+n\frac{1}{\nu}\mid m,n\in\cZ\}
\]
is a first integral for the variation group $\DVar f$, as the action
of such group in the $w$-coordinate is generated by the translations
$\transl_{1}(w)=w+1$ and $\transl_{1/\nu}(w)=w+\frac{1}{\nu}$. In
these coordinates, the Dulac map $f$ assumes the simple form of a
scaling 
\[
f(w)=\frac{-1}{\nu}\ w
\]
which conjugates $\transl_{1}{}^{-1}$ to $\transl_{1/\nu}$. Notice
however that $f$ is not in general an automorphism of the associated
elliptic curve $\cC/\Lambda$, except for some few exceptional cases
(see \emph{e.g.} \cite[Lemma 5.4]{Milnor})
\end{rem}

\subsection{\protect\label{subsec:novo}Topological rigidity of mildly ramified
Dulac germs}

Let $\phi$ be a lift under $\Pi$ of a germ of an orientation preserving
homeomorphism $\Phi\in\mathrm{Homeo}(\mathbb{C},0)$ fixing zero.
By $\mathcal{H}$ we denote the group of all such germs. Since $\Phi$
is well defined on a full neighborhood of $0$ and orientation preserving,
\begin{equation}
[\tau,\phi]=\mathrm{id},~~\ \phi\in\mathcal{H}.\label{eq:h}
\end{equation}

\begin{defn}
We say that a Dulac germ $f\in\mathcal{D}$ is \emph{strongly topologically
rigid} if, for every $\phi\in\mathcal{H}$ and $g\in\mathcal{D}$,
the condition $\phi^{-1}f\phi=g$ implies $\phi\in\mathcal{U}$.
\end{defn}

\begin{rem}
The weaker notion of \emph{topological rigidity} occurs when the existence
of a topological conjugacy $\phi$ implies the existence of a possibly
different $\tilde{\phi}\in\mathcal{U}$ such that $\tilde{\phi}^{-1}f\tilde{\phi}=g$.
\end{rem}

We show below in Theorem~\ref{thm:solu} that a ``generic'' element
of $\mathcal{M}\backslash\mathcal{U}$ is strongly topologically rigid,
which implies all other real $\text{C}^{r}(\mathbb{R}^{2},0)$-rigidity
properties for $r\in\mathbb{N}\cup\{\infty\}$. Then, in Proposition~\ref{prop:irac}
we prove topological rigidity in an exceptional (that is, non-generic)
case.
\begin{rem}
We exclude here the elements of $\mathcal{U}$, the unramified germs,
because their topological rigidity is known to be false in general.
\begin{itemize}
\item Unramified germs $f=z+b+\oo 1$ that are either attracting or repelling
($\re b\neq0$), are all analytically linearizable (Koenigs's theorem)
but not topologically rigid: there are only two topological classes
according to the sign of $\re b$.
\item Unramified rationally indifferent germs $f=z+2\pi\ii\beta+\oo 1\in\mathcal{U}$
with $\beta\in\mathbb{Q}$, have also differing topological~\cite{Cam}
and analytical~\cite{EcalParab,VoroParaboEng} classifications.
\item When $\beta$ is irrational not much is known about the comparison
between topological and analytical classifications.
\end{itemize}
\end{rem}

\smallskip{}

Due to~\eqref{eq:h}, similarly as in~(55), the following commutator
identities hold: 
\begin{equation}
\var{\phi}=\mathrm{Id},\ \var{f\phi}={\var f}^{\phi},\ \var{\phi f}=\var f,\ ~f\in\mathcal{D},\ \phi\in\mathcal{H}.\label{eq:u}
\end{equation}
For $f\in\mathcal{M}\backslash\mathcal{U}$ we denote by $G=G_{f}:=\Pi_{*}\var f,\ H=H_{f}:=\Pi_{*}\var{f^{-1}}$
the non-identity, unramified variations (which would be the holonomy
maps of the saddle point computed on a transverse disc in a saddle
loop of which $f$ were the Poincaré map). We define the following
group, containing $G$ and $H$: 
\[
\mathcal{G}_{f}:=\Pi_{*}\langle\var{f^{n}}\cap\mathcal{U}:\ n\in\mathbb{Z}\rangle<\mathrm{Diff}(\mathbb{C},0).
\]
As is clear from the definition of solvability, it suffices that $\left\langle G,H\right\rangle $
be non-solvable for $\mathcal{G}_{f}$ to be.
\begin{thm}
\label{thm:solu}Assume that $f\in\mathcal{M}\setminus\mathcal{U}$.
If $\mathcal{G}_{f}$ is non-solvable, then $f$ is strongly topologically
rigid.
\end{thm}

\begin{proof}
Let $g\in\mathcal{M}\setminus\mathcal{U}$ and $\phi\in\mathcal{H}$
be given such that $g=\phi^{-1}f\phi$. Then, by~\eqref{eq:h} and~\eqref{eq:u},
$\Phi=\Pi_{*}\phi\in\mathrm{Homeo}(\mathbb{C},0)$ conjugates the
whole groups $\mathcal{G}_{g}=\Phi^{-1}\mathcal{G}_{f}\Phi$. From
Shcherbakov's theorem~\cite[Theorem 1]{Shcher} it follows that $\Phi$
is either holomorphic or anti-holomorphic, hence $\phi\in\mathcal{U}$
(it is orientation preserving, so the latter cannot happen).
\end{proof}
\begin{rem}
Let us explain how the situation described in the theorem can be deemed
generic. 
\begin{enumerate}
\item Polynomial foliations of a fixed degree are generically topologically
rigid, as in the works of Y.~Ilyashenko and A.~Shcherbakov: in~\cite[Theorem 2]{Shcher}
it is shown that picking such a foliation outside a real-analytic
set implies its topological rigidity. It is probable that the property
still holds on the subset of polynomial foliations in the neighborhood
of a saddle loop. 
\item Following~\cite[Section 4]{TeyAna} and references therein, we can
put a topology on $\mathcal{M}$ for which $\mathcal{G}_{f}$ is generically
non-solvable. Let us sketch the construction by considering $\mathcal{D}_{0}$,
the vector space of all holomorphic and bounded functions on some
quadratic standard domain that admit a Dulac-type asymptotic expansion
as $\re z\to+\infty$. The topology inherited from that inductive
limit of Banach spaces is Hausdorff and locally convex, and we put
on $\mathcal{D}=\rr_{>0}\id\oplus\mathcal{D}_{0}$ the product topology,
for which the composition and the inversion in $\mathcal{D}$ are
analytic. Therefore, the variation mapping $\var{\bullet}$ is analytic,
and $\mathcal{M}=\text{var}^{-2}\left(\id\right)$ is an analytic
subset of $\mathcal{D}$. On the other hand, in~\cite{theseFrank}
it is proved that $\left\langle G,H\right\rangle $ is solvable if
and only if $\left[G,\left[G,H^{2}\right]\right]=\id$. That equation
lifts as $\Lambda\left(f\right)=\id$, where $\Lambda$ is the analytic
map 
\begin{align*}
\Lambda~ & :~f\in\mathcal{D}\longmaps\left[\var f,\left[\var f,\var{f^{-1}}^{2}\right]\right]\in\mathcal{D}.
\end{align*}
Because $\Lambda|_{\mathcal{M}}$ is not constant (see \emph{e.g.}
Proposition~\ref{prop:GH} or Proposition~\ref{prop:solvable_pq}),
the analytic set $\mathcal{M}\cap\Lambda^{-1}\left(\id\right)$ has
empty interior in $\mathcal{M}$. Therefore it makes sense to say
that for $f$ in a Zariski-dense open subset of $\mathcal{M}$, the
group $\left\langle G,H\right\rangle $ (and thus $\mathcal{G}_{f}$)
is not solvable.
\end{enumerate}
\end{rem}

\medskip{}

Let us discuss in the rest of the section some exceptional cases.
We fix a germ
\begin{align*}
f & =az+b+\oo 1\in\mathcal{M}\backslash\mathcal{U}~,~~a>0,\ b\in\mathbb{C}.
\end{align*}

On the one hand if $a\neq1$, then $b$ is not a topological invariant
for $\mathcal{H}$-conjugacy: $f$ is indeed analytically conjugate
to a Dulac germ with $b:=0$ through the translation $\tau_{-\frac{b}{a-1}}\in\mathcal{H}$.
On the other hand when $a=1$, all elements of the group $\mathcal{G}_{f}$
are tangent to the identity since $G_{f^{n}}$ has derivative $\ee^{-2\ii\pi a^{n}}$
at $0$. Following~\cite{CerMou} such a group can be solvable only
if it is abelian. With a careful case-by-case study, the results of
the cited reference together with Proposition~\ref{prop:solvable_pq}
can yield topological rigidity for $a$ rational, but the technical
proof would lengthen unnecessarily the present work. We have preferred
to provide a stronger result for $a$ irrational, maybe less expected,
that stems from the formal rigidity stated in Theorem~D.
\begin{prop}
\label{prop:irac} Assume $a\in\mathbb{R}_{>0}\setminus\mathbb{Q}$
and $\mathcal{G}_{f}$ abelian. Then, $f$ is topologically rigid.
\end{prop}

\noindent In the proof of Proposition~\ref{prop:irac} we use the
following lemma.
\begin{lem}
\label{lem:ti} Assume $a\in\mathbb{R}_{>0}\setminus\mathbb{Q}$ and
$\mathcal{G}_{f}$ abelian. Then, $a$ is invariant under $\mathcal{H}$-conjugacy.
\end{lem}

\begin{rem}
Except in the special case described above in Lemma~\ref{lem:ti},
we were unable to prove that the multiplier $a$ is a $\mathcal{H}$-invariant
of a mildly ramified germ. When $f$ is the Poincaré map of a saddle
loop, the multiplier $a=\lambda$ is the opposite of the saddle point's
eigenratio. Motivated by the affirmative results for real saddle loop
foliations in~\cite[Theorems 1,2]{DumRou}, we expect $a$ to be
a topological invariant (even complete if $a\neq1$) also for mildly
ramified germs.
\end{rem}

\begin{proof}
Assume for some $\phi\in\mathcal{H}$ that $g:=\phi^{-1}f\phi$ lie
in $\mathcal{M}\setminus\mathcal{U}$ and write $g=cz+\cst+\oo 1$
with $c>0$. We prove that $a=c$. Note that $f^{2},\ g^{2}\in\mathcal{M}$
since $\mathcal{G}_{f}$ is abelian, by Lemma~\ref{lem:properties-var}.
Therefore, $H_{f^{2}}\in\mathcal{G}_{f}$, with multiplier $\ee^{-2\pi\ii a^{2}}$,
and $H_{g^{2}}\in\mathcal{G}_{g}$ with multiplier $\ee^{-2\pi\ii c^{2}}$.
Since $\mathcal{G}_{g}=\Phi^{-1}\mathcal{G}_{f}\Phi$, for $\Phi=\Pi_{*}\phi\in\mathrm{Homeo}(\mathbb{C},0)$,
we conclude that $H_{g}=\Phi^{-1}H_{f}\Phi$ and $H_{g^{2}}=\Phi^{-1}H_{f^{2}}\Phi$.
The rotation number of an element of $\mathrm{Diff}(\mathbb{C},0)$
is a topological invariant, therefore $a-c\in\mathbb{Z}$ and $a^{2}-c^{2}\in\mathbb{Z}$,
which is not possible for irrational numbers $a,\ c>0$ unless $a=c$.
\end{proof}
\medskip{}

\begin{proof}[Proof of Proposition~\ref{prop:irac}]
Assume $g:=\phi^{-1}f\phi\in\mathcal{M}\setminus\mathcal{U}$ for
some $\phi\in\mathcal{H}$. The group $\mathcal{G}_{f}$ is formally
linearizable in $\widehat{\mathrm{Diff}}(\mathbb{C},0)$. Indeed,
the linear part of $G\in\mathcal{G}_{f}$ is an irrational rotation
$\ee^{-2\pi\ii a^{-1}}\id$, so it is formally linearizable through
some $\hat{\Psi}\in\widehat{\mathrm{Diff}}(\mathbb{C},0)$, that is:
$\hat{\Psi}^{-1}G\hat{\Psi}=\ee^{-2\pi\ii a^{-1}}\id$. Since $\mathcal{G}_{f}$
is abelian, $\hat{\Psi}^{-1}\mathcal{G}_{f}\hat{\Psi}<\mathrm{Center}\left(\ee^{-2\pi\ii a^{-1}}\id,\widehat{\mathrm{Diff}}(\mathbb{C},0)\right)=\left\{ \mu\id:\ \mu\in\mathbb{C}^{\times}\right\} $.
Therefore, 
\begin{equation}
\hat{\Psi}^{-1}H\hat{\Psi}=\ee^{-2\pi\ii a}\id.\label{druga}
\end{equation}

Analogously, the group $\mathcal{G}_{g}$ is formally linearizable,
so there exists $\hat{\Psi}_{1}\in\widehat{\mathrm{Diff}}(\mathbb{C},0)$
such that 
\begin{equation}
\hat{\Psi}_{1}^{-1}G_{g}\hat{\Psi}_{1}=\ee^{-2\pi\ii a^{-1}}\id,\ ~\hat{\Psi}_{1}^{-1}H_{g}\hat{\Psi}_{1}=\ee^{-2\pi\ii a}\id.\label{prva}
\end{equation}
Indeed, by Lemma~\ref{lem:ti}, $f$ and $g$ have the same multiplier
$a$ because they are $\mathcal{H}$-conjugate. If we were able to
prove at a formal level that $f$ and $g$ are $\widehat{\mathcal{U}}$-conjugate,
then Theorem~D would imply they are analytically conjugate, as expected.
Let us prove that property.

Since the deck transforms $\tau_{2\pi\ii k}$ for $k\in\mathbb{Z}$
commute with each element of $\hat{\mathcal{U}}$, and since conjugation
by an element of $\hat{\mathcal{U}}$ cannot change the constant term
of a Dulac germ that is tangent to identity, the identities~\eqref{druga}
and~\eqref{prva} can be lifted to the following variation equations:
\begin{align*}
 & \var{f^{-1}}=\hat{\psi}\tau_{2\pi\ii(a-1)}\hat{\psi}^{-1},\ ~\var f=\hat{\psi}\tau_{2\pi\ii(a^{-1}-1)}\hat{\psi}^{-1},\\
 & \var{g^{-1}}=\hat{\psi}_{1}\tau_{2\pi\ii(a-1)}\hat{\psi}_{1}^{-1},\ ~\var g=\hat{\psi}_{1}\tau_{2\pi\ii(a^{-1}-1)}\hat{\psi}_{1}^{-1},
\end{align*}
where $\hat{\psi},\ \hat{\psi}_{1}\in\hat{\mathcal{U}}$ are such
that $\hat{\Psi}=\Pi_{*}\hat{\psi}$ and $\hat{\Psi}_{1}=\Pi_{*}\hat{\psi}_{1}$.
Proposition~\ref{prop:equalvars} (Appendix) implies the existence
of $\hat{h}_{1},\ \hat{h}_{2}\in\mathrm{Center}(\tau_{2\pi i(a-1)},\hat{\mathcal{U}})$
such that 
\[
\hat{\psi}^{-1}f\hat{\psi}=\hat{h}_{1}\,az,\ ~\hat{\psi}_{1}^{-1}g\hat{\psi}_{1}=\hat{h}_{2}\,az.
\]
Since $a$ is irrational, it follows that $\widehat{h}_{1}$ and $\widehat{h}_{2}$
belong to $\left\{ \tau_{b}:\ b\in\mathbb{C}\right\} $. Therefore,
there exist $b_{1},\,b_{2}\in\mathbb{C}$ such that: 
\[
\hat{\psi}^{-1}f\hat{\psi}=az+b_{1},\ ~\hat{\psi}_{1}^{-1}g\hat{\psi}_{1}=az+b_{2}.
\]
As remarked upon earlier in the section if $a\neq1$, then $az+b_{2}$
is conjugate to $az+b_{1}$ in $\mathcal{U}$ by $\tau_{\frac{b_{2}-b_{1}}{a-1}}$,
so that: 
\[
\left(\hat{\psi}_{1}\tau_{\frac{b_{2}-b_{1}}{a-1}}\right)^{-1}g\,\left(\hat{\psi}_{1}\tau_{\frac{b_{2}-b_{1}}{a-1}}\right)=f.
\]
Therefore $f$ and $g$ are $\hat{\mathcal{U}}$-conjugate, and thus
$\mathcal{U}$-conjugate.
\end{proof}
\begin{rem}
There is no hope to derive strong topological rigidity by sticking
to the arguments used in the proof.
\end{rem}

\section{\protect\label{sec:Saddle-loops0}Equivalence of saddle loops}

\subsection{\protect\label{subsect:preparedsaddle}Prepared saddle foliations}

Let us introduce some preliminary notation. We denote by $\Delta_{r}$
the open disk of radius $r>0$ in $\cC$ centered at the origin and
by $\bD_{r}$ its closure. The open unit disk will be denoted simply
by $\Delta$, and we write $\bD^{\star}=\bD\setminus\{0\}$ for the
punctured unit disk. In order to simplify the notation, the closed
subset of $\cC^{2}$ given by $\bD\times\{0\}$ will also be denoted
simply by $\bD$ when there is no risk of ambiguity.

Given a positive constant $\lambda>0$, a {\em{$\lambda$-adapted
region}} is a subset in $\mathbb{C}^{2}$ of the form 
\begin{equation}
U_{A,B}=\{|y||x|^{\lambda}\leqslant A,|x|\leqslant1,|y|\leqslant B\}\label{UA-base}
\end{equation}
for some constants $A,\,B>0$. In logarithmic coordinates $x=\ee^{-z},y=\ee^{-w}$,
\eqref{UA-base} becomes: 
\begin{equation}
U_{A,B}=\left\{ \re w+\lambda\re z\geqslant-\log A,\,\re z\geqslant0,\,\re w\geqslant-\log B\right\} .\label{UA-log}
\end{equation}
The domains $U_{A,B}$ are illustrated below.

\begin{figure}[htb]
\centering{}\includegraphics[width=8.79796cm,height=3.23295cm]{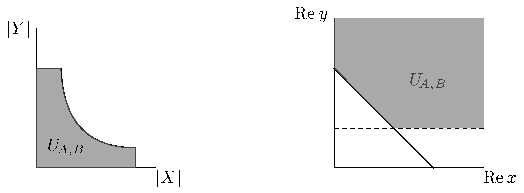}
\end{figure}

A {\em $\lambda$-neighborhood } is an open subset $U\subset\mathbb{C}^{2}$
containing a $\lambda$-adapted region (for some $A,\,B>0$). In the
sequel, it will be convenient to work with a well-chosen prepared
form for a saddle-type singularity, which we now define. We will prove
in Proposition~\ref{prop:preparation-saddles} that any saddle-type
singularity can be put in such a form up to a convenient choice of
local coordinates.
\begin{defn}
\label{def:prepared_saddle} A {\em prepared saddle foliation}
with eigenratio $-\lambda<0$ is a pair $(U,\vF)$ such that:
\begin{enumerate}
\item $U$ is a $\lambda$-neighborhood such that $U\cap\{x=0\}$ and $U\cap\{y=0\}$
are simply connected;
\item $\vF$ is a singular foliation in $U$, defined by a holomorphic differential
$1$-form 
\begin{equation}
\omega=x\dd y+\lambda y\left(1+x^{n}y\,K(x,y\right))\dd x,\label{eq:preparedform}
\end{equation}
for some positive integer $n\ge\lfloor\lambda\rfloor$ and some $K\in\mathcal{O}(U)$
satisfying the bound 
\[
\sup_{U}{|K|}\le\varepsilon,
\]
for some constant $\varepsilon>0$ such that $\eps B<1$, where it
is assumed that $U$ contains the $\lambda$-adapted region $U_{A,B}$.
\end{enumerate}
\end{defn}

From now on, we will denote by $(x,y)$ the coordinates in $\cC^{2}$.
The invariant lines $\{x=0\}$ and $\{y=0\}$ are respectively the
{\em vertical} and the {\em horizontal} separatrix of $\left(U,\vF\right)$.

It follows from the expression of $\omega$ and the choice for $\varepsilon$
that $\vF$ is both transverse to the vertical fibration $\{x=\cst\}$
and to the horizontal fibration $\{y=\cst\}$ in the domain $U_{A,B}$.
\smallskip{}

Some of the constructions that we describe in the sequel are only
valid up to restricting $(U,\vF)$ to some smaller $\lambda$-neighborhood.
Therefore, from now on, we adopt the following {\em germified}
point of view: we say that $(U,\vF)$ satisfies a certain property
(P) if the property holds up to restricting the foliation to a sufficiently
small $\lambda$-neighborhood.

For instance, we will tacitly identify two prepared foliations $(U,\vF)$
and $(\tilde{U},\tilde{\vF})$ if these foliations coincide in some
smaller $\lambda$-neighborhood $V\subset U\cap\tilde{U}$.
\begin{defn}
\label{def:transversals-prep}~
\begin{enumerate}
\item The {\em fixed base transversal} for $(U,\vF)$ is the germ of
the vertical fiber $\{x=1\}$ with base-point $(1,0)$. For brevity,
we will denote it simply by $(\Omega,1)$.
\item A {\em floating base transversal} for $(U,\vF)$ is a germ of a
curve $(\Sigma,\sigma)$ with base-point $y=\sigma\in\mathbb{C}^{\times}$
in the vertical separatrix $\{x=0\}$ such that $|\sigma|\le B$,
and which is both transverse to $\vF$ and to the vertical fibration
$\{x=\cst\}$. Here, $B>0$ is any positive real number such that
$U_{A,B}\subset U$ for some $A>0$.
\end{enumerate}
\end{defn}

\begin{rem}
Equivalently, $\Sigma$ is given locally as the graph of a function
$y=\alpha(x)$, with $\alpha(0)=\sigma$. Notice that we do not assume
$\Sigma$ to be a fiber of the horizontal fibration $\{y=\cst\}$.
Later on, we will see the importance of considering such floating
base transversals when constructing equivalences of prepared saddles.
\end{rem}

\subsubsection{\protect\label{subsect:preparation-saddles}Preparation of saddles
in foliated surfaces}

The following result shows that, up to a convenient choice of local
holomorphic coordinates, a prepared form \eqref{eq:preparedform}
can always be assumed to hold in the vicinity of a saddle type singularity.
\begin{prop}
\label{prop:preparation-saddles} Let $s$ be a saddle type singularity
of eigenratio $-\lambda$ contained in a holomorphic foliated surface
$(S,\mathcal{G})$. There exist local coordinates $(x,y)$ defined
in a neighborhood $V$ of $s$ mapping $(S,\mathcal{G})|_{V}$ to
a \emph{prepared} saddle of the form \eqref{eq:preparedform}.
\end{prop}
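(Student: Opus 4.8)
The plan is to proceed in three stages: first, reduce to a formal/holomorphic normal form by classical Poincaré--Dulac type arguments; second, perform a finite-order polynomial change of coordinates to kill the low-order terms that obstruct the desired shape $x^{n}y\,K(x,y)$; third, verify that the resulting $1$-form can be arranged on a $\lambda$-neighborhood with the required smallness bound on $K$. Concretely, since $s$ is a saddle of eigenratio $\lambda>0$, after dividing the defining $1$-form by a suitable unit and choosing coordinates straightening the two separatrices (which exist and are holomorphic, e.g.\ by the classical theory for hyperbolic singularities, or by Briot--Bouquet since $\lambda\notin\qq_{<0}$), we may assume the foliation is given by $\omega_{0}=x\,\dd y+\lambda y\,\dd x+(\text{higher order})$, with $\{x=0\}$ and $\{y=0\}$ invariant. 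Invariance of both axes forces every monomial appearing in the $\dd y$-coefficient to be divisible by $x$ and every monomial in the $\dd x$-coefficient to be divisible by $y$, so after factoring we can write $\omega = u(x,y)\bigl(x\,\dd y+\lambda y(1+g(x,y))\,\dd x\bigr)$ for a unit $u$ and a germ $g$ with $g(0,0)=0$; dividing by $u$ changes neither the foliation nor the separatrices.

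The key step is to improve $g(x,y)$ to the form $x^{n}y\,K(x,y)$ for some $n\ge\lfloor\lambda\rfloor$. Here I would use changes of coordinates of the form $(x,y)\mapsto(x, y(1+h(x,y)))$ and $(x,y)\mapsto(x(1+\tilde h(x,y)),y)$ — which preserve both separatrices and the axes — and track their effect on the coefficient $g$ order by order. Writing $g=\sum_{i+j\ge1} g_{ij}x^{i}y^{j}$, each such transformation lets one adjust the coefficient of a given monomial, except for the \emph{resonant} ones, which for a $\lambda$-saddle are the monomials $x^{i}y^{j}$ with $i=\lambda j$ (when $\lambda\in\zz_{>0}$) — only finitely many of these can occur below a fixed total degree, and they are precisely the terms that must be allowed to survive. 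The upshot is that one can remove all monomials $x^{i}y^{j}$ in $g$ with $i<n$ for $n$ chosen to exceed $\lfloor\lambda\rfloor$ (and to clear the finitely many resonant terms of low $x$-degree by a polynomial normalization), leaving $g = x^{n}\cdot(\text{holomorphic})$; and since every surviving monomial is divisible by $y$ as well — this is automatic because $g(x,0)$ corresponds to a term $x\,\dd y+\lambda y(1+g(x,0))\dd x$ whose restriction to $\{y=0\}$ must vanish, forcing $g(x,0)\equiv 0$ — we get $g=x^{n}y\,K(x,y)$ with $K$ holomorphic near the origin.

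Finally, to land in a genuine \emph{prepared saddle foliation} I would invoke the germified viewpoint: $K$ is holomorphic on some polydisk, hence bounded there, say $|K|\le M$; then a scaling $(x,y)\mapsto(\rho x,\rho y)$ for small $\rho>0$ (which respects the form \eqref{eq:preparedform} and the separatrices) multiplies the coefficient bound by $\rho^{n}$ (from the $x^{n}y$ prefactor and the shrunk domain), so choosing $\rho$ small enough makes $\sup_{U}|K|\le\varepsilon$ with $\varepsilon B<1$ for a suitable $\lambda$-adapted region $U=U_{A,B}$; the simple-connectedness of $U\cap\{x=0\}$ and $U\cap\{y=0\}$ is automatic for $U_{A,B}$. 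The main obstacle is the bookkeeping in the second stage: one must verify that the coordinate changes used to clean up $g$ can be organized so as to simultaneously remove all low-$x$-degree terms (including resonant ones) while introducing no new obstructions, and that the process terminates after a polynomial change — i.e.\ that beyond degree $n$ one simply \emph{renames} the remaining (convergent) tail as $x^{n}y\,K$ rather than trying to normalize it further. This is where care is needed, but it is the standard finite-determinacy mechanism for hyperbolic saddles, so no deep new input is required.
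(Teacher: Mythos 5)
Your overall plan (straighten separatrices, kill low-order terms in $x$, rescale to enforce the bound on $K$) is the same as the paper's, but there are two concrete gaps.

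First, your handling of resonances is off. You correctly identify the resonant monomials as $x^{i}y^{j}$ with $i=\lambda j$, $j\ge 0$ (this is not restricted to $\lambda\in\zz_{>0}$; it happens for any rational $\lambda$), and you correctly note such terms \emph{cannot} be removed by the coordinate changes you use --- but then you claim to ``clear the finitely many resonant terms of low $x$-degree by a polynomial normalization,'' which is precisely what cannot be done. The crucial observation in the paper, which your argument is missing, is that no argument about resonant terms is needed at all: the prepared form only requires the $x$-valuation of the perturbation to reach $\lfloor\lambda\rfloor$, and
\[
\{1,\ldots,\lfloor\lambda\rfloor-1\}\cap\lambda\,\mathbb{Z}_{\geqslant 0}=\emptyset,
\]
so every index you actually need to eliminate is non-resonant. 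By aiming to ``exceed'' $\lfloor\lambda\rfloor$ you walk into the first resonances for no reason and then have no way to remove them.

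Second, your assertion that $g(x,0)\equiv 0$ is ``automatic because the restriction to $\{y=0\}$ must vanish'' does not hold: in the form $\omega = x\,\dd y + \lambda y(1+g)\,\dd x$, the $\dd x$-coefficient already vanishes identically on $\{y=0\}$ regardless of $g(x,0)$, so invariance of that axis imposes no constraint on $g(x,0)$. Divisibility by $y$ is a genuine normalization step (the paper gets it from the cited local normal form, $K$ divisible by $xy$), not a formal consequence. Relatedly, the monomial-by-monomial elimination you describe needs a convergence argument for the infinite composition of coordinate changes; the paper sidesteps this by eliminating the entire coefficient $c_i(y)\,x^i$ in one stroke, using the automorphism $\exp\bigl((a(y)x^i)\,y\,\tfrac{\partial}{\partial y}\bigr)$ and the invertibility of the first-order operator $L_i(a)=\lambda a'-ia$ on the ring of holomorphic functions of $y$ --- only finitely many ($i=1,\ldots,\lfloor\lambda\rfloor-1$) such changes are needed, so there is no convergence question. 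Finally, your coordinate changes of the form $(x,y)\mapsto(x(1+\tilde{h}),y)$ move the $x$-variable and risk undoing previous normalizations; the paper's changes are purely $y$-fibered, which is both cleaner and needed later for the fibered equivalence statements.
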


\begin{proof}
It well-known (see \emph{e.g.} \cite[Théorème 5.1.2]{Lolo}) that
there exists local holomorphic coordinates $(x,y)$ near $s$ such
that $\mathcal{G}$ is locally generated by 
\[
\omega=x\dd y+y(\lambda-K(x,y))\dd x,
\]
with $K$ holomorphic at $(0,0)$ and divisible by $xy$. Up to a
scaling $x\mapsto\alpha x$, we can assume that $\omega$ is defined
in a neighborhood of the closed disk $\{0\}\times\bar{\Delta}$. In
particular, there exists some $R>1$ such that: 
\[
K(x,y)=\sum_{i\in\mathbb{Z}_{\geqslant1}}c_{i}(y)x^{i}
\]
where each $c_{i}$ is a function in $\mathcal{O}(\Delta_{R})$ vanishing
at $0$. We define the {\tmem{$x$-order}} of $K$ as the smallest
index $i\geqslant1$ such that $c_{i}$ is nonzero.

We now prove that, up to a further holomorphic coordinate change,
we can assume that $K$ has the $x$-order greater than or equal to
$\lfloor\lambda\rfloor$ (the integer part of $\lambda$). To prove
this, let us consider the dual vector field 
\[
\partial=\left(x\frac{\partial}{\partial x}-\lambda y\frac{\partial}{\partial y}\right)+Ky\frac{\partial}{\partial y},
\]
and compute the action of the automorphism $\Phi=\exp\left((ax^{i})y\frac{\partial}{\partial y}\right)$,
where $a\in\mathcal{O}(\Delta_{R})$ is a function of the $y$-variable
and $i\geqslant1$. The Lie bracket of $(ax^{i})y\frac{\partial}{\partial y}$
with the initial diagonal term of $\partial$ gives 
\begin{eqnarray*}
\left[(ax^{i})y\frac{\partial}{\partial y},\left(x\frac{\partial}{\partial x}-\lambda y\frac{\partial}{\partial y}\right)\right] & = & -\left(\left(x\frac{\partial}{\partial x}-\lambda y\frac{\partial}{\partial y}\right)(ax^{i})\right)y\frac{\partial}{\partial y}\\
 & = & (L_{i}(a)x^{i})y\frac{\partial}{\partial y},
\end{eqnarray*}
where $L_{i}$ given by $L_{i}(a)=(\lambda y\frac{\partial a}{\partial y}-ia),\ a\in\mathcal{O}(\Delta_{R}),$
is a linear operator in $\mathcal{O}(\Delta_{R})$. Writing the Taylor
expansion $a(y)=\sum a_{j}y^{j}$, we get: 
\[
L_{i}(a)=\sum_{j\in\mathbb{Z}_{\geqslant0}}(\lambda j-i)a_{j}y^{j},
\]
and we conclude that the operator $L_{i}$ is invertible for a fixed
$i\geqslant1$, provided that $i\notin\lambda\mathbb{Z}_{\geqslant0}$.

Notice that:
\[
\{1,\ldots,\lfloor\lambda\rfloor-1\}\cap\lambda\mathbb{Z}_{\geqslant0}=\emptyset.
\]
Therefore, if $K$ has $x$-order $i_{0}$ with $i_{0}\leqslant\lfloor\lambda\rfloor-1,$
we can eliminate its order $i_{0}$ (in $x$) element by applying
the automorphism considered above with the conveniently chosen coefficient
$a$.

Finally, re-scaling the coordinates one can enforce the remaining
conditions of Definition \ref{def:prepared_saddle}.
\end{proof}

\subsubsection{\protect\label{sect:lift}Partial path lifting}

Let us consider a prepared saddle foliation $(U,\vF)$ and denote
by 
\[
\Fib:U\setminus\{0\}\rightarrow W\supseteq\bD^{\star}
\]
the fibration defined by $\{x=\cst\}$, with base $W\subseteq\{y=0\}$
containing the punctured unit disk $\bD^{\star}$. By the transversality
property, we can intuitively see each leaf of $\vF$ as a covering
space over the base (save for $\left\{ x=0\right\} $). However, we
must be careful with the fact that we only have a {\em partial lifting
property}: not all paths in $\bD^{\star}$ lift to paths in a leaf
of $(U,\mathcal{F})$.

To study this issue, consider a path $\Gamma\subset\bD^{\star}$ with
initial point $x_{0}$, and denote by $\Gamma_{y_{0}}\subseteq U$
the $\vF$-lift of $\Gamma$ through the point $(x_{0},y_{0})\in\tmop{Fib}^{-1}(x_{0})$
(if it exists). Let $(x,y)$ be the end-point of $\Gamma_{y_{0}}$.

Our goal is to estimate the modulus $|y|$ in terms of $|y_{0}|$
and the length of $\Gamma$. It turns out to be easier to make the
estimates in the covering variables $(z,w)$, where $x=\ee^{-z}$
an $y=\ee^{-w}$. We consider the {\em{quasi-first integral}}
\[
\varphi=w+\lambda z.
\]
In the $(z,\varphi)$ coordinates, (\ref{eq:preparedform}) becomes:
\[
\frac{\dd{\varphi}}{\dd z}=-\lambda\ee^{-((n-\lambda)z+\varphi)}k,
\]
where $k(z,w)=K(\ee^{-z},\ee^{-w})$.

Let us choose a point $(z_{0},w_{0})\in\cc^{2}$ with $\re{z_{0}}\geqslant0$
and $\re{w_{0}}\geqslant0$ which projects to $(x_{0},y_{0})$ under
the covering map, and let $\gamma\subset\left\{ z~:~\re z>0\right\} $
be the path with initial point $z_{0}$ which covers $\Gamma$ under
$x=\ee^{-z}$.
\begin{prop}
\label{prop:estimatesphi} The solution of the Cauchy problem 
\begin{equation}
\left\{ \begin{array}{l}
\frac{\dd{\varphi}}{\dd z}=-\lambda\ee^{-((n-\lambda)z+\varphi)}k\\
\varphi_{0}=w_{0}+\lambda z_{0}
\end{array}\right.\label{Cauchy-probl}
\end{equation}
along the curve $\gamma$ satisfies the estimate 
\begin{equation}
\ee^{\re{\varphi_{0}}}-\lambda\varepsilon\tmop{length}(\gamma)\leqslant\ee^{\re{\varphi}}\leqslant\ee^{\re{\varphi_{0}}}+\lambda\varepsilon\tmop{length}(\gamma).\label{eq:s}
\end{equation}
\end{prop}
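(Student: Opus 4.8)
The statement is a Gronwall-type estimate, so the plan is to integrate the differential equation along $\gamma$ and control the growth of $\ee^{\re\varphi}$ directly. First I would observe that the key quantity to track is $u(z) = \ee^{\varphi(z)}$ rather than $\varphi$ itself, because differentiating gives
\[
\frac{\dd u}{\dd z} = \ee^{\varphi}\frac{\dd\varphi}{\dd z} = -\lambda\,\ee^{-(n-\lambda)z}\,k(z,w),
\]
i.e. the awkward factor $\ee^{-\varphi}$ cancels and the right-hand side no longer depends on $u$ explicitly — it only depends on the base variable $z$ and on $w = \varphi - \lambda z$ through $k$. Since $(U,\vF)$ is a prepared saddle foliation, we have $\sup_U |K| \le \varepsilon$, hence $|k(z,w)| \le \varepsilon$ at every point of the lifted curve, and $n \ge \lfloor\lambda\rfloor$ forces $n - \lambda > -1$; in the relevant region $\re z \ge 0$ one has $|\ee^{-(n-\lambda)z}| = \ee^{-(n-\lambda)\re z} \le 1$ when $n \ge \lambda$, and in any case it is bounded by a constant one can absorb — I would simply use $n\ge\lfloor\lambda\rfloor$ together with $\re z\ge 0$ to get $|\ee^{-(n-\lambda)z}| \le 1$ (checking the borderline $n=\lfloor\lambda\rfloor<\lambda$ case, where the exponent is still $>-1$ but one needs $\re z\ge0$ plus possibly that the relevant domain keeps $\re z$ away from where this blows up; most cleanly, restrict to $n\ge\lceil\lambda\rceil$ or note the adapted region forces the bound).

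Next, integrating along $\gamma$ from $z_0$ to $z$ and using $u(z_0) = \ee^{\varphi_0}$,
\[
u(z) = \ee^{\varphi_0} - \lambda\int_\gamma \ee^{-(n-\lambda)\zeta}\,k(\zeta,w)\,\dd\zeta,
\]
so that
\[
\bigl|\, u(z) - \ee^{\varphi_0}\,\bigr| \;\le\; \lambda\varepsilon\int_\gamma |\dd\zeta| \;=\; \lambda\varepsilon\,\tmop{length}(\gamma).
\]
Taking real parts, or rather estimating $\ee^{\re\varphi} = |u(z)| \le |\ee^{\varphi_0}| + |u(z) - \ee^{\varphi_0}|$ and $\ee^{\re\varphi} = |u(z)| \ge |\ee^{\varphi_0}| - |u(z)-\ee^{\varphi_0}|$, and noting $|\ee^{\varphi_0}| = \ee^{\re\varphi_0}$, yields exactly
\[
\ee^{\re\varphi_0} - \lambda\varepsilon\,\tmop{length}(\gamma) \;\le\; \ee^{\re\varphi} \;\le\; \ee^{\re\varphi_0} + \lambda\varepsilon\,\tmop{length}(\gamma),
\]
which is (\ref{eq:s}).

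The main subtlety — and the step I would be most careful about — is justifying that the integral representation is legitimate along the \emph{entire} curve $\gamma$: this presupposes that the $\vF$-lift $\Gamma_{y_0}$ exists all the way along $\Gamma$ (the "partial lifting" caveat emphasized just before the proposition), and that the lifted path stays in the domain $U$ where $|K|\le\varepsilon$ holds. I would handle this by a continuity/bootstrap argument: as long as the lift exists and remains in the adapted region, the estimate (\ref{eq:s}) holds on that sub-arc; and (\ref{eq:s}) itself, being a two-sided bound on $\ee^{\re\varphi} = \ee^{\re w}|x|^{-\lambda}$, shows the lift cannot escape the region $\{|y||x|^\lambda \le A\}$ prematurely provided $\ee^{\re\varphi_0}$ and $\tmop{length}(\gamma)$ are controlled — so the set of parameters for which the lift exists and the estimate holds is open, closed, and nonempty in the relevant parameter interval. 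I expect the paper either folds this into a standing hypothesis (that the lift $\Gamma_{y_0}$ is assumed to exist, as the notation "if it exists" suggests) or carries out precisely this open–closed argument; either way the analytic core is the one-line Gronwall computation above, with the lower bound $\ee^{\re\varphi_0} - \lambda\varepsilon\,\tmop{length}(\gamma)$ being vacuous (but still true) once it goes negative.
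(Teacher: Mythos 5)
Your proof is correct and takes essentially the same approach as the paper: rewrite the ODE as $\dd(\ee^{\varphi}) = -\lambda\ee^{-(n-\lambda)z}k\,\dd z$, integrate along $\gamma$, bound the integrand by $\lambda\varepsilon$ using $|k|\le\varepsilon$, $\re z\ge 0$ and $n-\lambda\ge 0$, then pass to moduli via the reverse triangle inequality. Your caution about the sign of $s=n-\lambda$ is well placed — the paper simply asserts $s\ge 0$, which the hypothesis $n\ge\lfloor\lambda\rfloor$ only guarantees when $\lambda\in\zz$, so the implicit convention must be $n\ge\lceil\lambda\rceil$ — but the analytic core of both arguments is identical.
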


\begin{proof}
We write 
\[
\ee^{\varphi}\dd{\varphi}=-\lambda\ee^{-sz}k\dd z,
\]
where $s=n-\lambda\geqslant0$. Using the estimate $|k|\leqslant\varepsilon$,
we obtain 
\[
|\ee^{\re{\varphi}}-\ee^{\re{\varphi_{0}}}|\leqslant|\ee^{\varphi}-\ee^{\varphi_{0}}|\leqslant\lambda\varepsilon\int_{\gamma}\ee^{-s\re z}|\dd z|\leqslant\lambda\varepsilon\tmop{length}(\gamma),
\]
which immediately implies \eqref{eq:s}.
\end{proof}
\begin{cor}
Consider the solution $t\rightarrow\varphi(t)$ of the above Cauchy
problem along a parameterized curve $z=\gamma(t)$, with an initial
condition $(z_{0},\varphi_{0}=w_{0}+\lambda z_{0})$ such that $(z_{0},w_{0})\in U_{A,B}$,
where $U_{A,B}$ is given in \eqref{UA-log}. Then, the pair 
\[
(z(t),w(t))=(\gamma(t),\varphi(t)-\lambda\gamma(t))
\]
stays inside the region $U_{A,B}$ provided that the following inequalities
hold for all values of $t$: 
\begin{eqnarray}
\re{\gamma(t)} & \geqslant & 0,\nonumber \\
\ee^{\re{\varphi_{0}}}-\lambda\varepsilon\tmop{length}(\gamma(t)) & \geqslant & \max\left\{ \frac{1}{A},\frac{1}{B}\ee^{\lambda\re{\gamma(t)}}\right\} .\label{ineq2}
\end{eqnarray}
\end{cor}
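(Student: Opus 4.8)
The plan is to unpack the condition $(z(t),w(t))\in U_{A,B}$ into scalar inequalities along the parameter $t$ and then close a short continuity (bootstrap) argument powered by Proposition~\ref{prop:estimatesphi}. First I would use the logarithmic description~\eqref{UA-log} of $U_{A,B}$: writing $w(t)=\varphi(t)-\lambda\gamma(t)$, the three defining conditions $\re z\geqslant0$, $\re w+\lambda\re z\geqslant-\log A$ and $\re w\geqslant-\log B$ become, respectively,
\[
\re{\gamma(t)}\geqslant0,\qquad \ee^{\re{\varphi(t)}}\geqslant\frac1A,\qquad \ee^{\re{\varphi(t)}}\geqslant\frac1B\,\ee^{\lambda\re{\gamma(t)}}.
\]
The first of these is exactly the first line of~\eqref{ineq2}, so the whole matter reduces to establishing the lower bound $\ee^{\re{\varphi(t)}}\geqslant\max\{\tfrac1A,\tfrac1B\ee^{\lambda\re{\gamma(t)}}\}$ for every $t$.

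For this, the idea is to feed the left half of~\eqref{eq:s}, namely $\ee^{\re{\varphi(t)}}\geqslant\ee^{\re{\varphi_{0}}}-\lambda\varepsilon\,\tmop{length}(\gamma|_{[0,t]})$, into the second line of~\eqref{ineq2}. The subtlety is that~\eqref{eq:s} is only legitimate while the solution $\varphi$ of~\eqref{Cauchy-probl} is defined along $\gamma$ and the associated point $(z(s),w(s))$ stays in the region where $|k|\leqslant\varepsilon$ — and that is, up to the inclusion $U_{A,B}\subseteq U$, precisely the conclusion we want. So I would argue by continuity. Let $I_{0}$ be the maximal initial sub-interval of the parameter interval $I$ on which the solution exists and $(z(s),w(s))\in U_{A,B}$ throughout; it is nonempty because $(z_{0},w_{0})\in U_{A,B}$ by hypothesis, and it is relatively closed in $I$ by continuity of $t\mapsto(z(t),w(t))$ together with closedness of $U_{A,B}$.

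On $I_{0}$ we have $(z(s),w(s))\in U_{A,B}\subseteq U$, hence $|k|\leqslant\varepsilon$, so Proposition~\ref{prop:estimatesphi} applies on $I_{0}$; combined with the two globally-assumed inequalities of~\eqref{ineq2} it gives $\re{\gamma(t)}\geqslant0$ and $\ee^{\re{\varphi(t)}}\geqslant\max\{\tfrac1A,\tfrac1B\ee^{\lambda\re{\gamma(t)}}\}$ on all of $I_{0}$. Moreover the right half of~\eqref{eq:s} bounds $\re{\varphi}$ from above, so along $I_{0}$ the trajectory is confined to a fixed compact subset of $U$ (compact because $\re\gamma$ is bounded on the compact interval $\overline{I_{0}}$, whence $\re w=\re\varphi-\lambda\re\gamma$ is squeezed between $-\log B$ and a constant). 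The standard continuation criterion for the holomorphic ODE~\eqref{Cauchy-probl} then forces $\sup I_{0}$ to be the right endpoint of $I$: otherwise the solution would extend slightly past $\sup I_{0}$, and since the three displayed inequalities persist there — the two inequalities of~\eqref{ineq2} are assumed for \emph{all} $t$, and together with~\eqref{eq:s} they keep $(z,w)$ inside $U_{A,B}$ — this would contradict maximality. Hence $I_{0}=I$, which is the assertion. (If $I$ is not compact, one runs this on an exhaustion of $I$ by compact subintervals.)

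I expect the only delicate point to be the bookkeeping of this bootstrap, and specifically the observation that the two inequalities in~\eqref{ineq2} are imposed for all $t$ rather than merely up to the current time, so the a priori lower bound on $\ee^{\re{\varphi}}$ cannot degenerate and $\re z\geqslant0$ is never violated; once that is recognized, and once one records $U_{A,B}\subseteq U$ so that $|k|\leqslant\varepsilon$ is available throughout, the argument is just the direct substitution of~\eqref{eq:s} into~\eqref{ineq2}.
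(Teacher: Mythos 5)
Your proof is correct and follows the same route as the paper's one-line argument, which simply instructs to plug the lower bound from Proposition~\ref{prop:estimatesphi} into the three defining inequalities~\eqref{UA-log} of $U_{A,B}$. You have additionally spelled out the standard continuity/bootstrap argument needed to justify that Proposition~\ref{prop:estimatesphi} is applicable along the whole curve (since its estimate presupposes $|k|\leqslant\varepsilon$, hence that the trajectory remains in $U$), a point the paper leaves implicit.
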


\begin{proof}
It suffices to plug the lower bound for $\ee^{\re{\varphi(t)}}$ provided
by the Proposition~\ref{prop:estimatesphi} in the three inequalities
in (\ref{UA-log}) that define $U_{A,B}$.
\end{proof}
\medskip{}

\begin{rem}
\label{rem:ovo} Following \cite[Section 5.2]{Lolo}, it is sometimes
useful to have an estimate for the growth of the quasi-integral $x^{\lambda}y$
in terms of the length of the path $\Gamma$ {\em in the original
$(x,y)$-coordinates}. Using the same Cauchy problem (\ref{Cauchy-probl})
and the fact that $\re z\geqslant0,\re w\geqslant-\log B$, and taking
$n\geqslant1$, we get: 
\[
|d\varphi|\leqslant\lambda\varepsilon|\ee^{-z}\dd z|.
\]
Denoting $F=\ee^{-\varphi}=x^{\lambda}y$, we obtain 
\[
\left|\frac{dF}{F}\right|\leqslant\lambda\varepsilon B|\dd x|,
\]
and, by integration, 
\begin{equation}
\ee^{-\lambda\varepsilon B\,\tmop{length}(\Gamma)}\leqslant\frac{|y||x^{\lambda}|}{|y_{0}||x_{0}^{\lambda}|}\leqslant\ee^{\lambda\varepsilon B\,\tmop{length}(\Gamma)}.\label{eq:ineqoh}
\end{equation}
We note that, for a fixed $x_{0}$ and for a fixed path $\Gamma$
with base-point $x_{0}$ (and endpoint $x$), we can always choose
$y_{0}$ sufficiently small in modulus such that 
\begin{equation}
\ee^{\lambda\varepsilon B\cdot\mathrm{length}(\Gamma)}\cdot\left(\frac{|x_{0}|}{|x|}\right)^{\lambda}\cdot|y_{0}|\leqslant B\quad\text{ and }\quad\ee^{\lambda\varepsilon B\cdot\tmop{length}(\Gamma)}\cdot|x_{0}|^{\lambda}\cdot|y_{0}|\le A.\label{eq:heart}
\end{equation}
In this case, it follows from the inequality (\ref{eq:ineqoh}) that
the path $\Gamma$ can be $\vF$-lifted to $(x_{0},y_{0})$, staying
inside the domain $U_{A,B}$ given in \eqref{UA-base}.
\end{rem}

\subsubsection{Examples of path lifting}

Consider a \emph{radial} path $\Gamma(t)=\ee^{-t}x_{0}$ with $|x_{0}|=1$,
$t\in\cR$, and its $\vF$-lift $\Gamma_{y_{0}}(t)$ with initial
point $(x_{0},y_{0})$. The following example estimates the interval
$[0,T]$ such that $\Gamma_{y_{0}}([0,T])$ stays inside a given domain
$U_{A,B}$.
\begin{example}
\label{ex:1} Let us assume $A=B=1$ in $U_{A,B}$ for simplicity.
In the covering coordinates $x=\ee^{-z}$, $y=\ee^{-w}$, the path
$\Gamma$ corresponds to the horizontal path $\gamma(t)=z_{0}+t$,
with $\re{z_{0}}=0$ and $t\in[0,T]$. Then $\tmop{length}(\gamma(t))=t$
and the inequalities \eqref{ineq2} are satisfied along this path
as long as $T$ satisfies: 
\begin{eqnarray*}
\ee^{\re{w_{0}}} & \geqslant & \max\left\{ 1,\ee^{\lambda T}\right\} +\lambda\varepsilon T.
\end{eqnarray*}
\smallskip{}
\end{example}

Consider a \emph{purely circular} path $\Gamma(t)=\ee^{-\ii t}$,
$t\in\cR$, in the $(x,y)$ coordinates. Let us estimate the number
of {\em laps} this path can make before the modulus of $|y|$ becomes
larger than $1$.
\begin{example}
\label{ex:2} Let us again assume $A=B=1$ in $U_{A,B}$ for simplicity.
In the covering coordinates, the path $\Gamma$ corresponds to the
vertical path $\gamma(t)=\ii t$ with endpoint $z=\gamma(T)$ and
initial point $z_{0}=0$. We obtain 
\[
\tmop{length}(\gamma)=|T|.
\]
Therefore, the inequalities~\eqref{ineq2} read 
\[
\begin{array}{lll}
\ee^{\re{w_{0}}}-\lambda\varepsilon|T| & \geqslant & 1\end{array}
\]
or, equivalently, 
\[
|T|\leqslant\frac{\ee^{\re{w_{0}}}-1}{\lambda\varepsilon}
\]
\end{example}

The following class of \emph{exponential paths} is used in the Ilyashenko's
proof of the analytic extension of the Dulac map to quadratic standard
domains.
\begin{example}
\label{ex:iliashenko-paths} Consider the family of exponential paths
\begin{equation}
\xi_{\alpha,C}(t)=t+\ii C(\ee^{\alpha t}-1),\quad t\in\mathbb{R}_{\geqslant0},\label{eq:expo}
\end{equation}
with parameters $C\in\{\pm1\}$ and $\alpha\in[0,\lambda[$.

\begin{figure}[htb]
\centering{}\includegraphics[width=4.10203cm,height=5.18593cm]{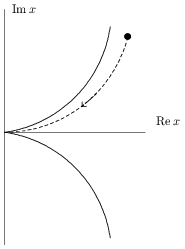}
\end{figure}

Let us fix some parameters $\alpha,C$, some $T\geqslant0$, and consider
the path $\gamma$ going {\em{backwards}} from $z_{0}=\xi_{\alpha,C}(T)=T+\ii C(\ee^{\alpha T}-1)$
to $\xi_{\alpha,C}(0)=0$ along the same trajectory as $\xi_{\alpha,C}$.
Then, we have the estimate 
\[
\tmop{length}(\gamma)\leqslant\frac{2}{\alpha}\ee^{\alpha T}.
\]
As a consequence, the inequalities (\ref{ineq2}) are satisfied (and
$\gamma$ stays inside $U_{A,B}$) as long as $T$ satisfies: 
\begin{eqnarray}
\ee^{\re{w_{0}}+\lambda T}-\lambda\varepsilon\frac{2}{\alpha}\ee^{\alpha T} & \geqslant & \max\left\{ \frac{1}{A},\frac{1}{B}\ee^{\lambda T}\right\} \nonumber \\
\ee^{\re{w_{0}}} & \geqslant & \max\left\{ \frac{1}{A}\ee^{-\lambda T},\frac{1}{B}\right\} +\lambda\varepsilon\frac{2}{\alpha}\ee^{-(\lambda-\alpha)T}\label{eq:kori}
\end{eqnarray}
which, if $\ee^{\re{w_{0}}}\geqslant\max\left\{ \frac{1}{A},\frac{1}{B}\right\} +\lambda\varepsilon\frac{2}{\alpha}$,
holds for all $T\geqslant0$.
\end{example}

\subsubsection{Monodromy and holonomy groupoids of a foliation}

Let us recall the definitions of monodromy and holonomy groupoids
for a foliation. We follow closely the exposition at~\cite[Section 5.2]{Moerdijk2003}
and refer to this book for further details. Let $\left(M,\vF\right)$
be a foliated manifold. The {\tmem{monodromy groupoid}} $\tmop{Mon}\left(M,\vF\right)$
is a groupoid over $M$ with the following arrows:
\begin{enumerate}
\item if $x,y\in M$ lie on the same leaf $L$ of $\vF$, then the arrows
in $\tmop{Mon}\left(M,\vF\right)$ with source $x$ and target $y$
are the homotopy classes (with fixed endpoints) of paths in $L$ from
$x$ to $y$; the set of all such arrows will be denoted by $\tmop{Mon}\left(M,\vF\right)_{x,y}$;
\item if $x,y\in M$ lie on different leaves of $\vF$ then there are no
arrows between them.
\end{enumerate}
The groupoid operation is induced by concatenation of paths. Notice
that the isotropy groups of the monodromy groupoid are the fundamental
groups of the leaves (\emph{i.e.} $\tmop{Mon}\left(M,\vF\right)_{x,x}=\pi_{1}(L,x)$).

Similarly, the {\tmem{holonomy groupoid}} $\tmop{Hol}\left(M,\vF\right)$
is the groupoid over $M$ where the arrows between $x,y\in M$ are
the classes of arrows in $\tmop{Mon}\left(M,\vF\right)$ \emph{modulo}
the following equivalence relation: two arrows $\gamma,\eta\in\tmop{Mon}\left(M,\vF\right)_{x,y}$
are equivalent if
\[
\tmop{hol}_{\vF,\gamma}=\tmop{hol}_{\vF,\eta},
\]
where the holonomy maps are seen as germs $(\Sigma,x)\rightarrow(\Omega,y)$
computed on arbitrarily fixed local transversals $(\Sigma,x)$ and
$(\Omega,y)$. We will often omit the symbol $\vF$ in the notation
when the underlying foliation is clear from the context.
\begin{rem}
\label{rem:identifytransv}Consider two transversals $\Sigma,\Sigma'$
with the same base-point $x$. Then, we can unambiguously identify
each holonomy map $(\Sigma,x)\rightarrow(\Omega,y)$ to a holonomy
map $(\Sigma',x)\rightarrow(\Omega,y)$ by choosing an arbitrary foliated
chart centered on $x$ and identifying these two transversals \emph{via}
the associated holonomy map $h:(\Sigma,x)\rightarrow(\Sigma',x)$.
It is obvious that such identification does not depend on the choice
of the foliated chart. We will tacitly use such an identification
from now on.
\end{rem}

In order to keep the usual terminology, the arrows in $\tmop{Mon}\left(M,\vF\right)$
will be called {\tmem{paths}}, and the arrows in $\tmop{Hol}\left(M,\vF\right)$
will be called {\tmem{holonomy germs}}.

\subsubsection{\protect\label{subsec:shol}Saddle holonomies}

Back to our present setting, let $(U,\vF)$ be a prepared saddle foliation
and $(\Sigma,\sigma)$, $(\Omega,1)$ be two transversals as in Definition~\ref{def:transversals-prep}.
\begin{defn}
\label{def:saddle_holo}Let $\eta$ be the circle $\left\{ x=0,\left|y\right|=|\sigma|\right\} $
and $\gamma$ be the circle $\left\{ \left|x\right|=1,y=0\right\} $,
oriented positively. The germs of a map
\[
\holo[\Sigma]=\holt{\eta}:(\Sigma,\sigma)\longto(\Sigma,\sigma),\quad\holo[\Omega]=\holt{\gamma}:(\Omega,1)\longto(\Omega,1),
\]
will be called the {\em saddle holonomies} of $(U,\vF)$.
\end{defn}

\begin{example}
\label{ex:linear_foliation}The linear foliation $\fol{1:\lambda}$
is defined by the differential $1$-form 
\begin{align*}
\omega_{\lambda} & =\frac{\dd y}{y}+\lambda\frac{\dd x}{x}=\dd{\log\left(yx^{\lambda}\right)}.
\end{align*}
and its leaves coincide with the level sets of the Darbouxian first-integral
\begin{align*}
H_{\lambda}\left(x,y\right) & =y\,x^{\lambda}.
\end{align*}
Consider the transversals $\Omega=\{x=1\}$ and $\Sigma=\{y=1\}$,
parameterized by the restriction of the ambient coordinates. Then
\[
\holo[\Omega]\left(y\right)=\ee^{-2\ii\pi\lambda}y\quad\text{ and }\quad\holo[\Sigma]\left(x\right)=\ee^{-\nf{2\ii\pi}{\lambda}}x.
\]
\end{example}

The following result is an easy consequence of the definition of a
prepared saddle foliation.
\begin{lem}
\label{lem:estimate_holonomy}The linear foliation provides the linear
part of the holonomy of a general prepared saddle foliation $(U,\vF)$
with same eigenratio. More precisely, for an arbitrary choice of local
coordinates $z$ and $w$ on the transversals $\Sigma$ and $\Omega$,
\begin{align*}
\holo[\Sigma](z) & =\ee^{-\nf{2\ii\pi}{\lambda}}z+\oo z,\\
\holo[\Omega](w) & =\ee^{-2\ii\pi\lambda}w+\oo w.
\end{align*}
\end{lem}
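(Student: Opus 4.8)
The plan is to reduce the statement to the path-lifting estimates already established in Proposition~\ref{prop:estimatesphi} and Remark~\ref{rem:ovo}. By Remark~\ref{rem:identifytransv}, the linear part of $\holo[\Sigma]$ and $\holo[\Omega]$ is independent of the choice of local coordinates on the transversals, so it is enough to compute it in any convenient coordinate; I would use the ambient coordinate $y$ on $\Omega=\{x=1\}$ and, say, the ambient coordinate $x$ on a horizontal transversal $\Sigma=\{y=\sigma\}$ (allowed since the linear part does not change under the holonomy identification of $\Sigma$ with a floating transversal). First I would treat $\holo[\Omega]$: its lift $\gamma$ is the circle $\{|x|=1,y=0\}$ traversed once positively, which in the covering coordinate $z$ with $x=\ee^{-z}$ becomes the vertical segment from $z_0$ to $z_0-2\pi\ii$ (length $2\pi$). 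Along this path $\re z\equiv 0$, so Proposition~\ref{prop:estimatesphi} (or directly the bound $|\dd\varphi|\le\lambda\varepsilon B|\ee^{-z}\dd z|$ from Remark~\ref{rem:ovo}) gives
\[
\bigl|\varphi(-2\pi\ii)-\varphi_0\bigr|\le 2\pi\lambda\varepsilon,
\]
and since $w=\varphi-\lambda z$ and $z$ increases its imaginary part by exactly $-2\pi i$ only through $\lambda$ times..., i.e. $w(-2\pi\ii)=\varphi(-2\pi\ii)-\lambda(z_0-2\pi\ii)=w_0+2\pi\ii\lambda+\OO{\varepsilon}$. Exponentiating, $y=\ee^{-w}$ returns to $\ee^{-2\pi\ii\lambda}y_0(1+\OO{\varepsilon})$ at first order, and letting the base point tend to the separatrix (so the error terms are genuinely higher order in the transversal coordinate) yields $\holo[\Omega](w)=\ee^{-2\ii\pi\lambda}w+\oo{w}$.

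Next I would treat $\holo[\Sigma]$ by the symmetric argument, now with $\eta=\{x=0,|y|=|\sigma|\}$ traversed once positively. Here the roles of $x$ and $y$ are exchanged: in the covering coordinate $w$ with $y=\ee^{-w}$, the loop $\eta$ is the vertical segment of length $2\pi$, and one follows the $\vF$-leaf back to $\Sigma$. The cleanest route is to note that the foliation \eqref{eq:preparedform}, written symmetrically, has quasi-first-integral $x^{\lambda}y$, equivalently $x\,y^{1/\lambda}$ up to a constant; running the same Cauchy-problem estimate but integrating $\dd z$ against the small factor shows that after one loop in $y$ the coordinate $z$ returns to $z_0$ shifted so that $x$ is multiplied by $\ee^{-2\pi\ii/\lambda}$, with a correction that is $\oo{x}$ once the base point is pushed toward $\{y=0\}$. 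Concretely, $\varphi=w+\lambda z$ changes by $\OO{\varepsilon}$ along the loop while $w$ changes by $2\pi\ii$, so $z$ changes by $-\tfrac{2\pi\ii}{\lambda}+\OO{\varepsilon}$, giving $\holo[\Sigma](z)=\ee^{-2\ii\pi/\lambda}z+\oo z$.

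The main obstacle is bookkeeping the error terms carefully enough to conclude that they are genuinely $\oo{\cdot}$ in the transversal coordinate rather than merely $\OO{\varepsilon}$ constants. The point is that in the prepared form \eqref{eq:preparedform} the perturbation $K$ is multiplied by $x^{n}y$ with $n\ge\lfloor\lambda\rfloor\ge1$, so along a leaf passing close to a separatrix the nonlinear contribution to the holonomy vanishes together with the transversal coordinate; this is exactly what upgrades the crude estimate $|\varphi-\varphi_0|\le 2\pi\lambda\varepsilon$ into a statement that the deviation from the linear model is $\oo{z}$ (resp.\ $\oo{w}$). I would make this precise by expanding the solution of \eqref{Cauchy-probl} in the transversal parameter and observing that the zeroth-order term reproduces the linear foliation of Example~\ref{ex:linear_foliation}, whose holonomies are computed there to be exactly $\ee^{-2\ii\pi\lambda}$ and $\ee^{-2\ii\pi/\lambda}$. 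Everything else is routine; the result then follows by comparing with Example~\ref{ex:linear_foliation}.
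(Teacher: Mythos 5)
Your plan is correct and follows the same route the paper implicitly delegates to Mattei-Moussu: a perturbation argument from the linear model, exploiting the $x^{n}y$ factor in the prepared form~\eqref{eq:preparedform}. Two small points are worth tightening up, both concerning the error bookkeeping you rightly flag as the main obstacle.

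First, the bound you quote, $|\varphi-\varphi_{0}|\leqslant 2\pi\lambda\varepsilon$, is not what Proposition~\ref{prop:estimatesphi} states; the proposition controls $e^{\Re\varphi}$ additively, and its proof actually controls the full complex quantity $|e^{\varphi}-e^{\varphi_{0}}|\leqslant\lambda\varepsilon\,\mathrm{length}(\gamma)$. For $\holo[\Omega]$, with $|x_{0}|=1$ this already reads $e^{\varphi}=e^{\varphi_{0}}\bigl(1+\OO{\varepsilon|y_{0}|}\bigr)$, so $\varphi-\varphi_{0}=\OO{\varepsilon|y_{0}|}$ and hence $w-(w_{0}+2\pi\ii\lambda)=\OO{\varepsilon|y_{0}|}$. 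Exponentiating gives $\holo[\Omega](y)=e^{-2\ii\pi\lambda}y+\OO{\varepsilon y^{2}}=e^{-2\ii\pi\lambda}y+\oo y$ at once; the refinement you were planning to extract from the $x^{n}y$ factor in your last paragraph is already encoded in the inequality, because $e^{\Re\varphi_{0}}=|y_{0}|^{-1}$ makes the \emph{relative} error $\OO{\varepsilon|y_{0}|}$. You need the complex version from the proof (not merely the statement, which only bounds the real part) because the linear coefficient you are after has a nontrivial phase. Second, for $\holo[\Sigma]$ the ODE~\eqref{Cauchy-probl} is written with $z$ as the independent variable while you now want to parameterize by $w$; the quasi-first-integral $\varphi$ still controls things, but one must also note that the $z$-length of the lifted path stays $\OO{1}$ (it is $\approx 2\pi/\lambda$ plus a correction of the same order as $d\varphi$), before applying the estimate. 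Once these two points are made precise, the argument goes through exactly as you outlined and reproduces the linear holonomies of Example~\ref{ex:linear_foliation} as the leading term.
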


\begin{proof}
It suffices to combine (\ref{eq:preparedform}) with an easy perturbation
argument. We refer to \cite[Section 5]{MaMou} for the details.
\end{proof}
\smallskip{}

We now recall the following fundamental result relating the conjugacy
of holonomies and the equivalence of the underlying foliations.
\begin{thm}[{Equivalence of prepared saddles~\cite[Théorème 2]{MaMou}}]
\label{thm-Mattei-Moussu} Let $(U,\vF)$ and $(\widetilde{U},\widetilde{\vF})$
be prepared saddle foliations with the same eigenratio $-\lambda$
and whose holonomies $\holo[\Omega]$ and $\holo[\tilde{\Omega}]$
are conjugate by a holomorphic germ 
\[
\varphi:(\widetilde{\Omega},1)\rightarrow(\Omega,1).
\]
Up to restricting $\widetilde{U}$ and $U$ to smaller $\lambda$-neighborhoods,
there exists a biholomorphic map 
\[
\Phi:\widetilde{U}\rightarrow U
\]
such that:
\begin{enumerate}
\item $\Phi(\widetilde{\vF})=\vF$
\item $\Phi$ preserves the vertical fibers $\{x=\cst\}$.
\item The restriction of $\Phi$ to the transversal $\widetilde{\Omega}$
coincides with $\varphi$.
\end{enumerate}
Moreover, $\Phi$ is uniquely determined by conditions 1., 2. and
3.
\end{thm}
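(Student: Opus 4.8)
The plan is to prove Theorem~\ref{thm-Mattei-Moussu} by the path-lifting construction of Mattei--Moussu, fed by the partial-lifting estimates of Section~\ref{sect:lift}. First I would delete the vertical separatrix and work on $U^{\star}=U\setminus\{x=0\}$ and $\widetilde U^{\star}=\widetilde U\setminus\{x=0\}$. On these sets $\vF$ and $\widetilde\vF$ are transverse to the fibration $\Fib$ over $\bD^{\star}$, and since $\pi_{1}(\bD^{\star})=\mathbb{Z}$ is generated by the positively oriented loop around $0$, the monodromy of that generating loop, computed on $\Omega$ (resp.\ $\tilde\Omega$), is the saddle holonomy $\holo[\Omega]$ (resp.\ $\holo[\tilde\Omega]$) of Definition~\ref{def:saddle_holo}. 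The hypothesis says precisely that $\varphi$ intertwines the two generators, $\holo[\Omega]=\varphi\,\holo[\tilde\Omega]\,\varphi^{-1}$, which is exactly what is needed to glue the two (partial) suspensions fibrewise.

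Concretely, after shrinking to $\lambda$-neighborhoods with small enough quasi-integral bound $A$ and fibre bound $B$, I would define $\Phi$ on $\widetilde U^{\star}$ as follows. Fix once and for all, for every $x_{0}\in\bD^{\star}$, a path $\Gamma_{x_{0}}\subset\bD^{\star}$ from $x_{0}$ to $1$ of length bounded by a universal constant (for instance the radial segment from $x_{0}$ to $x_{0}/|x_{0}|$ followed by a circular arc to $1$). Given $\tilde p=(x_{0},y_{0})\in\widetilde U^{\star}$, lift $\Gamma_{x_{0}}$ through $\tilde p$ along $\widetilde\vF$ to a point $(1,\tilde q)\in\widetilde\Omega$, set $q=\varphi(\tilde q)$, lift $\Gamma_{x_{0}}^{-1}$ through $(1,q)$ along $\vF$, and let $\Phi(\tilde p)$ be its endpoint. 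The growth estimates of Remark~\ref{rem:ovo}, inequalities \eqref{eq:heart} and \eqref{eq:ineqoh} (equivalently, the corollary to Proposition~\ref{prop:estimatesphi} applied to \eqref{Cauchy-probl} and \eqref{ineq2}), guarantee, once $A$ and $B$ are small relative to the universal length bound, that every lift involved stays inside the relevant $U_{A,B}$ and that $\tilde q$ and $q$ remain in the germs of definition of $\varphi$ and $\varphi^{-1}$. The conceptual heart is well-definedness: two admissible base paths from $x_{0}$ to $1$ differ by a loop winding $k$ times around $0$, whose $\widetilde\vF$-monodromy on the relevant fibre is conjugate to $\holo[\tilde\Omega]^{k}$, so the relation $\varphi\,\holo[\tilde\Omega]^{k}=\holo[\Omega]^{k}\,\varphi$ makes the output independent of the choice. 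Holomorphy of $\Phi$ follows from holomorphic dependence on initial conditions in \eqref{Cauchy-probl} together with holomorphy of $\varphi$, and conditions 1., 2., 3.\ are immediate: the base path has fixed endpoint $x_{0}$, so $\Phi$ preserves the fibres $\{x=\cst\}$; it sends $\widetilde\vF$-leaves into $\vF$-leaves by construction; and taking $x_{0}=1$ with the constant path gives $\Phi|_{\widetilde\Omega}=\varphi$.

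It remains to upgrade $\Phi$ to a biholomorphism of the full neighborhoods and to prove uniqueness. Running the same recipe with $\varphi^{-1}$ produces a holomorphic map $U^{\star}\to\widetilde U^{\star}$ which is a two-sided inverse of $\Phi$, so $\Phi$ is a biholomorphism $\widetilde U^{\star}\to U^{\star}$; since it takes values in the bounded set $U$, Riemann's removable-singularity theorem extends it holomorphically across the smooth hypersurface $\{x=0\}$, and the extension is still $x$-fibered, still invertible (its inverse being the extension of the inverse map), and carries the vertical separatrix of $\widetilde\vF$ onto that of $\vF$. For uniqueness, if $\Phi'$ also satisfies 1.--3., then $\Psi=\Phi^{-1}\Phi'$ is an $x$-fibered biholomorphism of $\widetilde U$ preserving $\widetilde\vF$ and restricting to the identity on $\widetilde\Omega$; using the local product structure of $\widetilde\vF$ transverse to $\Fib$ one gets $\Psi=\id$ on a whole neighborhood of $\widetilde\Omega$, hence, by the identity theorem on the connected manifold $\widetilde U^{\star}$ and continuity onto $\{x=0\}$, $\Psi=\id$ everywhere.

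The step I expect to be the main obstacle is not the monodromy bookkeeping behind well-definedness, which is forced by the hypothesis, but the uniform control of the partial lifting: one must choose the shrunk $\lambda$-neighborhoods so that, simultaneously for all base points $x_{0}\in\bD^{\star}$ and in both the $\widetilde\vF$- and $\vF$-directions, the chosen path $\Gamma_{x_{0}}$ genuinely lifts without leaving the domain and without $\tilde q$ or $q$ escaping the germ of definition of $\varphi$. Getting the order of quantifiers right here — first fix a universal length bound for the $\Gamma_{x_{0}}$, then shrink the bounds $A,B$ accordingly, paying attention that along $\Gamma_{x_{0}}^{-1}$ the modulus of the $y$-coordinate \emph{grows} — is precisely where the estimates of Section~\ref{sect:lift} do their work.
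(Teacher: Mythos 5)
Your proposal is correct and is exactly the path-lifting construction of Mattei--Moussu that the paper invokes, with the same use of the Section~\ref{sect:lift} estimates (in particular \eqref{eq:ineqoh} and the corollary to Proposition~\ref{prop:estimatesphi}) to guarantee that the lifts stay inside a $\lambda$-adapted region. The paper's own proof is just a one-line reference to \cite{MaMou} and \cite[Section 5.2]{Lolo} noting that the only point to check is that $\Phi$ maps $\lambda$-neighborhoods to $\lambda$-neighborhoods; you have simply spelled out the details of that construction (the double lift through $\varphi$, well-definedness via $\varphi\,\holo[\tilde\Omega]^{k}=\holo[\Omega]^{k}\,\varphi$, Riemann extension across $\{x=0\}$, and uniqueness from the local product structure), so the two proofs coincide in substance.
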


\begin{proof}
The proof uses the path-lifting method. The only remaining technical
point is to prove that $\Phi$, as constructed in Mattei-Moussu's
proof, is a map between two $\lambda$-neighborhoods. This is an immediate
consequence of both the upper and lower estimates in (\ref{eq:ineqoh})
along the paths approaching the origin radially. We refer to \cite[Section 5.2]{Lolo}
for the details.
\end{proof}
\begin{rem}
\label{rem:Realmatteimoussu} For future use, we observe the following
invariance property in Mattei-Moussu's construction. Assume that:
\begin{enumerate}
\item $(U,\vF)$ and $(\widetilde{U},\widetilde{\vF})$ are complexifications
of {\em real analytic foliations};
\item the conjugacy map $\varphi$ is real analytic.
\end{enumerate}
Then the biholomorphism $\Phi$ will also be the complexification
of a real analytic map. In particular, it defines an analytic equivalence
between the underlying real foliations.
\end{rem}

\subsubsection{\protect\label{sub:three}Saddle corner transition maps and determinations}

Let $(U,\vF)$ be a prepared saddle foliation as in Definition \ref{def:prepared_saddle}.
Informally, a corner map establishes a leaf-wise correspondence 
\[
\vF\cap\Sigma\to\vF\cap\Omega
\]
where $(\Omega,1)$ and $(\Sigma,\sigma)$ are respectively the fixed
base transversal and a floating base transversal associated to $(U,\vF)$.

This map is generally multivalued, since a leaf can intersect these
transversals at multiple points. We can choose a {\em determination}
by fixing a path in the punctured horizontal separatrix (say, connecting
a point sufficiently close to the origin to $1$) and considering
its $\vF$-lift, through the fibration $\Fib=\{x=\cst\}$, to a path
in a leaf going from $\Sigma$ to $\Omega$. As we have already observed
in Section \ref{sect:lift}, we must be careful with the fact that
not every path can be $\vF$-lifted. \smallskip{}

In what follows, we use the family of exponential paths 
\[
\{\xi_{\alpha,C}\}_{\alpha\in[0,\lambda),\,C\in\{\pm1\}},
\]
introduced in Example \ref{ex:iliashenko-paths}. Consider the restriction
of the images of this family of paths to the strip $I=\{z\in\cC:\im z\in]-\pi,\pi[\}$.
By $I_{\alpha,C}$ we denote the time domains for these restrictions.
Namely, for each parameter value $\alpha,C$, we consider the interval
$t\in I_{\alpha,C}$ such that $\xi_{\alpha,C}(I_{\alpha,C})\subset I$.
It can easily be computed from \eqref{eq:expo} that: 
\[
I_{\alpha,C}=\left[0,\frac{1}{\alpha}\log(1+\pi)\right).
\]
Now, let 
\begin{equation}
\Xi_{\alpha,C}=\ee^{-\xi_{\alpha,C}\left|_{I_{\alpha,C}}\right.},\ \alpha\in[0,\lambda),\ C\in\{\pm1\},\label{eq:expi}
\end{equation}
denote the image of the restricted paths $\xi_{\alpha,C}\left|_{I_{\alpha,C}}\right.$
under the covering map $x=\ee^{-z}$ (see Figure~\ref{fig:exp} below).

\begin{figure}[htb]
\label{fig:exp}
\centering{}\includegraphics[width=8.67469cm,height=3.52038cm]{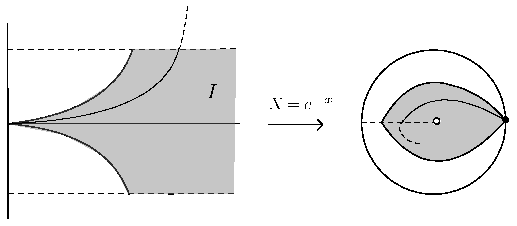}
\end{figure}

By definition, these paths $\Xi_{\alpha,C}$ are disjoint, and there
exists some radius $0<r<1$ (depending on $\lambda$) such that their
union covers the {\em cut disk} 
\[
\bD_{r}^{\cut}=\bD_{r}\setminus\cR_{\le0}
\]
In what follows, we will denote by $\gamma^{-1}$ the reverse path
of a given path $\gamma$ (\emph{i.e.} $\gamma$ traveled {\em backwards}).
\smallskip{}

The next lemma gives conditions under which the paths $\Xi_{\alpha,C}^{-1}$
can be $\vF$-lifted through the points of $\Sigma$.
\begin{lem}
\label{lem:cano_Dulac} Let $(\mathcal{F},U)$ be a prepared saddle
and let $U_{A,B}\subseteq U$, $A,\,B>0,$ be a $\lambda$-adapted
subdomain. Suppose that the base-point $\sigma$ of the floating transversal
$\Sigma$ satisfies the estimate 
\begin{equation}
|\sigma|\leqslant\frac{1}{\max\left\{ \frac{1}{A(1+\pi)},\frac{1}{B}\right\} +2\varepsilon}.\label{eq:estimate-sigma}
\end{equation}
Then, there exists some radius $r>0$ such that, for each point $x\in\bD_{r}^{\cut}$,
the path $\Xi_{\alpha,C}^{-1}$ (containing $x$) has a $\vF$-lift
by $\mathrm{Fib}$ in $U_{A,B}\subseteq U$ through the point $p=\Fib^{-1}(x)\cap\Sigma$.
\end{lem}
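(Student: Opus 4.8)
The statement is a path-lifting assertion, so the plan is to reduce it to the explicit a-priori estimates already established in Section~\ref{sect:lift}, most notably the Corollary following Proposition~\ref{prop:estimatesphi} and the computations carried out in Example~\ref{ex:iliashenko-paths}. First I would move to the covering coordinates $x=\ee^{-z}$, $y=\ee^{-w}$, where the path $\Xi_{\alpha,C}^{-1}$ becomes the reversed exponential path $\gamma$ going backwards from $z_{0}=\xi_{\alpha,C}(T)$ to $\xi_{\alpha,C}(0)=0$ along the trajectory of $\xi_{\alpha,C}$, with $T$ ranging over $I_{\alpha,C}=[0,\tfrac1\alpha\log(1+\pi))$. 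The point $p=\Fib^{-1}(x)\cap\Sigma$ has second coordinate close to $\sigma$, hence in the covering we start from some $w_{0}$ with $\ee^{\re w_{0}}$ as large as we wish by choosing $r$ small (equivalently by placing $p$ close to the vertical separatrix); the key quantitative input is that the starting point lies on $\Sigma$, so $|y|$ is comparable to $|\sigma|$.

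**Key steps.** The core of the argument is to verify the two inequalities in \eqref{ineq2} along $\gamma$, which by the Corollary guarantee that the lift stays inside $U_{A,B}$ and, in particular, exists. The first inequality $\re\gamma(t)\geqslant0$ is immediate from the shape of the exponential paths. For the second, I would use the length estimate $\tmop{length}(\gamma)\leqslant\tfrac2\alpha\ee^{\alpha T}$ from Example~\ref{ex:iliashenko-paths} together with the reformulation \eqref{eq:kori}, which shows that along $\Xi_{\alpha,C}^{-1}$ the lifting persists provided
\[
\ee^{\re w_{0}}\geqslant\max\Big\{\tfrac1A,\tfrac1B\Big\}+\lambda\varepsilon\tfrac2\alpha .
\]
Here the quantity $\tfrac1A$ can be improved to $\tfrac{1}{A(1+\pi)}$ because on the strip $I$ the real part of $z$ along $\gamma$ is bounded below by $-\log(1+\pi)$ (this is exactly where the time-domain restriction $I_{\alpha,C}$ and the factor $1+\pi$ enter); similarly $\lambda\varepsilon\tfrac2\alpha$ gets replaced by a uniform constant which, after absorbing $\lambda$ and optimizing over $\alpha\in[0,\lambda)$, can be taken to be $2\varepsilon$. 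Matching this against the hypothesis, the condition becomes $\ee^{\re w_{0}}\ge\max\{\tfrac{1}{A(1+\pi)},\tfrac1B\}+2\varepsilon$, i.e. $|y_{0}|^{-1}$ at least that value, and since $|y_{0}|$ is comparable to $|\sigma|$ (up to the bounded distortion \eqref{eq:ineqoh} along the short path, which costs only a factor $\ee^{\lambda\varepsilon B\,\tmop{length}}$ that I would fold into the constants by shrinking $\varepsilon$ or $r$), the estimate \eqref{eq:estimate-sigma} on $|\sigma|$ is precisely what makes this hold. Choosing $r$ small enough that every $x\in\bD_{r}^{\cut}$ corresponds to a parameter $T$ for which $\re w_{0}$ is large, and invoking the fact from Section~\ref{sub:three} that the $\Xi_{\alpha,C}$ cover $\bD_{r}^{\cut}$, finishes the argument.

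**Main obstacle.** The genuinely delicate point is bookkeeping the constants so that the clean hypothesis \eqref{eq:estimate-sigma} — with its specific factors $1+\pi$ and $2\varepsilon$ — comes out exactly, rather than some messier bound. This requires being careful about three things simultaneously: (i) the lower bound for $\re z$ along the reversed exponential paths restricted to the strip $I$, which produces the $1+\pi$; (ii) the uniform control of $\tfrac2\alpha\ee^{\alpha T}\cdot\lambda\varepsilon$ over the whole parameter range $\alpha\in[0,\lambda)$, $T\in I_{\alpha,C}$, which one checks is bounded by a constant of the form $2\varepsilon$ after noting $\alpha T\le\log(1+\pi)$ and $\lambda/\alpha$ is not uniformly bounded, so one must instead use that $\ee^{\alpha T}\le 1+\pi$ and handle the $\lambda/\alpha$ factor by comparing with the $\ee^{\lambda T}$ term on the other side of \eqref{eq:kori}; and (iii) the distortion factor relating $|y_{0}|$ on $\Sigma$ to $|\sigma|$. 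None of these is conceptually hard, but assembling them into the stated inequality is where all the care goes; the existence of the lift itself is then an immediate consequence of the Corollary to Proposition~\ref{prop:estimatesphi}.
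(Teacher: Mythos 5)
Your overall plan matches the paper's own proof: pass to the logarithmic chart, reduce the claim to the lifting estimates of Example~\ref{ex:iliashenko-paths} with the time bound $I_{\alpha,C}=[0,\frac{1}{\alpha}\log(1+\pi))$, and extract from \eqref{eq:kori} the constraint on $|y_{0}|$ that the hypothesis \eqref{eq:estimate-sigma} on $|\sigma|$ is designed to enforce. The plan is correct, but several of the concrete steps are misdescribed and would have to be repaired before the argument closes.

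The factor $1+\pi$ does \emph{not} come from a lower bound on $\re{z}$ along $\gamma$ --- in fact $\re{z}$ ranges over $[0,T]$ there, never going negative --- it comes from substituting $T=\frac{1}{\alpha}\log(1+\pi)$ into the first term of \eqref{eq:kori}, which gives $\frac{1}{A}\ee^{-\lambda T}=\frac{1}{A}(1+\pi)^{-\lambda/\alpha}\leqslant\frac{1}{A(1+\pi)}$ since $\lambda/\alpha\geqslant1$. Likewise, starting from the simplified sufficient condition $\ee^{\re{w_{0}}}\geqslant\max\{\frac{1}{A},\frac{1}{B}\}+\lambda\varepsilon\frac{2}{\alpha}$ and hoping to ``absorb'' $\lambda/\alpha$ into a constant $2\varepsilon$ cannot work, as that quantity is unbounded when $\alpha\to 0$; the argument that actually closes keeps the $\ee^{-(\lambda-\alpha)T}$ factor from \eqref{eq:kori}, which after the substitution becomes $2\varepsilon\cdot s(1+\pi)^{1-s}$ with $s=\lambda/\alpha\geqslant1$, and then uses that $s\mapsto s(1+\pi)^{1-s}$ is decreasing on $[1,\infty)$ (because $\log(1+\pi)>1$), hence $\leqslant1$. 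You gesture at this cancellation in your ``main obstacle'' paragraph but never carry it out; that elementary monotonicity check is precisely the missing computational heart of the lemma. Finally, to relate $|y_{0}|$ to $|\sigma|$ you cannot shrink $\varepsilon$ (it is a fixed datum of the prepared saddle and appears in the bound you are trying to establish, so that would be circular), and \eqref{eq:ineqoh} is the wrong tool here since it controls distortion of the quasi-integral along $\vF$-lifts, not along the transversal; the only fact needed is that $\Sigma$ is a graph $y=\alpha(x)$ with $\alpha(0)=\sigma$, so that $|y_{0}|=|\alpha(x)|\to|\sigma|$ as $x\to 0$, and shrinking $r$ alone suffices. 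With these fixes your argument coincides with the paper's.
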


\begin{proof}
It is an immediate consequence of the estimates derived in Example
\ref{ex:iliashenko-paths}. Note that $\xi_{\alpha,C}^{-1}(t)$ remains
inside the domain $I$ for $t\in\left[0,\frac{1}{\alpha}\log(1+\pi)\right)$.
By \eqref{eq:kori} in Example~\ref{ex:iliashenko-paths}, putting
$T=\frac{1}{\alpha}\log(1+\pi)$, we see that $\big(\xi_{\alpha,C}\left|_{I_{\alpha,C}}\big)^{-1}\right.$
has a $\vF$-lift in $U_{A,B}$ through the point $(x_{0}=\xi_{\alpha,C}(T),y_{0})$
if: 
\[
\frac{1}{|y_{0}|}\geqslant\max\left\{ \frac{1}{A}\ee^{-\frac{\lambda}{\alpha}\log(1+\pi)},\frac{1}{B}\right\} +\lambda\varepsilon\frac{2}{\alpha}\ee^{-\frac{\lambda-\alpha}{\alpha}\log(1+\pi)}.
\]
Now, for all $\alpha\in[0,\lambda[$: 
\[
\frac{1}{A}\ee^{-\frac{\lambda}{\alpha}\log(1+\pi)}\leqslant\frac{1}{A(1+\pi)},\ \lambda\varepsilon\frac{2}{\alpha}\ee^{-\frac{\lambda-\alpha}{\alpha}\log(1+\pi)}\leqslant2\varepsilon.
\]
Therefore, if 
\begin{equation}
|y_{0}|\leqslant\frac{1}{\max\left\{ \frac{1}{A(1+\pi)},\frac{1}{B}\right\} +2\varepsilon},\label{eq:hi0}
\end{equation}
all paths $\big(\xi_{\alpha,C}\Big|_{I_{\alpha,C}}\big)^{-1}$ can
be $\vF$-lifted through $(x_{0},y_{0})$, for every $\alpha\in[0,\lambda[$
and $C\in\{\pm1\}$. Note that \eqref{eq:hi0} implies also that $|y_{0}|<B$.
\end{proof}
\medskip{}

From now on, we will suppose that the base-point $\sigma$ of the
floating transversal $(\Sigma,\sigma)$ of a prepared saddle $(U,\mathcal{F})$
always satisfies the estimate (\ref{eq:estimate-sigma}) for some
$A,\,B>0$ such that $U_{A,B}\subseteq U$. Moreover, for $r>0$ given
by Lemma~\ref{eq:estimate-sigma}, we define the germ of a {\em
cut transversal} $(\Sigma^{\cut},\sigma)$ by 
\begin{equation}
\Sigma^{\cut}\subset\Fib^{-1}\left(\bD_{r}^{\cut}\right)\cap\Sigma.\label{estimate-sigma1}
\end{equation}
and, based on the previous Lemma, we state the following:
\begin{defn}
\label{def:cano} The {\em canonical corner-transition map} associated
to $(U,\vF)$ is the germ of a map 
\[
D_{[0]}:(\Sigma^{\cut},\sigma)\to(\Omega,1)
\]
which associates to each point $p\in\Sigma^{\cut}$ the endpoint $D_{[0]}(p)\in\Omega$
of the $\vF$-lift of $\Xi_{\alpha,C}^{-1}$ through $p$.
\end{defn}

Lemma~\ref{lem:cano_Dulac} can be further generalized to general
paths with endpoint $1$ and of bounded length $\ell$, as we explain
now.
\begin{rem}
\label{rem:impo}~
\begin{itemize}
\item Using \eqref{eq:heart} we get: for a given base-point $\sigma$ of
the floating transversal $\Sigma$, where $|\sigma|<B$, and $\ell>0$,
there exists a radius $r=r_{\ell,|\sigma|,A,B}>0$ such that, for
every $x\in\bD_{r}$, every path of length less than $\ell$ containing
$x$ and landing at $1$ can be $\vF$-lifted in $U_{A,B}$ through
the point $p=\mathrm{Fib}^{-1}(x)\cap\Sigma.$
\item Choosing a path $\gamma$ from $x\in\bD_{r}^{\cut}$ to $1$ of length
less than $\ell$, the holonomy map $\mathrm{hol}_{\mathcal{F},\gamma}$
along $\gamma$ through $p=\mathrm{Fib}^{-1}(x)\cap\Sigma$ to $\Omega$
can be holomorphically extended to the whole cut transversal $\Sigma^{\cut}=\mathrm{Fib}^{-1}\left(\bD_{r}^{\cut}\right)\cap\Sigma$.
This construction produces one particular determination of the corner
map of the saddle $D:(\Sigma^{\cut},\sigma)\to(\Omega,1)$.
\end{itemize}
\end{rem}

\begin{rem}
\label{rem:otherdeterminations} By Chapter~7 of \cite{Lolo}, all
other determinations of the corner map for a prepared saddle $(U,\vF)$
are obtained from the canonical determination $D_{[0]}$ by the formula
\begin{equation}
D_{[n]}=\holo[\Omega]^{-n}\ D_{[0]},\quad\ n\in\mathbb{Z},\label{eq:ili}
\end{equation}
where $\holo[\Omega]$ is the holonomy of $\Omega$. In other words,
they are obtained by $\vF$-lifting (to a sufficiently small germ
of a floating transversal, see Remark~\ref{rem:impo}), the concatenation
of paths 
\[
(\Xi_{\alpha,C})^{-1}\star\gamma^{-n}
\]
where $\Xi_{\alpha,C}$ are the paths used in the definition of $D_{[0]}$
and $\gamma^{n}$ is the $n^{th}$ composition of the path of Definition~\ref{def:saddle_holo}.

Notice that we obtain all determinations equally by taking the compositions
\begin{equation}
D_{[n]}=D_{[0]}\ \holo[\Sigma]^{n},\quad n\in\mathbb{Z},\label{eq:ili2}
\end{equation}
where $\holo[\Sigma]$ is the holonomy of $\Sigma$.
\end{rem}

\begin{rem}
\label{rem:realsaddlecorner} Assume that $(U,\vF)$ is a {\em real
prepared saddle foliation}, \emph{i.e.} that the differential form
in Definition \ref{def:prepared_saddle} is real analytic. Suppose
further that the floating transversal $\Sigma$ is the {\em complexification}
of a real analytic transversal $\Sigma_{\cR}\subset\cR^{2}$. Then,
by construction, the canonical determination for the corner map preserves
the real axis, namely $D_{[0]}\left(\rr_{>0}\right)\subset\text{\ensuremath{\rr}}_{>0}$.
\end{rem}

Let us now describe a global holomorphic extension of $D_{[0]}$ by
fixing natural coordinates on the transversals. Namely, we consider
the {\em quasi-first integral} $F=x^{\lambda}y$ and parameterize
the transversals $\Omega$ and $\Sigma$ in such a way that 
\[
F|_{\Omega}\left(y\right)=y,\quad\text{and}\quad F|_{\Sigma}\left(x\right)=x^{\lambda}
\]
where $x^{\lambda}$ is chosen with respect to the main branch of
the logarithm.

\smallskip{}

In the next lemma, we study the lift of $y=D_{[0]}(x)$ through the
covering maps $x=\ee^{-z}$, $y=\ee^{-w}$. Note that the lift is
well-defined only up to the action of the deck transformation $\tau=\id+2\pi i$.
\begin{lem}
A lift of $D_{[0]}$ in the $(z,w)$-coordinates defines a Dulac germ
$d\in\Dulac$ having an asymptotic expansion 
\[
w=d(z)=\lambda z+2\pi\ii k+\oo 1
\]
for some integer $k\in\cZ$.
\end{lem}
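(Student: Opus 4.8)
The plan is to analyze the lift $d$ of $D_{[0]}$ in logarithmic coordinates and show it satisfies the two defining properties of a Dulac germ (holomorphic on a quadratic standard domain, with a pol-exp asymptotic expansion of the stated initial part). First I would record the linear part: in the natural coordinates $F|_\Sigma(x)=x^\lambda$ and $F|_\Omega(y)=y$, the linear foliation $\fol{1:\lambda}$ gives $D_{[0]}(x)=x^\lambda$ exactly (the quasi-first integral $F$ is preserved along leaves of the linear model), which in the coordinates $x=\ee^{-z}$, $y=\ee^{-w}$ reads $w=\lambda z$. For a general prepared saddle, the perturbation term $x^n y K(x,y)$ in \eqref{eq:preparedform} is controlled by Remark~\ref{rem:ovo}: the estimate \eqref{eq:ineqoh} shows that along the lift of a path $\Gamma$ of bounded length, $|x^\lambda y|/|x_0^\lambda y_0|$ stays between $\ee^{\pm\lambda\varepsilon B\,\mathrm{length}(\Gamma)}$, so the value of $F$ at the endpoint differs from its value at the start-point by a bounded multiplicative factor; together with Lemma~\ref{lem:estimate_holonomy} (which pins the linear parts of the holonomies), this forces $w = \lambda z + \OO 1$, and refining the estimate as $z\to\infty$ gives $w = \lambda z + c + \oo 1$ for some constant $c$. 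Since the ambiguity in the lift is exactly post-composition by $\tau = \id + 2\pi\ii$, and since the natural-coordinate normalization fixes the real part of $c$ to be $\re(\lambda\log|x|^{\text{-corrections}})\to 0$ (more precisely $c$ is determined modulo $2\pi\ii\cZ$ by the branch choices), we may write $c = 2\pi\ii k + c'$ with $k\in\cZ$ and then absorb the genuine constant into the $\oo 1$ or keep it; the statement as written allows $w = \lambda z + 2\pi\ii k + \oo 1$ after the appropriate normalization of the base-points, so I would pin down that the constant term is purely imaginary (or more precisely, lies in $2\pi\ii\cZ$) by tracking the normalization $F|_\Sigma(x) = x^\lambda$ against $F|_\Omega(y)=y$ along the canonical paths $\Xi_{\alpha,C}^{-1}$, whose images accumulate on the positive real axis.

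Next I would establish the domain and asymptotic-expansion properties. The domain: by Lemma~\ref{lem:cano_Dulac} and Example~\ref{ex:iliashenko-paths}, the canonical corner map $D_{[0]}$ extends holomorphically over the cut disk $\bD_r^{\cut}$, and the exponential paths $\xi_{\alpha,C}$ for $\alpha\in[0,\lambda)$ sweep out (after passing to the covering $x=\ee^{-z}$) a region that contains a quadratic standard domain $\Omega = \varphi_d(\cC\setminus\Delta_c)$ — this is exactly Ilyashenko's construction, and the length bound $\mathrm{length}(\gamma)\leqslant\frac{2}{\alpha}\ee^{\alpha T}$ combined with \eqref{eq:heart} guarantees the lift survives on that whole region. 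For the pol-exp asymptotic expansion, I would invoke that the corner map of a prepared saddle is precisely the kind of map treated in \cite{IlyaDu}: integrating the Cauchy problem \eqref{Cauchy-probl} by successive approximations yields $\varphi(z) = \varphi_0 + \sum(\text{iterated integrals of }\ee^{-((n-\lambda)z+\varphi)}k)$, and re-expanding $k(z,w) = K(\ee^{-z},\ee^{-w})$ as a series in $\ee^{-z}, \ee^{-w}$ and back-substituting produces, order by order, terms of the shape $P(z)\ee^{-\mu z}$ with $\mu$ ranging over a discrete set of positive reals and $P$ polynomial — i.e. an element of $\Dulacf$. The convergence/uniformity of this expansion on the quadratic standard domain is the content of the quasi-analyticity machinery recalled in Proposition~\ref{prop:ilyashenko}.

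The main obstacle, I expect, is not any single estimate but rather cleanly isolating which facts are genuinely needed versus black-boxed. The honest shortest route is: (i) the analytic extension to a quadratic standard domain is Ilyashenko's theorem applied to $D_{[0]}$ via the exponential paths (cite \cite{IlyaDu}, \cite{Lolo}); (ii) the pol-exp asymptotic expansion with multiplier $a = \lambda$ and leading behavior $\lambda z + 2\pi\ii k + \oo 1$ follows from iterating \eqref{Cauchy-probl} and from Lemma~\ref{lem:estimate_holonomy}; (iii) the integer $k$ records the branch of $x^\lambda$ relative to the chosen guiding paths and the normalization of $\Sigma^{\cut}$, and is independent of the point since the paths $\Xi_{\alpha,C}^{-1}$ form a connected family. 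So I would structure the proof as: fix the natural coordinates; write the lifted equation; quote the extension-to-QSD result; read off the leading term from the linear model plus the bound \eqref{eq:ineqoh}; identify the constant term as lying in $2\pi\ii\cZ$ by the branch bookkeeping; and conclude $d\in\Dulac$. The subtle point to get right is the constant term — showing its real part vanishes (equivalently that it is in $2\pi\ii\cZ$ rather than a general complex number) genuinely uses that $F|_\Sigma$ and $F|_\Omega$ are normalized by the \emph{same} quasi-first integral $F=x^\lambda y$, so that along a leaf the natural coordinate is literally transported, and any discrepancy is a pure deck-transformation ambiguity.
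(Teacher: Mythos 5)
Your plan is essentially the paper's: extend $D_{[0]}$ to a quadratic standard domain by lifting the family of exponential paths, read the leading behavior off the Cauchy problem \eqref{Cauchy-probl} in the normalization $F|_{\Sigma}(x)=x^{\lambda}$, $F|_{\Omega}(y)=y$, identify the additive constant as a pure deck transformation $2\pi\ii k$, and obtain the full pol-exp expansion from Ilyashenko's Dulac normal-form machinery.

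One step is handled too loosely, however, and it is precisely the step that upgrades $\OO 1$ to $\oo 1$. The estimate \eqref{eq:ineqoh} from Remark~\ref{rem:ovo} bounds the multiplicative discrepancy in $x^{\lambda}y$ by $\ee^{\pm\lambda\varepsilon B\,\mathrm{length}(\Gamma)}$, but the $(x,y)$-length of a path from a point near $x=0$ to $x=1$ is bounded away from zero as the base-point tends to the origin; so this bound stays a fixed constant and only yields $w-\lambda z=\OO 1$, not convergence to a constant. ``Refining the estimate as $z\to\infty$'' is not something Remark~\ref{rem:ovo} gives you. What the paper actually uses is the integral form $\ee^{\varphi}=\ee^{\varphi_{0}}+\int_{\gamma}F\dd z$ of the Cauchy problem together with the uniform bound (from the computations of Example~\ref{ex:iliashenko-paths}) that the integral is $\OO{\ee^{\beta\re z}}$ with $\beta<\lambda$; dividing by $\ee^{\lambda z}$ and taking a branch of $\log$ then gives $d(z)=\lambda z+2\pi\ii k+\oo 1$ with genuine decay. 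You do gesture at ``iterating the Cauchy problem by successive approximations'' in your last paragraph, which is the right idea, but in the body you attribute the $\oo 1$ to Remark~\ref{rem:ovo} plus Lemma~\ref{lem:estimate_holonomy}, and neither of those supplies the required decay. (Lemma~\ref{lem:estimate_holonomy} is in any case about the linear parts of the two saddle holonomies $\holo[\Sigma],\holo[\Omega]$, which are not the same thing as the multiplier of the corner transition; the multiplier $\lambda$ comes straight from the quasi-first integral normalization.) Correcting this makes your argument coincide with the paper's.
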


\begin{proof}
The holomorphic extension of $d$ is simply obtained by considering
the $\vF$-lift to $\Sigma$ of the totality of exponential paths
$\ee^{-\xi_{\alpha,C}}$ (\emph{i.e.} no longer restricted to the
strip $I$). The estimates in Example \ref{ex:iliashenko-paths} show
that all these paths can be lifted (in their totality), and therefore
that $d_{0}$ is holomorphic on a domain of the form 
\[
\mathcal{E}=\{z\in\cC:\re z>R,|\im z|<\ee^{\beta\re z}\}
\]
for some $R>0$ and some $0<\beta<\lambda$. Such domain clearly contains
a $\QSD$. Moreover, the Cauchy problem written in Proposition \ref{prop:estimatesphi}
has the equivalent integral form 
\[
\ee^{\varphi}=\ee^{\varphi_{0}}+\int_{\gamma}F(z,\varphi)\dd z,
\]
where $F(z,\varphi)=-\lambda\ee^{-((n-\lambda)z+\varphi)}$. According
to the computations of Example \ref{ex:iliashenko-paths}, the integral
on the right-hand side is uniformly bounded on $\mathcal{E}$ by a
constant multiplied by $\ee^{\beta\re z}$. With the choice of coordinates
on the transversals described above, and passing to the covering coordinates
$z$ and $w$, we have $\varphi_{0}=\lambda z$ and $\varphi=d(z)$.
Therefore, we can write: 
\[
\ee^{d(z)}=\ee^{\lambda z}\left(1+\ee^{(\beta-\lambda)z}f\right),
\]
where $f$ is some uniformly bounded function on $\mathcal{E}$. The
expansion in the enunciate is obtained by taking logarithms of both
sides.

Finally, by applying the truncated version of the Dulac normal form
theorem to the $1$-form given in Definition \ref{def:prepared_saddle},
we can prove (see, for instance, the {\em Geometric Lemma} in \cite[section 0.3]{IlyaDu})
that $d$ has an asymptotic expansion in the pol-exp scale, as required
in the definition of Dulac germs in Section \ref{subsec:ramified-var}.
\end{proof}
Based on this Lemma, we may fix an unambiguous choice for the lift
of $D_{[0]}$ to the $\left(z,w\right)$ coordinates. The {\em lifted
(canonical) corner transition map} for $(U,\vF)$ is the unique lift
$d_{0}\in\Dulac$ of $D_{[0]}$ having the asymptotic expansion 
\[
d_{0}(z)=\lambda z+\oo 1.
\]
We remark that this lift is simply obtained by taking an appropriate
branch of the logarithm in the map $w=-\log y$ (\emph{i.e.} an appropriate
choice of the iterate of the deck transform in the lift). Note that,
by using the same deck transforms, all other determinations $D_{[k]}$,
$k\in\mathbb{Z}$, lift to germs of the type 
\[
d_{k}(z)=\lambda z+2k\pi\ii+\oo 1.
\]

\begin{rem}
Notice that, by its own definition, the inverse $d_{0}^{-1}$ of $d_{0}$
in $\Dulac$ corresponds to a lift of (some determination of) the
inverse corner transition $\vF\cap\Omega\rightarrow\vF\cap\Sigma$.
However, we stress the fact that it does not necessarily correspond
to the canonical Dulac map that we could define by exchanging the
roles of the horizontal and vertical separatrices in the construction
described in Section \ref{sub:three}.
\end{rem}

\subsubsection{\protect\label{subsect:holvars}Holonomies and variations of the
corner transition maps}

As remarked by Ilyashenko, the monodromy of the canonical corner transition
map and the holonomy of a prepared saddle $(U,\vF)$ are related by
the equation 
\begin{equation}
D_{[0]}(\ee^{2\pi\ii}x)=\holo[\Omega]^{-1}D_{[0]}(x)\label{eq:monodromy-cornermap}
\end{equation}
(see \emph{e.g.} \cite[Section 7.1.4]{Lolo}). Let us rewrite this
equation in the covering coordinate $x=\ee^{-z},~y=\ee^{-w}$. Based
on the expressions given in Lemma~\ref{lem:estimate_holonomy}, we
choose a lift $h_{\Omega}$ of $\holo[\Omega]$ of the form 
\[
h_{\Omega}(w)=w+2\ii\pi(1-\lambda)+\oo 1.
\]
Notice that $h_{\Omega}$ is an element of the Dulac group, convergent
on some right-half plane $\{z\in\cC:\re z>c\}$ and {\em unramified},
\emph{i.e.} 
\[
\var{h_{\Omega}}=\id,
\]
where $\var f=[\tau,f]$ is the functional variation defined in Section
\ref{subsec:ramifmildramif}.

We thus rewrite (\ref{eq:monodromy-cornermap}) as the following equation
in the Dulac group $\Dulac$, 
\[
d_{0}\,\tau=\tau h_{\Omega}^{-1}d_{0}.
\]
In other words, using that $\var{d_{0}^{-1}}=[\tau,d_{0}^{-1}]=\tau^{-1}d_{0}\tau d_{0}^{-1}$,
one obtains 
\[
\var{d_{0}^{-1}}=h_{\Omega}^{-1}.
\]
Similarly, based on Remark \ref{rem:otherdeterminations}, we can
write the relation 
\[
\var{d_{0}}=h_{\Sigma},
\]
where $h_{\Sigma}\in\Dulac$ is the lift of the other holonomy $\holo[\Sigma]$
with the expansion 
\[
h_{\Sigma}(z)=z+2\ii\pi\left(1-\frac{1}{\lambda}\right)+\oo 1.
\]

\begin{rem}
\label{rem:higher}~
\begin{enumerate}
\item It follows from Remark \ref{rem:otherdeterminations} that the lift
of other determinations of the corner transition map are Dulac germs
$d_{n}\in\Dulac$ related to the canonical determination $d_{0}$
by 
\[
d_{n}=h_{\Omega}^{-n}\,d_{0}=d_{0}\,h_{\Sigma}^{n},\ n\in\mathbb{Z}.
\]
\item The variation of a lifted corner transition map does not depend on
its determination. That is, $\var{d_{n}}=\var{d_{0}}$, $n\in\mathbb{Z}$.
This follows directly from the definition of variation operator, the
above item 1. and the fact that holonomies are unramified.
\end{enumerate}
\end{rem}

\subsection{\protect\label{subsec:gct}Germ of a corner transition}

We consider a prepared saddle foliation $\left(U,\vF\right)$, where
the domain $U$ contains some fixed $\lambda$-adapted region $U_{A,B}$
(see Section~\ref{subsect:preparedsaddle}). In what follows, we
will work frequently with germs of a transversal. We will say that
a germ of a floating transversal $(\Sigma,\sigma)$ with base-point
$\sigma$ on the vertical separatrix has a {\tmem{realization}}
in $U$ if it extends to a holomorphic curve in $U_{A,B}$ (which
we denote by the same letter $\Sigma$) such that:
\begin{enumerate}
\item $\Sigma$ is both transversal to the foliation $\vF$ and the vertical
fibration $\tmop{Fib}=\{x=\tmop{cst}\}$;
\item $\Sigma$ is simply connected.
\end{enumerate}
Recall that, for each point $\sigma$ in $U_{A,B}$ lying in the vertical
separatrix, there exists a standard determination of the corner transition
map 
\[
D_{[0]}:(\Sigma^{\tmop{cut}},\sigma)\rightarrow(\Omega,1),
\]
which is defined by lifting the exponential paths to an arbitrary
transversal through $\sigma$ (see Definition~\ref{def:cano}). All
other determinations of the corner transition are given by the identity
\begin{equation}
D_{[n]}=D_{[0]}\,\hhol_{\sigma}^{n},\label{distinctdets}
\end{equation}
where $n\in\mathbb{Z}$ and $\hhol_{\sigma}$ is the holonomy map
associated to the circular path $t\rightarrow(0,\ee^{2\pi\ii t}\sigma)$,
$t\in[0,1]$.

From now on, applying the identification of transversals with the
same base-point (as explained in Remark~\ref{rem:identifytransv}),
we will use the notation $D_{\sigma,[n]}$ to indicate the $n^{\tmop{th}}$
determination of the germ of a corner transition associated to the
{\em initial point} $\sigma\in U_{A,B}$. More generally, we will
write simply $D_{\sigma,\star}$ to refer to one of these determinations,
when it is not necessary to specify the index $n$.

The {\tmem{corner transition class}} $\Corner\left(U,\vF\right)$
is the set $\{D_{\sigma,\star}\}$ of all such corner transition germs,
indexed by the base-point and the corresponding determination. The
groupoid of holonomy germs $\tmop{Hol}(U,\mathcal{F})$ acts on $\Corner\left(U,\vF\right)$
by right composition. Namely, for each germ $D_{2}\in\Corner\left(U,\vF\right)$
with initial point $\sigma_{2}$ and each holonomy germ $h\in\tmop{Hol}(U,\mathcal{F})$
with initial point $\sigma_{1}$ and endpoint $\sigma_{2}$, we define
the action of $h$ on $D_{\sigma_{2}}$ by 
\[
D_{\sigma_{1}}=D_{\sigma_{2}}h
\]
which gives a germ $D_{\sigma_{1}}$ on $\Corner\left(U,\vF\right)$
with initial point $\sigma_{1}$. Notice that~(\ref{distinctdets})
is a particular case of this action, where $\sigma_{1}$ and $\sigma_{2}$
coincide.

\subsubsection{Corner transitions determined by connecting paths}

For what follows, it will be convenient to relate the corner transition
germs and the germs obtained by lifting of other $C^{1}$ paths in
the base, possibly different from the exponential paths used in the
definition of corner transitions in Section~\ref{sub:three}.

A {\tmem{connecting path}} is a piecewise $C^{1}$-path $\Gamma$
in the punctured disk $\bar{\Delta}^{\star}$ with initial point $x_{0}\in\bar{\Delta}^{\star}$
and endpoint $1$.

We will say that a corner transition germ $D=D_{\sigma,\star}\in\Corner\left(U,\vF\right)$
based at $\sigma$ is {\tmem{determined}} by $\Gamma$ if there
exists a realization of a transversal ($\Sigma$, $\sigma$) in $U$
and an open subset $V\subset\Sigma$ such that:
\begin{enumerate}
\item the germ $D$ is holomorphic at $V$;
\item the path $\Gamma$ has a lift $\Gamma_{z}$ through a point $z\in V$
and 
\[
\tmop{hol}_{\Gamma_{z}}=D
\]
(seen as germs at $z$ of a holomorphic map from the transversal $\Sigma$
to the transversal $\Omega$).
\end{enumerate}
In this case we will write $D=D_{\sigma,\Gamma}$ to indicate the
connecting path which determines $D$.
\begin{center}
\raisebox{-0.210834587661154\height}{\includegraphics[width=7.45853cm,height=3.5079cm]{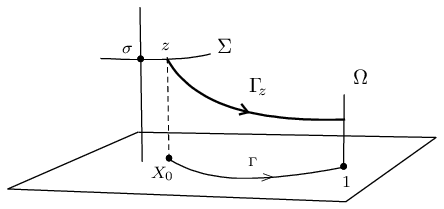}}
\par\end{center}
\begin{rem}
Observe that, by construction, each corner transition $D_{\sigma,[n]}$
has the form 
\[
D_{\sigma,[n]}=D_{\sigma,\Gamma_{n}}
\]
where the path $\Gamma_{n}=\Xi^{-1}\star\gamma^{-n}$ is obtained
as the concatenation of some finite segment $\Xi^{-1}$ of an exponential
path with the $(-n)^{\tmop{th}}$-iteration of the circular path $\gamma:t\rightarrow(\ee^{2\pi\ii t},0)$,
which makes one {\tmem{lap}} around the boundary $\mathbb{S}^{1}=\partial\bar{\Delta}$
of the unit disk. In particular, this shows that all corner transition
germs are determined by at least one connecting path.
\end{rem}

Suppose now that two connecting paths $\Gamma^{0},\Gamma^{1}$ (with
same initial point $x_{0}$) are path-homotopic. Then, by the partial
lifting property discussed in the previous section, for each piecewise-$C^{1}$
path-homotopy 
\[
H:[0,1]\times[0,1]\rightarrow\bar{\Delta}^{\star}
\]
such that $H(0,\cdot)=\Gamma^{0}(\cdot)$, $H(1,\cdot)=\Gamma^{1}(\cdot)$,
$H(\cdot,0)=x_{0}$ and $H(\cdot,1)=1$, there exists a disk $\mathbb{D}_{H}\subset\tmop{Fib}^{-1}(x_{0})$
(lying in the fiber above the initial point and whose radius depends
on both $x_{0}$ and the maximal length of paths within the homotopy
$H$) such that for each $z\in\mathbb{D}_{H}$, the lifted paths $\Gamma_{z}^{0},\Gamma_{z}^{1}\in\tmop{Mon}\left(M,\vF\right)$
are homotopic\footnote{By taking the initial point in a disk $\mathbb{D}_{H}$ of sufficiently
small radius, we ensure that both paths admit a complete lift starting
from this initial point.}.

In particular, we obtain the following result.
\begin{lem}
Let $\Gamma^{0}$ and $\Gamma^{1}$ be two homotopic connecting paths.
Then, there exists a radius $r>0$ such that, for each base-point
$\sigma$ with $|\sigma|<r$, the following statements are equivalent:
\begin{enumerate}
\item $\Gamma^{0}$ determines a corner transition germ based at $\sigma$;
\item $\Gamma^{1}$ determines a corner transition germ based at $\sigma$.
\end{enumerate}
Moreover, in this case, $D_{\sigma,\Gamma^{0}}=D_{\sigma,\Gamma^{1}}$.
\end{lem}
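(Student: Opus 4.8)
The plan is to reduce everything to three ingredients already present in the excerpt: the uniform partial path-lifting estimate (Remarks~\ref{rem:ovo} and~\ref{rem:impo}, resting on \eqref{eq:ineqoh}--\eqref{eq:heart}); the path-homotopy lifting statement in the paragraph immediately preceding the lemma; and the fact that a holonomy germ, seen as an arrow of $\tmop{Hol}(U,\vF)$, depends only on the leaf-homotopy class of the path, together with the identification of transversals sharing a base-point (Remark~\ref{rem:identifytransv}).

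\textbf{Setting up a uniform lifting threshold.} First I would fix once and for all a piecewise-$C^1$ path-homotopy $H\colon[0,1]\times[0,1]\to\bar{\Delta}^\star$ with $H(0,\cdot)=\Gamma^0$, $H(1,\cdot)=\Gamma^1$, $H(\cdot,0)=x_0$ and $H(\cdot,1)=1$. By compactness and piecewise-$C^1$ regularity, $\ell:=\sup_{s\in[0,1]}\tmop{length}(H(s,\cdot))$ is finite and the image of $H$ is a compact subset of $\bar{\Delta}^\star$, hence contained in an annulus $\{\delta\leqslant|x|\leqslant1\}$ for some $\delta>0$. Plugging $\tmop{length}\leqslant\ell$ and $|x|\geqslant\delta$ into the estimate \eqref{eq:ineqoh} together with the liftability criterion of Remarks~\ref{rem:ovo}--\ref{rem:impo}, one obtains a radius $r>0$ depending only on $\ell,\delta$ and the prepared-saddle data $A,B,\varepsilon,\lambda$ (and, in particular, not on $x_0$), with $r<\min\{A,B\}$, such that whenever $|\sigma|<r$ every path $H(s,\cdot)$, $s\in[0,1]$, admits a complete $\vF$-lift through the point $(x_0,\sigma)$ staying inside $U_{A,B}$. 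For such $\sigma$, the horizontal fibre $\Sigma:=\{y=\sigma\}\cap U_{A,B}$ is then a legitimate realization of a transversal at $\sigma$: it is transverse to $\vF$ (transversality of the horizontal fibration in $U_{A,B}$) and to the vertical fibration, it is simply connected, and its base-point $(0,\sigma)$ lies on the vertical separatrix.

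\textbf{From homotopic base paths to equal holonomy germs.} Next, assuming $|\sigma|<r$ and working with the realization $\Sigma$ above, whose unique point over $x_0$ is $p=(x_0,\sigma)$, I would invoke the paragraph preceding the lemma with $z=p$: the $\vF$-lift of $H$ through $p$ is a homotopy with fixed endpoints, inside the leaf through $p$, between $\Gamma^0_p$ and $\Gamma^1_p$. Leaf-homotopic paths represent the same arrow of $\tmop{Hol}(U,\vF)$, so $\tmop{hol}_{\Gamma^0_p}=\tmop{hol}_{\Gamma^1_p}$ as germs $(\Sigma,p)\to(\Omega,1)$. Transporting this common germ to the base-point $\sigma$ via the canonical identification of transversals with the same base-point (Remark~\ref{rem:identifytransv}) produces one and the same germ of a corner transition at $\sigma$ — namely both $D_{\sigma,\Gamma^0}$ and $D_{\sigma,\Gamma^1}$ — and, since the construction of corner transition germs is realization-independent, this germ is unaffected by our having chosen the horizontal realization. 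Finally, the holonomy germ along a connecting path from $x_0$ to $1$ is indeed an element of $\Corner(U,\vF)$: relative to the exponential segments of Definition~\ref{def:cano} its class in $\bar{\Delta}^\star$ is $\gamma^{-n}$ for a suitable $n\in\zz$, so it coincides with $D_{\sigma,[n]}$. Hence, for $|\sigma|<r$, statements (1) and (2) are equivalent and $D_{\sigma,\Gamma^0}=D_{\sigma,\Gamma^1}$. (The reverse reading is literally the same argument, reversing $H$ in its first variable, which changes neither $\ell$ nor $\delta$.)

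\textbf{Main obstacle.} The only genuinely delicate step is the first one: making the lifting threshold uniform over the whole homotopy and, above all, tying it to the single scalar $|\sigma|$. An arbitrary transverse realization may wander far from the separatrix between $x=0$ and $x=x_0$, so its intersection with the fibre over $x_0$ cannot be controlled by $|\sigma|$ alone; I expect the clean way around this is exactly the device used above — exploit that corner transition germs at $\sigma$ are realization-independent and therefore may be computed on the horizontal fibre $\{y=\sigma\}$, for which that intersection point is literally $(x_0,\sigma)$ and the length estimates apply directly. Everything beyond this point is a routine invocation of results already in the excerpt and the standard homotopy-invariance of holonomy.
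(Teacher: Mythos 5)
Your proof is correct. The paper states this lemma without a proof, presenting it as an immediate consequence of the preceding paragraph, which asserts the existence of a disk $\mathbb{D}_H\subset\tmop{Fib}^{-1}(x_0)$ such that for $z\in\mathbb{D}_H$ the lifted paths $\Gamma^0_z,\Gamma^1_z$ are leaf-homotopic. Your write-up supplies the missing step, and in particular fills the genuine gap you flag in your final paragraph: the paper's preliminary statement controls the $y$-coordinate \emph{over $x_0$} (the radius of $\mathbb{D}_H$), whereas the lemma requires control in terms of $|\sigma|$, the $y$-coordinate of the base-point \emph{on the separatrix}; for an arbitrary realization of a transversal these are not comparable. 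Choosing the horizontal-fibre realization $\{y=\sigma\}$, whose unique point over $x_0$ is $(x_0,\sigma)$, combined with the realization-independence of corner transition germs (Remark~\ref{rem:identifytransv}), is the clean bridge between the two, and is consistent with the paper's parenthetical remark that the radius of $\mathbb{D}_H$ depends on $x_0$ and the maximal length of paths within $H$. The remaining ingredients — the compactness bound on the lengths, leaf-homotopy invariance of holonomy germs, and the identification of the holonomy along a connecting path with some $D_{\sigma,[n]}$ through the framework of Remark~\ref{rem:impo} — are routine and align with Section~\ref{subsec:gct}. One minor stylistic point: your argument in fact establishes that for $|\sigma|<r$ \emph{both} statements (1) and (2) hold (rather than merely being logically equivalent); it would be worth saying so explicitly, since the lemma's phrasing might otherwise suggest a symmetric implication is being proved contrapositively.
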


\begin{rem}
The converse of this lemma does not necessarily hold.
\end{rem}

\subsubsection{\protect\label{subsec:cornerrelated}Corner transitions related by
the horizontal holonomy}

Consider two corner transition germs $D_{1}=D_{\sigma_{1},\Gamma^{1}}$
and $D_{2}=D_{\sigma_{2},\Gamma^{2}}$ which are determined by two
connecting paths $\Gamma^{1},\Gamma^{2}$. Note that $\Gamma^{1}$
and $\Gamma^{2}$ are not necessarily path-homotopic, neither have
they necessarily the same initial point. Let $\Sigma_{1},\Sigma_{2}$
be two respective transversals through $\sigma_{1},\sigma_{2}$ and
let 
\[
z_{1}\in\Sigma_{1}\quad\tmop{and}\quad z_{2}\in\Sigma_{2}
\]
be the initial points of the corresponding lifted paths $\Gamma_{z_{1}}^{1}$
and $\Gamma_{z_{2}}^{2}$.

We will say that $D_{1}$ and $D_{2}$ are {\tmem{related by the
horizontal holonomy}} if there exists a path $\Upsilon\subset\bar{\Delta}^{\star}$
which admits a lift $\Upsilon_{z_{1},z_{2}}\in\tmop{Mon}\left(U,\vF\right)$
in the foliation with initial point $z_{1}$ and endpoint $z_{2}$,
and satisfying 
\[
\tmop{hol}_{\Gamma_{z_{1}}^{1}}=\tmop{hol}_{\Gamma_{z_{2}}^{2}}\tmop{hol}_{\Upsilon_{z_{1},z_{2}}},
\]
where the holonomies are computed with respect to the transversals
$\Sigma_{1}$ and $\Sigma_{2}$ (see figure below).
\begin{center}
\raisebox{-0.375474987880968\height}{\includegraphics[width=5.97424cm,height=4.19448cm]{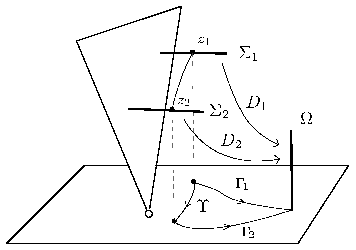}}
\par\end{center}

In other words, for the corresponding germs of a corner transition
$D_{1}$ and $D_{2}$, centered respectively at points $z_{1}$ and
$z_{2}$, we have:
\[
D_{1}=D_{2}\tmop{hol}_{\Upsilon_{z_{1},z_{2}}}.
\]
We will say that $\Upsilon$ is a {\tmem{transporting}} path between
$D_{1}$ and $D_{2}$.

\subsection{Germs of a regular transition and abstract saddle loops}

Recall from the introduction that a saddle loop in a foliated surface
$\left(S,\vG\right)$ is given by a germ of a saddle singularity and
a curve $\Gamma$ which is tangent to the foliation and connects the
two local separatrices. The regular transition map is defined by choosing
two points $\sigma,\omega$ on $\Gamma$ near the singularity and
by considering the holonomy along the sub-path $\gamma\subset\Gamma$
connecting these points.

Clearly, such a curve $\gamma$ has no intrinsic meaning since we
can freely move its initial point and endpoint. However, the genuine
ambiguity in its choice is encoded by the local holonomy groupoid
near the saddle. This motivates the definition of abstract saddle
loop which we give below.
\begin{defn}
Let $\left(U,\vF\right)$ be a prepared saddle foliation. The {\tmem{regular
transition class}} is the set $\tmop{Reg}\left(U,\vF\right)$ whose
elements are germs of a diffeomorphism
\[
R:(\Omega,1)\rightarrow(\Sigma,\sigma)
\]
between the fixed transversal and a transversal with base-point $\sigma\in U$
on the vertical separatrix. We say that $\sigma$ is the {\tmem{endpoint}}
of $R$.
\end{defn}

Following the convention from Remark~\ref{rem:identifytransv}, we
identify two such germs 
\[
R:(\Omega,1)\rightarrow(\Sigma,\sigma),~~\tilde{R}:(\Omega,1)\rightarrow(\tilde{\Sigma},\sigma)
\]
with the {\tmem{same}} endpoint $\sigma$, if there exists a foliated
chart centered on $\sigma$ such that $hR=\tilde{R}$, where $h:(\Sigma,\sigma)\rightarrow(\tilde{\Sigma},\sigma)$
is the germ of a diffeomorphism given by the chart.

The groupoid of holonomy germs $\tmop{Hol}(U,\mathcal{F})$ acts on
$\tmop{Reg}\left(U,\vF\right)$ by left composition. Namely, for each
germ $R_{\sigma_{1}}\in\tmop{Reg}\left(U,\vF\right)$ with endpoint
$\sigma_{1}$ and each germ $h\in\tmop{Hol}(U,\mathcal{F})$ with
initial point $\sigma_{1}$ and endpoint $\sigma_{2}$, we define
the action of $h$ on $R_{\sigma_{1}}$ by 
\begin{equation}
(h,R_{\sigma_{1}})\longmaps R_{\sigma_{2}}=hR_{\sigma_{1}}\label{actionR}
\end{equation}
which gives a germ $R_{\sigma_{2}}\in\tmop{Reg}\left(U,\vF\right)$
with endpoint $\sigma_{2}$. Evidently, this action accounts for the
change of floating transversal.
\begin{defn}
\label{def:regulartransclass} An {\tmem{abstract }}{\tmem{loop}}
on $U$ is a triple $\left(U_{A,B},\vF,\vR\right)$, where $\left(U,\vF\right)$
is a prepared saddle and $\vR\subset\tmop{Reg}(U)$ is an orbit for
the action of $\tmop{Hol}(U,\mathcal{F})$ defined above. In other
words, an orbit $\mathcal{R}$ is a subset of regular germs in $\tmop{Reg}\left(U,\vF\right)$
linked by the relation (\ref{actionR}).
\end{defn}

Notice that, for a fixed base-point $\sigma$, there exist at most
countably many distinct germs $R\in\vR$ with endpoint $\sigma$.
Indeed, given one such germ $R_{[0]}\in\mathcal{R}$, all others are
of the form
\begin{equation}
R_{[n]}=\hhol_{\sigma}^{n}R_{[0]}\,\label{distinctdetsR}
\end{equation}
for some $n\in\mathbb{Z}$, where $\hhol_{\sigma}$ is the holonomy
map associated to the circular path $t\mapsto(0,\ee^{2\pi\ii t}\sigma)$,
$t\in[0,1]$.

\smallskip{}

Following the convention introduced in Section~\ref{subsec:gct}
for germs of a corner transition, we will denote simply by $R_{\sigma,\star}$
a regular transition germ in $\vR$ with endpoint $\sigma$ (whenever
it is irrelevant to specify which one we consider).

\subsubsection{Regular transitions related by the horizontal holonomy}

Let $\left(U,\vF,\vR\right)$ be an abstract saddle loop and let $R_{1}=R_{\sigma_{1},\star}$
and $R_{2}=R_{\sigma_{2},\star}$ be two germs in $\vR$ with respective
endpoints $\sigma_{1},\sigma_{2}$.

As in Section \ref{subsec:cornerrelated}, we will say that $R_{1}$
and $R_{2}$ are {\tmem{related by the horizontal holonomy}} if
there exist a path $\Upsilon\subset\bar{\Delta}^{\star}$, two realizations
of $R_{1},R_{2}$ on respective transversals $\Sigma_{1},\Sigma_{2}$
and two points $z_{1},z_{2}$ in these transversals such that $\Upsilon$
lifts to a path $\Upsilon_{z_{1},z_{2}}\in\tmop{Mon}\left(U_{A,B},\vF\right)$
from $z_{1}$ to $z_{2}$ and 
\[
R_{2}=\tmop{hol}_{\Upsilon_{z_{1},z_{2}}}R_{1},
\]
understood as germs from $z_{1}$ to $z_{2}$ (see figure below).
We will say that $\Upsilon$ is a {\tmem{transporting}} path between
$R_{1}$ and $R_{2}$.
\begin{center}
\raisebox{-0.342678849852111\height}{\includegraphics[width=6.64966cm,height=4.59591cm]{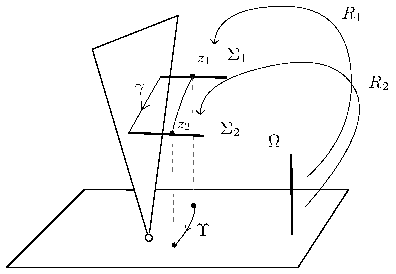}}
\par\end{center}
\begin{rem}
Observe that any two germs $R_{1},R_{2}\in\vR$ are always related
by the horizontal holonomy. Indeed, by the definition of $\vR$, there
exists a path $\gamma\in\tmop{Mon}\left(U,\vF\right)$ between the
base-points $\sigma_{1}$ and $\sigma_{2}$ \ (which is \emph{a fortiori}
contained in the vertical leaf $\{x=0\}$) such that 
\[
R_{2}=\tmop{hol}_{\gamma}R_{1}
\]
(as germs at $\sigma_{1}$ and $\sigma_{2}$ respectively). In order
to obtain a path $\Upsilon$, it suffices to choose a realization
of $\hol_{\gamma},~R_{1}$ and $R_{2}$ on small open neighborhoods
of the transversals and choose points $z_{1}\neq\sigma_{1},z_{2}\neq\sigma_{2}$
such that $\hol_{\gamma}(z_{1})=z_{2}$. By the uniqueness of the
lift, it follows that the germ of $\hol_{\gamma}$ at $z_{1}$ is
simply the holonomy associated to some path $\Upsilon_{z_{1},z_{2}}\in\tmop{Mon}\left(U,\vF\right)$,
which can be projected by the fibration Fib onto a path $\Upsilon\subset\bar{\Delta}^{\star}$
satisfying the above conditions.
\end{rem}

\subsubsection{Poincaré first return maps of an abstract saddle loop}

Let $\left(U,\vF,\vR\right)$ be an abstract saddle loop. Given a
base-point $\sigma$ on the vertical separatrix, let $R=R_{\sigma,\star}\in\tmop{Reg}\left(U,\vF\right)$
be a representative of $\vR$ with endpoint $\sigma$ and let $D=D_{\sigma,\star}\in\Corner\left(U,\vF\right)$
be a corner transition germ with initial point $\sigma$. The composition
\[
P=RD
\]
will be called a {\tmem{Poincaré first return germ based at $\sigma$}}.
\begin{defn}
The set of all such germs will be called the {\tmem{Poincaré first
return class}} of the abstract loop, and denoted by $\Poinc\left(U,\vF,\vR\right)$.
The factorization $P=RD$ as above will be called a {\tmem{dynamical
decomposition}} of the germ $P$.
\end{defn}

For future use, let us state the following obvious result.
\begin{lem}
\label{lem:sameloop}Let $(R_{1},D_{1})$ and $(R_{2},D_{2})$ be
elements in $\vR\times\Corner\left(U,\vF\right)$ which define respective
Poincaré first return germs 
\[
P_{1}=R_{1}D_{1}\quad\text{and}\quad P_{2}=R_{2}D_{2},
\]
based respectively at $\sigma_{1},\sigma_{2}$. Suppose that both
$R_{1},R_{2}$ and $D_{1},D_{2}$ are related by the {\tmstrong{
same}} transporting path $\Upsilon$. Then $P_{1}$ and $P_{2}$
are analytically conjugate.
\end{lem}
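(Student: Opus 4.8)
The plan is to read off an explicit conjugating germ directly from the two hypotheses, with essentially no computation. Unwinding the definition of ``$R_{1},R_{2}$ related by the transporting path $\Upsilon$'' (the regular-transition analogue of Section~\ref{subsec:cornerrelated}), there is an $\vF$-lift $\Upsilon_{z_{1},z_{2}}\in\tmop{Mon}(U,\vF)$, with $z_{i}$ lying on a realization $\Sigma_{i}$ of the floating transversal through $\sigma_{i}$, such that, after identifying the holonomy germ at $z_{i}$ with a germ based at $\sigma_{i}$ via Remark~\ref{rem:identifytransv}, the map $h:=\tmop{hol}_{\Upsilon_{z_{1},z_{2}}}\colon(\Sigma_{1},\sigma_{1})\to(\Sigma_{2},\sigma_{2})$ satisfies $R_{2}=h\,R_{1}$. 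By hypothesis the \emph{same} transporting path, equipped with the \emph{same} lift $\Upsilon_{z_{1},z_{2}}$, also witnesses that $D_{1},D_{2}$ are related in the sense of Section~\ref{subsec:cornerrelated}, which reads $D_{1}=D_{2}\,h$.

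With these two identities the proof is immediate. Writing $Q:=R_{1}D_{2}$ for the germ obtained by composition, one gets
\[
P_{1}=R_{1}D_{1}=R_{1}(D_{2}\,h)=Q\,h,\qquad P_{2}=R_{2}D_{2}=(h\,R_{1})D_{2}=h\,Q,
\]
so that $P_{2}=h\,P_{1}\,h^{-1}$. Since a holonomy germ is always a germ of a biholomorphism, $h$ realizes an analytic conjugacy between $P_{1}$ and $P_{2}$, which is exactly the asserted conclusion; if moreover the ambient data are real analytic, the same $h$ is real analytic and one obtains a real conjugacy as well.

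The only point that deserves care — and the reason this ``obvious'' lemma still warrants a word — is the matching of base-points and the precise meaning of ``the same transporting path''. I would spell out that a single path $\Upsilon\subset\bar{\Delta}^{\star}$ can be equipped with a single $\vF$-lift $\Upsilon_{z_{1},z_{2}}$ serving both relations simultaneously, so that literally the same germ $h$ occurs on both sides; this uses the uniqueness of $\vF$-lifts with prescribed initial point (the partial path-lifting of Section~\ref{sect:lift}) together with the transversal identification of Remark~\ref{rem:identifytransv} to transport the holonomy germ from the auxiliary points $z_{i}$ to the base-points $\sigma_{i}$, ensuring that the four compositions $R_{1}D_{1}$, $R_{2}D_{2}$, $hR_{1}$, $D_{2}h$ all take place between matching transversals. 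Beyond this bookkeeping there is no obstacle.
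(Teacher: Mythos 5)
Your proof is correct and is essentially the same as the paper's: both extract the identities $R_{2}=h\,R_{1}$ and $D_{1}=D_{2}\,h$ from the common transporting path, and both exhibit $h=\tmop{hol}_{\Upsilon_{z_{1},z_{2}}}$ (which the paper writes as $\varphi=R_{2}R_{1}^{-1}$) as the conjugating germ. The auxiliary factor $Q=R_{1}D_{2}$ is a slightly different packaging of the same computation, and your closing remarks on matching base-points, lift uniqueness, and the identification of Remark~\ref{rem:identifytransv} match the (implicit) care the paper takes in its opening sentence.
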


\begin{proof}
By definition, there exists a realization of both $R_{1},~D_{1}$
on a common transversal $\Sigma_{1}$ and a realization of $R_{2},~D_{2}$
on a common transversal $\Sigma_{2}$ and two points $(z_{1},z_{2})\in\Sigma_{1}\times\Sigma_{2}$
such that 
\begin{equation}
R_{2}=\tmop{hol}_{\Upsilon_{z_{1},z_{2}}}R_{1}\quad\text{and}\quad D_{1}=D_{2}\tmop{hol}_{\Upsilon_{z_{1},z_{2}}}.\label{eq:us}
\end{equation}
By assumption, $R_{1}$ and $R_{2}$ are analytic diffeomorphisms
defined on connected domains, mapping $1\in\Omega$ respectively to
$\sigma_{1}$ and $\sigma_{2}$. Therefore, putting $\varphi=R_{2}R_{1}^{-1}$,
the leftmost identity in~\eqref{eq:us} implies that $\varphi$ defines
a germ of a diffeomorphism from $(\Sigma_{1},\sigma_{1})$ to $(\Sigma_{2},\sigma_{2})$
such that $\varphi=\tmop{hol}_{\Upsilon_{z_{1},z_{2}}}$ (seen as
a germ from $z_{1}$ to $z_{2}$). Hence, by~\eqref{eq:us},
\[
P_{2}=R_{2}D_{2}=\varphi R_{1}D_{1}\varphi^{-1}=\varphi P_{1}\varphi^{-1}.
\]
\end{proof}

\subsection{\protect\label{sec:gcc}Loop germs and their equivalence}

We now {\tmem{germify}} the notion of abstract saddle loops by
considering the basis of neighborhoods of the disk $\bar{\Delta}$
defined by the family of $\lambda$-neighborhoods.
\begin{defn}
\label{def:loop-germ}A {\tmem{loop germ}} $(\mathcal{F},\mathcal{R})$
is an equivalence class of abstract saddle loops, where two abstract
loops 
\[
\left(U_{1},{\vF}_{1},\vR_{1}\right)\quad\left(U_{2},{\vF}_{2},\vR_{2}\right)
\]
are {\tmem{equivalent}} if there exists a $\lambda$-adapted domain
$U$ contained in $U_{1}\cap U_{2}$ such that 
\[
\mathcal{F}_{1}=\mathcal{F}_{2}\quad\tmop{on}\quad U
\]
and the regular transition classes $\mathcal{R}_{1}$ and $\mathcal{R}_{2}$
coincide when restricted to $U$.
\end{defn}

Accordingly, the {\tmem{corner transition class}} of a loop germ
is the set $\tmop{Corner}\left(\vF\right)$ of Dulac germs which can
be obtained as corner transition germs of some of its realizations
$\left(U,\vF\right)$. We define analogously the {\tmem{Poincaré
first return class}} as the set $\Poinc\left(\vF,\vR\right)$ of
Dulac germs which can be obtained as Poincaré first return maps of
some of its realizations $\left(U,\vF,\vR\right)$.

\medskip{}

Consider now a germ of a fibered diffeomorphism $\Phi\in\tmop{Diff}_{\tmop{fib}}(\mathbb{C}^{2},\bar{\Delta})$.
The {\tmem{action}} of $\Phi$ on a loop germ $\left(\vF,\vR\right)$
defines a new loop germ 
\[
(\widetilde{\vF},\widetilde{\vR})
\]
obtained as follows.
\begin{enumerate}
\item Choose representatives of both $\Phi$ and $\vF$ on some $\lambda$-adapted
neighborhood $U$ and define 
\[
\widetilde{\vF}=\Phi\left(\vF\right).
\]
\item Choose an element $R\in\vR$, \emph{i.e.} a germ of a regular transition
\[
R:(\Omega,1)\longrightarrow(\Sigma,\sigma),
\]
and define $\widetilde{\vR}$ to be the regular transition class generated
by 
\begin{equation}
\tilde{R}=(\Phi|_{\Sigma}\nobracket)R(\Phi|_{\Omega}\nobracket)^{-1}.\label{mapregclass}
\end{equation}
\end{enumerate}
\begin{defn}
\label{def:fib_eqv_abstract_loops}Under conditions 1. and 2. above,
we write $(\widetilde{\vF},\widetilde{\vR})=\Phi\cdot\left(\vF,\vR\right)$,
and we say that $(\widetilde{\vF},\widetilde{\vR})$ and $\left(\vF,\vR\right)$
are {\tmem{$\tmop{Diff}_{\tmop{fib}}(\mathbb{C}^{2},\bar{\Delta})$-equivalent}}.
\end{defn}

\begin{rem}
We remark the following basic correspondence between connecting paths
of corner transition maps in $\tmop{Diff}_{\tmop{fib}}(\mathbb{C}^{2},\bar{\Delta})$-equivalence.
In the above notation, suppose that $\Phi$ has a realization on a
domain $U$ and let $D\in\tmop{Corner}\left(U,\vF\right)$ be a corner
transition map defined on a transversal $\Sigma$ with base-point
$\sigma$. Then 
\begin{equation}
\widetilde{D}\overset{\tmop{def}}{=}(\Phi|_{\Omega}\nobracket)D(\Phi|_{\Sigma}\nobracket)^{-1}\label{mapcornclass}
\end{equation}
is a corner transition germ for $\widetilde{\vF}$, defined on a transversal
with base-point $\tilde{\sigma}=\Phi(\sigma)$. Moreover, if $D=D_{\sigma,\Gamma}$
is determined by ${\vF}$-lifting a connecting path $\Gamma$ then
$\widetilde{D}=D_{\tilde{\sigma},\Gamma}$ is determined by the $\tilde{\vF}$-lifting
of the {\em same} connecting path (since $\Phi$ preserves the
vertical fibration).
\end{rem}

We will say that the germs $\tilde{R}\in\widetilde{\vR}$ and $\tilde{D}\in\tmop{Corner}(\widetilde{\vF})$
defined by (\ref{mapregclass}) and (\ref{mapcornclass}) are respectively
the {\tmem{$\Phi$-correspondents}} of $R\in\vR$ and $D\in\tmop{Corner}\left(\vF\right)$.
We denote this relation by $\tilde{R}=\Phi_{\star}(R)$ and $\tilde{D}=\Phi_{\star}(D)$.

\subsubsection{Equivalence implies conjugacy of Poincaré transition germs}

Our first equivalence theorem states that equivalent loop germs possess
conjugate Poincaré first return classes.
\begin{thm}
\label{thm:A}Consider two $\tmop{Diff}_{\tmop{fib}}(\mathbb{C}^{2},\bar{\Delta})$-equivalent
loop germs $(\widetilde{\vF},\widetilde{\vR})$ and $\left(\vF,\vR\right)$
and let $\Phi$ be a fibered diffeomorphism such that $\left(\vF,\vR\right)=\Phi\cdot(\widetilde{\vF},\widetilde{\vR})$.
Let 
\[
P\in\Poinc\left(\vF,\vR\right),\qquad\tilde{P}\in\Poinc(\widetilde{\vF},\widetilde{\vR})
\]
be two Poincaré first return germs with dynamical decompositions $P=RD$
and $\tilde{P}=\tilde{R}\tilde{D}$, such that the pairs $R_{1},R_{2}\in\mathcal{R}$
and $D_{1},D_{2}\in\tmop{Corner}\left(\vF\right)$ defined by 
\[
R_{1}=R,R_{2}=\Phi_{\star}(\tilde{R})\quad\text{and}\quad D_{1}=D,D_{2}=\Phi_{\star}(\tilde{D})
\]
are related by a{\tmstrong{ same}} transporting path $\Upsilon\in\bar{\Delta}^{\star}$.
Then, $P$ and $\tilde{P}$ are analytically conjugate.
\end{thm}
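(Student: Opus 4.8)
Here is the plan. The idea is to route the desired conjugacy $P\thicksim\tilde P$ through the auxiliary Poincaré germ $P_{2}=R_{2}D_{2}$ built from the $\Phi$-correspondents, establishing separately a ``fibered'' conjugacy $\tilde P\thicksim P_{2}$ read off directly from $\Phi$, and a ``foliated'' conjugacy $P=R_{1}D_{1}\thicksim P_{2}$ coming from the common transporting path $\Upsilon$, then composing. First I would fix a single $\lambda$-adapted region $U$ on which representatives of $\Phi$, of $\vF$ and $\widetilde{\vF}$, of the relevant transversals, of the germs $R,D,\tilde R,\tilde D$, and of $\Upsilon$ together with the lifts used to relate $R_{1},R_{2}$ and $D_{1},D_{2}$ are all simultaneously realized; this is legitimate by the germified point of view of Definition~\ref{def:loop-germ} and the uniform path-lifting estimates of Section~\ref{sect:lift} (notably Remark~\ref{rem:impo}), after shrinking $U$ finitely many times.

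For the fibered conjugacy, write $\tilde R\colon(\Omega,1)\to(\widetilde{\Sigma},\tilde\sigma)$ and $\tilde D\colon(\widetilde{\Sigma},\tilde\sigma)\to(\Omega,1)$, so that $\tilde P=\tilde R\,\tilde D$ is based at $\tilde\sigma$. Since $\Phi\in\tmop{Diff}_{\tmop{fib}}(\mathbb{C}^{2},\bar{\Delta})$ preserves the vertical fibration and fixes the transversal $\Omega=\{x=1\}$, the definitions~(\ref{mapregclass}) and~(\ref{mapcornclass}) of the $\Phi$-correspondents yield
\[
R_{2}=\Phi_{\star}(\tilde R)=(\Phi|_{\widetilde{\Sigma}})\,\tilde R\,(\Phi|_{\Omega})^{-1},\qquad D_{2}=\Phi_{\star}(\tilde D)=(\Phi|_{\Omega})\,\tilde D\,(\Phi|_{\widetilde{\Sigma}})^{-1},
\]
where $\Phi|_{\widetilde{\Sigma}}\colon(\widetilde{\Sigma},\tilde\sigma)\to(\Sigma,\sigma)$, with $\Sigma=\Phi(\widetilde{\Sigma})$ and $\sigma=\Phi(\tilde\sigma)$, is a germ of a biholomorphism. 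Composing, the two occurrences of $\Phi|_{\Omega}$ cancel and
\[
P_{2}=R_{2}D_{2}=(\Phi|_{\widetilde{\Sigma}})\,\tilde R\,\tilde D\,(\Phi|_{\widetilde{\Sigma}})^{-1}=(\Phi|_{\widetilde{\Sigma}})\,\tilde P\,(\Phi|_{\widetilde{\Sigma}})^{-1},
\]
so $\tilde P$ and $P_{2}$ are analytically conjugate. This is precisely the place where it matters that a fibered biholomorphism preserves the \emph{fixed} transversal $\Omega$.

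For the foliated conjugacy, observe that $(R_{1},D_{1})=(R,D)$ and $(R_{2},D_{2})=(\Phi_{\star}(\tilde R),\Phi_{\star}(\tilde D))$ are (by the hypotheses of the theorem, together with the Remark following Definition~\ref{def:fib_eqv_abstract_loops} for the corner part) pairs in $\vR\times\Corner(\vF)$, with $R_{2}$ of endpoint $\sigma=\Phi(\tilde\sigma)$ and $D_{2}$ of initial point $\sigma$, so that $R_{2}D_{2}\in\Poinc(\vF,\vR)$; moreover $R_{1},R_{2}$ are related by $\Upsilon$ and $D_{1},D_{2}$ are related by the \emph{same} $\Upsilon$. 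Lemma~\ref{lem:sameloop} applies verbatim and gives that $P=R_{1}D_{1}$ and $P_{2}=R_{2}D_{2}$ are analytically conjugate. Chaining the two conjugacies yields $P\thicksim P_{2}\thicksim\tilde P$, as claimed.

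I expect the genuinely delicate point to be not the algebra above but the justification that all the listed representatives can be taken over one and the same $\lambda$-adapted region, so that every composition written down is a bona fide germ of a holomorphic map between the appropriate transversals (in particular that the various liftings of $\Upsilon$ used in the hypotheses coexist there); once the estimates of Section~\ref{sect:lift} are invoked, the argument becomes purely formal, the substantive geometric content having been packaged into Lemma~\ref{lem:sameloop}, whose own proof rests on path lifting inside the foliation. A secondary routine point to spell out in the write-up is the bookkeeping of base-points ($R_{2}$ has endpoint $\sigma=\Phi(\tilde\sigma)$, matching the initial point of $D_{2}$), ensuring that $R_{2}D_{2}$ really is an element of $\Poinc(\vF,\vR)$ rather than a mere abstract composition.
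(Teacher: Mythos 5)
Your proof is correct and takes essentially the same route as the paper: both establish $P\thicksim P_{2}=R_{2}D_{2}$ by Lemma~\ref{lem:sameloop} and then compute $P_{2}=(\Phi|_{\widetilde{\Sigma}})\,\tilde P\,(\Phi|_{\widetilde{\Sigma}})^{-1}$ directly from the $\Phi$-correspondence formulas, the two cancelling on $\Omega$, after which one composes the two conjugacies. The extra remarks about fixing a common $\lambda$-adapted region and about base-point bookkeeping are sound but not a departure from the paper's argument.
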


\begin{proof}
It follows from Lemma~\ref{lem:sameloop} that that $P=P_{1}=R_{1}D_{1}$
and $P_{2}=R_{2}D_{2}$ are analytically conjugate. Moreover, we have
\[
P_{2}=R_{2}D_{2}=(\Phi|_{\widetilde{\Sigma}}\nobracket)\tilde{R}(\Phi|_{\Omega}\nobracket)^{-1}(\Phi|_{\Omega}\nobracket)\tilde{D}(\Phi|_{\widetilde{\Sigma}}\nobracket)^{-1}=(\Phi|_{\widetilde{\Sigma}}\nobracket)\tilde{P}(\Phi|_{\widetilde{\Sigma}}\nobracket)^{-1}
\]
and, therefore, $P_{2}$ is analytically conjugate to $\tilde{P}$.
\end{proof}

\subsubsection{Conjugacy of Poincaré transition germs implies equivalence}

Our next goal is to prove the converse of Theorem~\ref{thm:A}.
\begin{thm}
\label{thm:B}Let $(\widetilde{\vF},\widetilde{\vR})$ and $\left(\vF,\vR\right)$
be two loop germs, and suppose that there exists two Poincaré first
return germs 
\[
P\in\Poinc\left(\vF,\vR\right),\qquad\tilde{P}\in\Poinc(\widetilde{\vF},\widetilde{\vR})
\]
which are analytically conjugate. Then, $(\widetilde{\vF},\widetilde{\vR})$
and $\left(\vF,\vR\right)$ are $\tmop{Diff}_{\tmop{fib}}(\mathbb{C}^{2},\bar{\Delta})$-equivalent.
\end{thm}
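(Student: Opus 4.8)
The plan is to deduce from the conjugacy of the Poincaré maps a conjugacy between the holonomies of the two saddles, to feed it into the Mattei--Moussu theorem to produce an $x$-fibered equivalence $\Phi$ of the foliations, and then to check --- this is the delicate point --- that $\Phi$ carries $\widetilde{\vR}$ exactly onto $\vR$.

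So I first fix dynamical decompositions $P=RD$, $\tilde{P}=\tilde{R}\tilde{D}$ and pass to the logarithmic chart, writing $p=rd$, $\tilde{p}=\tilde{r}\tilde{d}$ in $\Dulac$; here the lifts $r,\tilde{r}$ of $R,\tilde{R}$ and the lift $\psi$ of the conjugacy $\varphi\colon(\widetilde{\Sigma},\tilde{\sigma})\to(\Sigma,\sigma)$, $\tilde{P}=\varphi^{-1}P\varphi$, are all unramified. From $\tilde{p}=\psi^{-1}p\psi$, the commutator identities of Appendix~\ref{sec:commut} (notably $\varD((rd)^{-1})=r\,\varD(d^{-1})\,r^{-1}$, valid because $r\in\U$) together with $\varD(d^{-1})=h_{\Omega}^{-1}$ from Section~\ref{subsect:holvars} give
\[
\tilde{h}_{\widetilde{\Omega}}=\chi\,h_{\Omega}\,\chi^{-1},\qquad \chi:=\tilde{r}^{-1}\psi^{-1}r\in\U,
\]
where $h_{\Omega},\tilde{h}_{\widetilde{\Omega}}$ are the lifts of the saddle holonomies $\holo[\Omega],\holo[\widetilde{\Omega}]$. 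Since the vertical transversal $\Omega=\{x=1\}$ is common to both loop germs, applying the morphism $\Pi\colon\U\to\Diff(\cC,0)$ of~\eqref{isomUDiff} yields a \emph{convergent} germ $X:=\Pi(\chi)=\tilde{R}^{-1}\varphi^{-1}R\in\Diff(\cC,0)$ conjugating $\holo[\Omega]$ (for $\vF$) to $\holo[\widetilde{\Omega}]$ (for $\widetilde{\vF}$); convergence, rather than mere formality, is automatic because $R,\tilde{R},\varphi$ have convergent lifts, and this is exactly what allows us to invoke Mattei--Moussu. (One also notes $\vF$ and $\widetilde{\vF}$ have the same eigenratio $\lambda$, since $P,\tilde{P}$ have the same multiplier.)

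I then apply Theorem~\ref{thm-Mattei-Moussu} with the conjugacy $X^{-1}\colon(\widetilde{\Omega},1)\to(\Omega,1)$, obtaining an $x$-fibered biholomorphism $\Phi\colon\widetilde{U}\to U$ between suitable $\lambda$-neighborhoods, with $\Phi(\widetilde{\vF})=\vF$ and $\Phi|_{\widetilde{\Omega}}=X^{-1}$. It now suffices to prove $\Phi_{\star}(\tilde{R})\in\vR$, since then $\Phi_{\star}(\widetilde{\vR})$ --- the $\tmop{Hol}(\vF)$-orbit of $\Phi_{\star}(\tilde{R})$ --- equals $\vR$, so that $\Phi\cdot(\widetilde{\vF},\widetilde{\vR})=(\vF,\vR)$, i.e.\ the two loop germs are $\tmop{Diff}_{\tmop{fib}}(\mathbb{C}^{2},\bar{\Delta})$-equivalent. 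For this I use two identities: $\tilde{R}=\varphi^{-1}RX^{-1}$ (read off from the formula for $X$), and, substituting this into $P=\varphi\tilde{P}\varphi^{-1}=(\varphi\tilde{R})(\tilde{D}\varphi^{-1})=(RX^{-1})(\tilde{D}\varphi^{-1})$ and comparing with $P=RD$, the identity $\tilde{D}\varphi^{-1}=XD$. With $D':=\Phi_{\star}(\tilde{D})=X^{-1}\tilde{D}(\Phi|_{\widetilde{\Sigma}})^{-1}$ and $\widehat{\varphi}:=(\Phi|_{\widetilde{\Sigma}})\varphi^{-1}$ these give
\[
\Phi_{\star}(\tilde{R})=(\Phi|_{\widetilde{\Sigma}})\,\tilde{R}\,X=(\Phi|_{\widetilde{\Sigma}})\varphi^{-1}R=\widehat{\varphi}\,R,\qquad D'\,\widehat{\varphi}=X^{-1}\tilde{D}\varphi^{-1}=D.
\]
Hence $\widehat{\varphi}=D'^{-1}D$ is the composite of a corner transition germ of $\vF$ with the inverse of another; as every corner transition germ is a holonomy germ along a connecting path (Section~\ref{subsec:gct}), $\widehat{\varphi}$ lies in the groupoid $\tmop{Hol}(\vF)$ with source the endpoint $\sigma$ of $R$. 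Since $\vR$ is a $\tmop{Hol}(\vF)$-orbit and $R\in\vR$, we conclude $\Phi_{\star}(\tilde{R})=\widehat{\varphi}R\in\vR$, completing the proof.

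The main work will be the bookkeeping behind the displayed identities: keeping track of which transversal each germ is based at, of the chosen logarithmic lifts (with their $\tau$-indeterminacy), and of the implicit identifications of transversals sharing a base-point (Remark~\ref{rem:identifytransv}); above all one must be sure the germ $X$ fed to Mattei--Moussu is genuinely holomorphic. Conceptually, this realizes the subtle point flagged in the introduction: the equivalence $\Phi$ need not preserve the floating transversal --- it sends $\widetilde{\Sigma}$ to some other transversal $\Sigma'=\Phi(\widetilde{\Sigma})$ --- but the computation shows it moves $\widetilde{\Sigma}$ along a leaf, namely by the holonomy germ $\widehat{\varphi}$, so that the regular transition still lands in the intended orbit $\vR$.
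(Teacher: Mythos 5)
Your proof is correct and follows essentially the same route as the paper's: lift to the Dulac group, use the variation identities to deduce conjugacy of the $\Omega$-holonomies, feed $X^{-1}=R^{-1}\varphi\tilde R$ into Mattei--Moussu to get a fibered $\Phi$, and then show $\Phi_\star(\tilde R)$ differs from $R$ by a holonomy germ $\widehat\varphi=D'^{-1}D$. The only point the paper treats more carefully than you flag is the need to first shrink $(\tilde\Sigma,\tilde\sigma)$ (replacing $\tilde P$ by a conjugate $\tilde P_1=k\tilde P k^{-1}$ based closer to the origin) so that it actually lies in the domain of definition of the Mattei--Moussu germ $\Phi$.
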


\begin{proof}
Let
\[
P\in\Poinc\left(\vF,\vR\right),\qquad\tilde{P}\in\Poinc\left(\tilde{\vF},\tilde{\vR}\right),
\]
from the statement be the germs of a first return map defined on the
respective transversals $(\Sigma,\sigma)$ and $(\tilde{\Sigma},\tilde{\sigma})$.
Then, by assumption, there exists a germ of an analytic diffeomorphism
$\varphi$ between these transversals such that $\varphi\left(\sigma\right)=\widetilde{\sigma}$
and:
\[
\tilde{P}=\varphi P\varphi^{-1}.
\]
Let $P=RD$, $\tilde{P}=\tilde{R}\tilde{D}$ be dynamical decompositions
of $P$ and $\widetilde{P}$ based respectively at $\sigma$ and $\widetilde{\sigma}$.
The conjugacy relation between $P$ and $\tilde{P}$ gives $\tilde{R}\tilde{D}=\varphi RD\varphi^{-1}$,
or, equivalently, 
\[
(R^{-1}\varphi^{-1}\tilde{R})\tilde{D}\varphi=D.
\]
Let us consider the lift of these germs to the covering coordinates
$x=\ee^{-z}$, $y=\ee^{-w}$. Then, we obtain the relation 
\[
v\,\tilde{d}\,u=d,
\]
where the Dulac germs $\tilde{d},\ d$ are the lifts of $\tilde{D},\ D,$
and the unramified germs $u,v$ are respectively the lifts of $\varphi$
and $R^{-1}\varphi^{-1}\tilde{R}$. By applying the variation operator
to both sides, the identity~\eqref{eq:varxz} from the Appendix~\ref{sec:commut}
gives: 
\[
v\,\tmop{var}(\tilde{d})\,v^{-1}=\tmop{var}(d^{-1}).
\]
We now apply Theorem \ref{thm-Mattei-Moussu} to obtain a fibered
germ $\Phi\in\tmop{Diff}_{\tmop{fib}}(\mathbb{C}^{2},\bar{\Delta})$,
which maps $\tilde{\mathcal{F}}$ to $\mathcal{F}$ (as germs, on
some neighborhoods of the separatrices that contain $\lambda$-prepared
domains for $\tilde{\mathcal{F}}$ \emph{resp.} $\mathcal{F}$), such
that 
\begin{equation}
\Phi\Big|_{\Omega}=R^{-1}\circ\varphi^{-1}\circ\tilde{R}.\label{eq:i}
\end{equation}

Without loss of generality, we may assume that the transversal $(\tilde{\Sigma},\tilde{\sigma})$
above is chosen sufficiently close to zero, so that it lies in the
domain of definition of the equivalence $\Phi$ and its image by $\Phi$
belongs to some $\lambda$-adapted region for $\mathcal{F}$. Indeed,
instead of $\tilde{P}=\tilde{R}\tilde{D}$ on $(\tilde{\Sigma},\tilde{\sigma})$,
take another element $\tilde{P}_{1}\in\Poinc(\widetilde{\vF},\widetilde{\vR})$,
defined on some transversal $(\tilde{\Sigma}_{1},\tilde{\sigma}_{1})$
sufficiently close to the origin, so that it lies in the domain of
definition of $\Phi$. Let $\tilde{P}_{1}=\tilde{R}_{1}\tilde{D}_{1}$
be its dynamical decomposition at $\tilde{\sigma}_{1}$. We get that
$\tilde{D}_{1}=\tilde{D}k^{-1}$ and $\tilde{R}_{1}=k\tilde{R}$,
that is, $\tilde{P}_{1}=k\tilde{P}k^{-1}$, for some holonomy $k$
of $\tilde{\mathcal{F}}$. That means that $\tilde{P}_{1}=k\varphi P\varphi^{-1}k^{-1}$.
Applying Theorem~\ref{thm-Mattei-Moussu}, in the same way as above
we get that there exists a diffeomorphism $\Phi_{1}$ of foliations
$\mathcal{F}$ and $\tilde{\mathcal{F}}$ such that 
\[
\Phi_{1}\Big|_{\Omega}=R^{-1}\circ\varphi^{-1}\circ k^{-1}\circ\tilde{R}_{1}=R^{-1}\circ\varphi^{-1}\circ\tilde{R}=\Phi\Big|_{\Omega}.
\]
By the uniqueness of the analytic extension, $\Phi_{1}$ extends as
$\Phi$ analytically over the transversal $(\tilde{\Sigma}_{1},\tilde{\sigma}_{1})$.

Let us now consider the image of the loop germ $(\widetilde{\vF},\widetilde{\vR})$
under the action of $\Phi$, which we denote by $(\vF,\overline{\vR})$.
By construction, using \eqref{eq:i}, the $\Phi$-correspondent of
$\tilde{R}$ is given by 
\begin{equation}
\overline{R}=(\Phi|_{\tilde{\Sigma}})\tilde{R}(\Phi|_{\Omega})^{-1}=\Phi\Big|_{\tilde{\Sigma}}\circ\varphi\circ R.\label{eq:ih}
\end{equation}

What is left to prove is that $(\mathcal{F},\overline{\mathcal{R}})$
and $(\mathcal{F},\mathcal{R})$ define the same loop germ. That is,
that there exist $\overline{R}\in\overline{\mathcal{R}}$ , $R\in\mathcal{R}$
and a holonomy map $h$ of $\mathcal{F}$ such that $R=h\overline{R}$.

Take the decomposition $\overline{R}\,\overline{D}$ such that $\overline{R}$
and $\overline{D}$ are $\Phi$-correspondents of $\tilde{R}$ and
$\tilde{D}$. Then $\overline{P}=\overline{R}\,\overline{D}\in\Poinc(\mathcal{F},\overline{\mathcal{R}})$,
and: 
\begin{equation}
\overline{P}=(\Phi|_{\tilde{\Sigma}}\nobracket)\tilde{P}(\Phi|_{\tilde{\Sigma}}\nobracket)^{-1}=(\Phi|_{\tilde{\Sigma}}\nobracket)\circ\varphi\circ P\circ\varphi^{-1}\circ(\Phi|_{\tilde{\Sigma}}\nobracket)^{-1}.\label{eq:ihh}
\end{equation}

Since $\overline{D}$ and $D$ are corner transition germs of the
same foliation $\mathcal{F}$ (defined in its $\lambda$-invariant
domain), there exists a holonomy germ $h$ such that 
\begin{equation}
\overline{D}=Dh.\label{eq:ik}
\end{equation}
Now, putting~\eqref{eq:ik} and~\eqref{eq:ih} in~\eqref{eq:ihh},
writing $P=RD$ and $\overline{P}=\overline{R}\,\overline{D}$, we
get after simplifications that $h^{-1}=\Phi\Big|_{\tilde{\Sigma}}\circ\varphi$,
that is, by~\eqref{eq:ih}, $\overline{R}=h^{-1}R.$
\end{proof}

\subsection{\protect\label{sec:realization}Geometric realizations of a loop
germ}

We recall from the Introduction that a saddle loop in a foliated complex
analytic surface $(S,\mathcal{G})$ is defined by a saddle singularity
$s\in S$ and an oriented $C^{1}$-path $\Gamma\subset S$ that is
tangent to $\mathcal{G}$ and connects its local separatrices. Using
Proposition \ref{subsect:preparation-saddles}, it is easy to prove
that we can uniquely associate a loop germ $\mathbb{L}=(\vF,\vR)$
to the data given by $(S,\mathcal{G})$, $s$ and $\Gamma$.

In this subsection, our goal is to prove the converse. Namely, we
prove that any germ of a saddle loop can be {\em embedded} in a
foliated complex analytic surface. Let $\mathbb{L}=(\vF,\vR)$ be
a loop germ, as defined in Section~\ref{sec:gcc}. A {\em{gluing
domain}} for $\mathbb{L}$ is an open neighborhood $W\subset\cC^{2}$
of the horizontal unit disk $\overline{\Delta}$ whose traces on the
coordinates axis, $W\cap\{y=0\}$ and $W\cap\{x=0\}$ are simply connected
domains such that both $\vF$ and $\vR$ have realizations on $W$.

We recall that the latter condition means that $\vF$ extends to a
holomorphic foliation on $W$ and that one can choose some horizontal
transverse section $(\Sigma,\sigma)$ with base-point $(0,\sigma)$
in $\{x=0\}\cap W$ and a diffeomorphism 
\begin{equation}
R:(\Omega,1)\longrightarrow(\Sigma,\sigma)\label{realizationofRinW}
\end{equation}
that belongs to the regular transition class $\vR$ (see Definition~\ref{def:regulartransclass}).
\begin{rem}
We will use this map to quotient $W$ out by identifying a point of
(a foliated neighborhood of) $\Omega$ with its image by $R$. Notice
that we do not assume $W$ to be an adapted $\lambda$-neighborhood
as in Definition~\ref{def:prepared_saddle}. In fact, we will see
that $W$ will sometimes have to be shrunk in order to guarantee that
the quotient space is Hausdorff.
\end{rem}

Let $\left(S,\vG\right)$ be a foliated complex analytic surface.
\begin{defn}
\label{def:realizationsaddle} A {\em{(geometric) realization}}
of $\mathbb{L}$ in $\left(S,\vG\right)$ is given by the following
data:
\begin{enumerate}
\item A biholomorphic map $\Psi:W\rightarrow V$ between a gluing domain
$W\subset\cC^{2}$ and an open subset $V\subset S$ such that 
\[
\Psi\left(\vF|_{W}\right)=\left.\vG\right|_{V}
\]
In particular, $s=\Psi(0)$ is a saddle singularity of $\vG$.
\item An oriented curve $\gamma\subset S$ contained in a leaf of $\mathcal{G}$,
connecting the points $\Psi((0,1))$ to $\Psi((0,\sigma))$ and such
that the associated germ of a holonomy map $\holt{\vG,\gamma}$ between
$\Psi(\Omega)$ and $\Psi(\Sigma)$ satisfies 
\[
\holt{\vG,\gamma}=(\Psi|_{\Sigma})R(\Psi|_{\Omega})^{-1}
\]
for some germ $R\in\vR$ as in (\ref{realizationofRinW}).
\end{enumerate}
We will say $\Psi$ is the {\em{embedding map}} and that $\gamma$
is the {\em{connecting path}} of the realization.
\end{defn}

\begin{rem}
\label{rem:realisation-movingR} Notice that a geometric realization
is defined by choosing one representative $R\in\vR$ of the regular
transition class. However, once the embedding map $\Psi:W\rightarrow V$
and the connecting path $\gamma\subset S$ have been chosen, we can
obtain realizations for different choices of representative in $\vR$.
More precisely, in the notation of the previous definition, let $\delta\in\tmop{Mon}(W,\vF)$
be a path on the vertical separatrix between $\sigma$ and another
point $\tilde{\sigma}$ and let $\tilde{R}\in\vR$ be the germ from
$(\Omega,1)$ to $(\tilde{\Sigma},\tilde{\sigma})$ defined by 
\[
\tilde{R}=\holt{\vF,\delta}\,R
\]
(see Definition~\ref{def:regulartransclass}). Thus condition 2.
of the previous definition holds with $R$ replaced by $\tilde{R}$
if we change the path $\gamma$ to $\gamma\star\varepsilon$, where
the $\varepsilon\in\tmop{Mon}(S,\vG)$ is the image of $\delta$ under
the induced morphism of groupoids $\Psi_{*}:\tmop{Mon}(W,\vF)\rightarrow\tmop{Mon}(S,\vG)$.
\begin{figure}[htb]
\centering{}\includegraphics[height=5cm]{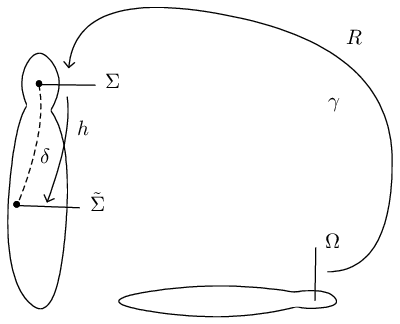}
\end{figure}
\end{rem}

Let us see some examples of a geometric realization.
\begin{example}
\label{example:real0} Consider the foliation $\mathcal{C}$ on $\mathbb{P}^{1}(\mathbb{C})$
defined by the polynomial differential form 
\[
\omega=\dd{(y^{2}-x^{2}+x^{3})}.
\]
The saddle point at the origin is linearizable, locally orbitally
equivalent to the linear foliation $\vF{}_{\mathrm{lin}}$ defined
by $\dd{\left(uv\right)}=0$. The separatrices $\{u=0\}$ and $\{v=0\}$
correspond to the local branches of the cubic $\Gamma=(y^{2}-x^{2}+x^{3}=0)$.
Up to a scaling, we can assume that the image $U$ of this orbital
equivalence contains the closure of the polydisk $\Delta\times\Delta$
in the $u,v$ variables.

Fixing the transverse sections $\Sigma=\{v=1\},\Omega=\{u=1\}$, equipped
with the natural parameterizations defined by the ambient coordinates,
and the connecting path $\gamma$ given by the real trace of $\Gamma$,
we easily see that $(\mathbb{P}^{1}(\mathbb{C}),\mathcal{C})$ contains
a realization of the loop germ 
\begin{equation}
\mathbb{L}=(\mathcal{F}_{1:1},\id).\label{trivialL}
\end{equation}
\end{example}

\begin{example}
\label{example:real0b} More generally, consider a polynomial $h\in\mathbb{R}[z,w]$
which has a simple critical point of saddle type at $p\in\mathbb{R}^{2}$
and such that that the real algebraic curve ${\Gamma_{\mathbb{R}}}=h^{-1}(\{h(p)\})$
contains a smooth component in ${\Gamma_{\mathbb{R}}}\setminus\{p\}$
connecting $p$ to itself. Then, this data also provides a realization
of the abstract saddle loop (\ref{trivialL}) in the foliated surface
$(\mathbb{P}^{1}(\mathbb{C}),\tmop{Fol}(\dd h))$. 
\begin{figure}[htb]
\centering{}\includegraphics[width=7.5cm,height=1.8cm]{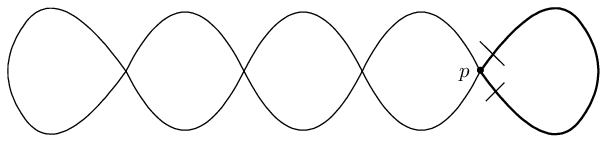}
\end{figure}
\end{example}

\begin{example}
\label{example:real1} Consider the following perturbation of the
integrable foliation given in Example \ref{example:real0}, 
\[
\omega_{\eps}=\dd{(w^{2}-z^{2}+z^{3})}+\eps\frac{(z^{2}-z^{3})^{2}}{wz}\dd{\left(\frac{w^{2}}{z^{2}-z^{3}}\right)}
\]
Notice that the cubic $\Gamma$ is an invariant algebraic curve for
all values of the parameter $\eps$. The ratio of eigenvalues at the
saddle point is given by $\lambda_{\eps}=\frac{\eps-1}{\eps+1}$.
By a suitable compactification, we can prove that the (closure of
the) complex leaf containing $\Gamma$ is homeomorphic to a pinched
torus, and that the holonomies are periodic when $\lambda_{\eps}$
is positive rational. Hence that the saddle point is linearizable.
We do not know if this is true for irrational values of $\lambda_{\eps}$
(we refer to \cite[Example 7.1 and Corollary 9.2]{Brunella2015} for
a more general discussion on foliations tangent to virtually elliptic
curves).
\end{example}

In the Examples~\ref{example:real0}--\ref{example:real1}, the
connecting curve $\gamma$ is contained in an algebraic leaf. Let
us see an example where this is not the case.
\begin{example}
Consider the one-parameter family of foliations defined by $\omega_{b}=P_{b}\dd y-Q_{b}\dd x$,
where 
\begin{align*}
P_{b} & =\frac{1}{4}\,{\left(2\,x+1\right)}{\left(2\,y+1\right)}-b\\
Q_{b} & =\frac{1}{4}\,{\left(2\,x-1\right)}{\left(2\,y-1\right)}-b
\end{align*}
This foliation has a Darbouxian first integral $H=\ee^{y-x}\left(xy+\frac{1}{2}(y-x)-b-\frac{1}{4}\right)$.
For each real $b<\frac{1}{4}$, such a foliation has a saddle loop
inside the non-algebraic leaf given by the level set $\{H=h_{c}\}$,
where $h_{c}=-\frac{1}{2}\left(\sqrt{-4\,b+1}-1\right)\ee^{-\sqrt{-4\,b+1}}$.
\end{example}

More generally, we can exhibit examples of non-algebraic saddle loops
inside non-integrable foliations.
\begin{example}
The {\em Kaypten-Dulac family} is the family of foliations defined
by the 6-parameter differential form $\omega_{\lambda}=P_{\lambda}\dd y-Q_{\lambda}\dd x$,
where 
\begin{align*}
P_{\lambda} & =-y+\lambda_{1}x-\lambda_{3}x^{2}+(2\lambda_{2}+\lambda_{5})xy+\lambda_{6}y^{2}\\
Q_{\lambda} & =x+\lambda_{1}y+\lambda_{2}x^{2}+(2\lambda_{3}+\lambda_{4})xy-\lambda_{2}y^{2}
\end{align*}
For $\lambda=(0,0,0,0,0,-1)$, the form $\omega_{\lambda}=-\dd{(y^{2}/2+y^{3}/3+x^{2}/2)}$
is integrable and has an algebraic saddle loop. It follows from~\cite[Section 5.3.2]{Roussarie1998}
that there exists a nonzero vector $(a,b,c)\in\cR^{3}$ such that
for all small perturbation of parameters lying inside the hypersurface
\[
a\lambda_{1}+b\lambda_{2}\lambda_{4}+c\lambda_{5}=0,
\]
the family still has a saddle loop. For generic values of parameters
inside this hypersurface, the associated foliation cannot be integrable,
since each one of the integrability strata in Kaypten-Dulac family
(given by the Dulac theorem) is of codimension greater or equal than
two.
\end{example}

All previous examples show realizations of abstract saddle loops in
algebraic foliated surfaces. However, it follows\footnote{The argument of~\cite{Genzmer2010} translates to resonant saddle
foliations at least when $\lambda=1$ through a blow-up of the saddle-node
singularity. It most probably does for other rational eigenratios.} from~\cite{Genzmer2010} that not every abstract saddle loop has
a realization in the algebraic category. Let us prove that a realization
always exists in the analytic category.
\begin{prop}
\label{prop:realisation} Each loop germ can be realized in a foliated
analytic complex surface.
\end{prop}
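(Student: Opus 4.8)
The plan is to realise $\bL$ by attaching a \emph{bridge} to an embedded copy of a prepared saddle: an auxiliary flow box that realises the germ $R\in\vR$ as a trivial-monodromy transition linking the two separatrices. First I would fix a representative $\left(U,\vF,\vR\right)$ of $\bL$, a representative $R\colon(\Omega,1)\to(\Sigma,\sigma)$ of $\vR$, and a gluing domain $W\subset U$ — a neighbourhood of $\overline{\Delta}\times\{0\}$ with simply connected axis traces on which $\vF$, $\Sigma$ and $R$ are realised (such a $W$ exists because $\vF$ is a prepared saddle). Since $\vF$ is transverse to both $\Omega$ and $\Sigma$, I would pick distinguished (flow-box) neighbourhoods $N_\Omega\cong\Omega_0\times\Delta_\varepsilon$ and $N_\Sigma\cong\Sigma_0\times\Delta_\varepsilon$ of small sub-arcs $\Omega_0\ni(1,0)$ and $\Sigma_0=R(\Omega_0)\ni(0,\sigma)$, relatively compact in $W$ with disjoint closures, whose plaques lie in leaves of $\vF$ and whose zero sections are $\Omega_0$, $\Sigma_0$.

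Next I would take the bridge $B\cong\tau\times\Delta_2$, foliated by $\{p\}\times\Delta_2$, together with a biholomorphism $\iota\colon\tau\xrightarrow{\ \sim\ }\Omega_0$ and two disjoint discs $D_0\ni 0$, $D_1\ni 1$ of radius $\varepsilon$ in $\Delta_2$. Form $S$ by gluing $B$ to $W$: identify $\tau\times D_0\subset B$ with $N_\Omega\subset W$ through $\iota$ (matching plaques), and identify $\tau\times D_1\subset B$ with $N_\Sigma\subset W$ through $R\circ\iota\colon\tau\to\Sigma_0$ (again matching plaques); let $q\colon W\sqcup B\to S$ be the quotient map. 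As both identifications preserve the foliations, $\vF$ and the product foliation of $B$ descend to a holomorphic foliation $\vG$ on $S$, and $s:=q(0)$ is a saddle singularity of eigenratio $\lambda$. Because the bridge joins the horizontal-separatrix plaque through $(1,0)$ to the vertical-separatrix plaque through $R(1,0)=(0,\sigma)$, the $\vG$-leaf through $s$ contains a homoclinic loop; this is the step which in general has no algebraic counterpart, the bridge being a purely analytic object.

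The delicate point is that $S$ need not be Hausdorff: a sequence inside $\tau\times D_0$ tending to its boundary in $B$ has image tending to $\partial N_\Omega\subset W$, producing a spurious pair in the closure of the graph of the identification. I would remedy this in the usual way, by \emph{shrinking}: replace $W$ by a smaller gluing domain and the gluing regions by slightly smaller flow boxes, chosen so that no such limiting pair lies in $(W\sqcup B)^2$, after which a graph-closedness argument shows that the quotient is a Hausdorff complex surface. The main obstacle is precisely this point-set bookkeeping — carrying out the shrinking while keeping the trimmed $W$ a genuine gluing domain (a neighbourhood of $\overline{\Delta}\times\{0\}$ with simply connected axis traces); everything else is routine.

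Finally I would verify Definition~\ref{def:realizationsaddle}. The restriction $\Psi:=q|_W$ is injective — the identification only relates points of $W$ to points of $B$ — hence a biholomorphism onto the open set $V:=q(W)\subset S$, with $\Psi(\vF|_W)=\vG|_V$ and $s=\Psi(0)$. For the connecting path I would take $\gamma\subset S$ to be the $q$-image of a leaf arc running from $(1,0)\in\Omega$ into $N_\Omega$, across $B$ along the leaf through $\iota^{-1}(1,0)$, and back through $N_\Sigma$ to $(0,\sigma)\in\Sigma$; it lies in the loop leaf through $s$. Following transversals along $\gamma$, the plaque transport through $N_\Omega$ and through $B$ is the identity while the $B$-to-$N_\Sigma$ identification contributes exactly the germ $R$, so $\holt{\vG,\gamma}=(\Psi|_\Sigma)\,R\,(\Psi|_\Omega)^{-1}$, which is condition~2. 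Hence $(S,\vG)$ together with $\Psi$ and $\gamma$ is a geometric realisation of $\bL$, which is what was wanted.
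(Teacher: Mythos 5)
Your bridge construction is a genuinely different route from the paper's, which instead self-glues a domain $W = D_1 \cup D_2 \cup (\tilde\Omega\times\Delta)\cup(\Delta\times\tilde\Sigma)$ by identifying the two flow-box charts via the map $G:(x_1,y_1)\mapsto(y_1,\tilde R(x_1))$. Both approaches run into the same Hausdorffness difficulty, and that is where your write-up has a gap: the ``shrinking'' you invoke, as stated, cannot resolve the obstruction you yourself point out.

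Concretely, you require $N_\Omega$ and $N_\Sigma$ to be \emph{relatively compact} in $W$, and $D_0$, $D_1$ relatively compact in $\Delta_2$. Under these hypotheses the quotient $S$ is necessarily non-Hausdorff: take $b_n\in\tau\times D_0$ with $b_n\to\bar b\in\tau\times\partial D_0\subset B$; the identified points in $N_\Omega$ accumulate (along a subsequence) at some $\bar w\in\overline{N_\Omega}\setminus N_\Omega$, which by relative compactness lies in $W$, and $(\bar b,\bar w)$ cannot be separated in $S$. Replacing $W$ by a smaller gluing domain and $N_\Omega$, $N_\Sigma$ by smaller flow boxes preserves both relative-compactness conditions and therefore reproduces the same spurious pair, so the naive shrinking does not help. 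What one actually needs is either to arrange that the \emph{leafwise} boundary of the flow boxes escapes $W$ (so the identified sequence leaves every compact subset of $W$), or to run a sharper graph-closedness argument. The paper takes the second route: it shows that $\overline{\Psi_1(D_1\cap V_1)}$ and $\overline{\Psi_2(D_2\cap V_2)}$ can each be separated from the origin by an affine line in the rectified flow-box coordinates, then applies a rotation so that these closed sets are disjoint, and from this deduces that $G(\overline{\Psi_1(D_1\cap V_1)})$ and $\overline{\Psi_2(D_2\cap V_2)}$ are disjoint, which is precisely what closes the gluing graph. That geometric lemma is the missing substance behind what you dismiss as ``point-set bookkeeping.'' The remainder of your argument --- the descent of $\vF$ and of the product foliation of $B$ to a foliation $\vG$ on $S$, and the verification of Definition~\ref{def:realizationsaddle} with $\Psi=q|_W$ and $\holt{\vG,\gamma}=(\Psi|_\Sigma)R(\Psi|_\Omega)^{-1}$ --- is sound.
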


\begin{proof}
Let $(\vF,\vR)$ be a loop germ. We fix once and for all a $\lambda$-neighborhood
$U\subset\cC^{2}$ where $\vF$ is holomorphic and a germ $R$ as
in~(\ref{realizationofRinW}) which represents $\vR$ on $U$. The
basic idea of proof is quite simple: we will construct the surface
$S$ by {\em{holomorphically gluing}} the transverse sections
$\Omega$ and $\Sigma$ through $R$. However, we must be careful
to guarantee that the resulting topological space is Hausdorff.

We consider first two neighborhoods $V_{1},V_{2}\subset U$ of the
base-points $(1,0)$ and $(0,\sigma)$ of the transversals, equipped
with appropriate rectifying coordinates $(x_{1},y_{1})$, $(x_{2},y_{2})$.
More precisely, we choose $V_{1},V_{2}$ such that the next two conditions
are fulfilled.
\begin{enumerate}
\item $V_{1}\cap\{y=0\}$ is a disk centered in $(1,0)$ and there exists
a biholomorphism $\Psi_{1}:V_{1}\rightarrow\Delta\times\Delta$ mapping
$\left.\vF\right|_{V_{1}}$ to the horizontal foliation $\{y_{1}=\cst\}$,
such that $\Psi_{1}(\Omega)=\Delta\times\{0\}$ and $\Psi_{1}(y=0)=(y_{1}=0)$.
\item $V_{2}\cap\{x=0\}$ is a disk centered in $(0,\sigma)$ and there
exists a biholomorphism $\Psi_{2}:V_{2}\rightarrow\Delta\times\Delta$
mapping $\left.\vF\right|_{V_{2}}$ to the vertical foliation $\{x_{2}=\cst\}$,
such that $\Psi_{2}(\Sigma)=\{0\}\times\Delta$ and $\Psi_{2}(x=0)=(x_{2}=0)$.
\end{enumerate}
Up to reducing the size of such neighborhoods, we can further suppose
$\Psi_{1},\Psi_{2}$ have holomorphic extensions to the closures $\overline{V_{1}},\overline{V_{2}}$
and that 
\begin{equation}
\overline{V_{1}}\cap\overline{V_{2}}=\emptyset.\label{emptyinter2}
\end{equation}
We now choose two polydisks centered on the origin 
\begin{equation}
D_{1}=\Delta_{a}\times\Delta_{\eps},\quad D_{2}=\Delta_{\eps}\times\Delta_{b},\label{polyD}
\end{equation}
where $\eps>0$ is some small constant to be chosen later and the
radius $a,b>0$ are going to be chosen such that, intuitively, the
intersections $D_{1}\cap V_{1}$ and $D_{2}\cap V_{2}$ are {\em{sufficiently
small}}.

More precisely, we choose $0<a<1$ such that the region 
\[
J_{1}=\Psi_{1}((\Delta_{a}\times\{0\})\cap V_{1})
\]
is a non-empty open subset of the unit disk whose closure can be separated
from the origin by an affine line (see the figure below).

\begin{figure}[htb]
\centering{}\includegraphics[width=2.659878cm,height=2.70607cm]{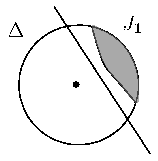}
\end{figure}

We choose similarly $0<b<|\sigma|$ such that $J_{2}=\Psi_{2}((\{0\}\times\Delta_{b})\cap V_{2})$
is a non-empty subset of $\Delta$ whose closure can be separated
from the origin by an affine line. It follows that we can choose a
rotation $r_{\lambda}(z)=\lambda z$, \ $|\lambda|=1$, \ such that
$r_{\lambda}\left(\overline{J_{1}}\right)\cap\overline{J_{2}}=\emptyset$.

We now modify one of the rectifying chart, say $\left(x_{1}{,y_{1}}\right),$
by post-composing the horizontal coordinate with the above rotation
$x_{1}\mapsto r_{\lambda}(x_{1})$. Therefore, in these new coordinates,
we obtain simply 
\begin{equation}
\overline{J_{1}}\cap\overline{J_{2}}=\emptyset.\label{s1s2}
\end{equation}
Up to taking $\eps>0$ sufficiently small, we can assume that the
same separation property holds for the whole polydisks, namely that
the sets 
\begin{equation}
\overline{\Psi_{1}(D_{1}\cap V_{1})}\quad\tmop{and}\quad\overline{\Psi_{2}(D_{2}\cap V_{2})}\label{separation}
\end{equation}
are disjoint. Up to further reducing $\eps$, we can therefore suppose
that the intersections 
\begin{equation}
\text{\ensuremath{\overline{D_{1}}\cap\overline{V_{2}}} and \ensuremath{\overline{D_{2}}\cap\overline{V_{1}}}}\label{emptyinter}
\end{equation}
are empty.

Now, using the coordinate charts $(x_{1},y_{1})$ and $(x_{2},y_{2})$
given above we fix a realization $x_{2}=\tilde{R}(y_{1})$ of the
germ $R$ between two small simply connected neighborhoods $\tilde{\Sigma}\subset\Sigma$
and $\tilde{\Omega}\subset\Omega$ of the corresponding base-points
and define a biholomorphism $\GlueM:\tilde{\Omega}\times\Delta\rightarrow\Delta\times\tilde{\Sigma}$
by 
\[
(x_{2},y_{2})=\GlueM(x_{1},y_{1})=(y_{1},\tilde{R}(x_{1})),
\]
which we call the {\em{gluing map}}. Notice that, according to
this definition, the restricted map $\GlueM|_{y_{1}=0}$ is the identity.
In particular, $\GlueM|_{y_{1}=0}\left(\overline{J_{1}}\right)=\overline{J_{1}}$
is disjoint from $\overline{J_{2}}$, according to (\ref{s1s2}).
Therefore, up to a further decrease of $\eps>0$, we can assume from~(\ref{separation})
that 
\begin{equation}
\GlueM\left(\overline{\Psi_{1}(D_{1}\cap V_{1})}\right)\text{ and }\overline{\Psi_{2}(D_{2}\cap V_{2})}\label{disj3}
\end{equation}
are disjoint.

We now consider the gluing domain given by the union $W=D_{1}\cup D_{2}\cup(\tilde{\Omega}\times\Delta)\cup(\Delta\times\tilde{\Sigma})$
and the quotient set 
\[
S=W/\thicksim
\]
where the equivalence relation $\thicksim$ identifies each point
$p$ in $\tilde{\Omega}\times\Delta$ to its image point $\GlueM(p)\in\Delta\times\tilde{\Sigma}$.

\begin{figure}
\centering{}\includegraphics[width=8.5895cm,height=4.90865cm]{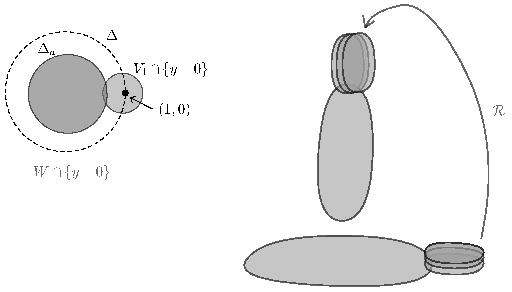}
\end{figure}

It is obvious that $S$ has a holomorphic structure inherited from
the holomorphic structure on $W$. Moreover, if we denote by $\pi:W\rightarrow S$
the quotient map, the image $\mathcal{G}=\pi(\mathcal{F})$ is a holomorphic
foliation in $S$ satisfying all the above requirements.

The only thing that remains to be proved is that $S$ is Hausdorff.
Or, equivalently, to show that the diagonal $\Delta(S)$ is closed
in the product $S\times S$. By the above construction, this amounts
to proving that the graph of the gluing map $\GlueM$, 
\[
\tmop{Graph}(\GlueM)=\{(p,\GlueM(p)):p\in(\tilde{\Omega}\times\Delta)\}
\]
is a closed subset of $W\times W$. But this property is a consequence
of the separation property (\ref{separation}).

Indeed, let us suppose by contradiction that there exists a limit
point $\left(\overline{p},\overline{q}\right)\in W\cap\partial\tmop{Graph}(\GlueM)$.
It follows from (\ref{emptyinter2}) and (\ref{emptyinter}) that
$\overline{p}\in D_{1}$ and $\overline{q}\in D_{2}$. On the other
hand, there exists a sequence 
\[
\{(p_{n},\GlueM(p_{n}))\}_{n\geqslant1}\subset\tmop{Graph}(\GlueM)
\]
converging to $\left(\overline{p},\overline{q}\right)$. Consider
the image of this sequence using the $\Psi_{1}$ and $\Psi_{2}$ maps.
Then, for all $n$ sufficiently large, $p_{n}$ lies in $\overline{\Psi_{1}(D_{1}\cap V_{1})}$
and $\GlueM(p_{n})$ lies in \ $\overline{\Psi_{2}(D_{2}\cap V_{2})}$,
which contradicts~(\ref{disj3}). The proposition is proved.
\end{proof}
\begin{rem}
\label{rem-Stein}Based on the above construction, a natural question
is whether one can choose $S$ to be a Stein manifold (that can be
embedded into $\cC^{n}$ for some sufficiently large $n$). Answering
this question will be the subject of an upcoming work.
\end{rem}

\subsection{\protect\label{subsec:real_loops}Real saddle loops}

We now consider the problem of classifying real analytic saddle loops.
We will say that a loop germ $\mathbb{L}=(\vF,\vR)$ is {\em{real}}
if:
\begin{enumerate}
\item we can choose a generator $x$ of $\mathcal{F}$ which is a real analytic
vector field;
\item we can choose a representative 
\[
R:(\Sigma,\sigma)\rightarrow(\Omega,1)
\]
that belongs to the transition class $\vR$, such that $\Sigma,\Omega$
are complexifications of real analytic curves (which we denote by
$\Sigma_{\mathbb{R}}$ and $\Omega_{\mathbb{R}}$) and $R:\Sigma_{\mathbb{R}}\rightarrow\Omega_{\mathbb{R}}$
is a real analytic map.
\end{enumerate}
Let us fix one such real analytic germ $R$. Up to a reflection and
multiplication of $x$ by $-1$, we can suppose that the configuration
of the transversals and the orientation of the solution curves of
$x$ is as illustrated below.

\begin{figure}[htb]
\centering{}\includegraphics[width=4.23383cm,height=4.21704cm]{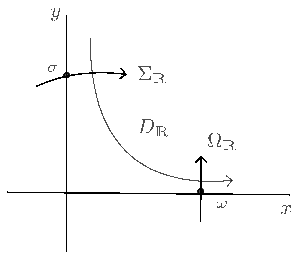}
\end{figure}

In the picture, we indicate by an arrow the orientation of the real
transversals in such a way that both ordered basis $\{x(\omega),T_{\omega}\Omega\}$
and $\{T_{\sigma}\Sigma,x(\sigma)\}$ are positively oriented (with
respect to the standard orientation of $\mathbb{R}^{2}$). Based on
these orientations, let us denote by $\Sigma_{\mathbb{R}_{>0}}$ and
$\Omega_{\mathbb{R}_{>0}}$ the positive parts of these transversals
(and define similarly $\Sigma_{\mathbb{R}_{<0}}$ and $\Omega_{\mathbb{R}_{<0}}$).

The leaf correspondence induced by the real trajectories uniquely
defines a determination of the corner transition that maps $\Sigma_{\mathbb{R}_{>0}}$
into $\Omega_{\mathbb{R}_{>0}}$. This is precisely the canonical
determination considered in Section~\ref{sub:three} (see Remark~\ref{rem:realsaddlecorner}).
We call it the {\em{real}} corner transition map associated to
$\mathbb{L}$, and write it simply $D$.

For later use, we also distinguish two possible cases concerning the
regular real transition map $R$ considered in the item 2. of the
above definition.
\begin{defn}
We will say that $R$ {\em{preserves the orientation}} if it
maps $\Omega_{\mathbb{R}_{>0}}$ to $\Sigma_{\mathbb{R}_{>0}}$. We
say that $R$ {\em{reverses the orientation}} if it maps $\Omega_{\mathbb{R}_{>0}}$
to $\Sigma_{\mathbb{R}_{<0}}$.
\end{defn}

\begin{rem}
If $R$ preserves the orientation then the {\em real} Poincaré
first return map, 
\[
P=RD
\]
is a real analytic germ mapping $\Sigma_{\mathbb{R}_{>0}}$ into itself.
\end{rem}

\subsubsection{Planar realizations \emph{via} Morrey-Grauert embedding}

Thanks to the realness assumption on $\mathbb{L}$, we prove a more
refined version of the Proposition~\ref{prop:realisation}, showing
that a {\em planar} realization always exists. In particular, this
provides a positive answer to the question asked in Remark~\ref{rem-Stein}
for real loop germs.

We specialize Definition~\ref{def:realizationsaddle} to the real
context.
\begin{defn}
\label{def:real-real} Let $\mathbb{L}=(\vF,\vR)$ be a real loop
germ. We say that a geometric realization of $\mathbb{L}$ in a foliated
surface $(S,\vG)$ is {\em real and planar} if:
\begin{enumerate}
\item $(S,\vG)$ is a complexification of a pair $(U,\mathcal{X})$ formed
by an open subset $U\subset\cR^{2}$ equipped with a real analytic
(oriented) foliation $\mathcal{X}$;
\item the embedding map $\Psi$ is the complexification of a real analytic
diffeomorphism, mapping the real solution curves of $\vF$ to the
real solution curves of $\mathcal{X}$ (preserving the orientations);
\item the connecting path $\gamma$ is real (\emph{i.e.} contained in $U$).
\end{enumerate}
Let us denote by $\Gamma\subset U$ the curve given by the saturation
of $\gamma$ by the leaves of $\mathcal{X}$. For shortness, we will
say that the triple $(U,\mathcal{X},\Gamma)$ is {\em a real planar
realization of $\mathbb{L}$}.
\end{defn}

\begin{rem}
\label{rem:realisation-movingRreal} We notice that a real planar
realization is defined by picking one representative $R\in\vR$ of
the regular transition class. As in Remark~\ref{rem:realisation-movingR},
once such a realization is obtained we can modify the representative
$R$ by pre-composing it with the holonomy of some path $\delta$,
\emph{i.e.} by taking 
\[
\tilde{R}=\holt{\vF,\delta}\,R
\]
and modifying accordingly the connecting path $\gamma$ to $\gamma\star\delta$.
Notice however that since we want to preserve the real transversals,
here we restrict to the case where the path $\delta$ is a segment
of the {\em real} vertical separatrix $\{x=0\}\subset\cR^{2}$
(see Figure~\ref{fig:real_transervsals_connection}). 
\begin{figure}[htb]
\begin{centering}
\includegraphics[height=4cm]{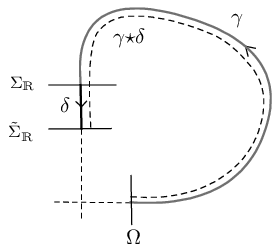}
\par\end{centering}
\caption{\protect\label{fig:real_transervsals_connection}}
\end{figure}
\end{rem}

The following result is essentially a consequence of the Morrey-Grauert
embedding theorem and the gluing argument that we used in the proof
of Proposition \ref{prop:realisation}. A similar construction is
provided in \cite[section 0.3 §C]{IlyaDu}.
\begin{prop}
\label{prop:realplanarreal} Suppose that $\mathbb{L}=(\vF,\vR)$
is real and that there exists a representative $R\in\vR$ which preserves
the orientation. Then, $\mathbb{L}$ has a real planar realization.
\end{prop}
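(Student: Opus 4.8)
The plan is to revisit the gluing construction from the proof of Proposition~\ref{prop:realisation}, carrying the real structure through every step, and then to use the Morrey--Grauert theory to realize the resulting abstract real analytic surface as an open subset of the plane.

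First I would normalize. By Proposition~\ref{prop:preparation-saddles} (whose coordinate changes are real analytic when applied to a real analytic germ, just as in Remark~\ref{rem:Realmatteimoussu}), we may assume $\vF$ is the complexification of a real prepared saddle, that $\Omega=\{x=1\}$ and $\Sigma$ are complexifications of real analytic transversals $\Omega_{\cR},\Sigma_{\cR}$, and --- after the reflection/sign normalization made just before the definition of ``$R$ preserves the orientation'' --- that the given $R\in\vR$ is a real analytic, orientation-preserving diffeomorphism $\Omega_{\cR}\to\Sigma_{\cR}$. All the ingredients of the proof of Proposition~\ref{prop:realisation} can then be chosen real: the rectifying charts $\Psi_1,\Psi_2$ on neighborhoods $V_1,V_2$ of $(1,0)$ and $(0,\sigma)$ are complexifications of real flow-box charts (real rectification of a real analytic vector field near a regular point); the polydisks $D_1,D_2$ near the saddle are complexifications of real ``polyrectangles''; and the separation conditions~\eqref{emptyinter2}, \eqref{s1s2}, \eqref{separation}, \eqref{emptyinter}, \eqref{disj3} can be arranged for the complexified pieces while keeping them conjugation-symmetric --- in the planar picture the base-points of the two transversals lie on distinct separatrix branches of the saddle, so the rotation $r_\lambda$ used in the complex proof is replaced by simply positioning and shrinking the real flow boxes. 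Finally the gluing map $\GlueM(x_1,y_1)=(y_1,\tilde R(x_1))$ is the complexification of a real analytic diffeomorphism $\GlueM_{\cR}$ between real polyrectangles, and since $R$ preserves the orientation, one checks from the orientation conventions fixed above that $\GlueM_{\cR}$ is orientation-preserving.

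Consequently the quotient $S_{\cR}=W_{\cR}/{\sim}$ of the real gluing domain is Hausdorff (by the separation conditions, exactly as in Proposition~\ref{prop:realisation}), orientable, a real analytic surface carrying a real analytic foliation $\mathcal{G}_{\cR}$, a distinguished saddle $s$ and a real connecting arc $\gamma$; its complexification is precisely the complex surface produced by Proposition~\ref{prop:realisation}. By construction $W_{\cR}$ is a cyclic chain of four contractible rectangles (the two near-saddle pieces, glued to one another at $s$, and the two flow boxes, glued to one another by $\GlueM_{\cR}$), so $S_{\cR}$ is homotopy equivalent to the loop $\Gamma\simeq\mathbb{S}^1$; being an open connected \emph{orientable} surface of this homotopy type, it is diffeomorphic to a standard open annulus $A\subset\cR^2$. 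Invoking the uniqueness part of the Morrey--Grauert theory (a paracompact smooth manifold carries a real analytic structure, unique up to real analytic diffeomorphism), we obtain a real analytic diffeomorphism $\psi:S_{\cR}\to A$. Set $U=A$, $\mathcal{X}=\psi_{*}\mathcal{G}_{\cR}$ and $\Gamma=\psi(\gamma)$; then $(U,\mathcal{X})$ is a real analytic foliated planar domain, $\Gamma$ is real, and composing the rectifying charts, the quotient projection and $\psi^{\cC}$ yields an embedding $\Psi$ of a gluing domain for $\mathbb{L}$ into the complexification $(S,\vG)$ of $(U,\mathcal{X})$, mapping $\vF$ to $\vG$ with $\Psi(0)=s$. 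The holonomy identity $\holt{\vG,\gamma}=(\Psi|_\Sigma)\,R\,(\Psi|_\Omega)^{-1}$ of Definition~\ref{def:realizationsaddle} holds by the very definition of $\GlueM$, as in the complex case, and all maps involved are complexifications of real analytic maps preserving the oriented real foliations. Hence $(U,\mathcal{X},\Gamma)$ is a real planar realization of $\mathbb{L}$ in the sense of Definition~\ref{def:real-real}.

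I expect the main obstacle to be the final step: establishing that the abstract real analytic surface $S_{\cR}$ embeds real-analytically as an open subset of $\cR^2$ --- not merely of some $\cR^N$, which is all the naive Morrey--Grauert embedding theorem provides. This is where the topological identification of $S_{\cR}$ as an open annulus must be fed into the uniqueness of the real analytic structure, and it is exactly here that the orientation hypothesis on $R$ is used: had $R$ reversed the orientation, $\GlueM_{\cR}$ would be orientation-reversing and $S_{\cR}$ would be a non-planar open Möbius band, for which no planar realization can exist.
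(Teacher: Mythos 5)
Your proposal follows essentially the same strategy as the paper: carry the real structure through the gluing construction of Proposition~\ref{prop:realisation} to produce a real analytic surface $S_{\cR}$ with the foliation, saddle, and connecting arc; observe that $S_{\cR}$ is smoothly diffeomorphic to a planar domain, using orientability (supplied by the hypothesis on $R$); then upgrade that diffeomorphism to a real analytic one via the Morrey--Grauert circle of ideas. The one point where you diverge from the paper's proof is the intermediate smooth step: you identify $S_{\cR}$ as an open annulus by topological classification of orientable noncompact surfaces of finite type and then invoke uniqueness of compatible real analytic structures, whereas the paper constructs the smooth diffeomorphism $\Psi_{s}:S_{\cR}\rightarrow D\cup V_{0}\subset\cR^{2}$ explicitly by a partition of unity subordinated to the cover by the near-saddle piece and the glued flow boxes, embeds via Morrey--Grauert into $\cR^{N}$, and then Weierstrass-approximates $\Psi_{s}$ by a real analytic diffeomorphism. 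Both routes are valid; the paper's partition-of-unity construction is more hands-on and sidesteps appealing to the surface classification, while your route is shorter but a bit quick in asserting that the cycle of four contractible pieces yields an annulus --- that deduction deserves a line of justification (finite topological type, genus zero, two ends). Your closing remark about the orientation hypothesis and the M\"obius band matches the paper's own aside, which is reassuring.
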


\begin{proof}
We define a gluing domain $W\subset\mathbb{C}^{2}$ for $\mathbb{L}$
as in the proof of the Proposition \ref{prop:realisation}. By construction,
its real trace $W_{\mathbb{R}}=W\cap\mathbb{R}^{2}$ is as illustrated
in Figure~\ref{fig:real_flow_boxes}, where $F_{1}$ and $F_{2}$
are real flow boxes.

\begin{figure}[htb]
\begin{centering}
\includegraphics[width=5.35445cm,height=4.32305cm]{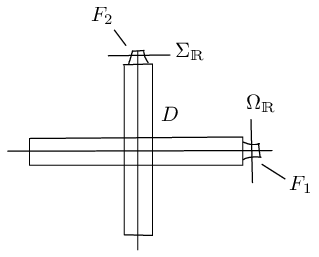}
\par\end{centering}
\caption{\protect\label{fig:real_flow_boxes}}
\end{figure}

Notice that here the gluing map $\GlueM$ can be chosen in such a
way that the following additional properties hold:
\begin{enumerate}
\item $\GlueM(F_{1})=F_{2}$;
\item $\GlueM$ maps the solution curves of $X$ on $F_{1}$ into the solution
curves of $X$ on $F_{2}$ {\em preserving the orientation}.
\end{enumerate}
As a result, the glued foliation $\mathcal{G}$ is real and preserves
the natural orientation of the real solution curves. Notice that the
orientation-preserving condition on $R$ is equivalent to the condition
that $S_{\mathbb{R}}$ be an orientable surface.

\textbf{Claim: }There exists a $C^{\infty}$ diffeomorphism mapping
$S_{\mathbb{R}}$ to an open subset $V_{0}$ of $\mathbb{R}^{2}$.

Indeed, let us choose an arbitrary real smooth curve $\gamma\subset\mathbb{R}^{2}$
connecting $F_{1}$ to $F_{2}$ as depicted by Figure~\ref{fig:gluing_real_flow_boxes}.

\begin{figure}[htb]
\begin{centering}
\includegraphics[width=5.80851cm,height=3.99772cm]{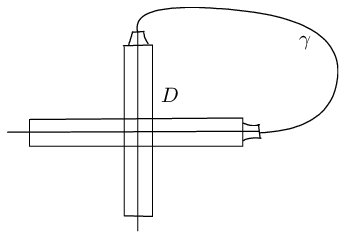}
\par\end{centering}
\caption{\protect\label{fig:gluing_real_flow_boxes}}
\end{figure}

Next we use a partition of unity subordinated to the covering $D\cup(F_{1}\thicksim_{\GlueM}F_{2})$,
and we can construct a smooth diffeomorphism $\Psi_{s}$ mapping $S_{\mathbb{R}}$
to $D\cup V_{0}$, where $V_{0}$ is an open neighborhood $\gamma$
(and such that $\Psi_{s}|_{D}=\id$). We leave the details to the
reader.

On the other hand, it follows from the Morrey-Grauert embedding theorem~\cite{10.2307/1970257}
that that there exists a real analytic proper embedding $\varphi:S_{\mathbb{R}}\rightarrow\mathbb{R}^{N}$
for some sufficiently large $N$. Using the embedding and the Weierstrass
Approximation Theorem on $\mathbb{R}^{N}$, we can find a real analytic
diffeomorphism $\Psi_{a}$ (arbitrarily close to $\Psi_{s}$) between
$S_{\mathbb{R}}$ and an open subset $U\subset\mathbb{R}^{2}$. The
realization of $\mathbb{L}$ in $U$ is obtained by taking the image
$\mathcal{X}$ of the foliation $\mathcal{G}$ under $\Psi_{a}$.
\end{proof}
\begin{rem}
Suppose that we modify the hypothesis by requiring instead that $R$
reverse the orientation. We could construct a realization of $\mathbb{L}$
in a Moëbius band using the same strategy as above.
\end{rem}

\subsubsection{Equivalence of planar realizations}

Using the Euclidean structure of $\cR^{2}$, we can easily construct
a global real analytic equivalence between planar realizations of
the same real loop germ.
\begin{prop}
\label{prop:equivalencesameloop} Let $(U,\mathcal{X},\Gamma)$ and
$(\tilde{U},\widetilde{\mathcal{X}},\tilde{\Gamma})$ be two planar
realizations of the \textbf{{same}} real loop germ $\mathbb{L}=(\vF,\vR)$.
Then, up to choosing $U$ and $\tilde{U}$ as smaller neighborhoods
of $\Gamma$ and $\tilde{\Gamma}$, there exist a real analytic diffeomorphism
$H$ mapping $(\tilde{U},\widetilde{\mathcal{X}})$ to $(U,\mathcal{X})$
and such that $H(\tilde{\Gamma})=\Gamma$.
\end{prop}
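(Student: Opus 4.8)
The plan is to obtain $H$ by patching a local equivalence near the saddle point to a product (flow-box) equivalence along the regular arc of the loop, after putting the gluing data of both realisations in a common form; all of this is done up to the shrinking of $U,\widetilde{U}$ permitted by the statement. First, since $(U,\mathcal{X},\Gamma)$ and $(\widetilde{U},\widetilde{\mathcal{X}},\widetilde{\Gamma})$ realise the \emph{same} loop germ $\mathbb{L}=(\vF,\vR)$, their embedding maps $\Psi,\widetilde{\Psi}$ (in the sense of Definition~\ref{def:realizationsaddle}, with the realness of Definition~\ref{def:real-real}) are both defined, as germs, on a common $\lambda$-neighbourhood $U_{0}$ carrying the prepared model $\vF$, and carry $\vF|_{U_{0}}$ to the complexifications of $\mathcal{X},\widetilde{\mathcal{X}}$ near the saddles $s=\Psi(0)$, $\widetilde{s}=\widetilde{\Psi}(0)$. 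Hence $\widetilde{\Psi}\circ\Psi^{-1}$ is a germ of a real-analytic biholomorphism $(S,s)\to(\widetilde{S},\widetilde{s})$ (with $S,\widetilde{S}$ the complexified ambient surfaces) sending $\mathcal{X}$ to $\widetilde{\mathcal{X}}$ and the local branches of $\Gamma$ onto those of $\widetilde{\Gamma}$. Moreover one may assume the \emph{same} real representative $R\in\vR$, $R\colon(\Omega,1)\to(\Sigma,\sigma)$, is used in both realisations: in a planar realisation $\Sigma,\Omega$ are complexifications of real transversals and the real homoclinic loop is an \emph{embedded} arc of $\cR^{2}$, so its connecting sub-path, and with it the determination $R$, is rigid up to sliding the base-point $\sigma$ along the real separatrix $\{x=0\}\cap\cR^{2}$, a move that is absorbed into the choice of connecting path by Remark~\ref{rem:realisation-movingRreal}.

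Next, along the regular part, let $\gamma'$ be the closure of $\Gamma\setminus\Psi(U_{0})$: it is a compact arc contained in one regular leaf of $\mathcal{X}$, transverse to $\Omega$ and $\Sigma$ at its ends, and whose holonomy read through $\Psi|_{\Omega},\Psi|_{\Sigma}$ is $R$ by Definition~\ref{def:realizationsaddle}. A foliated tubular neighbourhood of such an arc is a product: there is a real transverse disk $\Sigma'$ and a real-analytic foliated isomorphism of a neighbourhood $N\supset\gamma'$ in $(U,\mathcal{X})$ with $\Sigma'\times[0,1]$ carrying the horizontal foliation, the identifications of the two ends with neighbourhoods of $\Omega$ and $\Sigma$ being given by $R$. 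The same holds for $\widetilde{\gamma}'=\overline{\widetilde{\Gamma}\setminus\widetilde{\Psi}(U_{0})}$ in $(\widetilde{U},\widetilde{\mathcal{X}})$, \emph{with the same} $R$ by the previous paragraph; this yields a canonical real-analytic foliated isomorphism $\Theta\colon\widetilde{N}\to N$ covering the identity of the $\Sigma'$-parameter and compatible with $R$ at both ends.

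We now define $H$ on a neighbourhood of $\widetilde{\Gamma}$ by $H=\Psi\circ\widetilde{\Psi}^{-1}$ on (the real trace of) $\widetilde{\Psi}(U_{0})$ and $H=\Theta$ on $\widetilde{N}$. Each piece is real analytic, maps $\widetilde{\mathcal{X}}$ to $\mathcal{X}$, and carries the corresponding portion of $\widetilde{\Gamma}$ onto that of $\Gamma$. On the overlap $\widetilde{\Psi}(U_{0})\cap\widetilde{N}$ --- a neighbourhood of portions of $\widetilde{\Omega}$ and $\widetilde{\Sigma}$ --- the two pieces coincide, since on it both are determined by transporting the $\vF$-model through $\Psi,\widetilde{\Psi}$ while respecting $\Omega,\Sigma$ leafwise and the end-identification $R$. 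Hence $H$ is a well-defined real-analytic diffeomorphism from a neighbourhood of $\widetilde{\Gamma}$ onto one of $\Gamma$ with $H_{\ast}\widetilde{\mathcal{X}}=\mathcal{X}$ and $H(\widetilde{\Gamma})=\Gamma$; taking $U,\widetilde{U}$ to be these neighbourhoods proves the proposition.

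The main obstacle is the agreement of the two pieces of $H$ on $\widetilde{\Psi}(U_{0})\cap\widetilde{N}$, which genuinely requires that both realisations use the same representative $R$; resolving the a priori ambiguity in $\vR$ (the base-point $\sigma$ and the determination $R_{[n]}$) is possible precisely because real planar homoclinic loops are embedded, so no winding around the vertical separatrix can occur, the residual freedom in $\sigma$ being handled by Remark~\ref{rem:realisation-movingRreal}. One must also choose the shrinking so that the flow box $N$ and the local model $\Psi(U_{0})$ overlap in a single product chart, which is why the statement allows $U,\widetilde{U}$ to be shrunk.
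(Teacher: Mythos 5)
Your argument follows the paper's proof essentially verbatim: both construct the equivalence by patching the local map $\Psi\circ\widetilde{\Psi}^{-1}$ near the saddle (available because the two realizations share the same prepared model $\vF$) with a transversally-fibered transport along the regular arc, after invoking Remark~\ref{rem:realisation-movingRreal} to arrange that the same representative $R\in\vR$ appears in both realizations. The paper makes your flow-box $\Theta$ explicit as $\rho(t,u)=\bigl(t,\holt{\mathcal{X},t}\,(\Phi|_{\tilde{\Psi}(\Omega)})\,\holt{\widetilde{\mathcal{X}},t}^{-1}(u)\bigr)$ and verifies the end-compatibility $\rho(1,\cdot)=\Phi|_{\tilde{\Psi}(\Sigma)}$ by a short computation in which the common $R$ cancels; you assert this overlap agreement rather than carrying out the cancellation, which is precisely where the ``same $R$'' hypothesis is actually used.
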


\begin{proof}
We use the Euclidean metric on $\mathbb{R}^{2}$ to define a local
transverse fibration in the vicinity of $\Gamma\setminus\{s\}$, where
$s$ is the saddle point. Namely, on each point $p\in\Gamma\setminus\{s\}$,
we define the fiber through $p$ as the affine line 
\[
\tmop{Fib}^{-1}(p)=\left\{ p+u\frac{X^{\perp}}{\|X^{\perp}\|}:u\in\mathbb{R}\right\} 
\]
where $X$ is an arbitrary local generator of $\mathcal{X}$. We define
similarly a transverse fibration $\widetilde{\tmop{Fib}}$ in the
vicinity of $\tilde{\Gamma}\setminus\{\tilde{s}\}$.

Let $\Psi$ and $\tilde{\Psi}$ be the embedding maps associated to
the two realizations (see Definition~\ref{def:real-real}). The real
analytic map $\Phi=\Psi\tilde{\Psi}^{-1}$ establishes an orientation
preserving equivalence between $\widetilde{\mathcal{X}}$ and $\mathcal{X}$
in corresponding neighborhoods of the saddle points $\tilde{s}$ and
$s$. Therefore, we need to show that this equivalence, which in principle
is only defined in the vicinity of the saddles, extends analytically
to a whole neighborhood of the loops.

Up to moving the transversals according to Remark~\ref{rem:realisation-movingRreal},
we can assume that $\Phi$ maps the transversals $\Psi(\Sigma),\Psi(\Omega)$
to $\tilde{\Psi}(\Sigma),\tilde{\Psi}(\Omega)$ respectively. Furthermore,
according to the identification made in Remark~\ref{rem:identifytransv},
we can assume that the transversals are fibers of the respective fibrations
$\tmop{Fib},\widetilde{\tmop{Fib}}$ defined above.

Note that, by the item 2. of the definition of a realization, we have
\[
\holt{\mathcal{X},\gamma}=(\Psi|_{\Sigma})R(\Psi|_{\Omega})^{-1},\quad\text{and}\quad\holt{\widetilde{\mathcal{X}},\tilde{\gamma}}=(\tilde{\Psi}|_{\Sigma})R(\tilde{\Psi}|_{\Omega})^{-1}
\]
where the connecting curve $\gamma$ (\emph{resp.} $\tilde{\gamma}$)
is simply the part of $\Gamma$ (\emph{resp.} ${\Gamma}$) lying between
$\Omega$ and $\Sigma$ (\emph{resp.} $\tilde{\Omega}$ and ${\Gamma}$).

\begin{figure}[htb]
\centering{}\includegraphics[width=5.51933cm,height=5.49383cm]{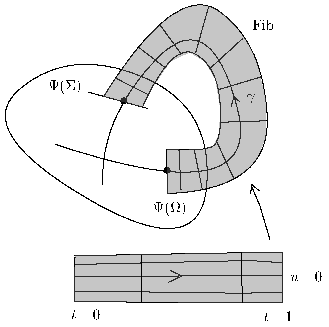}
\end{figure}

Let us fix real analytic regular parameterizations $p:[0,1]\rightarrow\gamma$,
\ $\tilde{p}:[0,1]\rightarrow\tilde{\gamma}$ of these curves.\quad{}We
obtain diffeomorphisms 
\[
(t,u)\rightarrow p(t)+u\frac{X^{\perp}}{\|X^{\perp}\|},\quad(t,u)\rightarrow\tilde{p}(t)+u\frac{\tilde{X}^{\perp}}{\|\tilde{X}^{\perp}\|}
\]
 between neighborhoods of $[0,1]\times\{0\}$ in $[0,1]\times\mathbb{R}$
and neighborhoods of $\gamma$ and $\tilde{\gamma}$ in $V$ and $\tilde{V}$,
respectively. Let us keep the notations $\mathcal{X}$ and $\widetilde{\mathcal{X}}$
to indicate the image of the above foliations under these diffeomorphisms.

We now define an analytic diffeomorphism between two open neighborhoods
of $[0,1]\times\{0\}$ in $[0,1]\times\mathbb{R}$, mapping $\widetilde{\mathcal{X}}$
to $\mathcal{X}$. To wit, we set 
\[
\rho(t,u)=(t,\holt{\mathcal{X},t}(\Phi|_{\tilde{\Psi}(\Omega)})\holt{\widetilde{\mathcal{X}},t}^{-1}(u))
\]
where $\holt{\mathcal{X},t}$ (\emph{resp.} $\holt{\widetilde{\mathcal{X}},t}$)
is the holonomy map of $\mathcal{X}$ (\emph{resp.} $\widetilde{\mathcal{X}}$)
going from the fiber $\{0\}\times\mathbb{R}$ to the fiber $\{t\}\times\mathbb{R}$.

It remains to show that the equivalence defined by $\rho$ is compatible
with the local equivalence in the neighborhood of the saddle points
defined by $\Phi$. For this, it suffices to show that 
\[
\rho(0,\cdot)=\Phi|_{\tilde{\Psi}(\Omega)}\quad\tmop{and}\quad\rho(1,\cdot)=\Phi|_{\tilde{\Psi}(\Sigma)}.
\]
The first equality is obvious from the definition. For the second,
we observe that, by construction, 
\[
\holt{\mathcal{X},1}=(\Psi|_{\Sigma})R(\Psi|_{\Omega})^{-1}\quad\text{and}\quad\holt{\widetilde{\mathcal{X}},1}=(\tilde{\Psi}|_{\Sigma})R(\tilde{\Psi}|_{\Omega})^{-1}.
\]
Therefore the second component of $\rho(1,\cdot)$ is given by 
\[
(\Psi|_{\Sigma})R(\Psi|_{\Omega})^{-1}(\Phi|_{\tilde{\Psi}(\Omega)})(\tilde{\Psi}|_{\Omega})R^{-1}(\tilde{\Psi}|_{\Sigma})^{-1}=\Phi|_{\tilde{\Psi}(\Sigma)},
\]
where the middle terms simplify because $\Phi=\Psi\tilde{\Psi}^{-1}$.
This concludes the proof.
\end{proof}

\subsubsection{$\protect\cR$-equivalence and proof of Theorem C}

In the present setting, it is natural to adapt the Definition~\ref{def:fib_eqv_abstract_loops}
of equivalence stated in Section~\ref{sec:gcc} by requiring that
the equivalence map preserve the reals. The group of {\em fibered
orientation preserving real analytic diffeomorphisms} is the group
$\tmop{Diff}_{\tmop{fib}}^{+}(\mathbb{\cR}^{2},[-1,1])$ of germs
of the form 
\[
\Phi(x,y)=(x,\varphi(x,y)),
\]
where $\varphi$ is real analytic in a neighborhood of $[-1,1]\times\{0\}$
and satisfies the following three conditions:
\begin{itemize}
\item $\varphi(x,0)=0$;
\item ${\displaystyle {\frac{\partial\varphi}{\partial y}}(x,0)>0}$;
\item $\varphi$ extends holomorphically to a neighborhood of the unit closed
disk $\bar{\Delta}\times\{0\}$ in $\cC^{2}$.
\end{itemize}
This last condition implies that $\tmop{Diff}_{\tmop{fib}}^{+}(\mathbb{\cR}^{2},[-1,1])$
is a subgroup of $\tmop{Diff}_{\tmop{fib}}(\mathbb{\cC}^{2},\bar{\Delta})$.
\begin{defn}
Two real loop germs $\mathbb{L}=(\vF,\vR)$ and $\tilde{\mathbb{L}}=(\widetilde{\vF},\widetilde{\vR})$
are {\em $\mathbb{R}$-equivalent} if 
\[
(\widetilde{\vF},\widetilde{\vR})=\Phi\cdot\left(\vF,\vR\right)
\]
for some germ $\Phi\in\tmop{Diff}_{\tmop{fib}}^{+}(\mathbb{\cR}^{2},[-1,1])$.
\end{defn}

The following result is a consequence of Theorems~A and~B for real
loop germs.
\begin{thm}
\label{thm:realequiv-poincare} Two real loop germs $\mathbb{L}$
and $\tilde{\mathbb{L}}$ are $\mathbb{R}$-equivalent if and only
if the corresponding real Poincaré first return maps $P$ and $\tilde{P}$
are conjugate by an orientation-preserving real analytic map.
\end{thm}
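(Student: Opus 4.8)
The plan is to derive both implications from the complex equivalence Theorems~\ref{thm:A} and~\ref{thm:B}, the only additional work being to propagate the real structure and the orientation through their proofs. Throughout, we use that $\tmop{Diff}_{\tmop{fib}}^{+}(\cR^{2},[-1,1])$ sits inside $\tmop{Diff}_{\tmop{fib}}(\cC^{2},\bar{\Delta})$, that the \emph{real} corner transition $D$ is the canonical determination of Section~\ref{sub:three} (Remark~\ref{rem:realsaddlecorner}, so $D(\Sigma_{\cR_{>0}})\subset\Omega_{\cR_{>0}}$), and that in a real loop germ one may choose a \emph{real analytic} representative $R\colon(\Omega,1)\to(\Sigma,\sigma)$ with real transversals.

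\textbf{Forward implication.} Assume $(\widetilde{\vF},\widetilde{\vR})=\Phi\cdot(\vF,\vR)$ with $\Phi\in\tmop{Diff}_{\tmop{fib}}^{+}(\cR^{2},[-1,1])$, and regard $\Phi$ as an element of $\tmop{Diff}_{\tmop{fib}}(\cC^{2},\bar{\Delta})$. Choose $R\in\vR$, $\widetilde R\in\widetilde{\vR}$ to be the real regular transitions and $D,\widetilde D$ the real canonical corner transitions. Since $\Phi$ is fibered and preserves the real structure, it sends exponential paths to exponential paths, so $\Phi_{\star}(\widetilde D)$ is again the real canonical corner transition of $\vF$ based at the real point $\Phi(\widetilde\sigma)$, and $\Phi_{\star}(\widetilde R)\in\vR$ is a real regular transition ending at $\Phi(\widetilde\sigma)$. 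Taking for $\Upsilon$ the projection of the segment of the real vertical separatrix joining $\sigma$ to $\Phi(\widetilde\sigma)$, the pairs $R_{1}=R$, $R_{2}=\Phi_{\star}(\widetilde R)$ and $D_{1}=D$, $D_{2}=\Phi_{\star}(\widetilde D)$ are related by this \emph{same} transporting path; Theorem~\ref{thm:A} (via Lemma~\ref{lem:sameloop}) then gives an analytic conjugacy between $P=RD$ and $\widetilde P=\widetilde R\widetilde D$ which is a composition of the real analytic, orientation-preserving maps $\Phi|_{\widetilde\Sigma}$, $R$ and $R_{2}$ — the orientation behaviours of $R$ and $\widetilde R$ being forced to agree by the very existence of the conjugacy. (Alternatively, when $R$ preserves orientation one may argue through Propositions~\ref{prop:realplanarreal} and~\ref{prop:equivalencesameloop}: $\Phi$ transports a real planar realization of $\mathbb{L}$ onto one of $\widetilde{\mathbb{L}}$, and restricting $\Phi$ to a real transversal yields the conjugacy of the real Poincaré maps.)

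\textbf{Converse implication.} Let $\varphi\colon(\Sigma,\sigma)\to(\widetilde\Sigma,\widetilde\sigma)$ be an orientation-preserving real analytic germ with $\widetilde P=\varphi P\varphi^{-1}$, and fix dynamical decompositions $P=RD$, $\widetilde P=\widetilde R\widetilde D$ with $R,\widetilde R$ the real regular transitions and $D,\widetilde D$ the real canonical corner transitions. Running the proof of Theorem~\ref{thm:B} verbatim produces, via Theorem~\ref{thm-Mattei-Moussu}, a fibered biholomorphism $\Phi$ mapping $\widetilde\vF$ to $\vF$ with $\Phi|_{\Omega}=g:=R^{-1}\circ\varphi^{-1}\circ\widetilde R\colon(\widetilde\Omega,1)\to(\Omega,1)$, and the final paragraph of that proof shows that $\Phi$ maps $\widetilde\vR$ onto $\vR$ (writing $\overline R=\Phi|_{\widetilde\Sigma}\circ\varphi\circ R$, $\overline D=Dh$ with $h$ a holonomy germ of $\vF$, the conjugacy $\overline P=(\Phi|_{\widetilde\Sigma})\varphi P\varphi^{-1}(\Phi|_{\widetilde\Sigma})^{-1}$ forces $h^{-1}=\Phi|_{\widetilde\Sigma}\circ\varphi$, i.e.\ $\overline R=h^{-1}R$). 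The two points to add are: (i) \emph{realness}: since $R,\widetilde R,\varphi$ are real analytic, so is $g$, and hence by Remark~\ref{rem:Realmatteimoussu} the map $\Phi$ is the complexification of a real analytic diffeomorphism; (ii) \emph{orientation}: a short bookkeeping with the half-transversals shows that, whether $R$ and $\widetilde R$ both preserve or both reverse orientation (these being the only options, again forced by the conjugacy), the composite $g$ preserves the orientation of $\Omega_{\cR}$, so that the $y$-derivative of the second component of $\Phi$ is positive at $(1,0)$ and, by connectedness of $[-1,1]$, on all of $[-1,1]\times\{0\}$. Thus $\Phi\in\tmop{Diff}_{\tmop{fib}}^{+}(\cR^{2},[-1,1])$; moreover the holonomy $h$ above may be taken along a segment of the real vertical separatrix (Remark~\ref{rem:realisation-movingRreal}), so the resulting equivalence is an $\cR$-equivalence of the real loop germs $\mathbb{L}$ and $\widetilde{\mathbb{L}}$.

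\textbf{Main obstacle.} The delicate step is the one where the rigidity of the real structure is genuinely used, namely arranging that the Mattei-Moussu equivalence on $\Omega$ is \emph{simultaneously} real analytic and orientation-preserving. Real-analyticity comes for free once one insists on working with the canonical real corner transition and a real regular transition (making $g=R^{-1}\varphi^{-1}\widetilde R$ manifestly real), but the orientation accounting — checking that an orientation-preserving conjugacy $\varphi$ of the real Poincaré maps is exactly what prevents the two transversals from being reglued with opposite orientations — has no analogue in the complex Theorem~\ref{thm:B} and is the heart of the argument; everything else is a transcription of the proofs of Theorems~\ref{thm:A} and~\ref{thm:B}.
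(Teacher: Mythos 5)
Your proposal is correct and follows essentially the same route as the paper's own proof: both implications are derived from Theorems~\ref{thm:A} and~\ref{thm:B} by observing that the real structure makes the Mattei–Moussu equivalence real (Remark~\ref{rem:Realmatteimoussu}) and that the orientation hypothesis on $\varphi$ places the resulting fibered diffeomorphism in $\tmop{Diff}_{\tmop{fib}}^{+}(\cR^{2},[-1,1])$. The paper's proof is terse and explicitly leaves the details to the reader; your write-up supplies exactly the kind of bookkeeping (choice of real canonical corner transitions, a real transporting path, the factorization $g=R^{-1}\varphi^{-1}\widetilde R$, and the orientation check on $\Omega_{\cR}$ propagated by connectedness of $[-1,1]$) that the authors had in mind.
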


\begin{proof}
This enunciate is a particular case of Theorems~\ref{thm:A} and~\ref{thm:B}
proved above. Recalling the arguments used in the proof of those results,
we remark that the realness of $\mathbb{L}$ and $\tilde{\mathbb{L}}$
leads to the following significant simplifications:
\begin{enumerate}
\item the real Poincaré first return maps $P$ and $\widetilde{P}$ associated
respectively to $\mathbb{L}$ and to $\tilde{\mathbb{L}}$ do not
depend on a specific choice of a connecting curve;
\item according to the Remark~\ref{rem:Realmatteimoussu}, the path-lifting
method of Mattei-Moussu will give a fibered biholomorphism which is
a complexification of a real analytic map.
\end{enumerate}
We further observe that, assuming $\varphi$ to preserve the orientation,
the analytic equivalence obtained by the Mattei-Moussu theorem will
give a germ lying in the group $\tmop{Diff}_{\tmop{fib}}^{+}(\mathbb{\cR}^{2},[-1,1])$
which establishes an equivalence between the underlying real analytic
foliations. We leave the details to the reader.
\end{proof}
As a consequence, we derive the proof of Theorem C.
\begin{proof}[Proof of Theorem C]
The fact that equivalent real saddle loops have analytically conjugate
(real) Poincaré maps is trivial since there is no ambiguity on the
choice of a determination of the corner transition map.

Let us prove the converse. Let $\mathbb{L}$ and $\tilde{\mathbb{L}}$
be the saddle loop germs associated to $(U,X,\Gamma)$ and $(\tilde{U},\tilde{X},\tilde{\Gamma})$
respectively. Then, by Theorem~\ref{thm:realequiv-poincare}, $\mathbb{L},\tilde{\mathbb{L}}$
are $\mathbb{R}$-equivalent. Using Proposition~\ref{prop:realplanarreal},
we find a common real planar realization $(V,\mathcal{Y},\Omega)$
of both $\mathbb{L},\tilde{\mathbb{L}}$. Proposition~\ref{prop:equivalencesameloop},
establishes a local equivalence between $(V,\mathcal{Y},\Omega)$
and $(U,\mathcal{X},\Gamma)$ and also between $(V,\mathcal{Y},\Omega)$
and $(\tilde{U},\widetilde{\mathcal{X}},\tilde{\Gamma})$. The result
follows from the transitivity of the relation of local equivalence.
\end{proof}

\section{\protect\label{sec:integrability}Integrability of loop germs}

In the following we consider a complex saddle loop given as a loop
germ $\left(\fol{},\mathcal{R}\right)$ as in Section~\ref{sec:gcc}.
It means that we work in a $\lambda$-neighborhood $U$ of the unit
polydisk $\bar{\Delta}\times\bar{\Delta}=\left\{ \left|x\right|\leqslant1~,~\left|y\right|\leqslant1\right\} $
in $\cc^{2}$ and consider the foliated manifold $\left(S,\mathcal{G}\right)$
given by Proposition~\ref{prop:realisation}. We recall that we pick
some representative $R\in\mathcal{R}$ to identify the leaves of $\fol{}$
near the transversals $\Sigma=\left\{ y=1\right\} $ and $\Omega=\left\{ x=1\right\} $.

A theorem of M.~Singer~\cite{SingLiou} guarantees that $\fol{}$
admits a Liouvillian first-integral if and only if there exists a
\emph{Godbillon-Vey sequence of length 2, i.e. }a pair $\left(\omega,\eta\right)$
of differential 1-forms on $U$ such that $\omega$ is holomorphic,
$\eta$ is meromorphic and
\begin{align}
\begin{cases}
\dd{\omega} & =\eta\wedge\omega\\
\dd{\eta} & =0
\end{cases} & ,\label{eq:GodVey}
\end{align}
where the distribution $\ker\omega$ defines the foliation $\fol{}$.
In that case, the (generally multivalued) function obtained by integrating
the closed 1-form $\exp\left(-\int\eta\right)\omega$,
\begin{align}
H & =\int\frac{\omega}{\exp\int\eta},\label{eq:liouvillian_first-integ}
\end{align}
is a Liouvillian first-integral of $\fol{}$ since $\dd H\wedge\omega=0$.

The study we conduct here amounts to picking among all integrable
prepared saddles $\floc$ those that can be glued by some germ of
a biholomorphism $R$ to obtain a loop germ $\left(\floc,\mathcal{R}\right)$,
in such a way that $R$ preserves the ``niceness'' of the transverse
structure provided by Godbillon-Vey sequences. Because we want $H|_{\Sigma}R$
to be the restriction of a first-integral of $\floc$ near $\Omega$,
it must coincide with a determination of $H|_{\Omega}$. Observe that
the special form~\eqref{eq:liouvillian_first-integ} of the Liouvillian
first-integral $H$ forces its monodromy to be affine (since the monodromy
of $\int\eta$ is additive). Therefore, we are naturally led to give
the following definition of integrability for loop germs.
\begin{defn}
\label{def:integrable}The loop germ $\left(\fol{},\mathcal{R}\right)$
is said to be \emph{integrable }whenever there exists a Liouvillian
first integral $H$ of $\fol{}$ which is compatible with a gluing
map $R\in\mathcal{R}$:
\begin{align}
\exists R\in\mathcal{R},~\alpha\in\cc^{\times},~c\in\cc~:~~~~ & H_{\text{}}|_{\Sigma}R=\alpha H|_{\Omega}+c.\label{eq:liouvillian-R}
\end{align}
\end{defn}

In the rest of the section, we use the following immediate characterization
of integrability.
\begin{lem}
\label{lem:integrable_regluing}A loop germ $\left(\floc,\mathcal{R}\right)$
is integrable if and only if both conditions are met:
\begin{itemize}
\item $\floc$ is integrable with Liouvillian first-integral $H=\int\frac{\omega}{\exp\int\eta}$;
\item there exists $R\in\mathcal{R}$ and $\alpha\in\cc^{\times}$ such
that $R^{*}\frac{\omega}{\exp\int\eta}|_{\Sigma}=\alpha~\frac{\omega}{\exp\int\eta}|_{\Omega}$.
\end{itemize}
Here $\frac{\omega}{\exp\int\eta}|_{\Sigma}$ (resp. $\frac{\omega}{\exp\int\eta}|_{\Omega}$)
denotes the pulled-back 1-form $\iota_{j}^{*}\frac{\omega}{\exp\int\eta}$
by the respective inclusion $\iota_{1}~:~x\mapsto\left(x,1\right)$
and $\iota_{2}~:~y\mapsto\left(1,y\right)$.
\end{lem}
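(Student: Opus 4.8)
The plan is to recognise that the two bullet conditions are just Definition~\ref{def:integrable} rephrased through the differential: since $H$ is by construction a primitive of the closed $1$-form $\frac{\omega}{\exp\int\eta}$ (see~\eqref{eq:liouvillian_first-integ}), the relation $H|_{\Sigma}R=\alpha H|_{\Omega}+c$ of Definition~\ref{def:integrable} and the identity $R^{*}\frac{\omega}{\exp\int\eta}|_{\Sigma}=\alpha\,\frac{\omega}{\exp\int\eta}|_{\Omega}$ differ by exactly one exterior derivative (one way) and one integration along a transversal (the other way). I would first note that the first bullet carries no proof obligation: that $\floc$ is integrable with a Liouvillian first-integral of the special shape~\eqref{eq:liouvillian_first-integ} is precisely what Singer's theorem~\cite{SingLiou} provides once $\floc$ is assumed integrable, so the content of the lemma is entirely the equivalence between the two relations involving $R$.

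For the direct implication I would take $R\in\mathcal{R}$, $\alpha\in\cc^{\times}$, $c\in\cc$ realising Definition~\ref{def:integrable}, fix compatible determinations of $H$ near the base-points of $\Omega$ and $\Sigma$ so that $H|_{\Sigma}\circ R=\alpha H|_{\Omega}+c$ literally holds, and then apply $d$. Using that $d$ commutes with the inclusion pull-backs $\iota_{1}^{*},\iota_{2}^{*}$ and with $R^{*}$, together with $dH=\frac{\omega}{\exp\int\eta}$, the constant $c$ disappears and one is left with $R^{*}\frac{\omega}{\exp\int\eta}|_{\Sigma}=\alpha\,\frac{\omega}{\exp\int\eta}|_{\Omega}$, which is the second bullet.

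For the converse I would start from $\frac{\omega}{\exp\int\eta}|_{\Sigma}=d(H|_{\Sigma})$ and $\frac{\omega}{\exp\int\eta}|_{\Omega}=d(H|_{\Omega})$, so that the hypothesised $1$-form identity becomes $d\bigl(H|_{\Sigma}\circ R-\alpha H|_{\Omega}\bigr)=0$ on (a neighbourhood of the base-point in) the transversal $\Omega$. Since $\Omega$ is one-dimensional and we may work on a simply connected germ of transversal, the primitive is locally constant, hence equal to some $c\in\cc$; this gives back $H|_{\Sigma}\circ R=\alpha H|_{\Omega}+c$, i.e.~\eqref{eq:liouvillian-R}, so the loop germ is integrable.

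The only step that is more than bookkeeping — and the reason this is a lemma rather than a tautology — is controlling the multivaluedness of $H$: one must check that a coherent choice of branches of $H$ at the two base-points makes the constant $c$ well-defined and branch-independent, so that the two relations really are equivalent and not merely ``equivalent up to monodromy''. I expect this to be harmless: as observed right before Definition~\ref{def:integrable}, the form~\eqref{eq:liouvillian_first-integ} forces the monodromy of $H$ to be affine, and since $R$ and the relation live at the level of germs at the base-points, fixing a branch once and for all removes the ambiguity. I would simply spell this out and then conclude.
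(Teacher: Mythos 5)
Your proposal is correct and takes essentially the same approach as the paper: the paper's own proof is the one-liner ``it suffices to differentiate both sides of~\eqref{eq:liouvillian-R},'' which is exactly your direct implication, with the converse by integrating along a one-dimensional germ of transversal being the obvious inverse step. Your extra care about branches of $H$ is a reasonable amount of spelling-out, but it is not a genuinely different route.
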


\begin{proof}
It suffices to differentiate both sides of~\eqref{eq:liouvillian-R}.
\end{proof}
\begin{rem}
If $\left(\omega,\eta\right)$ is a Godbillon-Vey sequence of length
2 and $u$ is a meromorphic function, then $\left(u\omega,u\eta+\dd u\right)$
satisfies again~\eqref{eq:GodVey}. Said differently, we can freely
choose the holomorphic 1-form $\omega$ defining the saddle foliation,
and we may even choose it meromorphic if it better suits our needs
(\emph{e.g.} if it allows us to work with closed 1-forms defining
the same foliation).
\end{rem}

\subsection{Integrability and holonomy}

We let $\holo[\Sigma]$ and $\holo[\Omega]$ be the holonomy mappings
of $\floc$ as defined in Definition~\ref{def:saddle_holo}, and
denote by $\cloc\in\Corner\left(\mathcal{F},\mathcal{R}\right)$ a
representative of the corner transition map of the saddle foliation
$\fol{}$. We recall that the holonomy group of $\mathcal{G}$ is
given by $\Var f=\left\langle R^{*}\holo[\Sigma],\holo[\Omega]\right\rangle $,
where $f\in\Poinc$$\left(\mathcal{F},\mathcal{R}\right)$ is a Poincaré
map of the loop germ associated to the dynamical decomposition $f=R\cloc$
for some $R\in\mathcal{R}$.
\begin{thm}[\cite{BeCeLi}]
If the foliation $\mathcal{G}$ is integrable, then its holonomy
group is solvable.
\end{thm}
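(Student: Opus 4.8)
The plan is to derive the solvability from the classical principle that a codimension-one foliation carrying a Liouvillian first integral is \emph{transversely affine} away from a divisor, together with the elementary fact that the affine group $\mathrm{Aff}(\mathbb{C})=\{z\mapsto az+b\}$ is solvable.

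First I would invoke Singer's theorem, already recalled above: $\mathcal{G}$ being integrable means it admits a Godbillon--Vey sequence of length $2$, that is, a holomorphic $1$-form $\omega$ with $\ker\omega=T\mathcal{G}$ together with a closed meromorphic $1$-form $\eta$ satisfying $\dd\omega=\eta\wedge\omega$. Setting $D=(\eta)_{\infty}\cup\mathrm{Sing}(\mathcal{G})$, a direct computation with $\dd\omega=\eta\wedge\omega$ and $\dd\eta=0$ shows that the multivalued $1$-form $\Omega=\exp(-\int\eta)\,\omega$ is closed on $S\setminus D$ and still defines $\mathcal{G}$; hence $H=\int\Omega$ is a multivalued first integral of $\mathcal{G}$ there. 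Along any loop, $\int\eta$ acquires an additive period, so $\Omega$ is multiplied by a nonzero constant and $H$ undergoes an affine substitution $z\mapsto az+b$: the monodromy of $H$ lies in $\mathrm{Aff}(\mathbb{C})$, i.e.\ $\mathcal{G}|_{S\setminus D}$ is transversely affine. In our situation the existence of such an $H$ can equivalently be read off the definition of an integrable loop germ, through the relation $H|_{\Sigma}\circ R=\alpha\,H|_{\Omega}+c$ together with the integrability of the saddle $\fol{}$.

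Next I would transport this to the holonomy group. By the identification recalled just above, the holonomy group of $\mathcal{G}$ is generated by $R^{*}\holo[\Sigma]$ and $\holo[\Omega]$, both read on the transversal $\Omega$. Transverse affineness of $\fol{}$ near the separatrices makes the saddle holonomies $\holo[\Sigma]$, $\holo[\Omega]$ act affinely on $H|_{\Sigma}$, $H|_{\Omega}$, and the defining relation $H|_{\Sigma}\circ R=\alpha\,H|_{\Omega}+c$ of an integrable loop germ propagates this to $R^{*}\holo[\Sigma]$; consequently every element $g$ of the holonomy group satisfies $H|_{\Omega}\circ g=a_{g}\,H|_{\Omega}+b_{g}$, and $g\mapsto(a_{g},b_{g})$ is a group homomorphism into $\mathrm{Aff}(\mathbb{C})$. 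Since $\mathrm{Aff}(\mathbb{C})$ is metabelian --- its commutator subgroup is the abelian translation group --- its image is solvable; the kernel is the group of holonomy germs fixing the multivalued function $H|_{\Omega}$, which, being up to a biholomorphism a power of the coordinate, is abelian. An extension of a solvable group by an abelian one is solvable, so the holonomy group of $\mathcal{G}$ is solvable.

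The step requiring the most care --- and the one where \cite{BeCeLi} does the real work --- is precisely this last passage: the transverse affine structure a priori degenerates along the polar divisor $D$, and the natural transversals of a loop germ are based at points $\sigma\in\{x=0\}\subset D$, so one must control the behaviour of $H$ (and the kernel above) there. This is manageable because the local model near the saddle is one of the integrable saddles classified by Berthier--Touzet, for which $\eta$ can be chosen with at worst logarithmic poles along each separatrix; alternatively one realizes the holonomy group as that of a regular leaf $\mathcal{L}\not\subseteq D$, basing the transversal at a point where $H$ is a local biholomorphism, which makes the homomorphism into $\mathrm{Aff}(\mathbb{C})$ injective and bypasses the kernel analysis altogether.
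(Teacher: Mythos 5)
The paper does not prove this theorem --- it is quoted verbatim as a result of \cite{BeCeLi}, so there is no in-paper argument to compare against. Your sketch correctly recapitulates the standard proof at the level of ideas: Singer's theorem yields the length-two Godbillon--Vey pair $(\omega,\eta)$; twisting gives the closed multivalued form $\exp(-\int\eta)\,\omega$ and hence a multivalued first integral with affine monodromy; this produces a homomorphism from the holonomy group into $\mathrm{Aff}(\mathbb{C})$ with (at worst) abelian kernel, and $\mathrm{Aff}(\mathbb{C})$ is metabelian, so the holonomy group is solvable. Two small points are worth flagging for rigor rather than as errors. First, the homomorphism $g\mapsto(a_{g},b_{g})$ is only well-defined once you fix a branch of $H$ near the base point of the transversal and track how continuation along each holonomy loop permutes branches; this is routine but should be said, since $H$ is genuinely multivalued. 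Second, the kernel of the homomorphism consists of germs $h$ with $H\circ h=H$ for the chosen branch; for $H$ of the form $y^{a}$ times a unit (possibly with essential factors $\exp(b_{j}y^{-j})$) this kernel is a finite rotation group, hence abelian --- your alternative suggestion of basing the transversal at a regular point where $H$ is a local biholomorphism is cleaner, as it kills the kernel outright, but one then needs to check that the holonomy group read off the regular transversal is conjugate to $\langle R^{*}\holo[\Sigma],\holo[\Omega]\rangle$, which is the content of the identification the paper makes just before the statement. With those two remarks attended to, your argument is the one that \cite{BeCeLi} (and the related work of Camacho--Sc\'ardua, Cerveau--Moussu) carry out in detail.
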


Solvability imposes very strict limitations on the holonomy group
of $\fol{}$ and is seldom achieved. Solvable subgroups of $\diff[\cc,0]$
were intensively studied in the 90's, hence we can use their analytical
classification (we refer to the Appendix for the precise sources and
results we rely on here).

If $\Var f$, coming from some loop germ $\left(\floc,\mathcal{R}\right)$,
is analytically conjugate to $\Var{\widetilde{f}}$, coming from $\left(\widetilde{\floc},\widetilde{\mathcal{R}}\right)$,
then $\fol{}$ is analytically conjugate to $\widetilde{\fol{}}$
(by Mattei-Moussu theorem) and, in this new $\U$-coordinate, $f=r\widetilde{f}$
for some unramified germ $r\in\U$ commuting with a generator of the
holonomy (Proposition~A.4). In most situations, $r$ must itself
be a power of the holonomy, hence (by definition of $\mathcal{R}$
and Theorem~B) the loop germs $\left(\floc,\mathcal{R}\right)$ and
$\left(\widetilde{\floc},\widetilde{\mathcal{R}}\right)$ are equivalent.
In the remaining cases, we are able to describe exactly which $r$
correspond to integrable loop germs.

Together with the theory developed by M.~Berthier and F.~Touzet
in~\cite{BerTouze}, this argument allows us to give the complete
list of (analytic classes of) all integrable abstract saddle loops.

\subsection{Admissible gluing maps}

The classification result presented here is a consequence of the following
characterization of germs $R$ appearing in Lemma~\ref{lem:integrable_regluing}
under suitable conditions. The fact that the restricted $1$-forms
$\frac{\omega}{\exp\int\eta}|_{\Sigma}$ and $\frac{\omega}{\exp\int\eta}|_{\Omega}$
meet the requirements of the proposition below will be made clear
in the course of the classification.
\begin{prop}
\label{prop:transverse_groupoid}For a finite collection of complex
numbers $a$, $b_{1},\ldots,b_{k}$ and a germ $A$ at $0$ of a non-vanishing
holomorphic function, define 
\begin{align*}
F\left(y\right) & =A\left(y\right)y^{-a}\prod_{0<j\leqslant k}\exp\left(b_{j}y^{-j}\right).
\end{align*}
Assume $R\in\diff[\cc,0]$ and $\alpha\in\cc^{\times}$ are such that
\begin{align*}
R'\left(y\right)\times F\left(R\left(y\right)\right) & =\alpha F\left(y\right).
\end{align*}
Then, either $R$ is analytically linearizable or $R'\left(0\right)\in\ee^{2\ii\pi\qq}$
and $R$ is analytically conjugate to $R'\left(0\right)R_{0}$, where
$R_{0}$ is the (tangent to the identity) holonomy of a Bernoulli
differential equation computed on $\left\{ x=1\right\} $: 
\begin{align}
y^{k+1}\dd x & =x\left(1+ay^{k}+x^{d}y^{\sigma}Q\left(y\right)\right)\dd y\label{eq:bernoulli_diff}
\end{align}
for some polynomial $Q\left(y\right)$ of degree at most $k-1$, some
integers $d\in\zz_{\geqslant-1}\backslash\left\{ 0\right\} $ and
$\sigma\in\zz_{\geqslant0}$ chosen in such a way that $\sigma+ad\notin\rr_{\leqslant0}$.
\end{prop}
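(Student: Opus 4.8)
The plan is to analyze the functional equation $R'(y)F(R(y))=\alpha F(y)$ by separating the \emph{multiplier} part of $R$ from its higher-order behavior, and to extract from the equation a differential equation of Bernoulli type whose holonomy recovers $R$. First I would take logarithmic derivatives of both sides of the equation. Writing $\frac{F'}{F}(y)=\frac{A'}{A}(y)-\frac{a}{y}-\sum_{0<j\le k}\frac{jb_j}{y^{j+1}}$, the relation becomes
\[
\frac{R''}{R'}(y)+R'(y)\,\frac{F'}{F}(R(y))=\frac{F'}{F}(y).
\]
The dominant singular terms as $y\to0$ are the pole terms $-\frac{jb_j}{y^{j+1}}$ and $-\frac{a}{y}$. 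Comparing the leading Laurent coefficients forces a constraint on $\rho:=R'(0)$: if some $b_j\ne0$, matching the top-order pole $y^{-(j+1)}$ gives $\rho^{-j}\cdot\rho = 1$ after accounting for the Jacobian factor (more precisely $R(y)=\rho y+\cdots$ makes $\frac{F'}{F}(R(y))\sim -jb_j\rho^{-(j+1)}y^{-(j+1)}$, and the factor $R'(y)\sim\rho$ in front yields $-jb_j\rho^{-j}y^{-(j+1)}$, to be matched against $-jb_jy^{-(j+1)}$), so $\rho^{j}=1$ for the largest such $j$; in particular $\rho$ is a root of unity. If instead all $b_j=0$, then only the $-a/y$ term survives, and comparing residues forces $\rho\,\frac{-a}{\rho y}=\frac{-a}{y}$, which is automatically satisfied and gives no constraint — this is precisely the branch where one expects $R$ to be linearizable.

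Next, in the generic-pole case, I would promote the formal constraint to the statement that $R$ is \emph{resonant}: $\rho=\ee^{2\ii\pi p/q}$ for some $p/q\in\qq$, and furthermore that $\rho$ is a $k$-th root of unity (or a divisor thereof), so that $R^{q}$ — or rather the appropriate iterate — is tangent to the identity of order dividing $k$. The key move is then to \emph{integrate} the functional equation to produce an invariant. Rewriting $R'(y)F(R(y))=\alpha F(y)$ as $\dd\!\big(\Phi(R(y))\big)=\alpha\,\dd\!\big(\Phi(y)\big)$ where $\Phi'=F$ (a primitive of $F$, which exists as a multivalued function with controlled — here affine, in the logarithmic piece coming from $y^{-a}$ — monodromy), we see that $R$ conjugates the $1$-form $F(y)\dd y$ to $\alpha^{-1}$ times itself, i.e.\ $R$ is a symmetry of the transverse structure defined by $F\dd y$. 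The standard recipe (as in Section~\ref{sub:three} and the Bernoulli differential equation~\eqref{eq:bernoulli_diff}) is: a saddle foliation defined by $y^{k+1}\dd x = x(1+ay^k+x^dy^\sigma Q(y))\dd y$ has its vertical separatrix holonomy computed on $\{x=1\}$ given exactly by the time-one map of a vector field whose transverse invariant is of the form $F(y)\dd y$ with the prescribed exponent data $(a,b_1,\dots,b_k)$ read off from the normal form. So I would reverse-engineer $Q$, $d$, $\sigma$ from the data $A, a, b_1,\dots,b_k$: the polynomial $Q$ of degree $\le k-1$ encodes the $b_j$'s (via the residue/Fatou-coordinate expansion), and $A$ — the non-vanishing holomorphic factor — together with the precise conjugacy class of $R$ among resonant diffeomorphisms with the same formal invariant is absorbed by the freedom in $d,\sigma$ and by an analytic change of coordinate.

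The technical heart — and the step I expect to be the main obstacle — is showing that \emph{every} $R$ satisfying the equation in the resonant case actually arises as such a Bernoulli holonomy, rather than merely being formally conjugate to one. Here I would invoke the analytic classification of resonant (solvable, in fact commutative-with-the-holonomy) diffeomorphisms in $\diff[\cc,0]$ referenced in the Appendix and in~\cite{CerMou,EcalParab}: a germ $R$ with $R'(0)$ a primitive $q$-th root of unity whose $q$-th iterate is $k$-tangent to the identity is determined up to analytic conjugacy by its residue (a single complex number) and by the Écalle–Voronin moduli of $R^q$; the functional equation $R'F(R)=\alpha F$ says precisely that $R^q$ admits $F(y)\dd y$ (a \emph{convergent} $1$-form, since $F$ is honestly holomorphic away from $0$ with a pole/ramification only at $0$) as a transverse symmetry, which forces all its Voronin cocycles to be homographies of a fixed ramification order — equivalently, $R^q=\Exp{\partial}$ for a convergent vector field $\partial$ with $F$ as Fatou-type primitive. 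This convergence is exactly what distinguishes case~(2) of Proposition~\ref{prop:GH}-type dichotomies, and it is what lets us write down the honest Bernoulli equation~\eqref{eq:bernoulli_diff} realizing $R$. The condition $\sigma+ad\notin\rr_{\le0}$ is a genericity/irreducibility constraint ensuring the saddle is of the required type with the correct eigenratio, and I would check it is compatible with the construction by choosing $\sigma$ large enough. Finally, the dichotomy "linearizable \emph{or} conjugate to $R'(0)R_0$" follows: if the pole data $b_1,\dots,b_k$ all vanish and $A$ contributes nothing obstructing linearization, the residue computation gives a linearizable (even periodic-on-the-multiplier) $R$; otherwise the above realization puts $R$ in the Bernoulli form with $R_0$ tangent to the identity and $R'(0)$ the prescribed root of unity.
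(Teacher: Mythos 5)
Your sketch for the rational-multiplier case is broadly in the spirit of the paper (integrate the functional equation to get $H R=\alpha H+c$, read off that $R$ preserves the transverse structure defined by $F\,\dd y$, and invoke the analytic classification of resonant germs admitting a Liouvillian transverse invariant — the paper does this by citing Berthier--Touzet and Teyssier rather than re-deriving it through Écalle--Voronin moduli, but these are essentially the same route). However, there is a genuine gap in how you handle the complementary alternative.

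First, your case split is misaligned. You dichotomize on whether the $b_j$ all vanish, but the proposition's dichotomy is between linearizable $R$ and Bernoulli $R$, and the correct intermediate split (which the paper uses) is on whether the multiplier $\rho=R'(0)=\ee^{2\ii\pi\beta}$ has $\beta\in\qq$ or not. It is possible to have all $b_j=0$, $\beta\in\qq$, and $R$ still nonlinearizable of Bernoulli type (this is precisely the $k=0$-in-the-$b$'s, nontrivial $A$ case, where the obstruction to linearization lives in the resonant normal form, not in the essential singularities of $F$). You acknowledge this possibility in passing ("if $A$ contributes nothing obstructing linearization\ldots") but do not address what happens when $A$ does obstruct it; the paper folds that into the $\beta\in\qq$ citation to Berthier--Touzet.

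Second — and this is the critical omission — the case $\beta\notin\qq$ is where the real work is, and you dismiss it with "this is precisely the branch where one expects $R$ to be linearizable." That expectation must be \emph{proved}, and the proof is nontrivial. The paper argues by contradiction: if $R$ were nonlinearizable with irrational rotation number, Perez-Marco's resolution of the Dulac--Moussu conjecture supplies a forward sub-orbit converging to $0$; a careful case-by-case Liouvillian argument on $H R=\alpha H+c$ (split on $\alpha=1$ vs.\ $\alpha\neq1$, then $a\in\rr$ vs.\ $a\notin\rr$) then upgrades this to convergence of the \emph{full} orbit to $0$, which is incompatible with the recurrence of $R^{\circ q_n}(y_0)$ towards $y_0$ along denominators of convergents. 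Nothing in your proposal replaces this step. The Laurent-coefficient computation at the start correctly extracts that $\rho^j=1$ whenever $b_j\neq0$ (hence $\beta\in\qq$ when some $b_j\neq0$), but when all $b_j=0$ it yields, as you observe, \emph{no} constraint on $\rho$ — so at that point you still have to rule out a nonlinearizable $R$ with irrational multiplier, which requires the Perez-Marco input.
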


\begin{defn}
\label{def:bernoulli}A \emph{Bernoulli diffeomorphism of order $k$}
is a germ $R=\theta R_{0}$, where $R_{0}$ is given by the holonomy
of a Bernoulli differential equation~\eqref{eq:bernoulli_diff} as
in the previous proposition and $\theta\in\ee^{2\ii\pi\qq}$.
\end{defn}

\begin{rem}
~
\begin{enumerate}
\item This case also covers the situation where $R_{0}$ embeds in a holomorphic
flow ($Q=0$).
\item Since Bernoulli equations are explicitly solvable, Bernoulli diffeomorphisms
can be expressed using a quadrature.
\end{enumerate}
\end{rem}

\begin{proof}
Without loss of generality we can assume that $\left|R'\left(0\right)\right|=1$,
else $R$ is analytically linearizable, so that $R\left(y\right)=\ee^{2\ii\pi\beta}y+\oo y$
for some $\beta\in\rr$. The hypothesis implies $\exp\left(\left(1-a\right)2\ii\pi\beta\right)=\alpha$
and, if $b_{j}\neq0$, $j\beta\in\zz$. Let $H$ be an antiderivative
of $F$; there exists $c\in\cc$ such that $H$ semi-conjugates $R$
to the affine map $\rho~:~t\mapsto\alpha t+c$:
\[
HR=\alpha H+c.
\]

The case $\beta\in\qq$ has been dealt with in~\cite[Proposition 4.1 and Proposition 5.5]{BerTouze},
where they conclude $R\left(y\right)=\ee^{2\ii\pi\beta}R_{0}\left(y\right)$,
with $R_{0}$ being the holonomy of a Bernoulli foliation. The fact
that this equation can be chosen polynomial of the specified form
is proved in~\cite[Theorem 2]{Tey-ExSN}.

We are left with the case $\beta\notin\qq$, so that all $b_{j}$
vanish. We assume for the sake of contradiction that $R$ is not analytically
linearizable. According to the proof of Dulac-Moussu's conjecture
by R.~Perez-Marco~\cite[Section IV.2]{PM}, there exists a forward
sub-orbit $\left(y_{n}\right)_{n\in\zp}=\left(R^{\circ k_{n}}\left(y_{0}\right)\right)_{n\in\zp}$,
with $y_{0}\neq0$ arbitrarily close to $0$, converging to the fixed-point
$0$. Using a case-by-case analysis, we show that in every possible
situation this property cannot be fulfilled. The contradiction we
reach is that the complete orbit $\left(R^{\circ n}\left(y_{0}\right)\right)_{n\in\zp}$
converges to $0$, but this cannot be the case as the sub-orbit $\left(R^{\circ q_{n}}\left(y_{0}\right)\right)_{n\in\zp}$
converges towards $y_{0}$ when $\left(q_{n}\right)_{n}$ is the sequence
of denominators of the convergents of $\lambda=\lim_{n\to+\infty}\frac{p_{n}}{q_{n}}$.
\begin{enumerate}
\item Suppose first $\alpha=1$, so that $a=1$ and $R$ is semi-conjugate
to the translation $\rho~:~t\mapsto t+c$ by $H\left(y\right)\sim A\left(0\right)\log y$.
The limit of $\left|H\left(y\right)\right|$ as $y$ goes to $0$
is $\infty$. For $H\left(R^{\circ n}\left(y_{0}\right)\right)=H\left(y_{0}\right)+nc$
to tend to $\infty$ it is necessary that $c\neq0$, from which we
deduce $\lim_{n\to+\infty}R^{\circ n}\left(y_{0}\right)=0$.
\item Assume next that $\alpha\neq1$. By subtracting the constant $\frac{c}{\alpha-1}$
to $H$ we may assume without loss of generality that $c=0$ and $R$
is semi-conjugate to the linear map $\rho~:~t\mapsto\alpha t$.
\begin{enumerate}
\item If $a\notin\rr$, then $\left|\alpha\right|\neq1$ and $\lim_{n\to+\infty}\alpha^{n}$
is either $0$ or $\infty$. Therefore $H\left(R^{\circ n}\left(y_{0}\right)\right)=\alpha^{n}H\left(y_{0}\right)$
also converges towards the same limit, because we may choose $y_{0}$
in such a way that $H\left(y_{0}\right)\notin\left\{ 0,\infty\right\} $.
But this can only mean $\lim_{n\to+\infty}R^{\circ n}\left(y_{0}\right)=0$.
\item If $a\in\rr$, then $\left|\alpha\right|=1$ and $H\left(y\right)$
tends to $0$ or $\infty$ as $y\to0$. For all $n\in\zp$ we have
$\left|H\left(R^{\circ k_{n}}\left(y_{0}\right)\right)\right|=\left|H\left(y_{0}\right)\right|$,
which is clearly impossible.
\end{enumerate}
\end{enumerate}
\end{proof}

\subsection{Classification of integrable loop germs}

We are now ready to state the classification of integrable saddle
loops. The most striking fact is that the holonomy group of an integrable
foliation must be abelian: no purely metabelian holonomy can correspond
to integrable loop germs.
\begin{thm}
\label{thm:regluing_integrability}Any integrable saddle loop, given
as a loop germ, is $\tmop{Diff}_{\tmop{fib}}(\mathbb{C}^{2},\bar{\Delta})$-equivalent
to one of the loop germs appearing in the following list.
\begin{lyxlist}{00.00.0000}
\item [{\textbf{Linear~model}}] $\left(\fol{1:\lambda},\mathcal{R}\right)$,
with eigenratio $-\lambda<0$ and $R\in\GL 1{\cc}$, defined by the
closed $1$-form
\begin{align*}
\omega_{1:\lambda} & =\frac{\dd y}{y}+\lambda\frac{\dd x}{x}.
\end{align*}
\item [{\textbf{Bernoulli~model}}] $\left(\fol{1:1},\mathcal{R}\right)$,
where $R$ is a Bernoulli diffeomorphism.
\item [{\textbf{Poincaré-Dulac~model}}] $\left(\fol{k,\mu},\mathcal{R}\right)$,
with eigenratio $-1$ and:
\begin{itemize}
\item $\fol{k,\mu}$ is the $1:1$ resonant saddle foliation defined by
a closed $1$-form 
\begin{align*}
\omega_{k,\mu} & =\frac{\dd y}{y}-\frac{1+\mu u^{k}}{u^{k}}\omega_{1:1}
\end{align*}
for some $k\in\zz_{\geqslant1}$ and $\mu\in\cc$, where $u=xy$ is
the resonant monomial (so that $\omega_{1:1}=\frac{\dd u}{u}$);
\item $R=\exp\chi$ is the exponential of a derivation 
\begin{align*}
\chi\left(y\right) & =\frac{y^{k+1}}{1+\nu y^{k}}\pp y~,~\nu\in\cc.
\end{align*}
\end{itemize}
\end{lyxlist}
\end{thm}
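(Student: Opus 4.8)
The plan is to combine three ingredients: Singer's criterion for Liouvillian integrability together with Lemma~\ref{lem:integrable_regluing}, the Berthier--Touzet classification~\cite{BerTouze} of saddle foliations carrying a Liouvillian first integral, and Proposition~\ref{prop:transverse_groupoid}, which pins down the admissible gluing maps once the transverse structure is understood.

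\emph{Reduction.} Let $(\floc,\mathcal{R})$ be an integrable loop germ. By Singer's theorem~\cite{SingLiou} there is a Godbillon--Vey sequence $(\omega,\eta)$ of length $2$ as in~\eqref{eq:GodVey} defining $\floc$, with Liouvillian first integral $H=\int\Theta$ where $\Theta=\omega/\exp\int\eta$ is closed (by the Remark after Lemma~\ref{lem:integrable_regluing} one may replace $\omega$ by a meromorphic multiple). By Lemma~\ref{lem:integrable_regluing}, integrability of the loop germ is equivalent to: $\floc$ is Liouvillian-integrable, and there are $R\in\mathcal{R}$, $\alpha\in\cc^{\times}$ with $R^{*}(\Theta|_{\Sigma})=\alpha\,\Theta|_{\Omega}$; writing $\Theta|_{\Omega}=F_{\Omega}\,\dd y$, $\Theta|_{\Sigma}=F_{\Sigma}\,\dd x$ in the natural coordinates of the transversals, this reads $R'\cdot(F_{\Sigma}\circ R)=\alpha\,F_{\Omega}$. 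I would also record that by~\cite{BeCeLi} the holonomy group of the glued surface of Proposition~\ref{prop:realisation} is solvable. Finally, I am free to change the representative $R$, the floating transversal and its base-point, and to act by $\tmop{Diff}_{\tmop{fib}}(\mathbb{C}^{2},\bar{\Delta})$: thanks to Proposition~\ref{prop:preparation-saddles}, Theorem~\ref{thm-Mattei-Moussu} and Theorems~\ref{thm:A}--\ref{thm:B}, conjugating the saddle to a prepared normal form stays inside the equivalence class.

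\emph{Enumeration of $\floc$, and the easy cases.} By~\cite{BerTouze}, up to analytic equivalence a saddle carrying a Liouvillian first integral is either a linear saddle $\fol{1:\lambda}$ ($\lambda>0$) or a resonant Poincaré--Dulac normal form of eigenratio $-p/q$ with $\gcd(p,q)=1$ (the case $p=q=1$ being the foliations $\fol{k,\mu}$). In each case I would write an explicit closed $\Theta$ and compute $F_{\Omega},F_{\Sigma}$, checking that they have the shape $A(t)\,t^{-a}\prod_{0<j\le k}\exp(b_{j}t^{-j})$ of Proposition~\ref{prop:transverse_groupoid} (the unit $A$ being the restriction of the holomorphic $\omega$, the rest the restriction of $\exp(-\int\eta)$, whose primitive is a log plus a Laurent tail). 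For $\fol{1:\lambda}$, $\Theta=\dd y/y+\lambda\,\dd x/x$, so $F_{\Omega}=1/t=F_{\Sigma}$ up to the constant $\lambda$ (absorbed in $\alpha$), and $R'/R=\alpha/t$ integrates to $R(t)=ct^{\alpha}$; being a germ of a biholomorphism forces $\alpha=1$, so $R\in\GL 1{\cc}$ --- the \textbf{Linear model}. For the linear $\fol{1:1}$ the admissible first integral is $\phi(xy)$ for a Liouvillian function $\phi$ of the resonant monomial $xy$ (constrained to keep the monodromy affine), so $F_{\Omega}=F_{\Sigma}=\phi'$ is a genuine single function of the required form, and Proposition~\ref{prop:transverse_groupoid} yields that $R$ is either analytically linearizable (Linear model, $\lambda=1$) or a Bernoulli diffeomorphism as in Definition~\ref{def:bernoulli}; conversely every such $R$ occurs, $\phi$ being the quadrature of~\eqref{eq:bernoulli_diff}. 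This is the \textbf{Bernoulli model}.

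\emph{The nonlinear resonant case, and the main obstacle.} Here the admissible first integral is unique up to an affine change, so $\Theta$ is fixed up to a constant and its restrictions $F_{\Omega},F_{\Sigma}$ are both of the type of Proposition~\ref{prop:transverse_groupoid} but with leading exponents governed separately by $p$ and by $q$; matching them through $R$ (comparing leading terms of $R'\cdot(F_{\Sigma}\circ R)=\alpha F_{\Omega}$ with $R(t)=rt+\oo t$) forces $p=q$, hence eigenratio $-1$ --- this is the asymmetry obstruction already visible for $R=\id$ in the Introduction. For $\fol{k,\mu}$, Proposition~\ref{prop:transverse_groupoid} leaves $R$ analytically conjugate to a Bernoulli diffeomorphism with constrained multiplier and residue; I would then use solvability of the holonomy group together with Proposition~\ref{prop:GH}: the two holonomy-variation germs of such a loop germ commute only in the ``embedded into a flow'' case or the exceptional ``identical variations'' ($\mu=\tfrac12$) case, and one checks that the latter is Liouville-integrable precisely when it already embeds in a flow. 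This leaves $R=\Exp\chi$ with $\chi(y)=\frac{y^{k+1}}{1+\nu y^{k}}\pp y$, and a last normalization of $R$, the base-point and the closed $1$-form by $\tmop{Diff}_{\tmop{fib}}(\mathbb{C}^{2},\bar{\Delta})$-equivalence gives the \textbf{Poincaré--Dulac model}. I expect the hard part to be exactly this nonlinear resonant analysis: showing that only eigenratio $-1$ survives, and then, among loop germs over $\fol{k,\mu}$ with abelian holonomy, separating the genuinely Liouville-integrable ones from the $\mu=\tfrac12$ family that merely looks integrable; the attendant bookkeeping with the natural transversal coordinates and the floating base-point, and the transfer of the Berthier--Touzet list to the prepared/germified setting, are delicate though routine.
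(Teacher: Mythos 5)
Your proposal uses the same three ingredients as the paper (Singer's criterion plus Lemma~\ref{lem:integrable_regluing}, the Berthier--Touzet classification, and Proposition~\ref{prop:transverse_groupoid}) and divides into the same cases (irrational/rational non-unity eigenratio, linearizable $1:1$, nonlinear $1:1$). Your handling of the rational $\lambda=p/q\neq 1$ case is a genuine variant of the paper's: the paper first invokes Proposition~\ref{prop:solvable_pq} to linearize the holonomy (hence the foliation, by Mattei-Moussu) and only then applies the $F$-matching argument to force $F\equiv\tmop{cst}$ and $R$ linear, whereas you propose to work directly with the nonlinear $p:q$ normal form and compare leading pole/exponential orders of $F_\Sigma\circ R$ and $F_\Omega$; this gives a cleaner-looking dichotomy but you would still have to justify the claim that $\Theta$ is unique up to a multiplicative constant (which is false for the linear resonant case, where $\Theta$ may be multiplied by any meromorphic function of the resonant monomial --- this is precisely what the paper's $F(u)$ parametrizes), so some care is needed to separate the linear from the genuinely nonlinear sub-cases before the comparison of leading terms can be made.

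The place where your proposal has a real gap is the ``identical variations'' subcase $\mu=\tfrac12$, $R^{*}\holo[\Sigma]=\holo[\Omega]$, from Proposition~\ref{prop:GH}. You write that ``one checks that the latter is Liouville-integrable precisely when it already embeds in a flow,'' and you flag this as the hard part --- correctly so, but the check is not routine, and you do not supply the idea that makes it work. The paper's argument is substantive: assuming $\floc$ is a Bernoulli foliation $\omega=\omega_{k,\mu}+\cdots$ with $P\neq 0$, one writes a candidate $\eta=a\omega+b\,\dd u$, imposes $\dd\omega=\eta\wedge\omega$ and $\dd\eta=0$, deduces that $a-d$ must be a separatrix-divisor germ, reduces to a cohomological equation $X\cdot\log s=\cdots$ for a non-vanishing $s$, solves it by formal power series in the resonant variable, and finds that convergence forces $\mu\in\zz_{\geqslant k}$, contradicting $\mu=\tfrac12$; and then shows that even with $P=0$ the form $\exp\int\eta$ with $d\neq 0$ admits no $R\in\diff$ satisfying the regluing condition. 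None of this is reducible to the already-established Proposition~\ref{prop:transverse_groupoid}, which is why the paper needs a separate computation here. Without this step, your proof does not rule out the $\mu=\tfrac12$ Bernoulli saddles from the list, so the classification would be incomplete. A minor secondary gap: in the irrational linear case you take $\Theta=\omega_{1:\lambda}$ without observing, as the paper does, that one must first show $\eta=a\,\omega_{1:\lambda}$ with $a$ constant (a meromorphic first integral, hence a constant) and then derive $a=0$ from the regluing identity.
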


\begin{proof}
We start from an integrable loop germ $\left(\floc,\mathcal{R}\right)$
with gluing map $R$, and let $\left\langle R^{*}\holo[\Sigma],\holo[\Omega]\right\rangle $
be its holonomy group.
\begin{enumerate}
\item Let us first deal with the simplest case $\lambda\notin\qq$. Theorem~3.1
from~\cite{BerTouze} states in that case that $\floc$ is analytically
linearizable. Hence, up to considering the pulled-back loop germ as
in Definition~\ref{def:fib_eqv_abstract_loops}, one can suppose
that $\floc=\fol{1:\lambda}$ and choose $\omega=\omega_{1:\lambda}$.
Because $\dd{\omega}_{1:\lambda}=0=\eta\wedge\omega_{1:\lambda}$,
we deduce that $\eta=a\omega_{1:\lambda}$ for some meromorphic germ
$a$. Plugging this identity into $\dd{\eta}=0$ we infer that $a$
is a meromorphic first-integral of $\omega_{1:\lambda}$, hence a
constant $a\in\cc$. Therefore $\exp\int\eta=cx^{\lambda a}y^{a}$
for some $c\in\cc^{\times}$.\\
Next we apply Lemma~\ref{lem:integrable_regluing}: there exists
$\alpha\in\cc^{\times}$ for which $R$ must solve
\begin{align*}
\lambda\frac{R'}{R^{1+\lambda a}}\left(y\right) & =\alpha\frac{1}{y^{1+a}}.
\end{align*}
Because $R\left(y\right)=\gamma y\left(1+\OO y\right)$ for some $\gamma\neq0$
we deduce $a=0$ and $\lambda=\alpha$. Finally $R\in\GL 1{\cc}$.
\item Assume next $\lambda=\frac{p}{q}\in\qq_{>0}\setminus\left\{ 1\right\} $.
Following Proposition~\ref{prop:solvable_pq} from the Appendix,
and after a convenient analytic change of coordinates, we can assume
that $\holo[\Omega]$ is linear. It stems from Mattei-Moussu theorem
that $\floc$ can be chosen as the linear foliation $\fol{1:\lambda}$,
that is $\omega=\omega_{1:\lambda}$. We let $u=x^{p}y^{q}$ be the
resonant monomial so that $\omega_{1:\lambda}=\frac{\dd u}{qu}$ and
every meromorphic first-integral of $\omega_{1:\lambda}$ factors
meromorphically through $u$~\cite{MaMou}. Following the same line
of reasoning as for 1., we have $\eta=G\left(u\right)\omega_{1:\lambda}$
for some meromorphic germ $G$. We compute directly 
\begin{align*}
F\left(u\right) & =\exp\int\eta=A\left(u\right)u^{-a}\prod_{0<j\leqslant k}\exp\left(b_{j}u^{-j}\right)~~,~a,~b_{1},\ldots,b_{k}\in\cc,
\end{align*}
where $A\left(0\right)\neq0$. Thanks to Lemma~\ref{lem:integrable_regluing},
we know there exists $\alpha\in\cc^{\times}$ such that
\begin{align*}
\lambda\frac{R'}{R\times F\left(R^{p}\right)} & =\alpha\frac{1}{yF\left(y^{q}\right)}.
\end{align*}
Because $p\neq q$ and $R\in\diff$, the function $F$ must be constant:
we recover the case studied in 1., yielding a linear model $R\in\GL 1{\cc}$.
Observe in particular that the only possible alternative described
by Proposition~C.1 is $\varepsilon=0$.
\item Let us continue the classification with the case $\lambda=1$. The
symmetry offered by the variables $x$ and $y$ explains why this
case is so rich. Let us first consider the case of a foliation $\floc$
whose formal Poincaré-Dulac normal form is linear: it is actually
analytically linearizable and we may assume that $\omega=\omega_{1:1}$
and $\floc=\fol{1:1}$. We let $u=xy$ be the resonant monomial so
that $\omega_{1:1}=\frac{\dd u}{u}$ and every meromorphic first-integral
of $\omega_{1:1}$ factors meromorphically through $u$.\\
In that case the holonomy group is trivial, the corner transition
map $\text{\ensuremath{\cloc\ }}$ is the identity and the Poincaré
map $f$ coincides with $\Pi^{*}R$. According to Theorem~B, the
analytical class of $R$ determines that of $\left(\floc,\mathcal{R}\right)$.
Moreover, an equivalence $\varphi^{*}R=\widetilde{R}$ between two
such analytic germs induces a fibered isotropy of $\fol{}$ through
$\left(x,y\right)\mapsto\left(x,y\frac{\varphi\left(u\right)}{u}\right)$,
sending $u$ to $\varphi\left(u\right)$. Hence, we can change $R$
for any element $\widetilde{R}$ of its conjugacy class while preserving
at the same time the linearity of $\floc$ and the analytical class
of $\left(\floc,\mathcal{R}\right)$.\\
We have again $\eta=G\left(u\right)\omega_{1:1}$ for some meromorphic
germ $G$. Thanks to Lemma~\ref{lem:integrable_regluing}, the germ
$R$ and the multivalued function $\exp\int\eta$ meet the requirements
of Proposition~\ref{prop:transverse_groupoid}. $R$ is thus either
analytically conjugate to the rotation $\ee^{2\ii\pi\beta}\id$, giving
a linear model, or to $\ee^{2\ii\pi\beta}R_{0}$ (only if $\beta\in\qq$),
where $R_{0}$ is a Bernoulli diffeomorphism of order $\left|m\right|$,
giving a Bernoulli model.
\item Let us conclude our proof when $\lambda=1$ and the Poincaré-Dulac
normal formal of $\floc$ is nonlinear. Each generator $\holo[j]$
of the holonomy group is tangent to the identity, hence $\Var f$
is solvable if and only if it is abelian~\cite{CerMou}. Proposition~\ref{prop:GH}
describes the only two situations that may happen.
\begin{enumerate}
\item In convenient analytic coordinates, $\holo[\Omega]=\Exp{\partial}$
and $R^{*}\holo[\Sigma]=\Exp{\left(c\partial\right)}$ are embedded
in the same holomorphic flow. Then $\floc$ is analytically conjugate
to its Poincaré-Dulac normal form $\fol{k,\mu}$. Since $\holo[\Sigma]=\exp\left(c'\partial\right)$
for some other $c'\in\cc$, it follows that $R$ also embeds in the
same holomorphic flow, yielding a Poincaré-Dulac model.
\item $R^{*}\holo[\Sigma]=\holo[\Omega]$ and $\mu=\frac{1}{2}$. According
to~\cite[Proposition 6.2]{BerTouze}, the foliation $\floc$ is
analytically conjugate to a foliation defined by $\omega=\omega_{k,\mu}+P\left(u\right)\dd u$
for some analytic germ $P$ and $d\in\zz_{\geqslant-1}\setminus\left\{ 0\right\} $,
which can actually be chosen as a polynomial~\cite[Theorem 2]{Tey-ExSN}
like the one in Proposition~\ref{prop:transverse_groupoid}. Therefore
$\holo[\Sigma]$ and $\holo[\Omega]$ are Bernoulli diffeomorphisms.
If moreover $P=0$, then we obtain again a Poincaré-Dulac model. Let
us prove that this is the only possible outcome by assuming $P\neq0$
in the sequel.\\
Requesting that $\eta=a\omega+b\dd u$ satisfy $\dd{\omega}=\eta\wedge\omega$
yields $b=dy^{d}u^{\sigma}P\left(u\right)$, and requesting that it
be closed yields $\dd a\wedge\omega=b\left(a-d\right)\dd x\wedge\dd y$.
Therefore $S=a-d$ is a separatrix of $\floc$, hence $S=y^{n}u^{m}s$
for some holomorphic function $s$ (which is either $0$ or does not
vanish at $0$) and some $n,~m\in\zz$. Indeed, the only branches
of a separatrix of $\floc$ are $\left\{ x=0\right\} $ and $\left\{ y=0\right\} $.\\
Let us first prove that $s=0$ by contraposition. Since $\frac{\dd{\left(y^{n}u^{m}\right)}}{y^{n}u^{m}}\wedge\omega=y\left(n\left(1+\mu u^{k}\right)+mu^{k}\right)\dd x\wedge\dd y$,
the non-vanishing holomorphic germ $s$ solves the cohomological equation
\begin{align*}
X\cdot\log s & =y^{d+1}u^{k+1}P\left(u\right)\left(d-n\right)-y\left(n\left(1+\mu u^{k}\right)+mu^{k}\right),
\end{align*}
where $X\left(x,y\right)=xu^{k}\pp x+\left(1+\mu u^{k}+u^{k+1}y^{d}P\left(u\right)\right)\left(x\pp x-y\pp y\right)$
is the vector field dual to $yu^{k+1}\omega$. The only way to reach
the terms $y\left(n\left(1+\mu u^{k}\right)+mu^{k}\right)$ is through
$X\cdot\left(y\phi\left(u\right)\right)$ for some holomorphic $\phi$.
Observe that $d\neq0$, hence $y^{d+1}u^{k+1}P\left(u\right)$ does
not mix up with the other terms, and $\phi$ must solve
\begin{align*}
u^{k+1}\phi'\left(u\right)+\left(1+\mu u^{k}\right)\phi\left(u\right) & =-n\left(1+\mu u^{k}\right)+mu^{k}.
\end{align*}
An immediate formal computation on $\phi\left(u\right)=\sum_{j\geqslant0}\phi_{j}u^{j}$
gives the only possible solution
\begin{align*}
\phi_{0}=-n, & ~~\phi_{j}=0\text{ for }0<j<k,\\
\phi_{k}=m~~\text{and } & \phi_{j+1}=\left(j-\mu\right)\phi_{j}\text{ for }k\leqslant j.
\end{align*}
The only way for this power series to have a positive radius of convergence
is to be a finite sum, so that $\mu\in\zz_{\geqslant k}$ and in particular
$\mu\neq\frac{1}{2}$.\\
So far we have proved $a=d$ and $b=dy^{d}P\left(u\right)$, from
which we deduce that $\eta=a\omega_{k,\mu}$ and $\exp\int\eta=y^{d}u^{-d\mu}\exp\frac{d}{ku^{k}}$.
Consequently, the condition expressed by Lemma~\ref{lem:integrable_regluing}
amounts to
\begin{align*}
\frac{1+\frac{1}{2}R^{k}+R^{k+1}P\left(R\right)}{R^{1+k-\nf d2}\exp\frac{dR^{-k}}{k}}R' & =\alpha\frac{1-\frac{1}{2}y^{k}+y^{d+k+1}P\left(y\right)}{y^{1+k+\nf d2}\exp\frac{dy^{-k}}{k}}.
\end{align*}
Since $d\neq0$ there is no solution belonging to $\diff$, as can
be seen by equating the respective first-order contributions of $R'\times R^{-1-k+\nf d2}$
and $\alpha y^{-1-k-\nf d2}$. This completes the classification.
\end{enumerate}
\end{enumerate}
\end{proof}
To conclude this classification, let us give an example of a Bernoulli
loop germ.
\begin{example}[Bernoulli model]
The linear foliation $\fol{1:1}$ with Godbillon-Vey sequence
\begin{align*}
\omega=\omega_{1:1}~~~\text{and~~~} & \eta=\frac{1+u}{u^{2}}\dd u~~~,~u=xy~,
\end{align*}
corresponds to the Liouvillian first integral
\begin{align*}
H & =\int\frac{\exp\frac{1}{u}}{u^{2}}\dd u=-\exp\frac{1}{u}.
\end{align*}
To obtain an associated loop germ with gluing map $R$, one must solve
the equation $HR=\alpha H+c$ for complex constants $\alpha$ and
$c$, that is:
\begin{align*}
\exp\frac{1}{R} & =\alpha\exp\frac{1}{y}-c.
\end{align*}
Every admissible gluing germ $R$ for $c=0$ has the form
\begin{align*}
R\left(y\right) & =\frac{y}{1+y\log\alpha}.
\end{align*}
For $c\neq0$ one must take $R$ as the time-$\log\alpha$ flow of
the vector field $\frac{u^{2}}{1+cu}\pp u$.
\end{example}

\appendix

\section{\protect\label{sec:commut}Some general commutator identities}

Given a group $G$, we define the \emph{commutator} of two elements
$x,y\in G$ by 
\[
[x,y]=x^{-1}y^{-1}xy.
\]
For the sake of brevity, we write the conjugation of $x$ by $y$
simply as 
\begin{align*}
x^{y} & =y^{-1}xy.
\end{align*}
Let us list some well-known identities 
\[
\begin{array}{ccl}
(\text{C}0) &  & [x,y]=x^{-1}x^{y}\\
(\text{C}1) &  & [x,y]^{-1}=[y,x]\\
(\text{C}2) &  & [x,y]^{z}=[x^{z},y^{z}]\\
(\text{C}3) &  & [x,y^{-1}]=[y,x]^{y^{-1}}\text{ and }[x^{-1},y]=[y,x]^{x^{-1}}\\
(\text{C}4) &  & [x,yz]=[x,z]\,[x,y]^{z}\text{ and }[xy,z]=[x,z]^{y}\,[y,z]
\end{array}
\]
Let $\tau\in G$ be fixed. The {\em $\tau$-functional variation}
(or simply the \emph{variation}) $\varD:G\rightarrow G$ is defined
as the commutator 
\[
\varD(x)=[\tau,x].
\]
Consider the sets 
\begin{equation}
\Cent_{0}(\tau)=\{\idD\},\qquad\Cent_{k}(\tau)=\{x\in G:\varD(f)\in\Cent_{k-1}(\tau)\},\quad k\ge1,\label{eq:definitions-Ck}
\end{equation}
where $\idD$ denotes the identity element. Notice that $\Cent_{1}(\tau)$
is a subgroup of $G$, since it is precisely the centralizer of $\tau$
in $G$. This property does not hold for $\Cent_{k}(\tau)$ for $k\ge2$
since the $\varD$ operator is not a morphism of groups.
\begin{lem}
\label{lem:x2} Let $x\in G$. Then 
\begin{equation}
\varD(x^{-1})^{x}=\varD(x)^{-1}.\label{eq:varfm}
\end{equation}
\end{lem}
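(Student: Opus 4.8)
The statement to prove is Lemma~\ref{lem:x2}: for $x\in G$, one has $\varD(x^{-1})^{x}=\varD(x)^{-1}$, where $\varD(x)=[\tau,x]$.

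The plan is to reduce everything to the listed commutator identities (C0)--(C4). First I would unfold the definition: $\varD(x^{-1})=[\tau,x^{-1}]$. By identity (C3), $[\tau,x^{-1}]=[x,\tau]^{x^{-1}}$. Conjugating both sides by $x$ gives $\varD(x^{-1})^{x}=\bigl([x,\tau]^{x^{-1}}\bigr)^{x}=[x,\tau]^{x^{-1}x}=[x,\tau]$, using that conjugation is a left action (i.e. $(g^{a})^{b}=g^{ab}$ with the convention $g^{a}=a^{-1}ga$, so $x^{-1}x=e$). Finally, by identity (C1), $[x,\tau]=[\tau,x]^{-1}=\varD(x)^{-1}$. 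Chaining these equalities yields the claim.

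Concretely, the sequence of steps is: (1) write $\varD(x^{-1})^{x}=[\tau,x^{-1}]^{x}$; (2) apply (C3) in the form $[\tau,x^{-1}]=[x,\tau]^{x^{-1}}$ to get $[\tau,x^{-1}]^{x}=[x,\tau]^{x^{-1}x}=[x,\tau]$; (3) apply (C1) to get $[x,\tau]=[\tau,x]^{-1}=\varD(x)^{-1}$. Each step is a direct substitution of a named identity, so no genuine computation beyond bookkeeping is required; one could alternatively just expand both sides in terms of $\tau$ and $x$ and check they agree letter-by-letter ($\varD(x^{-1})^{x}=x^{-1}(x\tau^{-1}x^{-1}\tau)x = \tau^{-1}x^{-1}\tau x$ versus $\varD(x)^{-1}=(\tau^{-1}x^{-1}\tau x)^{-1}\cdots$; one sees after simplification both equal $\tau^{-1}x^{-1}\tau x$), but invoking (C1) and (C3) is cleaner and keeps the proof in the same spirit as the surrounding material.

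There is essentially no obstacle here: the only thing to be careful about is the convention for the conjugation exponent, namely whether $x^{y}$ means $y^{-1}xy$ or $yxy^{-1}$ — the excerpt fixes $x^{y}=y^{-1}xy$, and (C2)/(C3) are stated consistently with that, so $(g^{a})^{b}=g^{ab}$ holds and the cancellation $x^{-1}x=e$ in the exponent is legitimate. Hence I would simply present the three-line chain of identities as the proof.

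\begin{proof}
Using identity (C3) we have $[\tau,x^{-1}]=[x,\tau]^{x^{-1}}$, hence
\[
\varD(x^{-1})^{x}=[\tau,x^{-1}]^{x}=\bigl([x,\tau]^{x^{-1}}\bigr)^{x}=[x,\tau].
\]
By identity (C1), $[x,\tau]=[\tau,x]^{-1}=\varD(x)^{-1}$, which proves \eqref{eq:varfm}.
\end{proof}
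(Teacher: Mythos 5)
Your proof is correct and uses the same two identities, (C1) and (C3), as the paper; the paper applies (C1) first to flip the commutator and then (C3), whereas you apply (C3) first and (C1) last, but this is the same argument up to reordering. One caveat: the parenthetical ``expand both sides'' check in your prose is mis-executed — you write $\varD(x^{-1})=x\tau^{-1}x^{-1}\tau$, but from the paper's convention $\varD(x^{-1})=[\tau,x^{-1}]=\tau^{-1}x\tau x^{-1}$, which is the inverse of your expression, and the two sides of \eqref{eq:varfm} both reduce to $x^{-1}\tau^{-1}x\tau$, not to $\tau^{-1}x^{-1}\tau x$ as you claim; since that remark is only an aside and your actual proof relies on (C1)/(C3), the proof as submitted stands.
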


\begin{proof}
This is immediate from the definition of $[\tau,x]$.
\end{proof}
We will be mostly interested in $\Cent_{2}(\tau)$, formed by the
set of group elements $x$ such that $[\tau,[\tau,x]]=\idD$. We now
show that such a set is closed under inversion and a partial composition
operation.
\begin{lem}
\label{lem:properties-var} Let $x,y\in\Cent_{2}(\tau)$. Then:
\begin{enumerate}
\item $x^{-1}\in\Cent_{2}(\tau)$;
\item $\varD(x)$ and $\varD(y^{-1})$ commute if and only if $xy$ belong
to $\Cent_{2}(\tau)$;
\item $\varD(x)=\varD(y^{-1})$ if and only if $xy$ belongs to $\Cent_{1}(\tau)$.
\end{enumerate}
\end{lem}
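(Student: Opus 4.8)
The plan is to treat the three items in order, reducing each to the commutator identities (C0)--(C4) and to Lemma~\ref{lem:x2}. For item~1, I would start from the hypothesis $\varD(\varD(x))=\idD$, \emph{i.e.}\ that $\varD(x)\in\Cent_1(\tau)$, and use Lemma~\ref{lem:x2}, which gives $\varD(x^{-1})=\bigl(\varD(x)^{-1}\bigr)^{x^{-1}}$. Since $\Cent_1(\tau)$ is a subgroup (the centralizer of $\tau$), $\varD(x)^{-1}$ lies in $\Cent_1(\tau)$; and conjugation by $x^{-1}$ of an element of $\Cent_1(\tau)$ need not stay in $\Cent_1(\tau)$, so one cannot stop there. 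Instead I would compute $\varD\bigl(\varD(x^{-1})\bigr)$ directly: writing $c=\varD(x)^{-1}\in\Cent_1(\tau)$ and $g=x^{-1}$, we have $\varD(x^{-1})=c^{g}=[\tau,x^{-1}]$, and then $\varD(c^{g})=[\tau,c^{g}]$. Using (C2) to move the conjugation outside, $[\tau,c^{g}]=[\tau^{g^{-1}},c]^{g}$, and expanding $\tau^{g^{-1}}=\tau\cdot[\tau,g^{-1}]=\tau\,\varD(g^{-1})$ via (C0) together with (C4) to split $[\tau\,\varD(g^{-1}),c]$, one sees the relevant factors all lie in $\Cent_1(\tau)$ once we know $\varD(g^{-1})=\varD(x)\in\Cent_1(\tau)$ and $c\in\Cent_1(\tau)$ commute with $\tau$. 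So $\varD(\varD(x^{-1}))=\idD$. (I expect this to be the fiddliest of the three, essentially the ``main obstacle'', since it requires care with the order of conjugations.)

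For item~2, the cleanest route is the multiplicativity formula (C4): $\varD(xy)=[\tau,xy]=[\tau,y]\,[\tau,x]^{y}=\varD(y)\,\varD(x)^{y}$. Then $\varD(\varD(xy))=\idD$ is equivalent to $\varD(y)\,\varD(x)^{y}\in\Cent_1(\tau)$. Using item~1 and the hypothesis that $x,y\in\Cent_2(\tau)$, both $\varD(y)$ and $\varD(x)$ lie in $\Cent_1(\tau)$, and I would rewrite $\varD(x)^{y}$ in terms of $\varD(x)$ and $\varD(y^{-1})$ (or $\varD(y)$) so that the condition ``the product lies in $\Cent_1(\tau)$'' becomes exactly a commutation relation between $\varD(x)$ and $\varD(y^{-1})$. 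Concretely, by Lemma~\ref{lem:x2} applied to $y$, $\varD(y^{-1})=\bigl(\varD(y)^{-1}\bigr)^{y^{-1}}$, so $\varD(y^{-1})^{y}=\varD(y)^{-1}$, and conjugating the relation $\varD(y)\varD(x)^{y}\in\Cent_1(\tau)$ by $y^{-1}$ turns it into a statement purely about $\varD(x)$, $\varD(y^{-1})$ and $\tau$; since all three of $\varD(x),\varD(y^{-1}),\varD(y)$ centralize $\tau$, the membership in $\Cent_1(\tau)$ holds automatically, and what remains is precisely $[\varD(x),\varD(y^{-1})]=\idD$. This gives the equivalence in both directions.

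For item~3, I would argue similarly but now demanding the stronger conclusion $xy\in\Cent_1(\tau)$, \emph{i.e.}\ $\varD(xy)=\idD$. By (C4), $\varD(xy)=\varD(y)\,\varD(x)^{y}$, so $\varD(xy)=\idD$ iff $\varD(x)^{y}=\varD(y)^{-1}$, \emph{i.e.}\ $\varD(x)=\bigl(\varD(y)^{-1}\bigr)^{y^{-1}}$. By Lemma~\ref{lem:x2} (applied to $y$), the right-hand side is exactly $\varD(y^{-1})$. Hence $\varD(xy)=\idD$ iff $\varD(x)=\varD(y^{-1})$, which is the claimed equivalence. One small point I would check is that no hypothesis beyond $x,y\in\Cent_2(\tau)$ is actually needed for item~3 (the computation is an identity in $G$), and that items~2 and~3 are consistent: $\varD(x)=\varD(y^{-1})$ trivially implies $[\varD(x),\varD(y^{-1})]=\idD$, matching $\Cent_1(\tau)\subset\Cent_2(\tau)$. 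Throughout, the only inputs are (C0)--(C4), Lemma~\ref{lem:x2}, and the fact that $\Cent_1(\tau)$ is the centralizer of $\tau$; no analytic input is required, so the proof is purely formal.
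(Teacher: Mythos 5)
Your plan is correct and follows essentially the same route as the paper for all three items: item~1 via Lemma~\ref{lem:x2} and a commutator calculation, item~2 via the (C4) formula $\varD(xy)=\varD(y)\varD(x)^{y}$ followed by conjugation and the substitution $\tau^{y^{-1}}=\tau\varD(y^{-1})$, and item~3 as an immediate corollary of that same formula. The one spot to tighten is the wording in item~2: ``conjugating the relation $\varD(y)\varD(x)^{y}\in\Cent_{1}(\tau)$ by $y^{-1}$'' should not be read as transporting a membership statement (conjugation by $y^{-1}$ need not preserve $\Cent_{1}(\tau)$) but as the explicit identity $[\tau,\varD(y)\varD(x)^{y}]^{y^{-1}}=[\tau\varD(y^{-1}),\varD(y^{-1})^{-1}\varD(x)]$, which then collapses to $[\varD(y^{-1}),\varD(x)]$ because both variations centralize $\tau$.
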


\begin{proof}
~
\begin{enumerate}
\item If we denote $z=\varD(x)^{-1}$ then, by~(\ref{eq:varfm}), it suffices
to prove that $z^{x^{-1}}$ is in $\Cent_{1}(\tau)$. We compute its
variation 
\[
\varD(z^{x^{-1}})=\big[\tau,z^{x^{-1}}\big]=\big[\tau^{x},z\big]^{x^{-1}}=\big[\tau z^{-1},z\big]^{x^{-1}}=\idD,
\]
where we have used the identity (C0) to write $\tau^{x}=\tau z^{-1}$
and the fact that $z$ commutes with $\tau$ to obtain the last equality.
\item Using identities~(C3),~(C4) and~(\ref{eq:varfm}), we write: 
\begin{equation}
\varD(xy)=\left[\tau,xy\right]=[\tau,y]\,[\tau,x]^{y}=\varD(y)\varD(x)^{y}=\left(\varD(y^{-1})^{-1}\varD(x)\right)^{y}.\label{eq:varxy-trivial}
\end{equation}
Put $z=\varD(x)$ and $w=\varD(y^{-1})$. Then:
\[
\left[\tau,\varD(xy)\right]=\left[\tau,(w^{-1}z)^{y}\right]=[\tau^{y^{-1}},w^{-1}z]^{y}=[\tau w,zw^{-1}]^{y},
\]
where in the last equality we have used that $\tau^{y^{-1}}=y\tau y^{-1}=\tau w$.
From the fact that $z$ and $w$ commute with $\tau$, we obtain,
using (C4), that 
\[
[\tau w,w^{-1}]=[w,z].
\]
\item is an immediate consequence of~(\ref{eq:varxy-trivial}).
\end{enumerate}
\end{proof}
Given an element $x\in\Cent_{2}(\tau)$, both the left coset $x\Cent_{1}(\tau)$
and the right coset $\Cent_{1}(\tau)x$ are entirely contained in
$\Cent_{2}(\tau)$. More precisely, for any $x\in G$ and $z\in\Cent_{1}(\tau)$,
a simple computation shows that 
\begin{equation}
\varD(xz)=\varD(x)^{z},\quad\text{and}\quad\varD(zx)=\varD(x).\label{eq:varxz}
\end{equation}
It follows that the right coset $\Cent_{1}(\tau)x$ is contained in
the set $\varD^{-1}(\varD(x))$. We prove that in fact 
\[
\Cent_{1}(\tau)x=\varD^{-1}(\varD(x)).
\]
Indeed, suppose that $y\in\Cent_{2}(\tau)$ is such that $\varD(x)=\varD(y)$.
Then, using (C4) we obtain 
\[
\varD(yx^{-1})=[\tau,yx^{-1}]=[\tau,x^{-1}]\,[\tau,y]^{x^{-1}}=\varD(x^{-1})\varD(x)^{x^{-1}}=\idD,
\]
where the last equality follows from~(\ref{eq:varfm}). Therefore,
$y=zx$ for some $z\in\Cent_{1}(\tau)$. This discussion can be restated
as follows.
\begin{prop}
\label{prop:cosetinj} The operator $\varD$ defines an embedding
\[
\varD:\Cent_{1}(\tau)\backslash\Cent_{2}(\tau)\longrightarrow\Cent_{1}(\tau)
\]
(which is not necessarily surjective).
\end{prop}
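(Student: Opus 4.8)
The plan is to build the claimed embedding as the map induced by $\varD$ on the quotient set, and to verify the three things that make ``induced embedding'' meaningful: that $\varD$ sends $\Cent_2(\tau)$ into $\Cent_1(\tau)$, that it is constant on each right coset $\Cent_1(\tau)x$, and that it separates distinct right cosets. The first point is immediate from the definition~\eqref{eq:definitions-Ck}: if $x\in\Cent_2(\tau)$ then $\varD(x)\in\Cent_1(\tau)$ by definition of $\Cent_2$, so $\varD$ restricts to a map $\Cent_2(\tau)\to\Cent_1(\tau)$. For the second point I would invoke identity (C4): for $z\in\Cent_1(\tau)$ and $x\in\Cent_2(\tau)$ one has $\varD(zx)=[\tau,zx]=[\tau,x]\,[\tau,z]^{x}=\varD(x)$, since $[\tau,z]=\varD(z)=\idD$ because $z$ centralizes $\tau$. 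Hence $\varD$ is constant on right cosets and descends to a well-defined map $\bar\varD:\Cent_1(\tau)\backslash\Cent_2(\tau)\to\Cent_1(\tau)$.

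For injectivity, suppose $x,y\in\Cent_2(\tau)$ satisfy $\varD(x)=\varD(y)$; I must show $y\in\Cent_1(\tau)x$, i.e.\ $yx^{-1}\in\Cent_1(\tau)$. Applying (C4) once more, $\varD(yx^{-1})=[\tau,yx^{-1}]=[\tau,x^{-1}]\,[\tau,y]^{x^{-1}}=\varD(x^{-1})\,\varD(x)^{x^{-1}}$, and by Lemma~\ref{lem:x2} (the identity $\varD(x^{-1})^{x}=\varD(x)^{-1}$) the product on the right collapses to $\idD$. Therefore $yx^{-1}\in\Cent_1(\tau)$, the two right cosets agree, and $\bar\varD$ is injective, i.e.\ an embedding.

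Finally, to justify the parenthetical ``not necessarily surjective'' I would just record the trivial obstruction: if $G$ is Abelian then $\varD\equiv\idD$ and $\Cent_1(\tau)=\Cent_2(\tau)=G$, so $\bar\varD$ has image $\{\idD\}$, which equals $\Cent_1(\tau)$ only when $G$ is trivial. (In the situations actually used in the paper, $\U\backslash\MR$ and $\Uf\backslash\MRf$, surjectivity does hold, but this is a substantial fact relying on the realization theorems of~\cite{MaRa-Res} and~\cite{PY}, as recalled in Remark~\ref{rem:added}.) There is no serious obstacle in the proof itself; the only point demanding care is that $\varD$ is constant on \emph{right} cosets but not on left ones --- indeed $\varD(xz)=\varD(x)^{z}$ in general differs from $\varD(x)$ --- and that the injectivity computation genuinely uses Lemma~\ref{lem:x2} rather than any naïve cancellation of commutators.
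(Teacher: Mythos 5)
Your proof is correct and follows essentially the same route as the paper: you verify that $\varD$ lands in $\Cent_1(\tau)$, use identity (C4) with $\varD(z)=\idD$ to show $\varD(zx)=\varD(x)$ for well-definedness on right cosets, and prove injectivity by computing $\varD(yx^{-1})=\varD(x^{-1})\,\varD(x)^{x^{-1}}=\idD$ via Lemma~\ref{lem:x2} — all exactly as in the text. The parenthetical Abelian-group example for non-surjectivity is a harmless addition not found in the paper.
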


Let $x,y\in\Cent_{2}(\tau)$ be such that 
\begin{equation}
\varD(x)=\varD(y),\quad\text{and}\quad\varD(x^{-1})=\varD(y^{-1})\label{eq:equal-vars}
\end{equation}
Then, by the previous proposition, the left-hand equality gives $y=zx$,
for some $z\in\Cent_{1}(\tau)$. Inserting this in the right-hand
equality and using~(\ref{eq:varxz}), we obtain 
\[
\varD(x^{-1})=\varD(y^{-1})=\varD(x^{-1}z^{-1})=\varD(x^{-1})^{z^{-1}},
\]
which shows that $z$ belongs to the centralizer of $\varD(x^{-1})$
in $\Cent_{1}(\tau)$. We have thus proved the following result.
\begin{prop}
\label{prop:equalvars} Let $x,y\in\Cent_{2}(\tau)$. Then, the relations~(\ref{eq:equal-vars})
hold if and only if 
\[
y=c\,x,
\]
for some $c$ in the centralizer of $\varD(x^{-1})$.
\end{prop}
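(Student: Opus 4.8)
The plan is to prove the two implications separately; both are short and rely only on the conjugation identities~\eqref{eq:varxz} together with Proposition~\ref{prop:cosetinj}. Throughout I would use freely that $\Cent_{1}(\tau)$ is a subgroup of $G$ (hence closed under inversion), and that the hypothesis $x,y\in\Cent_{2}(\tau)$ guarantees that $\varD(x)$, $\varD(y)$, $\varD(x^{-1})$ and $\varD(y^{-1})$ all lie in $\Cent_{1}(\tau)$, which is exactly what makes~\eqref{eq:varxz} and Proposition~\ref{prop:cosetinj} applicable here.

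For the direct implication I would argue as in the paragraph preceding the statement. From $\varD(x)=\varD(y)$ and the identity $\Cent_{1}(\tau)x=\varD^{-1}(\varD(x))$ established there (a consequence of Proposition~\ref{prop:cosetinj}), one gets $y\in\Cent_{1}(\tau)x$, i.e. $y=zx$ with $z\in\Cent_{1}(\tau)$. Substituting $y^{-1}=x^{-1}z^{-1}$, with $z^{-1}\in\Cent_{1}(\tau)$, into the second relation $\varD(x^{-1})=\varD(y^{-1})$ and using the left-hand formula of~\eqref{eq:varxz} (with $x^{-1}$ playing the role of $x$ and $z^{-1}$ that of $z$), one obtains $\varD(x^{-1})=\varD(x^{-1})^{z^{-1}}$. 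This says precisely that $z^{-1}$, and hence $z$, commutes with $\varD(x^{-1})$, i.e. $z$ lies in the centralizer of $\varD(x^{-1})$ inside $\Cent_{1}(\tau)$. Taking $c=z$ completes this direction.

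For the converse I would take $y=cx$ with $c\in\Cent_{1}(\tau)$ commuting with $\varD(x^{-1})$ --- this being the meaning of ``$c$ in the centralizer of $\varD(x^{-1})$'' consistent with the sentence just before the statement. The right-hand formula of~\eqref{eq:varxz} gives $\varD(y)=\varD(cx)=\varD(x)$ immediately. For the inverses, $y^{-1}=x^{-1}c^{-1}$ with $c^{-1}\in\Cent_{1}(\tau)$, so the left-hand formula of~\eqref{eq:varxz} yields $\varD(y^{-1})=\varD(x^{-1})^{c^{-1}}$, and since $c^{-1}$ commutes with $\varD(x^{-1})$ this equals $\varD(x^{-1})$. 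Hence both relations of~\eqref{eq:equal-vars} hold.

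Since every tool is already in place, I do not expect a genuine obstacle; this is really a bookkeeping exercise in the commutator calculus. The only points requiring care are: applying~\eqref{eq:varxz} with the correct group element substituted for ``$x$''; keeping track that passing to inverses stays inside the subgroup $\Cent_{1}(\tau)$; and making explicit that ``centralizer'' means the centralizer taken within $\Cent_{1}(\tau)$ (matching the computation leading up to the statement) rather than within the ambient group $G$. It is also worth noting that the hypotheses $x,y\in\Cent_{2}(\tau)$ enter only implicitly, to ensure the relevant variations land in $\Cent_{1}(\tau)$.
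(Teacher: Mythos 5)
Your argument is correct and follows essentially the same route as the paper: the direct implication is exactly the computation the paper carries out in the paragraph preceding the proposition (Proposition~\ref{prop:cosetinj} to get $y=zx$ with $z\in\Cent_{1}(\tau)$, then the left identity of~\eqref{eq:varxz} to show $z$ centralizes $\varD(x^{-1})$), while the converse — which the paper leaves implicit — is handled by the same two identities of~\eqref{eq:varxz} run in reverse. Your point that ``centralizer'' must mean the centralizer inside $\Cent_{1}(\tau)$ is the right reading and is needed for the converse, since $\varD(cx)=\varD(x)$ requires $c\in\Cent_{1}(\tau)$.
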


A simple computation shows that the centralizers of $\varD(x^{-1})$
and $\varD(x)$ are conjugate by $x$. Therefore, assuming (\ref{eq:equal-vars}),
we can equally write $y=x\,c$ for some $c$ in the centralizer of
$\varD(x)$.

\section{Derivations of the formal Dulac ring and normal forms}

\subsection{\protect\label{subsec:appB}Derivations}

We consider the set of differential operators 
\[
P\ee^{-\lambda z}\frac{\partial}{\partial z},
\]
for $(P,\lambda)\in\cC[z]\times\cR_{>0}$, seen as derivations in
the ring $\RingQSD$ of QSD-germs. This set is totally ordered with
respect to the $\succ$ relation defined in Section \ref{subsec:ramified-var}.
Furthermore, it is closed under the usual Lie bracket, since we have:
\begin{equation}
\left[P\ee^{-\lambda z}\frac{\partial}{\partial z},Q\ee^{-\mu z}\frac{\partial}{\partial z}\right]=\Big(P\partial_{\mu}(Q)-\partial_{\lambda}(P)Q\Big)\ee^{-(\lambda+\mu)z}\frac{\partial}{\partial z},\label{commutator}
\end{equation}
where $\partial_{\lambda}$ is the differential operator $\frac{\partial}{\partial z}-\lambda\id$.

A {\em nilpotent derivation} $X$ of the formal Dulac ring $\rDulacf$
is a formal sum of the above differential operators, namely
\[
X=\sum_{\lambda\in L}P_{\lambda}\ee^{-\lambda z}\frac{\partial}{\partial z},
\]
where $L\subset\cR_{>0}$ is a discrete subset. The set of such derivations
forms a Lie algebra $\DerDulacf$ under the bracket defined in \eqref{commutator}.

Notice that the condition on the exponents set $L$ guarantees that
the sequence of Dulac series $\left\{ z,X(z),X^{2}(z),..\right\} $
is formally convergent. More precisely, for each $\mu\in\cR_{>0}$,
there exists an index $k_{0}\in\cZ_{\ge0}$ such that $X^{k}(z)=\oo{\ee^{-\mu z}}$
for all $k\ge k_{0}$. In other words, the nilpotency condition guarantees
the the usual exponential series
\[
\Exp X=\idD+X+\frac{1}{2}X^{2}+\cdots
\]
is summable, and that $z\mapsto\left(\Exp X\right)(z)$ is an element
of the Dulac ring.

\smallskip{}

We now establish a correspondence between the group of formal Dulac
series with multiplier $1$ and the nilpotent derivations. We denote
by $\Dulacf_{>0}$ the normal subgroup of formal Dulac series having
an asymptotic expansion of the form 
\[
f=z+\oo 1.
\]
More specifically, given a $\lambda\in\cR_{>0}$, we say that a formal
Dulac series $f\in\Dulacf$ is {\em $\lambda$-tangent to identity}
if it has an expansion of the form: 
\[
f=z+P(z)\ee^{-\lambda z}+\oo{\ee^{-\lambda z}},
\]
for some polynomial $P$ (possibly zero). The set of $\lambda$-tangent
to identity formal series is a normal subgroup of $\Dulacf_{>0}$,
which we denote by $\Dulacf_{\ge\lambda}$. Similarly, we say that
a nilpotent derivation $X$ is {\em $\lambda$-flat} if it can
be written as: 
\[
X=P(z)\ee^{-\lambda z}\;\frac{\partial}{\partial z}+\oo{\ee^{-\lambda z}},
\]
for some polynomial $P$ (possibly zero). The sets of all such derivations
form a filtered collection of Lie ideals: 
\[
\Big[\DerDulacf_{\ge\lambda},\DerDulacf_{\ge\mu}\Big]\subset\DerDulacf_{\ge\lambda+\mu},
\]
as can be easily seen by the formula \eqref{commutator} for the Lie
bracket.

In what follows, we identify each formal Dulac series $f$ with the
corresponding {\em automorphism} of the formal Dulac ring defined
by
\[
p\in\rDulacf\mapsto p\circ f
\]
(which we will also denote by $f$). The following result has an immediate
proof.
\begin{lem}
\label{lem:exponentiallog} The formal exponential series establishes
a bijection 
\[
\Exp{}:\,\DerDulacf\longrightarrow\Dulacf_{>0},
\]
which maps $\DerDulacf_{\ge\lambda}$ to $\Dulacf_{\ge\lambda}$ for
each $\lambda>0$.
\end{lem}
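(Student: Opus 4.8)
The plan is to establish the two assertions separately: that $\Exp{}$ is a well-defined map $\DerDulacf\to\Dulacf_{>0}$ carrying $\DerDulacf_{\ge\lambda}$ into $\Dulacf_{\ge\lambda}$, and that it is a bijection. For well-definedness I would first note that if $X\in\DerDulacf$ has smallest exponential weight $\lambda_{0}>0$, then $X^{n}(z)$ only involves weights $\ge n\lambda_{0}$, so for each weight $\nu$ only the finitely many indices $n\le\nu/\lambda_{0}$ contribute to the coefficient of $\ee^{-\nu z}$ in $\Exp X(z)=z+\sum_{n\ge1}\tfrac1{n!}X^{n}(z)$; that coefficient is then a finite combination of derivatives and products of the polynomial coefficients of $X$, hence a polynomial, and the weights that occur lie in the numerical sub-semigroup of $\cR_{>0}$ generated by $\supp(X)$, which is again discrete since $\supp(X)$ has finite intersection with each bounded interval. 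Thus $\Exp X\in\rDulacf$; since $X(z)=\oo1$ we get $\Exp X(z)=z+\oo1$, so $\Exp X\in\Dulacf_{>0}$. If moreover $X\in\DerDulacf_{\ge\lambda}$ then $X^{n}(z)=\OO{\ee^{-n\lambda z}}$, so $\Exp X(z)=z+P(z)\ee^{-\lambda z}+\oo{\ee^{-\lambda z}}$ with $P$ the polynomial coefficient of the weight-$\lambda$ part of $X$, giving $\Exp X\in\Dulacf_{\ge\lambda}$.

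The heart of the argument is a triangularity property of $\Exp$ with respect to the weight filtration, which I would isolate as a preliminary claim: if $X\in\DerDulacf$ and $Y=P(z)\ee^{-\lambda z}\tfrac{\partial}{\partial z}$ is a weight-$\lambda$ derivation, then $\Exp(X+Y)(z)$ agrees with $\Exp X(z)$ on all weights $<\lambda$, while its coefficient at weight exactly $\lambda$ is that of $\Exp X(z)$ plus $P$. This follows by expanding $\tfrac1{n!}(X+Y)^{n}(z)$ into words in $X$ and $Y$: the words with no $Y$ reassemble into $\Exp X(z)-z$, while a word containing at least one $Y$ produces only terms of weight $\ge\lambda$, with weight exactly $\lambda$ forced to come from the single-letter word $Y$ itself, i.e.\ from $Y(z)=P(z)\ee^{-\lambda z}$. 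Injectivity is then immediate: if $\Exp X=\Exp Y$ with $X\ne Y$, let $\lambda$ be the smallest weight at which $X$ and $Y$ differ (the set of such weights is discrete, hence has a minimum); applying the claim to the common lower-weight part $X_{<\lambda}=Y_{<\lambda}$ shows that the weight-$\lambda$ coefficient of $\Exp X(z)$ (resp.\ $\Exp Y(z)$) equals a fixed polynomial plus the weight-$\lambda$ coefficient of $X$ (resp.\ of $Y$), forcing those to be equal --- a contradiction.

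For surjectivity I would construct the logarithm of a given $f\in\Dulacf_{>0}$ weight by weight. Taking $\Lambda=\{\lambda_{0}<\lambda_{1}<\cdots\}$ to be the numerical semigroup generated by $\supp(f)$ (discrete and unbounded, assuming $f\ne z$), I would produce polynomials $P_{\lambda_{N}}$ so that $X_{(N)}=\sum_{j\le N}P_{\lambda_{j}}\ee^{-\lambda_{j}z}\tfrac{\partial}{\partial z}$ and $f$ have the same coefficient at every weight $\le\lambda_{N}$: given $X_{(N-1)}$, the germ $g_{N}=\Exp(X_{(N-1)})^{-1}f$ lies in $\Dulacf_{\ge\lambda_{N}}$ by the inductive hypothesis, say $g_{N}(z)=z+c_{N}(z)\ee^{-\lambda_{N}z}+\oo{\ee^{-\lambda_{N}z}}$; substituting $g_{N}(z)=z+\OO{\ee^{-\lambda_{N}z}}$ into $\Exp(X_{(N-1)})$ in the identity $f=\Exp(X_{(N-1)})\circ g_{N}$ perturbs its terms only at weights $>\lambda_{N}$, so the weight-$\lambda_{N}$ coefficient of $f$ is that of $\Exp(X_{(N-1)})$ plus $c_{N}$; by the triangularity claim the choice $P_{\lambda_{N}}=c_{N}$ corrects exactly this coefficient and nothing below it. The limit $X=\sum_{j}P_{\lambda_{j}}\ee^{-\lambda_{j}z}\tfrac{\partial}{\partial z}$ is a genuine element of $\DerDulacf$ (support in the discrete set $\Lambda$, polynomial coefficients), and triangularity once more shows $\Exp X$ agrees with $\Exp(X_{(N)})$, hence with $f$, at all weights $\le\lambda_{N}$ for every $N$, so $\Exp X=f$. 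The only genuinely delicate point throughout is the bookkeeping of exponential weights underpinning the triangularity claim --- the interaction of composition with the two-parameter pol-exp grading --- so I would state once and for all, before both halves of the proof, the elementary estimate that substituting $z\mapsto z+\OO{\ee^{-\rho z}}$ into a weight-$\mu$ term perturbs it only at weights $\ge\mu+\rho$; everything else is then formal manipulation.
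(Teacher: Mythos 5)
Your proof is correct. The paper itself labels this result ``has an immediate proof'' and supplies nothing further, so there is no argument to compare against line by line; but the route you take --- filtering by exponential weight, establishing the triangularity estimate (substitution of a weight-$\ge\rho$ perturbation into a weight-$\mu$ term only affects weights $\ge\mu+\rho$), reading off injectivity from the fact that the weight-$\lambda$ coefficient of $\Exp X$ is a fixed function of $X_{<\lambda}$ plus the weight-$\lambda$ coefficient of $X$, and then solving for the logarithm weight by weight --- is the natural filling-in and is consistent with how the paper handles the entirely analogous inversion of $\lvar$ in Proposition~\ref{prop:inverselvar} (the same weight-by-weight recursion). Two minor points worth making explicit if you were to write this out: you implicitly use that ``discrete subset of $\cR_{\ge0}$'' means finite intersection with every bounded interval (this is needed, both here and for the paper's ring $\rDulacf$ to be closed under multiplication), which makes your observation that the additive semigroup generated by $\supp(X)$ is again discrete a genuine finiteness argument; and in the surjectivity induction you should note that $\Exp(X_{(N-1)})^{-1}=\Exp(-X_{(N-1)})$ so $g_N$ indeed has support in the semigroup $\Lambda$, which is what guarantees that its lowest nontrivial weight, if any, is one of the $\lambda_j$.
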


The inverse map is given by the formal logarithmic series, which we
denote by $\Log{}$.

\subsection{BCH and variation operator}

By Lemma~\ref{lem:exponentiallog}, we can uniquely write an element
$f\in\Dulacf_{>0}$ as $f=\Exp X$ for some nilpotent derivation $X$.
The variation operator takes the form: 
\[
\varD(f)=\varD(\Exp X)=\tau^{-1}(\Exp{-X})\tau\Exp X=(\Exp{-X^{\tau}})(\Exp X),
\]
where $X^{\tau}=\tau^{-1}X\tau$ denotes the conjugation of $X$ by
the translation $\tau(z)=z+2\pi\ii$. In particular, if $X$ is one
of the basis elements $P(z)\ee^{-\lambda z}\frac{\partial}{\partial z}$,
\[
X^{\tau}=P(z-2\pi\ii)\ee^{-\lambda(z-2\pi\ii)}\frac{\partial}{\partial z}.
\]
If $X$ is $\lambda$-flat, then the same identity holds for $X^{\tau}$.
Let us denote by $\lvar(X)$ the unique derivation $Z$ such that
\[
\Exp Z=\varD(\Exp X).
\]
In other words, $\lvar=\Log{\circ\varD\circ\Exp{}}$. Using the usual
Baker-Campbell-Hausdorff expansion, we have: 
\begin{equation}
\lvar(X)=X-X^{\tau}+\frac{1}{2}[X,X^{\tau}]+\cdots,\label{eq:BCH-Z}
\end{equation}
where the omitted terms are higher order commutators that vanish if
$[X,X^{\tau}]=0$. Moreover, for $X\in\DerDulacf_{\ge\lambda}$, the
omitted terms belongs to $\DerDulacf_{\ge3\lambda}$. Finally, note
that $\varD(\Exp X)=\idD$ if and only if $\lvar(X)=0$.
\begin{prop}
\label{prop:ramifder} Let $f=\Exp X$, for some nilpotent derivation
\[
X=\sum_{\lambda\in L}P_{\lambda}\ee^{-\lambda z}\frac{\partial}{\partial z}.
\]
Then the following three statements are equivalent:
\begin{enumerate}
\item $\varD(f)=\idD$ (\emph{i.e.} $f$ is unramified);
\item $X=X^{\tau}$;
\item $L\subset\cZ_{\ge1}$ and all polynomials $P_{\lambda}$ are constants.
\end{enumerate}
\end{prop}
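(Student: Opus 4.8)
The plan is to prove the two equivalences $(1)\Leftrightarrow(2)$ and $(2)\Leftrightarrow(3)$ separately, relying only on the exponential/variation dictionary recalled just before the statement.

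For $(1)\Leftrightarrow(2)$ I would invoke the product formula $\varD(\Exp X)=(\Exp{-X^\tau})(\Exp X)$ together with Lemma~\ref{lem:exponentiallog}, which asserts that $\Exp\colon\DerDulacf\to\Dulacf_{>0}$ is a bijection. Indeed, $\varD(f)=\idD$ holds exactly when $\Exp X=\Exp(X^\tau)$ as automorphisms of $\rDulacf$, and by injectivity of $\Exp$ this is equivalent to $X=X^\tau$. Note that the full Baker--Campbell--Hausdorff expansion~\eqref{eq:BCH-Z} is not needed here, only the product formula and injectivity (alternatively one could use $\lvar(X)=0\Leftrightarrow X=X^\tau$, but that again reduces to injectivity of $\Exp$).

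For $(2)\Leftrightarrow(3)$ I would carry out a direct term-by-term computation of $X^\tau$. Conjugating the basis derivation $P_\lambda(z)\ee^{-\lambda z}\frac{\partial}{\partial z}$ by the translation $\tau(z)=z+2\pi\ii$ yields $P_\lambda(z-2\pi\ii)\ee^{-\lambda(z-2\pi\ii)}\frac{\partial}{\partial z}=\ee^{2\pi\ii\lambda}P_\lambda(z-2\pi\ii)\ee^{-\lambda z}\frac{\partial}{\partial z}$, so $X^\tau=\sum_{\lambda\in L}\ee^{2\pi\ii\lambda}P_\lambda(z-2\pi\ii)\ee^{-\lambda z}\frac{\partial}{\partial z}$, a derivation supported on the same set $L$. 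Since the derivations $\ee^{-\lambda z}\frac{\partial}{\partial z}$ with distinct $\lambda$ are linearly independent in $\DerDulacf$, the identity $X=X^\tau$ is equivalent to the system $P_\lambda(z)=\ee^{2\pi\ii\lambda}P_\lambda(z-2\pi\ii)$ for all $\lambda\in L$. Comparing leading coefficients (using that $P_\lambda$ and $P_\lambda(z-2\pi\ii)$ share the same leading term and that $P_\lambda\neq0$ when $\lambda\in L$) forces $\ee^{2\pi\ii\lambda}=1$, hence $\lambda\in\cZ$; together with $L\subset\cR_{>0}$ this gives $\lambda\in\cZ_{\ge1}$. The identity then collapses to $P_\lambda(z)=P_\lambda(z-2\pi\ii)$, so $P_\lambda$ is $2\pi\ii$-periodic and therefore constant. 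Conversely, if $L\subset\cZ_{\ge1}$ and all $P_\lambda$ are constant, then $\ee^{2\pi\ii\lambda}=1$ and $P_\lambda(z-2\pi\ii)=P_\lambda(z)$, so $X^\tau=X$.

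There is essentially no substantial obstacle: the statement is a bookkeeping exercise once the correspondence $\Exp$/$\varD$ is available. The only point deserving care is the explicit effect of conjugation by $\tau$ on the pol-exp basis derivations (the factor $\ee^{2\pi\ii\lambda}$) and the appeal to linear independence of these derivations in order to pass from the single identity $X=X^\tau$ to the family of polynomial identities indexed by $\lambda\in L$.
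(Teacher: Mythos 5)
Your proof is correct and follows essentially the same approach as the paper: reducing $(1)\Leftrightarrow(2)$ to the behaviour of the exponential map and then deriving $(2)\Leftrightarrow(3)$ by an explicit coefficient-by-coefficient analysis of $X^{\tau}$. The one small point of difference is that for $(1)\Leftrightarrow(2)$ the paper cites the BCH expansion for $\lvar(X)$, whereas you appeal directly to the product formula $\varD(\Exp X)=(\Exp{-X^{\tau}})(\Exp X)$ together with injectivity of $\Exp$; these are equivalent in substance, and your version is arguably the cleaner phrasing since the forward direction of the BCH-based argument would otherwise need a supplementary valuation remark.
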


\begin{proof}
As remarked above, $\varD(f)=\idD$ if and only if $\lvar(X)=0$.
Using the asymptotic expansion given by the BCH formula~(\ref{eq:BCH-Z}),
this is equivalent to saying that $X=X^{\tau}$. Hence, 1. and 2.
are equivalent.

In order to prove the equivalence of 2. and 3., it suffices to observe
that the conjugation of $X$ by $\tau$ preserves the order of the
asymptotic terms in the expansion of $X$. In other words, the equality
$X=X^{\tau}$ is equivalent to requesting 
\[
\Big(P_{\lambda}\ee^{-\lambda z}\frac{\partial}{\partial z}\Big)^{\tau}=P_{\lambda}\ee^{-\lambda z}\frac{\partial}{\partial z}
\]
for each $\lambda\in L$. This holds if and only if $P_{\lambda}$
lies in the kernel of the linear operator $\cC[z]\rightarrow\cC[z]$
defined by 
\[
P(z)\longmapsto\ee^{2\pi\ii\lambda}P(z-2\pi\ii)-P\left(z\right)
\]
This is equivalent to saying that $\lambda$ is an integer and that
$P_{\lambda}$ is of degree 0.
\end{proof}
By analogy with the notations introduced in Section~\ref{subsec:ramifmildramif},
we say that a nilpotent derivation $X$ is:
\begin{itemize}
\item {\em unramified} if $\lvar(X)=0$;
\item {\em mildly ramified} if $\lvar(X)$ is unramified.
\end{itemize}
Notice that these conditions are equivalent to $\Exp X\in\U$ and
$\Exp X\in\MR$, respectively.

\subsection{Computing $\protect\lvar^{-1}$ and normal forms}

Let $Z$ be an unramified derivation. We are interested in determining
another nilpotent derivation $X$ such that 
\[
Z=\lvar(X).
\]
Consider the difference operator $\Differ:\cC[z]\rightarrow\cC[z]$
defined by 
\[
(\Differ P)(z)=P(z)-P(z-2\pi\ii)
\]
or equivalently, $\Delta P=P-P\tau^{-1}$. It is easy to see that
the sequence of polynomials 
\begin{equation}
\pP_{0}=1,\quad\pP_{1}=\frac{1}{2\pi\ii}\;z,\quad\pP_{k}=\frac{1}{(2\pi\ii)^{k}}\;z(z+2\pi\ii)\cdots(z+2\pi\ii(k-1))\label{basis-pol}
\end{equation}
satisfies $\Delta\pP_{k}=k\pP_{k-1}$. In particular, $\pP_{0}$ generates
the kernel of $\Differ$.
\begin{prop}
\label{prop:inverselvar} Let $Z$ be an unramified nilpotent derivation.
There exist a nilpotent derivation $X$ such that 
\[
Z=\lvar(X).
\]
Moreover, $X$ is uniquely determined \emph{modulo} the choice of
a section of the cokernel of $\Delta$.
\end{prop}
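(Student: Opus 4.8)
The plan is to build $X$ recursively, grading by the filtration $\DerDulacf_{\ge\lambda}$ and using the Baker--Campbell--Hausdorff formula~\eqref{eq:BCH-Z} for $\lvar$. Since $Z$ is unramified, Proposition~\ref{prop:ramifder} lets us write $Z=\sum_{m\in L}c_m\ee^{-mz}\pp{z}$ with $L\subset\cZ_{\ge1}$ discrete and all $c_m\in\cC$ constant. A preliminary remark pins down which operator governs the recursion: any solution $X$ of $\lvar(X)=Z$ has $\supp(X)\subset\cZ$. Indeed, if $\mu$ were the smallest non-integer exponent occurring in $X$, then by~\eqref{eq:BCH-Z} the coefficient of $\ee^{-\mu z}\pp{z}$ in $\lvar(X)$ would reduce (up to sign) to $\bigl(\ee^{2\pi\ii\mu}P_\mu(z-2\pi\ii)-P_\mu(z)\bigr)$ --- every higher bracket contributing to exponent $\mu$ would require two positive exponents summing to $\mu$, hence one non-integer exponent strictly below $\mu$, which does not exist --- and the operator $P\mapsto\ee^{2\pi\ii\mu}P(z-2\pi\ii)-P(z)$ is invertible on $\cC[z]$ for $\mu\notin\cZ$; since $Z$ has no $\ee^{-\mu z}\pp{z}$-term, this forces $P_\mu=0$, a contradiction. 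Hence every exponent in sight is an integer, the twist $\ee^{2\pi\ii\lambda}$ is trivial, and the operator acting on coefficient polynomials is precisely $\Delta$.

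For existence I would enumerate the additive sub-semigroup $\Lambda=\langle L\rangle\subset\cZ_{\ge1}$ as $\lambda_1<\lambda_2<\cdots$ (a discrete set, being contained in $\cZ$) and determine $X=\sum_j P_{\lambda_j}\ee^{-\lambda_j z}\pp{z}$ by induction on $j$. Suppose $P_{\lambda_1},\dots,P_{\lambda_{j-1}}$ have been chosen, with partial sum $X^{(j-1)}$, so that $\lvar(X^{(j-1)})\equiv Z$ modulo $\DerDulacf_{\ge\lambda_j}$. Expanding $\lvar\bigl(X^{(j-1)}+P_{\lambda_j}\ee^{-\lambda_j z}\pp{z}\bigr)$ by~\eqref{eq:BCH-Z}, the $\ee^{-\lambda_j z}\pp{z}$-coefficient of this minus $Z$ equals $\Delta P_{\lambda_j}-Q_j$, where $Q_j\in\cC[z]$ is assembled from $c_{\lambda_j}$ (when $\lambda_j\in L$) and from the already-fixed $P_{\lambda_1},\dots,P_{\lambda_{j-1}}$ through the higher brackets in~\eqref{eq:BCH-Z} --- every such contribution to exponent $\lambda_j$ involves only exponents strictly below it. Because $\Delta$ is surjective (by the identities $\Delta\pP_k=k\pP_{k-1}$ of~\eqref{basis-pol} and the fact that $\{\pP_k\}_{k\ge0}$ is a basis of $\cC[z]$), one solves $\Delta P_{\lambda_j}=Q_j$, uniquely modulo $\ker\Delta=\cC\,\pP_0$; a right inverse (section) of $\Delta$, fixed once and for all as in the statement, removes this freedom. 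Each $Q_j$ is a finite sum of products and derivatives of finitely many previously determined polynomials, so $\deg Q_j<\infty$ and hence $\deg P_{\lambda_j}<\infty$; since also $\supp(X)\subset\Lambda$ is discrete, the formal object $X$ so produced is a genuine nilpotent derivation of $\rDulacf$ with $\lvar(X)=Z$.

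For uniqueness I would pass to the group side. If $X,X'$ both solve the equation, put $f=\Exp{X}$ and $f'=\Exp{X'}$, which lie in $\Dulacf_{>0}$ by Lemma~\ref{lem:exponentiallog}; then $\varD(f)=\varD(f')=\Exp{Z}$. As $Z$ is unramified, this common value lies in $\Cent_1(\tau)$, so $f,f'\in\Cent_2(\tau)$, and Proposition~\ref{prop:cosetinj} applies: $\varD$ is injective on right cosets of $\Cent_1(\tau)$, whence $f'=cf$ for some $c\in\Cent_1(\tau)\cap\Dulacf_{>0}$. Thus $X'$ differs from $X$ by composition (\emph{via} BCH) with an unramified derivation; by Proposition~\ref{prop:ramifder} these are exactly the $\sum_{m\ge1}c_m\ee^{-mz}\pp{z}$ with constant coefficients, i.e.\ the $\ker\Delta$-freedom at each exponent, matching the section in the statement. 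I expect the delicate part to be the bookkeeping in the second paragraph: arranging the BCH expansion so that the right-hand side $Q_j$ at exponent $\lambda_j$ manifestly involves only data already fixed, and checking that the recursion keeps the support inside the discrete semigroup $\Lambda$ and the coefficient polynomials of finite degree, so that the recursively defined $X$ genuinely belongs to $\DerDulacf$.
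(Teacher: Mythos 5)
Your proof is correct, and the existence part follows the paper's argument closely: both proceed by a BCH-driven recursion in which each coefficient polynomial $P_{\lambda_j}$ is obtained by inverting the difference operator $\Delta$ against data determined at lower exponents, with the freedom at each step being $\ker\Delta\cong\cC$. You add two things the paper's brief proof leaves implicit. First, the preliminary claim that \emph{every} solution $X$ of $\lvar(X)=Z$ has integer support, proved by a minimality argument on exponents and the invertibility of the twisted operator $P\mapsto\ee^{2\pi\ii\mu}P(z-2\pi\ii)-P(z)$ for $\mu\notin\cZ$; the paper only constructs a solution with integer support and does not address whether others exist. Second, your uniqueness argument passes to the group side via Lemma~\ref{lem:exponentiallog} and Proposition~\ref{prop:cosetinj}, showing that any two solutions $X,X'$ have $\Exp X'=c\,\Exp X$ with $c\in\Uf$, whereas the paper infers uniqueness directly from the recursion (``$P_k$ is uniquely determined modulo an additive constant''). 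These are genuinely complementary: the paper's route is shorter, while yours characterizes all solutions independently of the chosen enumeration and makes the link to the unramified subgroup explicit. The one place you gloss a little is the final identification of the multiplicative freedom $X'=\mathrm{BCH}(Y,X)$ with the additive $\ker\Delta$-freedom in the recursion -- that correspondence is true but not immediate, since changing the constant in $P_{\lambda_j}$ propagates nonlinearly into all later $Q_{j'}$; it would be worth a sentence to note that BCH accounts for exactly this propagation.
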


\begin{proof}
Let us write the expansion of $Z$ as 
\[
Z=\sum_{k\ge1}c_{k}\ee^{-kz}\frac{\partial}{\partial z},
\]
where $c_{k}$ are complex numbers. We now look for an expansion of
$X$ in the form 
\[
X=\sum_{k\ge1}P_{k}\ee^{-kz}\frac{\partial}{\partial z}.
\]
It follows from $Z=\lvar(X)$ and the BCH formula~(\ref{eq:BCH-Z})
that the polynomials $P_{k}$ can be recursively determined by the
formula 
\[
\Delta(P_{k})=c_{k}+R,
\]
where $R$ depends only on previously determined polynomials $P_{\ell}$
for $0<\ell<k$. In other words, $P_{k}$ is uniquely determined from
this formula \emph{modulo} an additive constant.
\end{proof}
\begin{rem}
This algorithm can also be used to compute $\lvar^{-1}(Z)$ for arbitrary
(\emph{i.e.} not necessarily unramified) nilpotent derivations
\[
Z=\sum_{L}Q_{\lambda}\ee^{-\lambda z}\frac{\partial}{\partial z}.
\]
In this case, we eventually need to consider the twisted difference
operators $\Delta^{\lambda}P=P(z)-\ee^{2\pi\ii\lambda}P(z-2\pi\ii)$,
which are {\em invertible} for $\lambda\notin\cZ$. In particular,
if the additive semi-group generated by the exponent set $L$ is disjoint
from $Z_{\ge0}$, then $\lvar^{-1}(Z)$ is uniquely determined.
\end{rem}

We reprove the following normal form result.
\begin{prop}
\label{prop:nfunramif} Let $Z$ be a nilpotent, unramified derivation
with an expansion 
\[
Z=c_{k}\ee^{-kz}\frac{\partial}{\partial z}+\oo{\ee^{-kz}},
\]
such that $c_{k}$ is nonzero. Then, there exists a formal unramified
series $g\in\Uf$ which conjugates $Z$ to a derivation of the form
\[
-2\pi\ii\frac{\ee^{-kz}}{1+\mu\ee^{-kz}}\frac{\partial}{\partial z}+\oo{\ee^{-3kz}},
\]
where $k\in\zz_{\geqslant1}$ and $\mu\in\cC$ (called the {\em
residue} of $Z$) are formal invariants.
\end{prop}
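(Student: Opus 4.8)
The plan is to carry out the normalization explicitly, one homogeneous term at a time, working inside the formal derivation ring and relying only on the commutator formula~(\ref{commutator}); the chart $x=\ee^{-z}$ will enter only at the end, to identify $\mu$ with a classical conjugacy invariant. Since $Z$ is unramified, Proposition~\ref{prop:ramifder} gives
\[
Z=\sum_{j\ge1}c_{j}\,\ee^{-jz}\frac{\partial}{\partial z},\qquad c_{j}\in\cC,\quad c_{k}\neq0,
\]
and every conjugate of $Z$ by an element of $\Uf$ stays unramified (conjugation by something commuting with $\tau$ preserves the relation $X=X^{\tau}$) and nilpotent, hence remains of this same shape; so throughout one only has to follow a sequence of complex coefficients. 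The first reduction fixes the leading coefficient: conjugating $Z$ by the translation $z\mapsto z+\beta$ — which commutes with $\tau$, hence lies in $\Uf$ — multiplies each $c_{j}$ by $\ee^{j\beta}$, and choosing $\beta$ with $\ee^{k\beta}=-2\pi\ii/c_{k}$ brings $Z$ to the form $-2\pi\ii\,\ee^{-kz}\frac{\partial}{\partial z}+\oo{\ee^{-kz}}$.

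Next I would eliminate the higher-order terms by successive conjugations by the series $\Exp{\bigl(q\,\ee^{-jz}\frac{\partial}{\partial z}\bigr)}$, each of which lies in $\Uf$ because $q\,\ee^{-jz}\frac{\partial}{\partial z}$ is an unramified derivation (Proposition~\ref{prop:ramifder}). Such a conjugation alters a derivation by $\bigl[q\,\ee^{-jz}\frac{\partial}{\partial z},\,\cdot\,\bigr]$ plus higher iterated commutators, and~(\ref{commutator}) yields
\[
\Bigl[q\,\ee^{-jz}\tfrac{\partial}{\partial z},\,-2\pi\ii\,\ee^{-kz}\tfrac{\partial}{\partial z}\Bigr]=2\pi\ii\,(k-j)\,q\;\ee^{-(j+k)z}\tfrac{\partial}{\partial z},
\]
so the conjugation changes the coefficient of $\ee^{-(j+k)z}\frac{\partial}{\partial z}$ by $2\pi\ii(k-j)q$ while leaving every coefficient of strictly lower order unchanged (the whole correction, BCH tail included, has exponents $\ge j+k$). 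Running this for $j=1,\dots,k-1$ kills the terms $\ee^{-(k+1)z},\dots,\ee^{-(2k-1)z}$; at $j=k$ the factor $k-j$ vanishes, so no further conjugation of this kind can alter the coefficient of $\ee^{-2kz}\frac{\partial}{\partial z}$ once the lower terms have been removed, and I would \emph{define} $\mu$ to be $\tfrac{1}{2\pi\ii}$ times that coefficient; finally the conjugations with $j=k+1,\dots,2k$ can be used to match the coefficients of $\ee^{-(2k+1)z},\dots,\ee^{-3kz}$ against those of $-2\pi\ii\frac{\ee^{-kz}}{1+\mu\ee^{-kz}}\frac{\partial}{\partial z}$. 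Composing all these conjugations with the translation of the previous paragraph produces the required $g\in\Uf$ carrying $Z$ to $-2\pi\ii\frac{\ee^{-kz}}{1+\mu\ee^{-kz}}\frac{\partial}{\partial z}+\oo{\ee^{-3kz}}$.

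To finish, one checks that $k$ and $\mu$ are genuine \emph{formal invariants}. Under $x=\ee^{-z}$ the derivation $Z$ becomes a formal vector field $v(x)\frac{\partial}{\partial x}$ with a zero of order exactly $k+1$ at $0$, and the order of a zero is preserved by formal conjugacy; this settles $k$. For $\mu$, the same substitution turns the normal form into $v(x)\frac{\partial}{\partial x}$ with $v(x)=2\pi\ii\,\frac{x^{k+1}}{1+\mu x^{k}}$, and since the meromorphic $1$-form $\frac{\dd x}{v(x)}$ is conjugacy-invariant so is its residue at $0$; a direct computation gives $\mathrm{Res}_{0}\frac{\dd x}{v(x)}=\frac{\mu}{2\pi\ii}$, so $\mu$ depends only on the $\Uf$-conjugacy class of $Z$ (and this is what ``the residue of $Z$'' refers to).

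The computations are routine. The step I would be most careful with is the triangularity in the middle paragraph — verifying, through the quadratic and higher terms of the BCH expansion as well, that conjugating by $\Exp{\bigl(q\,\ee^{-jz}\frac{\partial}{\partial z}\bigr)}$ genuinely leaves untouched all coefficients of order $\succeq\ee^{-(j+k-1)z}$, so that the successive eliminations do not undo one another — together with the invariance argument of the last paragraph, where one must be sure that $\mu$ is an invariant of the conjugacy class and not merely of the particular normalizing $g$ produced by the construction.
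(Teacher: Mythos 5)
Your proof is correct and takes essentially the same approach as the paper: normalize the leading coefficient by a translation, then eliminate higher-order homogeneous terms one at a time by conjugating with $\Exp\bigl(q\,\ee^{-jz}\frac{\partial}{\partial z}\bigr)\in\Uf$, using the bracket identity~(\ref{commutator}) and the fact that $j=k$ is the unique resonance. You actually go further than the paper's own proof, which (up to a visible typo in the index range) only runs the elimination for $j=1,\dots,k-1$ and hence only yields the normal form modulo $\oo{\ee^{-2kz}}$; since the statement asserts $\oo{\ee^{-3kz}}$, your explicit continuation for $j=k+1,\dots,2k$ to match the Taylor coefficients of $-2\pi\ii\tfrac{\ee^{-kz}}{1+\mu\ee^{-kz}}$ up to order $\ee^{-3kz}$ closes a genuine gap. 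You also supply an invariance argument for $\mu$ — as the residue at $0$ of $\dd x/v(x)$ in the chart $x=\ee^{-z}$ — where the paper merely asserts it; this is the standard and correct way to see it. One small slip: conjugating by $z\mapsto z+\beta$ scales $c_{j}$ by $\ee^{-j\beta}$, not $\ee^{j\beta}$ (as in the paper's $c_{k}\ee^{-kb}$); since $\beta$ is chosen freely this is immaterial, but worth fixing for accuracy.
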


\begin{rem}
It is well-known that actually $g\in\mathcal{U}$. This stems from
the fact that only a finite number of reduction steps are applied.
\end{rem}

\begin{proof}
First of all, we conjugate $Z$ by a germ of the form $g_{0}(z)=z+b$,
$b\in\cc$, to obtain 
\[
Z^{g_{0}}=c_{k}\ee^{-kb}\ \ee^{-kz}\frac{\partial}{\partial z}+\oo{\ee^{-kz}}.
\]
Therefore, up to a convenient choice of $b$, we can assume that $c_{k}=-2\pi\ii$.
Then, considering the Lie bracket identity 
\[
\Big[\ee^{-jb}\frac{\partial}{\partial z},\ee^{-kz}\frac{\partial}{\partial z}\Big]=(j-k)\ee^{-(k+j)z}\frac{\partial}{\partial z},
\]
we observe that we can eliminate all terms $\{c_{k+j}\}_{1<j<k-1}$,
by successive conjugations by germs of the form 
\[
g_{j}=\Exp{\Big(b_{j}\ee^{-jz}\frac{\partial}{\partial z}\Big)},\qquad1\leqslant j<k-1.
\]
The remaining term $j=k$ corresponds to a resonance. It is an invariant
for $\Uf$-conjugation.
\end{proof}
\begin{rem}
Although not needed in the sequel, observe that we further reduce
the above expression by a formal series $g\in\Uf$ in order to eliminate
all $\oo{\ee^{-3kz}}$ terms and get simply 
\[
-2\pi\ii\frac{\ee^{-kz}}{1+\mu\ee^{-kz}}\frac{\partial}{\partial z}.
\]
In fact, each $\Uf$-conjugacy class contains a unique representative
of this form.
\end{rem}

\begin{thm}
\label{thm:solvevar} Let $X$ be a mildly ramified (but not unramified)
nilpotent derivation. Then there exists a formal unramified series
$g\in\U$ which conjugates $X$ to the form 
\[
-(z+a)\ee^{-kz}\frac{\partial}{\partial z}+\left(\Big(\mu-\frac{1}{2}\Big)z+b\right)\ee^{-2kz}\frac{\partial}{\partial z}+\oo{\ee^{-2kz}}
\]
for some $k\in\cZ_{\ge1}$ and $a,b,\mu\in\cC$.
\end{thm}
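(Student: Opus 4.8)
The plan is to transport the normalization through the unramified derivation $Z:=\lvar(X)$ and then lift it back. Since $X$ is mildly ramified, $Z$ is unramified; since $X$ is not unramified, $Z\neq0$, because by Proposition~\ref{prop:ramifder} the identity $\lvar(X)=0$ forces $X$ unramified. The basic bookkeeping device is the equivariance of $\lvar$ under $\U$-conjugation: the identities~(\ref{eq:varxz}) give $\varD\!\left(g^{-1}(\Exp X)g\right)=g^{-1}\varD(\Exp X)g$ for $g\in\U$, so writing $X^{g}$ for the derivation with $\Exp{X^{g}}=g^{-1}(\Exp X)g$ one has $\lvar(X^{g})=\left(\lvar X\right)^{g}$. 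Applying Proposition~\ref{prop:nfunramif} to $Z$ (and the sharper reduction of the remark following it) furnishes $g_{1}\in\U$ with
\[
Z^{g_{1}}=-2\pi\ii\,\frac{\ee^{-kz}}{1+\mu\ee^{-kz}}\,\frac{\partial}{\partial z}=-2\pi\ii\,\ee^{-kz}\frac{\partial}{\partial z}+2\pi\ii\mu\,\ee^{-2kz}\frac{\partial}{\partial z}+\oo{\ee^{-2kz}},
\]
where $k\in\cZ_{\ge1}$ and $\mu\in\cC$ are the formal invariants of $Z$. Replacing $X$ by $X^{g_{1}}$ we may assume $Z=\lvar(X)$ has this form; its development $\sum_{j}c_{j}\ee^{-jz}\frac{\partial}{\partial z}$ is then supported on multiples of $k$, with $c_{k}=-2\pi\ii$ and $c_{2k}=2\pi\ii\mu$.

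Next I would extract the shape of $X$ from $\lvar(X)=Z$ via the BCH expansion $\lvar(X)=X-X^{\tau}+\frac{1}{2}[X,X^{\tau}]+\cdots$ of~(\ref{eq:BCH-Z}) together with the bracket formula~(\ref{commutator}). Writing $X=\sum_{\lambda\in L}P_{\lambda}\ee^{-\lambda z}\frac{\partial}{\partial z}$ and using that the Lie bracket strictly raises the asymptotic order, the lowest-order component of $\lvar(X)$, at order $\lambda_{0}:=\min L$, equals $\left(P_{\lambda_{0}}(z)-\ee^{2\pi\ii\lambda_{0}}P_{\lambda_{0}}(z-2\pi\ii)\right)\ee^{-\lambda_{0}z}\frac{\partial}{\partial z}$; a non-integer $\lambda_{0}$ would produce a non-zero term of $Z$ with non-integer exponent, which is impossible, and an immediate induction then yields $L\subset\cZ_{\ge1}$, so that all the twisted difference operators involved become the operator $\Delta$ of Proposition~\ref{prop:inverselvar}. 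Running the recursion $\Delta P_{j}+\left(\text{corrections in the }P_{<j}\right)=c_{j}$ against the development of $Z$ forces $P_{\lambda_{0}}$ to have degree $1$ with leading coefficient $c_{\lambda_{0}}/(2\pi\ii)$; hence $\lambda_{0}=k$ and, after the translation $z\mapsto z+b\in\U$ absorbing the additive constant, the leading term of $X$ is $-(z+a)\ee^{-kz}\frac{\partial}{\partial z}$. Continuing the recursion and performing a finite sequence of $\U$-conjugations supported at orders $\geq k$, of the type used in the proof of Proposition~\ref{prop:nfunramif}, brings the development of $X$ below order $2k$ to consist of that single term and normalizes the constant part of the order-$2k$ coefficient, so that $X=-(z+a)\ee^{-kz}\frac{\partial}{\partial z}+(\gamma z+b)\ee^{-2kz}\frac{\partial}{\partial z}+\oo{\ee^{-2kz}}$ for some $\gamma,b\in\cC$.

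It remains to identify $\gamma=\mu-\frac{1}{2}$, which I would do by computing the order-$2k$ component of $\lvar(X)$ for this $X$: the part coming from $X-X^{\tau}$ is $\Delta(\gamma z+b)=2\pi\ii\gamma$, the part coming from $\frac{1}{2}[X,X^{\tau}]$ is the constant $\pi\ii$ produced, via~(\ref{commutator}), by the self-bracket of the leading term $-(z+a)\ee^{-kz}\frac{\partial}{\partial z}$ with its $\tau$-conjugate, and no higher bracket contributes at order $2k$; equating $2\pi\ii\gamma+\pi\ii=c_{2k}=2\pi\ii\mu$ gives $\gamma=\mu-\frac{1}{2}$. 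Since only finitely many conjugation steps are performed, their composite lies in $\U$, and the theorem follows.

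The step I expect to be the genuine obstacle is the middle one: converting ``$\lvar(X)$ is unramified'' into the precise normal-form prefix of $X$, that is, controlling simultaneously the linear term $X-X^{\tau}$ (governed by $\Delta$, whose kernel and cokernel sections enter through Proposition~\ref{prop:inverselvar}) and the quadratic corrections $\frac{1}{2}[X,X^{\tau}]$ in the Baker--Campbell--Hausdorff series, and verifying that every constant-coefficient ambiguity thrown up by $\Delta$ is exactly one that a $\U$-conjugation supported at lower order can absorb.
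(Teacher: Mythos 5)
Your proposal takes essentially the same route as the paper's proof: normalize $Z=\lvar(X)$ by Proposition~\ref{prop:nfunramif} (using equivariance of $\lvar$ under $\U$-conjugation to replace $X$ by $X^g$), then read off the orders $k$ and $2k$ of $X^g$ from the first two terms of the BCH expansion~(\ref{eq:BCH-Z}), solving $\Delta P=-2\pi\ii$ and $\Delta Q+\tfrac{1}{2}\bigl(P(P\tau^{-1})'-P\tau^{-1}P'\bigr)=2\pi\ii\mu$ to get $P=-(z+a)$ and $Q=(\mu-\tfrac{1}{2})z+b$; your Wronskian computation $\tfrac{1}{2}\bigl((z+a)-(z+a-2\pi\ii)\bigr)=\pi\ii$ matches the paper's verbatim. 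Your intermediate paragraph is slightly more verbose than the paper's (the justification that the support of $X$ lies in $\cZ_{\ge1}$, and the sentence about ``a finite sequence of $\U$-conjugations,'' neither of which the paper spells out), and the assertion that the recursion ``forces $P_{\lambda_0}$ to have degree $1$, hence $\lambda_0=k$'' implicitly assumes, as does the paper, that $X^g$ carries no unramified terms at orders $<k$ — but this is a shared laconicism, not a new defect introduced by your argument.
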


\begin{proof}
By the assumption, the derivation $Z=\lvar(X)$ is a nontrivial unramified
derivation. It follows from Proposition~\ref{prop:nfunramif} that
there exists a formal unramified series $g$ such that 
\[
Z^{g}=-2\pi\ii\frac{\ee^{-kz}}{1+\mu\ee^{-kz}}\frac{\partial}{\partial z}+\oo{\ee^{-3kz}}.
\]
It suffices to show that $X^{g}=\lvar^{-1}(Z^{g})$ has the desired
form.

In fact, it suffices to look at the initial steps of the algorithm
described in the proof of Proposition~\ref{prop:inverselvar}. Using
only the first two terms in the expansion 
\[
Z=X-X^{\tau}+\frac{1}{2}[X,X^{\tau}]+\cdots,
\]
it follows that $X^{g}$ has an expansion of the form 
\[
X^{g}=P\ee^{-kz}\frac{\partial}{\partial z}+Q\ee^{-2kz}\frac{\partial}{\partial z}+\oo{\ee^{-2kz}},
\]
where $P$ and $Q$ satisfy the following polynomial equations: 
\[
\Delta P=-2\pi\ii\quad\text{and}\quad\Delta Q+\frac{1}{2}(P\left(P\tau^{-1}\right)^{\prime}-P\tau^{-1}P^{\prime})=2\pi\ii\mu.
\]
The first equation has general solution $P=-(z+a)$ (with an arbitrary
$a\in\cC$). Then 
\[
\frac{1}{2}(P\left(P\tau^{-1}\right)^{\prime}-P\tau^{-1}P^{\prime})=\frac{1}{2}\Big((z+a)-(z+a-2\pi\ii)\Big)=\ii\pi,
\]
and we conclude that $Q=\left(\mu-\frac{1}{2}\right)z+b$, for an
arbitrary $b\in\cC$.
\end{proof}
\begin{cor}
\label{cor:finalformvars}Let $X$ be as in the enunciate of the Theorem.
Then, up to unramified conjugation, we have 
\begin{equation}
\lvar(X)=-2\pi\ii\frac{\ee^{-kz}}{1+\mu\,\ee^{-kz}}\frac{\partial}{\partial z}+\oo{\ee^{-2kz}}\label{first-vf}
\end{equation}
and 
\begin{equation}
\lvar(-X)=2\pi\ii\frac{\ee^{-kz}}{1+\Big(\mu-1\Big)\ee^{-kz}}\frac{\partial}{\partial z}+\oo{\ee^{-2kz}}.\label{second-vf}
\end{equation}
As a consequence, denoting $Z=\lvar(X)$ and $W=\lvar(-X)$, we get:
\[
[W,Z]=\left(4\pi^{2}k\ee^{-3kz}+\oo{\ee^{-3kz}}\right)\frac{\partial}{\partial z}.
\]
\end{cor}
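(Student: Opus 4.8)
The plan is to reduce everything to the normal form supplied by Theorem~\ref{thm:solvevar} and then run a direct Baker--Campbell--Hausdorff computation, working throughout modulo the exponential filtration $\DerDulacf_{\ge\lambda}$.

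\emph{Step 1 (reduction to normal form).} The operator $\lvar$ is equivariant under unramified conjugation: if $g\in\U$, then $g$ commutes with $\tau$, so $\varD\big((\Exp X)^{g}\big)=\varD(\Exp X)^{g}$ exactly as in the proof of Lemma~\ref{lem:variationgroup} (see~\eqref{eq:varxz}), and exponentiating gives $\lvar(X^{g})=\lvar(X)^{g}$. Hence, conjugating $X$ by the unramified series furnished by Theorem~\ref{thm:solvevar} conjugates $Z=\lvar(X)$ and $W=\lvar(-X)$ \emph{simultaneously} by the same $g$, and it suffices to prove all three formulas assuming $X$ is already in normal form, namely
\[
X=P\,\ee^{-kz}\pp z+Q\,\ee^{-2kz}\pp z+\oo{\ee^{-2kz}},\qquad P=-(z+a),\quad Q=\Big(\mu-\tfrac12\Big)z+b .
\]

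\emph{Step 2 (computing $\lvar(\pm X)$).} Apply the BCH expansion~\eqref{eq:BCH-Z}: $\lvar(X)=(X-X^{\tau})+\tfrac12[X,X^{\tau}]+\oo{\ee^{-3kz}}$. Since $\ee^{2\pi\ii k}=1$, conjugation by $\tau$ only replaces each polynomial coefficient $P$ by $P\tau^{-1}$, so $X-X^{\tau}=(\Delta P)\,\ee^{-kz}\pp z+(\Delta Q)\,\ee^{-2kz}\pp z+\oo{\ee^{-2kz}}$ with $\Delta P=-2\pi\ii$ and $\Delta Q=2\pi\ii\big(\mu-\tfrac12\big)$. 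By the bracket formula~\eqref{commutator}, the only contribution of $\tfrac12[X,X^{\tau}]$ at order $\ee^{-2kz}$ is $\tfrac12\big(P\,(P\tau^{-1})'-P'\,(P\tau^{-1})\big)\ee^{-2kz}\pp z=\pi\ii\,\ee^{-2kz}\pp z$. Summing, $\lvar(X)=-2\pi\ii\,\ee^{-kz}\pp z+2\pi\ii\mu\,\ee^{-2kz}\pp z+\oo{\ee^{-2kz}}$, and since this is the order-$\ee^{-2kz}$ expansion of $-2\pi\ii\,\ee^{-kz}(1+\mu\ee^{-kz})^{-1}\pp z$, we obtain~\eqref{first-vf}. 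Running the identical computation with $-X$ (so that now $P=z+a$ and $Q=-(\mu-\tfrac12)z-b$) gives $\lvar(-X)=2\pi\ii\,\ee^{-kz}\pp z-2\pi\ii(\mu-1)\,\ee^{-2kz}\pp z+\oo{\ee^{-2kz}}$, which is~\eqref{second-vf}.

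\emph{Step 3 (computing $[W,Z]$).} Write $Z=z_{1}\ee^{-kz}\pp z+z_{2}\ee^{-2kz}\pp z+\oo{\ee^{-2kz}}$ and $W=w_{1}\ee^{-kz}\pp z+w_{2}\ee^{-2kz}\pp z+\oo{\ee^{-2kz}}$ with $z_{1}=-2\pi\ii$, $z_{2}=2\pi\ii\mu$, $w_{1}=2\pi\ii$, $w_{2}=-2\pi\ii(\mu-1)$, all \emph{constant}. By~\eqref{commutator}, the bracket of two monomials with constant coefficients and the same exponent $-kz$ vanishes; hence the $\ee^{-2kz}$-term of $[W,Z]$ is zero and the leading surviving contribution lies at order $\ee^{-3kz}$, coming from $[w_{1}\ee^{-kz}\pp z,z_{2}\ee^{-2kz}\pp z]+[w_{2}\ee^{-2kz}\pp z,z_{1}\ee^{-kz}\pp z]=k\,(w_{2}z_{1}-w_{1}z_{2})\,\ee^{-3kz}\pp z$ (the remainders $\oo{\ee^{-2kz}}$ contribute only $\oo{\ee^{-3kz}}$). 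Substituting, $w_{2}z_{1}-w_{1}z_{2}=-4\pi^{2}(\mu-1)+4\pi^{2}\mu=4\pi^{2}$, so $[W,Z]=\big(4\pi^{2}k\,\ee^{-3kz}+\oo{\ee^{-3kz}}\big)\pp z$, as claimed.

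The computation is mechanical; the only points demanding care — and the source of the clean residues $\mu$ and $\mu-1$ as well as of the cancellation that lowers the order of $[W,Z]$ — are the exact $\pm\pi\ii$ bookkeeping produced by $\tfrac12[X,X^{\tau}]$ against $\Delta Q$, and keeping straight which terms survive in each level of the exponential filtration.
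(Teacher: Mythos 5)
Your computation is correct and is precisely the ``simple computation'' the paper's one-line proof alludes to: reduce to the normal form of Theorem~\ref{thm:solvevar} (using equivariance of $\lvar$ under $\U$-conjugation), extract the $\ee^{-kz}$ and $\ee^{-2kz}$ coefficients from the BCH expansion~\eqref{eq:BCH-Z}, and then apply the bracket formula~\eqref{commutator} to the constant leading coefficients of $Z$ and $W$, noting the cancellation at order $\ee^{-2kz}$ and the surviving $k(w_2z_1 - w_1z_2)=4\pi^2 k$ at order $\ee^{-3kz}$. Same approach as the paper, with the details spelled out.
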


\begin{proof}
This is a simple computation using the formulas given above.
\end{proof}

\section{Mildly ramified germs with rational multiplier}

Let $f\in\MR\backslash\mathcal{U}$ be a mildly ramified germ with
linear part $\frac{p}{q}z$ for some coprime positive integers $p,q\neq1$.
As usual, we denote by $\Var f<\diff$ the group generated by the
images of $\varD(f)$ and $\varD(f^{-1})$ under the morphism $\Pi_{*}:\mathcal{U}\rightarrow\diff$.

Although not needed for the proof of Theorem~D, the integrability
result of Section~\ref{sec:integrability} uses some finer properties
of $\varD(f)$ that may also be useful for proving topological rigidity
results, which we summarize in the next proposition.
\begin{prop}
\label{prop:solvable_pq}The group $\Var f$ is formally rigid. Moreover,
if it is solvable, then it is analytically conjugate to 
\[
\left\langle \ee^{2\pi\ii\nicefrac{q}{p}}x,\frac{\ee^{2\pi\ii\nicefrac{p}{q}}x}{(1+\varepsilon x^{k})^{1/k}}\right\rangle ,
\]
where $k$ is some positive integer not in $p\mathbb{Z}\cup q\mathbb{Z}$
and $\varepsilon\in\left\{ 0,1\right\} $.
\end{prop}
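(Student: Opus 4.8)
The plan is to exploit the same structural dictionary used throughout the paper: the group $\Var f$ is generated by $G=\Pi(\varD(f))$ and $H=\Pi(\varD(f^{-1}))$, where by Lemma~\ref{lem:estimate_holonomy} and the relation $\varD(d_0)=h_\Sigma$, $\varD(d_0^{-1})=h_\Omega^{-1}$ these are (up to the projection $\Pi$) the two saddle holonomies of a prepared $p:q$ saddle foliation realizing $f$ as a Poincaré map — this realization being furnished by the realization theorems of~\cite{MaRa-Res} and~\cite{PY} quoted in Remark~\ref{rem:added}. First I would record that $G,H$ are \emph{linearizable with multipliers $\ee^{2\pi\ii q/p}$ and $\ee^{2\pi\ii p/q}$ up to the correction coming from the local eigenratio}; more precisely, from Lemma~\ref{lem:estimate_holonomy} the linear parts of the saddle holonomies on $\Omega$ and $\Sigma$ are $\ee^{-2\pi\ii\lambda}=\ee^{-2\pi\ii p/q}$ and $\ee^{-2\pi\ii/\lambda}=\ee^{-2\pi\ii q/p}$. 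Formal rigidity of $\Var f$ then follows from the classification of solvable (and indeed of \emph{all} finitely generated non-exceptional) subgroups of $\diff$ à la Cerveau--Mattei--Moussu / Nakai: a finitely generated subgroup of $\diff$ containing an element with a periodic non-identity linear part of order $>1$, or two generators whose commutator subgroup is non-trivial, is formally rigid — this is essentially \cite[Proposition~1]{CerMou} combined with the structure theory of solvable subgroups. So the plan is: (i) reduce to the solvable case (formal rigidity in the non-solvable case being the classical rigidity of non-solvable subgroups), and (ii) in the solvable case, pin down the analytic normal form.

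For step (ii), I would invoke the Nakai / Cerveau--Mattei--Moussu classification of solvable subgroups of $\diff[\cc,0]$: a non-abelian solvable subgroup is, up to analytic conjugacy, contained in the group $\bigl\{x\mapsto \frac{\zeta x}{(1+ax^k)^{1/k}}: \zeta^{\ell}=1,\ a\in\cc\bigr\}$ for suitable $k,\ell$ — i.e.\ it preserves the vector field $\frac{x^{k+1}}{1+\varepsilon x^k}\pp x$ (or its linear counterpart $x^{k+1}\pp x$) up to a root of unity, with $\varepsilon\in\{0,1\}$ after rescaling. Since the two generators $G,H$ have linear parts $\ee^{2\pi\ii q/p}x$ and $\ee^{2\pi\ii p/q}x$ (here I should be careful about which holonomy lands on which transversal, but the pair $\{q/p,p/q\}$ is what appears), the constraint that $G,H$ together generate a solvable group forces, just as in the analysis of Proposition~\ref{prop:GH} and the case analysis in the proof of Theorem~\ref{thm:regluing_integrability}, that the "ramification order" $k$ be the same for both and that $k\notin p\zz\cup q\zz$ (otherwise the relevant periodic linear part would force $G$ or $H$ to have an $\ee^{2\pi\ii j/k}$-type linear part incompatible with $q/p$ resp.\ $p/q$, contradicting that $G$, $H$ are periodic-linear of the prescribed orders while lying in a common solvable model). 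This yields precisely
\[
\Var f \;\cong\; \Bigl\langle \ee^{2\pi\ii q/p}x,\ \frac{\ee^{2\pi\ii p/q}x}{(1+\varepsilon x^k)^{1/k}}\Bigr\rangle,\qquad \varepsilon\in\{0,1\},
\]
with $\varepsilon=0$ exactly when $\Var f$ is abelian (and in fact linearizable).

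The steps, in order: (1) use the realization theorem to identify $G,H$ with the projected saddle holonomies of a linear $p:q$ saddle realizing $f$, reading off their linear parts from Lemma~\ref{lem:estimate_holonomy}; (2) dispatch the non-solvable case by classical formal rigidity of non-solvable subgroups of $\diff$; (3) in the solvable case, apply the Cerveau--Mattei--Moussu / Nakai normal form for solvable subgroups to get a common invariant vector field $x^{k+1}(1+\varepsilon x^k)^{-1}\pp x$; (4) match linear parts to force the multipliers $\ee^{2\pi\ii q/p}$, $\ee^{2\pi\ii p/q}$ and to exclude $k\in p\zz\cup q\zz$; (5) normalize $\varepsilon\in\{0,1\}$ by rescaling, and observe $\varepsilon=0$ iff abelian. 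The main obstacle I anticipate is step (4): making fully rigorous the assertion that being linearizable with an order-$p$ (resp.\ order-$q$) periodic linear part is incompatible with $k\in p\zz$ (resp.\ $k\in q\zz$) inside the solvable model — one must be careful that both generators live in the \emph{same} model (same $k$), which is where the mild-ramification hypothesis $\varD^2(f)=\id$, equivalently $[G,H]$ having trivial enough structure, is used; this parallels the "identical variations" vs.\ "embedded in a flow" dichotomy of Proposition~\ref{prop:GH} but now with a genuinely periodic (rather than tangent-to-identity) linear part, so the bookkeeping with roots of unity needs to be done carefully rather than quoted verbatim.
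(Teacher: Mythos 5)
Your plan captures the right ingredients (the Cerveau--Mattei--Moussu and Loray--Meziani style classification of solvable subgroups, and the constraint $k\notin p\zz\cup q\zz$), but there are two genuine gaps that the paper's proof has to address and that you do not.

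First, you assert that $G,H$ are \emph{periodic} (``periodic-linear of the prescribed orders''), i.e.\ $G^p=H^q=\id$, and the whole step (4) depends on it. But for a germ with a rational multiplier, finite order is equivalent to analytic (or even formal) linearizability, which is not automatic: a priori $\varD(f)$ could be a resonant diffeomorphism with $\ee^{2\pi\ii q/p}$ as multiplier but infinite order. The paper handles this by a dichotomy: if $\varD(f)$ is \emph{not} $\Uf$-linearizable, it puts $g=\varD(f)$ in the prepared form $t_{2\pi\ii(q/p-1)}\exp(-2\pi\ii\ee^{-kpz}\partial_z+\cdots)$, computes $h=\varD(f^{-1})=f\varD(f)^{-1}f^{-1}$, and reads off that $G^p$ and $H^q$ fail to commute, so $\Var f$ is not solvable and there is nothing more to prove. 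Only then, in the complementary case where $g$ \emph{is} linearizable, does one deduce $f=r\,s_{p/q}$ (so $f$ is a Poincaré map of the \emph{linear} $p:q$ saddle) and hence $G^p=H^q=\id$. Your proposal jumps directly to the linear-saddle realization and so silently assumes exactly the alternative that needs proving. The realization theorem of \cite{MaRa-Res}, \cite{PY} gives you a saddle realization, but not a \emph{linear} one without this extra argument.

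Second, your formal-rigidity claim is stated too broadly. It is not true that every finitely generated subgroup with a nontrivial rational-multiplier generator is formally rigid: the Cerveau--Moussu theory has an explicit class of \emph{exceptional} (non-rigid) metabelian subgroups. The paper's proof of rigidity in the metabelian case consists precisely of checking that $\langle G,H\rangle$ is never exceptional here, because the exceptionality conditions $q/p\equiv n_1/(2k)$ and $p/q\equiv n_2/(2k)\pmod\zz$ with $n_1,n_2$ odd would force both $p$ and $q$ to be even, contradicting $\gcd(p,q)=1$. Without this arithmetic check, the formal rigidity assertion is not justified.
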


\begin{rem}
Both generators of the group are analytically linearizable, but in
general not in the same coordinates. The group itself is linearizable
if and only if it is abelian, which happens exactly when $\varepsilon=0$.
\end{rem}

\begin{proof}
Up to a conjugation inside $\mathcal{U}$, we can suppose that 
\[
f=\frac{p}{q}z+\oo 1
\]
and the variation $g=\varD(f)$ has the form 
\[
g=t_{2\pi\ii\left(\frac{q}{p}-1\right)}\tilde{g}
\]
where $t_{c}(z)=z+c$ is the translation map and $\tilde{g}$ is a
tangent to identity unramified germ. We split the proof into two cases.
\begin{lyxlist}{00.00.0000}
\item [{Case~1.}] Up to a conjugation inside $\widehat{\mathcal{U}}$,
we can write $g=\varD(f)$ in prepared form 
\[
g=t_{2\pi\ii\left(\frac{q}{p}-1\right)}\exp\left(-2\pi\ii\ee^{-kpz}\frac{\partial}{\partial z}+\oo{\ee^{-kpz}}\right)
\]
Using the fact that $h=\varD(f^{-1})=f\varD(f)^{-1}f^{-1}$, we obtain
\[
h=t_{2\pi\ii\left(\frac{p}{q}-1\right)}\exp\left(-2\pi\ii\ee^{-kqz}\frac{\partial}{\partial z}+\oo{\ee^{-kqz}}\right)
\]
Therefore, $G=\Pi_{*}g$ and $H=\Pi_{*}h$ are such that $G^{p}$
and $H^{q}$ do not commute. We conclude that the group $\langle G,H\rangle$
is not solvable.
\item [{Case~2.}] It remains to consider the case $g=\varD(f)$ when it
is $\mathcal{U}$-linearizable, that is (after convenient conjugation):
\[
g=t_{2\pi\ii\left(\frac{q}{p}-1\right)}
\]
and, according to Proposition~A.1, the Dulac germ has the form $f=rs_{p/q}$
for some tangent to identity unramified germ $r$. As a consequence,
$h=\varD(f^{-1})$ has the form 
\[
rt_{2\pi\ii\left(\frac{p}{q}-1\right)}r^{-1}.
\]
Therefore $f$ is the Poincaré map of a model $(\text{\ensuremath{\fol{p:q}}},R)$,
where $\fol{p:q}$ is the germ of a foliation generated by the linear
differential 1-form $\omega_{p:q}=\dd{\left(x^{q}y^{p}\right)}$,
and $R\in\tmop{Diff}(\mathbb{C},0)$ is an arbitrary holomorphic germ.
In particular, the variation group is generated by two germs of a
diffeomorphism $G,H$ with respective linear parts $\ee^{2\pi\ii q/p}x$
and $\ee^{2\pi\ii p/q}x$, and such that 
\[
G^{p}=H^{q}=\id.
\]
We now use a result of \cite{Loray:Meziani}, which establishes the
following trichotomy:
\begin{lyxlist}{00.00.0000}
\item [{Case~2.a.}] Either $\langle G,H\rangle$ is non-solvable, or
\item [{Case~2.b.}] $\langle G,H\rangle$ is abelian and analytically
linearizable, or
\item [{Case~2.c.}] $\langle G,H\rangle$ is metabelian and, in appropriate
formal coordinates, has the form 
\[
\langle G,H\rangle=\left\langle \alpha x,\frac{\beta x}{(1+x^{k})^{1/k}}\right\rangle 
\]
where we note note $\alpha=\ee^{2\pi\ii q/p},\beta=\ee^{2\pi\ii p/q}$
and $k$ is some positive integer in $\mathbb{Z}_{\geqslant1}\setminus\{p\mathbb{Z}\cup q\mathbb{Z}\}$
\end{lyxlist}
\begin{proof}[Proof based on \cite{Loray:Meziani}.]
 According to \cite[ Proposition I.1]{Loray:Meziani}, in the metabelian
case we can write in convenient formal coordinates 
\[
G=s_{\alpha}\exp\left(ax^{k}x\frac{\partial}{\partial x}\right),H=s_{\beta}\exp\left(bx^{k}x\frac{\partial}{\partial x}\right),
\]
where $s_{\lambda}(z)=\lambda z$ is the scaling map. We now impose
the conditions $H^{q}=G^{p}=\id$ and $[G,H]\neq\id$.

First of all, 
\begin{eqnarray*}
\id & = & H^{q}\\
 & = & \left(s_{\beta}\exp\left(bx^{k}x\frac{\partial}{\partial x}\right)\right)^{q}\\
 & = & s_{\beta}\exp\left(bx^{k}x\frac{\partial}{\partial x}\right)s_{\beta}\exp\left(bx^{k}x\frac{\partial}{\partial x}\right)\cdots s_{\beta}\exp\left(bx^{k}x\frac{\partial}{\partial x}\right)\\
 & = & \exp\left(\left(\beta^{k(1-q)}+\beta^{k(2-q)}+\cdots+1\right)bx^{k}x\frac{\partial}{\partial x}\right),
\end{eqnarray*}
which implies that $k$ is not a multiple of $q$. Similarly, from
$G^{p}=\id$ we get that $k\notin p\zz$. A simple computation finally
gives 
\begin{eqnarray*}
{}[G,H] & = & \exp\left(\left(a(\beta^{-k}-1)-b(\alpha^{-k}-1)\right)x^{k}x\frac{\partial}{\partial x}\right),
\end{eqnarray*}
which implies that $a(\beta^{-k}-1)+b(1-\alpha^{-k})\neq0$. Notice
that 
\[
\exp\left(-\lambda x^{k}x\frac{\partial}{\partial x}\right)g\exp\left(\lambda x^{k}x\frac{\partial}{\partial x}\right)=s_{\alpha}\exp\left(a+\lambda(1-\alpha^{-k})x^{k}x\frac{\partial}{\partial x}\right).
\]
Therefore, upon conjugating by $\exp\left(\lambda x^{k}x\frac{\partial}{\partial x}\right)$
with $\lambda$ such that 
\[
a+\lambda(1-\alpha^{-k})=0
\]
and by a further scaling, we obtain 
\[
G=\alpha x\quad\text{and}\quad H=\frac{\beta x}{(1+x^{k})^{1/k}}.
\]
\end{proof}
\end{lyxlist}
Finally, in order to prove the formal rigidity, it suffices to consider
the case where $\tmop{Var}(f)$ is metabelian, because non-solvable
groups are rigid, and we just proved that abelian groups are analytically
linearizable. According to \cite[Section 1, Remarque 1]{CerMou},
the group $\langle G,H\rangle$ is exceptional (\emph{i.e.} not rigid)
if and only if 
\[
\ee^{2\pi\ii\frac{q}{p}}=\ee^{i\pi\frac{n_{1}}{k}},\quad\ee^{2\pi\ii\frac{p}{q}}=\ee^{i\pi\frac{n_{1}}{k}}
\]
with $n_{1},n_{2}$ odd integers. In other words, 
\[
\frac{q}{p}\equiv\frac{n_{1}}{2k}\mod{\mathbb{Z}}~~\text{ and }~~\frac{p}{q}\equiv\frac{n_{2}}{2k}\mod{\mathbb{Z}}
\]
But this is impossible since it would imply that both $q$ and $p$
are even numbers. Therefore, $\langle G,H\rangle$ is always rigid,
and in particular is \emph{analytically} conjugate to $\left\langle \alpha x,\frac{\beta x}{(1+x^{k})^{1/k}}\right\rangle $.
\end{proof}
\bibliographystyle{bibliography/preprints}
\bibliography{bibliography/bibliography}

\begin{thebibliography}{ALGM73}
\expandafter\ifx\csname urlstyle\endcsname\relax
  \providecommand{\doi}[1]{doi:\discretionary{}{}{}#1}\else
  \providecommand{\doi}{doi:\discretionary{}{}{}\begingroup
  \urlstyle{rm}\Url}\fi

\bibitem[ALGM73]{AnLeon}
\textsc{A.~A. Andronov}, \textsc{E.~A. Leontovi\v{c}}, \textsc{I.~I. Gordon}
  and \textsc{A.~G. Ma\u{i}er}, Theory of bifurcations of dynamic systems on a
  plane, Halsted Press [John Wiley \& Sons], New York-Toronto; Israel Program
  for Scientific Translations, Jerusalem-London, translated from the Russian,
  1973

\bibitem[BCL96]{BeCeLi}
\textsc{M.~Berthier}, \textsc{D.~Cerveau} and \textsc{A.~{Lins Neto}},
  \emph{Sur les feuilletages analytiques r{\'e}els et le probl{\`e}me du
  centre}, Journal of Differential Equations, vol. 131, \#2: pp. 244--266,
  \doi{10.1006/jdeq.1996.0163}, 1996

\bibitem[BI19]{Bolsinov2019}
\textsc{A.~Bolsinov} and \textsc{A.~Izosimov}, \emph{Smooth invariants of
  focus-focus singularities and obstructions to product decomposition}, Journal
  of Symplectic Geometry, vol.~17, \#6: pp. 1613--1648,
  \doi{10.4310/jsg.2019.v17.n6.a2}, 2019

\bibitem[Bru15]{Brunella2015}
\textsc{M.~Brunella}, Birational Geometry of Foliations, Springer International
  Publishing, \doi{10.1007/978-3-319-14310-1}, 2015

\bibitem[BT99]{BerTouze}
\textsc{M.~Berthier} and \textsc{F.~Touzet}, \emph{{Sur l'int{\'e}gration des
  {\'e}quations diff{\'e}rentielles holomorphes r{\'e}duites en dimension
  deux}}, Bol. Soc. Brasil. Mat. (N.S.), vol.~30, \#3: pp. 247--286,
  \doi{10.1007/BF01239006}, 1999

\bibitem[Cam78]{Cam}
\textsc{C.~Camacho}, \emph{On the local structure of conformal mappings and
  holomorphic vector fields in {${\bf C}\sp{2}$}}, \emph{in} Journ\'ees
  {S}inguli\`eres de {D}ijon ({U}niv. {D}ijon, {D}ijon, 1978),
  \emph{Ast\'erisque}, vol. 59-60, pp. 3, 83--94, Soc. Math. France, Paris,
  1978

\bibitem[CLW94]{ChoWang}
\textsc{S.-N. Chow}, \textsc{C.~Z. Li} and \textsc{D.~Wang}, Normal forms and
  bifurcation of planar vector fields, Cambridge University Press, Cambridge,
  \doi{10.1017/CBO9780511665639}, 1994

\bibitem[CM88]{CerMou}
\textsc{D.~Cerveau} and \textsc{R.~Moussu}, \emph{{Groupes d'automorphismes de
  {$({\bf C},0)$} et {\'e}quations diff{\'e}rentielles {$ydy+\cdots=0$}}},
  Bull. Soc. Math. France, vol. 116, \#4: pp. 459--488 (1989), 1988

\bibitem[DR90]{DumRou}
\textsc{F.~Dumortier} and \textsc{R.~Roussarie}, \emph{On the saddle loop
  bifurcation}, \emph{in} \textsc{J.-P. Fran{\c{c}}oise} and
  \textsc{R.~Roussarie} (editors), Bifurcations of Planar Vector Fields, pp.
  44--73, Springer Berlin Heidelberg, Berlin, Heidelberg, 1990

\bibitem[Dui80]{Duistermaat1980}
\textsc{J.~J. Duistermaat}, \emph{On global action-angle coordinates},
  Communications on Pure and Applied Mathematics, vol.~33, \#6: pp. 687--706,
  \doi{10.1002/cpa.3160330602}, Nov. 1980

\bibitem[{\'E}ca75]{EcalParab}
\textsc{J.~{\'E}calle}, \emph{{Th{\'e}orie it{\'e}rative: introduction {\`a} la
  th{\'e}orie des invariants holomorphes}}, J. Math. Pures Appl. (9), vol.~54:
  pp. 183--258, 1975

\bibitem[Eli90]{Elia}
\textsc{L.~H. Eliasson}, \emph{Normal forms for {H}amiltonian systems with
  {P}oisson commuting integrals---elliptic case}, Comment. Math. Helv.,
  vol.~65, \#1: pp. 4--35, \doi{10.1007/BF02566590}, 1990

\bibitem[Gra58]{10.2307/1970257}
\textsc{H.~Grauert}, \emph{On Levi's Problem and the Imbedding of Real-Analytic
  Manifolds}, Annals of Mathematics, vol.~68, \#2: pp. 460--472, 1958

\bibitem[GT10]{Genzmer2010}
\textsc{Y.~Genzmer} and \textsc{L.~Teyssier}, \emph{Existence of non-algebraic
  singularities of differential equation}, Journal of Differential Equations,
  vol. 248, \#5: pp. 1256--1267, \doi{10.1016/j.jde.2009.10.001}, Mar. 2010

\bibitem[HZ07]{HanZ}
\textsc{M.~Han} and \textsc{H.~Zhu}, \emph{The loop quantities and bifurcations
  of homoclinic loops}, J. Differential Equations, vol. 234, \#2: pp. 339--359,
  \doi{10.1016/j.jde.2006.11.009}, 2007

\bibitem[Il'84]{IlYaRus}
\textsc{Y.~S. Il'yashenko}, \emph{{Limit cycles of polynomial vector fields
  with nondegenerate singular points on the real plane}}, Funktsional. Anal. i
  Prilozhen., vol.~18, \#3: pp. 199--209, 1984

\bibitem[Il'91]{IlyaDu}
\textsc{Y.~S. Il'yashenko}, {Finiteness theorems for limit cycles},
  \emph{{Translations of Mathematical Monographs}}, vol.~94, American
  Mathematical Society, Providence, RI, translated from the Russian by H. H.
  McFaden, 1991

\bibitem[IY91]{Ilya-Kov}
\textsc{Y.~S. Il'yashenko} and \textsc{S.~Y. Yakovenko}, \emph{Finitely smooth
  normal forms of local families of diffeomorphisms and vector fields}, Uspekhi
  Mat. Nauk, vol.~46, \#1(277): pp. 3--39, 240,
  \doi{10.1070/RM1991v046n01ABEH002733}, 1991

\bibitem[JR89]{Jojo}
\textsc{P.~Joyal} and \textsc{C.~Rousseau}, \emph{Saddle quantities and
  applications}, J. Differential Equations, vol.~78, \#2: pp. 374--399,
  \doi{10.1016/0022-0396(89)90069-7}, 1989

\bibitem[Leo51]{Leon}
\textsc{E.~Leontovi\v{c}}, \emph{On the generation of limit cycles from
  separatrices}, Doklady Akad. Nauk SSSR (N.S.), vol.~78: pp. 641--644, 1951

\bibitem[LM94]{Loray:Meziani}
\textsc{F.~Loray} and \textsc{R.~Meziani}, \emph{Classification de certains
  feuilletages associ{\'e}s {\`a}un cusp}, Boletim da Sociedade Brasileira de
  Matem{\'a}tica - Bulletin/Brazilian Mathematical Society, vol.~25, \#1: pp.
  93--106, \doi{10.1007/BF01232937}, 1994

\bibitem[Lor94]{theseFrank}
\textsc{F.~Loray}, \emph{{Feuilletages holomorphes {\'a} holonomie
  r{\'e}soluble}}, Ph.D. thesis, 1994

\bibitem[Lor10]{Lolo}
\textsc{F.~Loray}, \emph{{Pseudo-groupe d'une singularit{\'e} de feuilletage
  holomorphe en dimension deux}}, PrePrint 2010

\bibitem[Mil06]{Milnor}
\textsc{J.~Milnor}, \emph{{On {L}att{\`e}s maps}}, \emph{in} {Dynamics on the
  {R}iemann sphere}, pp. 9--43, Eur. Math. Soc., Z{\"u}rich,
  \doi{10.4171/011-1/1}, 2006

\bibitem[MM80]{MaMou}
\textsc{J.-F. Mattei} and \textsc{R.~Moussu}, \emph{{Holonomie et
  int{\'e}grales premi{\`e}res}}, Ann. Sci. {\'E}cole Norm. Sup. (4), vol.~13,
  \#4: pp. 469--523, 1980

\bibitem[MM03]{Moerdijk2003}
\textsc{I.~Moerdijk} and \textsc{J.~Mrcun}, Introduction to Foliations and Lie
  Groupoids, Cambridge University Press, \doi{10.1017/cbo9780511615450}, Sep.
  2003

\bibitem[MR82]{MaRa-SN}
\textsc{J.~Martinet} and \textsc{J.-P. Ramis}, \emph{{Probl{\`e}mes de modules
  pour des {\'e}quations diff{\'e}rentielles non lin{\'e}aires du premier
  ordre}}, Inst. Hautes {\'E}tudes Sci. Publ. Math., , \#55: pp. 63--164, 1982

\bibitem[MR83]{MaRa-Res}
\textsc{J.~Martinet} and \textsc{J.-P. Ramis}, \emph{{Classification analytique
  des {\'e}quations diff{\'e}rentielles non lin{\'e}aires r{\'e}sonnantes du
  premier ordre}}, Ann. Sci. {\'E}cole Norm. Sup. (4), vol.~16, \#4: pp.
  571--621, 1983

\bibitem[Ngo03]{VNgoc2003}
\textsc{S.~V. Ngoc}, \emph{On semi-global invariants for
  focus{\textendash}focus singularities}, Topology, vol.~42, \#2: pp. 365--380,
  \doi{10.1016/s0040-9383(01)00026-x}, Mar. 2003

\bibitem[Per21]{Per}
\textsc{D.~Peran}, \emph{{Normal forms for logarithmic transseries, PhD thesis,
  University of Zagreb, Croatia}}, 2021

\bibitem[PM97]{PM}
\textsc{R.~P{\'e}rez-Marco}, \emph{{Fixed points and circle maps}}, Acta Math.,
  vol. 179, \#2: pp. 243--294, \doi{10.1007/BF02392745}, 1997

\bibitem[PRRS21]{PerResRolSerForm}
\textsc{D.~Peran}, \textsc{M.~Resman}, \textsc{J.-P. Rolin} and
  \textsc{T.~Servi}, \emph{{Normal forms of hyperbolic logarithmic
  transseries}}, PrePrint 2021

\bibitem[PRRS22]{PerResRolSerAn}
\textsc{D.~Peran}, \textsc{M.~Resman}, \textsc{J.-P. Rolin} and
  \textsc{T.~Servi}, \emph{{Linearization of hyperbolic complex Dulac germs}},
  J. Math. Anal. Appl., vol. 508: pp. 1--27, \doi{10.1016/j.jmaa.2021.125833},
  2022

\bibitem[PY94]{PY}
\textsc{R.~{P{\'e}rez Marco}} and \textsc{J.-C. Yoccoz}, \emph{{Germes de
  feuilletages holomorphes {\`a} holonomie prescrite}}, 222, pp. 7, 345--371,
  complex analytic methods in dynamical systems (Rio de Janeiro, 1992), 1994

\bibitem[RM21a]{MardeResClassif}
\textsc{M.~Resman} and \textsc{P.~Marde{\v s}i{\'c}}, \emph{{Analytic moduli
  for parabolic Dulac germs}}, Russian Mathematical Surveys (Uspekhi
  Matematicheskikh Nauk), vol.~76: pp. 389--460, 2021

\bibitem[RM21b]{MardeResReal}
\textsc{M.~Resman} and \textsc{P.~Marde{\v s}i{\'c}}, \emph{{Realization of
  analytic moduli for parabolic Dulac germs}}, Ergodic Theory and Dynamical
  Systems, \doi{10.1017/etds.2020.139}, 2021

\bibitem[Rou86]{RouLimCyc}
\textsc{R.~Roussarie}, \emph{On the number of limit cycles which appear by
  perturbation of separatrix loop of planar vector fields}, Bol. Soc. Brasil.
  Mat., vol.~17, \#2: pp. 67--101, \doi{10.1007/BF02584827}, 1986

\bibitem[Rou98]{Roussarie1998}
\textsc{R.~Roussarie}, Bifurcations of Planar Vector Fields and
  Hilbert{\textquotesingle}s Sixteenth Problem, Springer Basel,
  \doi{10.1007/978-3-0348-0718-0}, 1998

\bibitem[Shc84]{Shcher}
\textsc{A.~A. Shcherbakov}, \emph{Topological and analytic conjugation of
  noncommutative groups of germs of conformal mappings}, Trudy Sem. Petrovsk.,
  , \#10: pp. 170--196, 238--239, 1984

\bibitem[Sin92]{SingLiou}
\textsc{M.~F. Singer}, \emph{{Liouvillian first integrals of differential
  equations}}, Trans. Amer. Math. Soc., vol. 333, \#2: pp. 673--688,
  \doi{10.2307/2154053}, 1992

\bibitem[Sma63]{Smale1963}
\textsc{S.~Smale}, \emph{Stable manifolds for differential equations and
  diffeomorphisms}, Annali della Scuola Normale Superiore di Pisa - Classe di
  Scienze, vol.~17, \#1-2: pp. 97--116, 1963

\bibitem[Smi14]{Smirnov-focus}
\textsc{G.~E. Smirnov}, \emph{Focus-focus singularities in classical
  mechanics}, Nelin. Dinam., vol.~10, \#1: pp. 101--112, 2014

\bibitem[Tey04]{Tey-ExSN}
\textsc{L.~Teyssier}, \emph{{Examples of non-conjugated holomorphic vector
  fields and foliations}}, J. Differential Equations, vol. 205, \#2: pp.
  390--407, \doi{10.1016/j.jde.2004.03.033}, 2004

\bibitem[Tey15]{TeyAna}
\textsc{L.~Teyssier}, \emph{{Analyticity in spaces of convergent power series
  and applications}}, Moscow Mathematical Journal, vol.~15, \#3: pp. 527--592,
  2015

\bibitem[Vor81]{VoroParaboEng}
\textsc{S.~Voronin}, \emph{{Analytic classification of germs of conformal
  mappings {$({\bf C}, 0)\rightarrow ({\bf C}, 0)$}}}, Functional Anal. Appl.,
  vol.~15, \#1: pp. 1--13, 1981

\bibitem[Yeu25]{MYeung}
\textsc{M.~Yeung}, \emph{Dulac's Theorem Revisited}, Qualitative Theory of
  Dynamical Systems, vol.~24, \#2: p.~57, \doi{10.1007/s12346-025-01220-2},
  2025

\bibitem[Yoc92]{Yoccoz1992}
\textsc{J.-C. Yoccoz}, An Introduction To Small Divisors Problems, pp.
  659--679, Springer Berlin Heidelberg, Berlin, Heidelberg,
  \doi{10.1007/978-3-662-02838-4_14}, 1992

\bibitem[ZR02]{RouZhu}
\textsc{H.~Zhu} and \textsc{C.~Rousseau}, \emph{Finite cyclicity of graphics
  with a nilpotent singularity of saddle or elliptic type}, J. Differential
  Equations, vol. 178, \#2: pp. 325--436, \doi{10.1006/jdeq.2001.4017}, 2002

\end{thebibliography}

\end{document}